\newtheorem{theorem}{Theorem}[section]
\newtheorem{lemma}[theorem]{Lemma}
\newtheorem{corollary}[theorem]{Corollary}
\newtheorem{proposition}[theorem]{Proposition}
\theoremstyle{definition}
\newtheorem{definition}[theorem]{Definition}
\newtheorem{remark}[theorem]{Remark}
\newtheorem{example}[theorem]{Example}
\def\N{\mathbb{N}}
\def\Z{\mathbb{Z}}
\def\Q{\mathbb{Q}}
\def\F{\mathbb{F}}
\def\CF {\operatorname{CF}}
\def\CFKa {\widehat{\operatorname{CFK}}}
\def\HFKa {\widehat{\operatorname{HFK}}}
\def\CFp {\operatorname{CF}^+}
\def\CFm {\operatorname{CF}^-}
\def\CFi {\operatorname{CF}^\infty}
\def\CFa {\operatorname{\widehat{CF}}}
\def\HFp {\operatorname{HF}^+}
\def\HFm {\operatorname{HF}^-}
\def\HFi {\operatorname{HF}^\infty}
\def\HFa {\operatorname{\widehat{HF}}}
\def\CFpt {\ul{\operatorname{CF}}^+}
\def\CFit {\ul{\operatorname{CF}}^\infty}
\def\CFic {\mathbf{CF}^\infty}
\def\CFmc {\mathbf{CF}^-}
\def\CFK {\operatorname{CFK}}
\def\HFK {\operatorname{HFK}}
\def\CFKm {\operatorname{CFK}^-}
\def\CFKi {\operatorname{CFK}^\infty}
\def\CFKp {\operatorname{CFK}^+}
\def\CFLi {\operatorname{CFL}^\infty}
\def\spincs {\mathfrak{s}}
\def\spinct {\mathfrak{t}}
\def\spincu {\mathfrak{u}}
\def\spincv {\mathfrak{v}}
\def\spincx {\mathfrak{x}}
\def\spincy {\mathfrak{y}}
\def\DD {\mathcal{D}}
\def\FF {\mathcal{F}}
\def\II {\mathcal{I}}
\def\MM {\mathcal{M}}
\def\T{\mathbb{T}}
\def\a {\mathbf{a}}
\def\b {\mathbf{b}}
\def\q {\mathbf{q}}
\def\r {\mathbf{r}}
\def\x {\mathbf{x}}
\def\y {\mathbf{y}}
\def\ul {\underline}
\newcommand{\abs}[1] {\left\lvert #1 \right\rvert}
\newcommand{\gen}[1] {\langle #1 \rangle}
\newcommand{\floor}[1] {\left\lfloor #1 \right\rfloor}
\def\Th{^{\text{th}}}
\def\minus{\smallsetminus}
\def\co{\colon\thinspace}
 \DeclareMathOperator{\id}{id} 
  \DeclareMathOperator{\sign}{sign}
\DeclareMathOperator{\Sym}{Sym} \DeclareMathOperator{\Spin}{Spin}
\DeclareMathOperator{\Span}{Span} \DeclareMathOperator{\nbd}{nbd}
\DeclareMathOperator{\PD}{PD} \DeclareMathOperator{\gr}{gr}
 \DeclareMathOperator{\Cone}{Cone}
\DeclareMathOperator{\Gr}{Gr}
\def\absgr{\operatorname{\widetilde{gr}}}
\def\conn{\mathbin{\#}}
\def\bconn{\mathbin{\natural}}
\renewcommand{\MR}[1]{}
\definecolor{darkgreen}{rgb}{0,0.5,0}
\definecolor{purple}{rgb}{0.5,0,0.5}
\def\Al {\tilde A}
\def\AlNorm {A}
\def\JJ {\mathcal{J}}
\def\MultComb {\mathcal{N}}
\def\Gens {\mathfrak{S}}
\numberwithin{equation}{section}
\begin{document}

\title{A surgery formula for knot Floer homology}

\author{Matthew Hedden}
\thanks{MH was partially supported by NSF CAREER grant DMS-1150872, NSF grant DMS-1709016 and an Alfred P. Sloan Research Fellowship.}
\address{Department of Mathematics, Michigan State University, East Lansing, MI 48824}
\email{mhedden@math.msu.edu}

\author{Adam Simon Levine}
\thanks{ASL was partially supported by NSF grant DMS-1806437.}
\address{Department of Mathematics, Duke University, Durham, NC 27708}
\email{alevine@math.duke.edu}

\allowdisplaybreaks

\begin{abstract}
Let $K$ be a rationally null-homologous knot in a $3$-manifold $Y$, equipped with a nonzero framing $\lambda$, and let $Y_\lambda(K)$ denote the result of $\lambda$-framed surgery on $Y$. Ozsv\'ath and Szab\'o gave a formula for the Heegaard Floer homology groups of $Y_\lambda(K)$ in terms of the knot Floer complex of $(Y,K)$. We strengthen this formula by adding a second filtration that computes the knot Floer complex of the dual knot $K_\lambda$ in $Y_\lambda$, i.e., the core circle of the surgery solid torus.   In the course of proving our refinement we derive a combinatorial formula for the Alexander grading  which may be of independent interest.
\end{abstract}

\maketitle


\section{Introduction} \label{sec: intro}

Let $K$ be a rationally null-homologous knot in a $3$-manifold $Y$. Let $\lambda$ be any framing on $K$, and let $Y_\lambda(K)$ denote the result of $\lambda$-framed surgery along $K$. In \cite{OSzSurgery, OSzRational}, Ozsv\'ath and Szab\'o gave a formula for the Heegaard Floer homology groups of $Y_\lambda(K)$ in terms of the knot Floer complex $\CFKi(Y,K)$. This formula has been one of the most important tools in the Heegaard Floer toolkit.  Not only has it has been the primary method of  computation for many specific examples of Floer homology groups \cite{CochranHarveyHornFiltering, HeddenKimLivingston, HeddenWatsonUnknot, HomLevineLidmanPL, JabukaMarkSurface, LevineLidmanSpineless, MeierDoublySlice}, but the existence of the formula indicates that the knot Floer homology invariants tightly constrain the Floer invariants of manifolds obtained by surgery, and conversely.  This interplay between the two invariants, coupled with the rich geometric content of both, has led to striking new applications in Dehn surgery.  For instance, it has given rise to interesting new surgery obstructions  \cite{JabukaSurgeryGenus, HomKarakurtLidman, Rasmussen2Simple} and led to significant progress on the cosmetic surgery conjecture \cite{WangGenusOne, WuCosmetic, NiWuCosmetic}, exceptional surgeries \cite{DoigFinite, HomLidmanZufelt, LiNi, MeierCabled, NiZhangFinite, WuSeifert}, and the Berge Conjecture \cite{BakerGrigsbyHedden, HeddenBerge, RasmussenLens}. The surgery formula was subsequently generalized by Manolescu and Ozsv\'ath \cite{ManolescuOzsvathLink} to surgeries on links, which results in a combinatorial (albeit largely impractical) algorithm for computing all versions of Heegaard Floer homology for any $3$-manifold \cite{ManolescuOzsvathThurston}.

Let $K_\lambda \subset Y_\lambda(K)$ denote the core circle of the surgery solid torus, often called the \emph{dual knot}. In this paper, we strengthen Ozsv\'ath and Szab\'o's results to provide a formula for $\CFKi(Y_\lambda(K), K_\lambda)$, provided the framing is nonzero. Specifically, we define a second filtration on the chain complex defined by Ozsv\'ath and Szab\'o, and we show that it agrees with the Alexander filtration induced by $K_\lambda$.

Some special cases of our formula are already known and have had numerous applications. In \cite{HeddenWhitehead}, the first author established a limited version of our formula, addressing the significantly easier computation of the ``hat'' knot Floer homology groups of the dual knot in sufficiently large surgery, and used this computation to derive a formula for the knot Floer homology of Whitehead doubles in terms of the complex of the companion knot. In \cite{HeddenBerge}, the same formula was used to derive an obstruction to lens space surgeries in terms of the dual knot, namely that the dual knot must have simple Floer homology (c.f. \cite{RasmussenLens}); this result is central to Baker--Grigsby--Hedden's approach to the Berge conjecture \cite{BakerGrigsbyHedden}. Also, in joint work with Plamenevskaya \cite{HeddenPlamenevskayaRational}, the ``hat'' formula was used to provide criteria for manifolds obtained by Dehn surgery on fibered knots to admit tight contact structures. Subsequently, Kim, Livingston, and the first author \cite{HeddenKimLivingston} extended the preceding result to describe the full complex $\CFKi(Y_\lambda(K), K_\lambda)$ for sufficiently large surgeries, established that a framing coefficient that is twice the genus of $K$ is ``sufficiently large,'' and used the surgery formula as the key tool in $d$-invariant computations that verified the existence of $2$--torsion in the subgroup of smooth concordance generated by topologically slice knots.

Most recently, Hom, Lidman, and the second author \cite{HomLevineLidmanPL} have used our main theorem  (Theorem \ref{thm: mapping-cone}) to provide an example of a knot in a homology sphere which has infinite order in the non-locally-flat piecewise-linear concordance group. The reader is encouraged to refer to that paper for a detailed computation using this formula, which illustrates the general technique.

\subsection{Statement of the theorem}

In order to state the main theorem, we start by quickly establishing some terminology and notation. We will fill in more details in Section \ref{sec: mapping-cone}.

Assume that $K$ represents a class of order $d>0$ in $H_1(Y;\Z)$. Fix a tubular neighborhood $\nbd(K)$. Let $\mu \subset \partial(\nbd(K))$ be a right-handed meridian of $K$.

A \emph{relative spin$^c$ structure} is a homology class of nowhere-vanishing vector fields on $Y \minus \nbd(K)$ which is tangent to the boundary along $\partial (\nbd(K))$. The set of relative spin$^c$ structures is denoted $\ul\Spin^c(Y,K)$ and is an affine set for $H^2(Y,K)$. (This set does not depend on the orientation of $K$.) Given an orientation, Ozsv\'ath and Szab\'o define a map
\[
G_{Y,K} \co \ul\Spin^c(Y,K) \to \Spin^c(Y),
\]
which is equivariant with respect to the restriction map
\[
H^2(Y,K;\Z) \to H^2(Y;\Z).
\]
The fibers of $G_{Y,K}$ are precisely the orbits of $\ul\Spin^c(Y,K)$ under the action of $\gen{\PD[\mu]} \subset H^2(Y,K;\Z)$.

The \emph{Alexander grading} of each $\xi \in \ul\Spin^c(Y,K)$ is defined as
\begin{equation} \label{eq: spinc-alex}
\AlNorm_{Y,K}(\xi) = \frac{\gen{c_1(\xi), [F]} + [\mu] \cdot [F] }{2 [\mu] \cdot [F]} \in \frac{1}{2d} \Z,
\end{equation}
where $F$ is a rational Seifert surface for $K$, and $\cdot$ denotes the intersection pairing between $H_1(Y \minus K)$ and $H_2(Y,K)$.   Note that the relative Chern class in the above equation depends on a choice of vector field along the boundary torus of the knot complement; for this, we take a nowhere-vanishing vector field tangent to the torus.
For each $\spincs \in \Spin^c(Y)$, the values of $\AlNorm_{Y,K}(\xi)$, taken over all $\xi \in G_{Y,K}^{-1}(\spincs)$, form a single coset in $\Q/\Z$, which we denote by $\AlNorm_{Y, K}(\spincs)$. Indeed, any $\xi \in \Spin^c(Y,K)$ is uniquely determined by the pair $(G_{Y,K}(\xi), \AlNorm_{Y,K}(\xi))$.
Let $\F$ denote the field of two elements. The \emph{knot Floer complex} of $(Y,K)$ is a doubly-filtered chain complex $\CFKi(Y,K)$, defined over $\F[U,U^{-1}]$, which is invariant up to doubly-filtered chain homotopy equivalence, with a decomposition
\[
\CFKi(Y,K) = \bigoplus_{\spincs \in \Spin^c(Y)} \CFKi(Y,K,\spincs).
\]
The two filtrations are denoted by $i$ and $j$. Our conventions are slightly different from Ozsv\'ath and Szab\'o's: on each summand $\CFKi(Y,K,\spincs)$, $i$ is an integer, while $j$ takes values in $\Z + \AlNorm_{Y, K}(\spincs)$. The action of $U$ decreases both filtrations by $1$. By ignoring the $j$ filtration, we have $\CFKi(Y,K, \spincs) = \CFi(Y,\spincs)$; in particular, the groups $\HFm(Y,\spincs)$, $\HFp(Y,\spincs)$, and $\HFa(Y,\spincs)$ are the homologies of the $i<0$ subcomplex, the $i \ge 0$ quotient, and the $i=0$ subquotient, respectively. If $\spincs$ is a torsion spin$^c$ structure, then $\CFKi(Y,K,\spincs)$ also comes equipped with an absolute $\Q$-grading $\absgr$ that lifts a relative $\Z$-grading; the differential has grading $-1$, and multiplication by $U$ has grading $-2$.

For each $\xi \in G_{Y,K}^{-1}(\spincs)$, there is a ``flip map''
\[
\Psi^\infty_\xi \co \CFKi(Y,K, \spincs) \to \CFKi(Y,K, \spincs+ \PD[K]),
\]
a filtered chain homotopy equivalence that is an invariant of the knot $K$ up to filtered chain homotopy. (See Lemma \ref{lemma: Psi-filtered} for the precise sense in which $\Psi^\infty_\xi$ is filtered.)

An (integral) framing on $K$ is specified by a choice of longitude $\lambda$, which we may view as a curve in $\partial (\nbd(K))$. As elements of $H_1(\partial (\nbd(K)))$, we have $\partial F = d\lambda - k\mu$ for some $k\in\Z$; the framing determines and is determined by $k$. Let $Y_\lambda = Y_\lambda(K)$ denote the manifold obtained by $\lambda$-framed surgery on $K$. The meridian $\mu$ is isotopic to a core circle of the glued-in solid torus. Let $K_\lambda$ denote this core circle, with the orientation inherited from the \emph{left-handed} meridian $-\mu$. The sets $\ul\Spin^c(Y,K)$ and $\ul\Spin^c(Y_\lambda, K_\lambda)$ are canonically identified, since they depend only on the complement. The orientation of $K_\lambda$ induces a map $G_{Y_\lambda, K_\lambda} \co \ul\Spin^c(Y_\lambda, K_\lambda) \to \Spin^c(Y_\lambda)$ whose fibers are the orbits of the action of $\PD[\lambda]$.

Assume henceforth that $k \ne 0$. Choose a spin$^c$ structure $\spinct$ on $Y_\lambda(K)$. 
Let us index the elements of $G_{Y_\lambda, K_\lambda}^{-1}(\spinct)$ by $(\xi_l)_{l \in \Z}$, where $\xi_{l+1} = \xi_l + \PD[\lambda]$. Let $\spincs_l = G_{Y,K}(\xi_l)$ and $s_l = \AlNorm_{Y,K}(\xi_l)$. Then $\spincs_{l+1} = \spincs_l + \PD[K]$ (so that the sequence $(\spincs_l)_{l \in \Z}$ repeats with period $d$), while $s_{l+1} = s_l + \frac{k}{d}$. We pin down the indexing by the conventions
\begin{gather}
\label{eq: xil-bound-pos}
\frac{(2l-1)k}{2d}  < \AlNorm_{Y,K}(\xi_l) \le \frac{(2l+1)k}{2d} \qquad \text{if } k>0, \\
\label{eq: xil-bound-neg}
\frac{(2l+1)k}{2d} \le  \AlNorm_{Y,K}(\xi_l) < \frac{(2l-1)k}{2d}   \qquad \text{if } k<0.
\end{gather}
Moreover, it is easy to see that
\begin{equation}\label{eq: A(xi-l)}
\AlNorm_{Y_\lambda, K_\lambda}(\xi_l)  = \frac{2d \AlNorm_{Y, K}(\xi_l) + k-d}{2k} = \frac{2ds_l+k-d}{2k}.
\end{equation}

For each $l \in \Z$, let $A^\infty_{\xi_l}$ and $B^\infty_{\xi_l}$ each denote a copy of $\CFKi(Y,K,\spincs_l)$. Define a pair of filtrations $\II_\spinct$ and $\JJ_\spinct$ and an absolute grading $\gr_\spinct$ on these complexes as follows:
\begin{align}
\intertext{For $[\x,i,j] \in A^\infty_{\xi_l}$,}
\label{eq: It-def-A} \II_\spinct([\x,i,j]) &= \max\{i,j-s_l\} \\
\label{eq: Jt-def-A} \JJ_\spinct([\x,i,j]) &= \max\{i-1,j-s_l\} + \frac{2ds_l+k-d}{2k} \\
\label{eq: grt-def-A} \gr_\spinct([\x,i,j]) &= \absgr([\x,i,j]) + \frac{(2ds_l - k)^2 }{4dk} + \frac{2-3\sign(k)}{4} \\
\intertext{For $[\x,i,j] \in B^\infty_{\xi_l}$,}
\label{eq: It-def-B} \II_\spinct([\x,i,j]) &= i \\
\label{eq: Jt-def-B} \JJ_\spinct([\x,i,j]) &= i-1 + \frac{2ds_l+k-d}{2k} \\
\label{eq: grt-def-B} \gr_\spinct([\x,i,j]) &= \absgr([\x,i,j]) + \frac{(2ds_l - k)^2 }{4dk} + \frac{-2-3\sign(k)}{4}
\end{align}
The values of $\II_\spinct$ are integers, while the values of $\JJ_\spinct$ live in the coset $\AlNorm_{Y_\lambda,K_\lambda}(\spinct)$. Let $A^-_{\xi_l}$ (resp.~$B^-_{\xi_l}$) denote the subcomplex of $A^\infty_{\xi_l}$ (resp.~$B^\infty_{\xi_l}$) generated by elements with $\II<0$, and let $A^+_{\xi_l}$ (resp.~$B^+_{\xi_l}$) denote the quotient by this subcomplex; these agree with the definitions from \cite{OSzRational}.

Let $v^\infty_{\xi_l} \co A^\infty_{\xi_l} \to B^\infty_{\xi_l}$ denote the identity map, and let $h^\infty_{\xi_l} \co A^\infty_{\xi_l} \to B^\infty_{\xi_{l+1}}$ denote the ``flip map'' $\Psi^\infty_{\xi_l}$ described above. Both $v^\infty_{\xi_l}$ and $h^\infty_{\xi_l}$ are filtered with respect to both $\II_\spinct$ and $\JJ_\spinct$ and homogeneous of degree $-1$ with respect to $\gr_\spinct$; this is obvious for $v^\infty_{\xi_l}$, and for $h^\infty_{\xi_l}$ it is Lemma \ref{lemma: h-filt} below. It is simple to check that $v^\infty_{\xi_l}$ and $h^\infty_{\xi_l}$ are homogeneous of degree $-1$ with respect to $\gr_\spinct$.

%

If $k>0$, then for any integers $a \le b$, define a map
\begin{equation}\label{eq: D-infty-k-pos}
D^\infty_{\lambda,\spinct,a,b} \co \bigoplus_{l=a}^b A^\infty_{\xi_l} \to \bigoplus_{l=a+1}^b B^\infty_{\xi_l}
\end{equation}
which is the sum of all the terms $v^\infty_{\xi_l}$ (for $l=a+1, \dots, b$) and $h^\infty_{\xi_l}$ (for $l=a, \dots, b-1$). If $k<0$, we likewise define
\begin{equation}\label{eq: D-infty-k-neg}
D^\infty_{\lambda,\spinct,a,b} \co \bigoplus_{l=a}^b A^\infty_{\xi_l} \to \bigoplus_{l=a}^{b+1} B^\infty_{\xi_l}
\end{equation}
to be the sum of all terms $v^\infty_{\xi_l}$ and $h^\infty_{\xi_l}$ for $l=a, \dots, b$. In either case, $D^\infty_{\lambda,\spinct,a,b}$ is a doubly-filtered chain map. Let $X^\infty_{\lambda, \spinct, a,b}$ denote the mapping cone of $D^\infty_{\lambda,\spinct,a,b}$, which inherits the structure of a doubly-filtered chain complex that is finitely generated over $\F[U,U^{-1}]$.
We will see below (Lemma \ref{lemma: Xab-indep-ab}) that for all $a$ sufficiently negative and all $b$ sufficiently positive, the doubly-filtered chain homotopy type of $X^\infty_{\lambda, \spinct, a, b}$ is independent of $a$ and $b$.

We are now  able to state the main theorem:

\begin{theorem} \label{thm: mapping-cone}
Let $K$ be a rationally null-homologous knot in a $3$-manifold $Y$, let $\lambda$ be a nonzero framing on $K$, and let $\spinct$ be any torsion spin$^c$ structure on $Y_\lambda(K)$. Then for all $a \ll 0$ and $b \gg 0$, the chain complex $X^\infty_{\lambda,\spinct,a,b}$, equipped with the filtrations $\II_\spinct$ and $\JJ_\spinct$, is doubly-filtered chain homotopy equivalent to $\CFKi(Y_\lambda, K_\lambda, \spinct)$.
\end{theorem}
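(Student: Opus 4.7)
The strategy is to strengthen the original Ozsv\'ath--Szab\'o proof of the surgery formula, which already provides a chain homotopy equivalence between $X^\infty_{\lambda,\spinct,a,b}$ and $\CFi(Y_\lambda,\spinct)$ compatible with $\II_\spinct$. The new content is to show that, under this equivalence, the second filtration $\JJ_\spinct$ corresponds to the Alexander filtration induced by the dual knot $K_\lambda$. Accordingly I would begin by choosing a multi-pointed Heegaard diagram for $(Y, K)$ that extends, after placing the framing curve $\lambda$ as an additional $\beta$-curve, to a Heegaard diagram for $(Y_\lambda, K_\lambda)$, so that generators of the mapping cone can be identified with generators of the surgered diagram, and the basepoints for $K_\lambda$ can be placed explicitly relative to those for $K$.

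The heart of the proof is a combinatorial formula for the Alexander grading (the result teased in the abstract). Starting from \eqref{eq: spinc-alex} applied to $(Y_\lambda, K_\lambda)$ and using the canonical identification $\ul\Spin^c(Y,K) = \ul\Spin^c(Y_\lambda, K_\lambda)$, I would express $\AlNorm_{Y_\lambda, K_\lambda}(\xi)$ in terms of data intrinsic to the $(Y, K)$-diagram. The key identity is \eqref{eq: A(xi-l)}, and at the chain level it produces precisely \eqref{eq: Jt-def-A} and \eqref{eq: Jt-def-B}. Generators of $B^\infty_{\xi_l}$ contribute according to the $i$-filtration plus a rational shift, while generators of $A^\infty_{\xi_l}$ contribute according to $\max\{i-1, j-s_l\}$; the maximum reflects the fact that a rational Seifert surface for $K_\lambda$, read off from the surgered diagram, crosses the region separating the two basepoints in a way that depends on the generator. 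The grading shifts in \eqref{eq: grt-def-A}--\eqref{eq: grt-def-B} come from the standard Chern-class accounting together with the grading change formula under the $2$-handle cobordism associated to the surgery.

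Once the combinatorial formula is in hand, the remaining task is to verify that the Ozsv\'ath--Szab\'o homotopy equivalence is doubly filtered. The map $v^\infty_{\xi_l}$ is tautologically so; the flip map $h^\infty_{\xi_l}$ requires Lemma \ref{lemma: h-filt}, whose proof again invokes the combinatorial formula and the relation between the relative spin$^c$ structures $\xi_l$ and $\xi_l+\PD[\lambda]$. The triangle and square maps in the chain homotopy equivalence of \cite{OSzRational} preserve $\II_\spinct$ automatically; to show that they preserve $\JJ_\spinct$, one tracks, for each holomorphic polygon contributing to the map, its effect on a fixed rational Seifert surface for $K_\lambda$, using the combinatorial formula. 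Together with Lemma \ref{lemma: Xab-indep-ab} stabilizing the truncation in $a,b$, this upgrades the Ozsv\'ath--Szab\'o equivalence to the doubly-filtered setting.

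The main obstacle will be the combinatorial Alexander formula itself---specifically, the correct bookkeeping for rational (as opposed to integral) Seifert surfaces under the surgery---together with the verification that the flip map carries the claimed shift in $\JJ_\spinct$, since this shift mixes the Alexander grading of $K$, the framing parameter $k$, and the order $d$ in a nontrivial way, and its correctness is what pins down the doubly-filtered chain homotopy equivalence on the nose.
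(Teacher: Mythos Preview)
Your high-level strategy---follow the Ozsv\'ath--Szab\'o argument and check that everything respects the second filtration---is indeed the paper's strategy, and the combinatorial Alexander grading formula plays exactly the role you describe. However, there is a genuine gap in your plan: you assume that the triangle and rectangle maps in the exact-triangle proof can be shown to preserve $\JJ_\spinct$ by direct tracking of basepoint multiplicities. In fact they do \emph{not} preserve $\JJ_\spinct$ on $\CFp$ or $\CFi$. The paper states this explicitly: the filtration shift of each summand $f^+_{j,\spincv}$ or $h^+_{j,\spincv}$ contains a term that is a step function of the $c_1$-evaluation of $\spincv$, and for spin$^c$ structures with large evaluations this term has the wrong sign.

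The paper's fix, which is the technical core of the argument and which your proposal does not anticipate, has two parts. First, one passes to the vertically truncated complexes $\CF^t$. On $\CF^t$, Maslov grading bounds (obtained via the grading analysis in Proposition~\ref{prop: maslov-bound}) force most of the offending spin$^c$ summands of the triangle maps to vanish when $m$ is large relative to $t$; this is the content of Lemmas~\ref{lemma: f0-trunc}, \ref{lemma: f1-trunc}, and \ref{lemma: f2-trunc}. Second, even this is not enough for the rectangle maps: one must replace $h^t_j$ by a ``spin$^c$-truncated'' version $\tilde h^t_j$ that simply discards the summands for bad spin$^c$ structures, and then separately verify that $\tilde h^t_j$ is still a null-homotopy and that the resulting composite is still a filtered quasi-isomorphism (this requires a parallel truncation of the pentagon maps). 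Only then does the filtered mapping-cone detection lemma apply. Finally, one needs the homological algebra of Section~\ref{ssec: algebra}---specifically Lemma~\ref{lemma: filt-QI}---to deduce the filtered quasi-isomorphism on $\CFm$ and $\CFi$ from the collection of filtered quasi-isomorphisms on all $\CF^t$. None of this machinery is visible in your outline, and without it the argument does not go through.
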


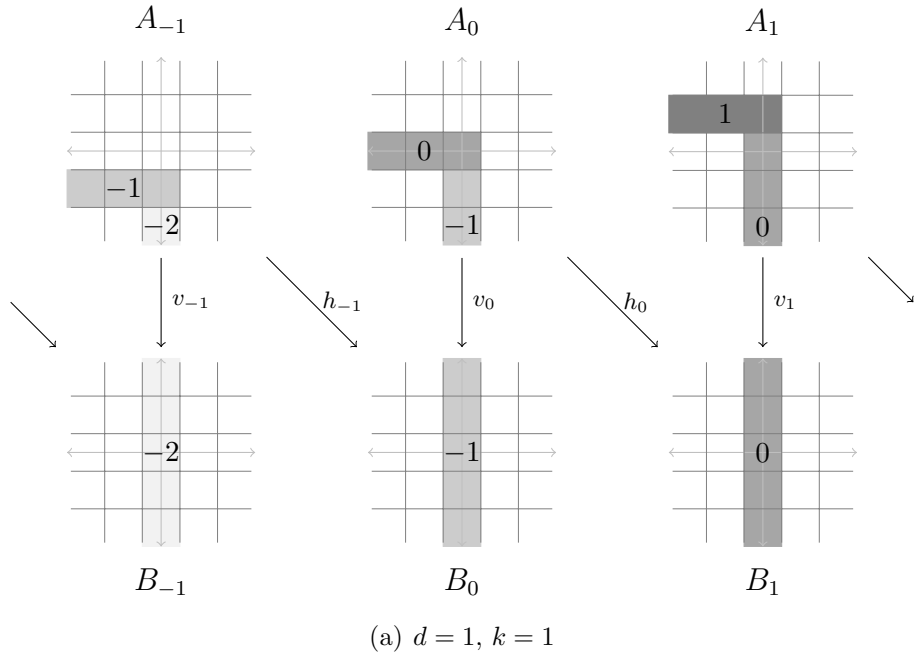
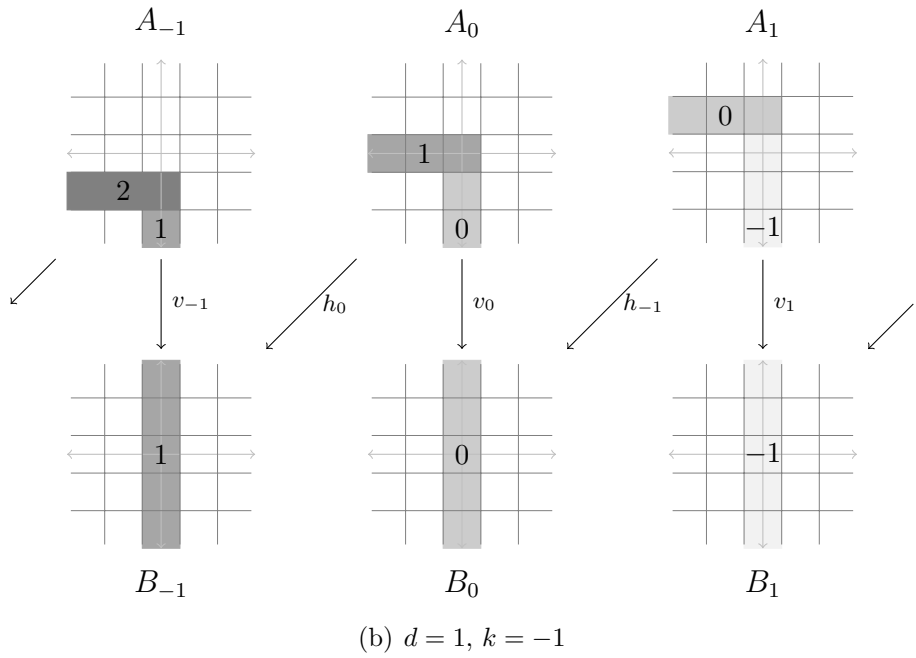
\begin{figure}
\subfigure[][$d=1$, $k=1$]{
\begin{tikzpicture}
\node[label=north:$A_{-1}$](A-1) at (-4,0) {
\begin{tikzpicture}[scale=0.5]
\filldraw[black!5!white] (0, -2) rectangle (1, -1);
\filldraw[black!20!white] (-2, -1) rectangle (1, 0);
	\begin{scope}[thin, black!25!white]
		\draw [<->] (-2, 0.5) -- (3, 0.5);
		\draw [<->] (0.5, -2) -- (0.5, 3);
	\end{scope}
	\draw[step=1, black!50!white, very thin] (-1.9, -1.9) grid (2.9, 2.9);
\node[] at (-0.5,-0.5) {\small $-1$};
\node[] at (0.5,-1.5) {\small $-2$};
\end{tikzpicture}};

\node[label=south:$B_{-1}$](B-1) at (-4,-4) {
\begin{tikzpicture}[scale=0.5]
\filldraw[black!5!white] (0, -2) rectangle (1, 3);
	\begin{scope}[thin, black!25!white]
		\draw [<->] (-2, 0.5) -- (3, 0.5);
		\draw [<->] (0.5, -2) -- (0.5, 3);
	\end{scope}
	\draw[step=1, black!50!white, very thin] (-1.9, -1.9) grid (2.9, 2.9);
\node[] at (0.5,0.5) {\small $-2$};
\end{tikzpicture}};

\node[label=north:$A_0$](A0) at (0,0) {
\begin{tikzpicture}[scale=0.5]
\filldraw[black!20!white] (0, -2) rectangle (1, 0);
\filldraw[black!35!white] (-2, 0) rectangle (1, 1);
	\begin{scope}[thin, black!25!white]
		\draw [<->] (-2, 0.5) -- (3, 0.5);
		\draw [<->] (0.5, -2) -- (0.5, 3);
	\end{scope}
	\draw[step=1, black!50!white, very thin] (-1.9, -1.9) grid (2.9, 2.9);
\node[] at (-0.5,0.5) {\small $0$};
\node[] at (0.5,-1.5) {\small $-1$};
\end{tikzpicture}};

\node[label=south:$B_0$](B0) at (0,-4) {
\begin{tikzpicture}[scale=0.5]
\filldraw[black!20!white] (0, -2) rectangle (1, 3);
	\begin{scope}[thin, black!25!white]
		\draw [<->] (-2, 0.5) -- (3, 0.5);
		\draw [<->] (0.5, -2) -- (0.5, 3);
	\end{scope}
	\draw[step=1, black!50!white, very thin] (-1.9, -1.9) grid (2.9, 2.9);
\node[] at (0.5,0.5) {\small $-1$};
\end{tikzpicture}};

\node[label=north:$A_1$](A1) at (4,0) {
\begin{tikzpicture}[scale=0.5]
\filldraw[black!35!white] (0, -2) rectangle (1, 1);
\filldraw[black!50!white] (-2, 1) rectangle (1, 2);
	\begin{scope}[thin, black!25!white]
		\draw [<->] (-2, 0.5) -- (3, 0.5);
		\draw [<->] (0.5, -2) -- (0.5, 3);
	\end{scope}
	\draw[step=1, black!50!white, very thin] (-1.9, -1.9) grid (2.9, 2.9);
\node[] at (-0.5,1.5) {\small $1$};
\node[] at (0.5,-1.5) {\small $0$};
\end{tikzpicture}};

\node[label=south:$B_1$](B1) at (4,-4) {
\begin{tikzpicture}[scale=0.5]
\filldraw[black!35!white] (0, -2) rectangle (1, 3);
	\begin{scope}[thin, black!25!white]
		\draw [<->] (-2, 0.5) -- (3, 0.5);
		\draw [<->] (0.5, -2) -- (0.5, 3);
	\end{scope}
	\draw[step=1, black!50!white, very thin] (-1.9, -1.9) grid (2.9, 2.9);
\node[] at (0.5,0.5) {\small $0$};
\end{tikzpicture}};

\draw [->] (-6,-2) -- (B-1);
\draw [->] (A-1) -- (B-1) node[midway,anchor=west] {\scriptsize $v_{-1}$};
\draw [->] (A-1) -- (B0) node[midway,anchor=west] {\scriptsize $h_{-1}$};
\draw [->] (A0) -- (B0) node[midway,anchor=west] {\scriptsize $v_{0}$};
\draw [->] (A0) -- (B1) node[midway,anchor=west] {\scriptsize $h_{0}$};
\draw [->] (A1) -- (B1) node[midway,anchor=west] {\scriptsize $v_{1}$};
\draw [->] (A1) -- (6,-2);
\end{tikzpicture}  \label{sfig: mapping-cone-ex-pos}
}

\subfigure[][$d=1$, $k=-1$]{
\begin{tikzpicture}
\node[label=north:$A_{-1}$](A-1) at (-4,0) {
\begin{tikzpicture}[scale=0.5]
\filldraw[black!35!white] (0, -2) rectangle (1, -1);
\filldraw[black!50!white] (-2, -1) rectangle (1, 0);
	\begin{scope}[thin, black!25!white]
		\draw [<->] (-2, 0.5) -- (3, 0.5);
		\draw [<->] (0.5, -2) -- (0.5, 3);
	\end{scope}
	\draw[step=1, black!50!white, very thin] (-1.9, -1.9) grid (2.9, 2.9);
\node[] at (-0.5,-0.5) {\small $2$};
\node[] at (0.5,-1.5) {\small $1$};
\end{tikzpicture}};

\node[label=south:$B_{-1}$](B-1) at (-4,-4) {
\begin{tikzpicture}[scale=0.5]
\filldraw[black!35!white] (0, -2) rectangle (1, 3);
	\begin{scope}[thin, black!25!white]
		\draw [<->] (-2, 0.5) -- (3, 0.5);
		\draw [<->] (0.5, -2) -- (0.5, 3);
	\end{scope}
	\draw[step=1, black!50!white, very thin] (-1.9, -1.9) grid (2.9, 2.9);
\node[] at (0.5,0.5) {\small $1$};
\end{tikzpicture}};

\node[label=north:$A_0$](A0) at (0,0) {
\begin{tikzpicture}[scale=0.5]
\filldraw[black!20!white] (0, -2) rectangle (1, 0);
\filldraw[black!35!white] (-2, 0) rectangle (1, 1);
	\begin{scope}[thin, black!25!white]
		\draw [<->] (-2, 0.5) -- (3, 0.5);
		\draw [<->] (0.5, -2) -- (0.5, 3);
	\end{scope}
	\draw[step=1, black!50!white, very thin] (-1.9, -1.9) grid (2.9, 2.9);
\node[] at (-0.5,0.5) {\small $1$};
\node[] at (0.5,-1.5) {\small $0$};
\end{tikzpicture}};

\node[label=south:$B_0$](B0) at (0,-4) {
\begin{tikzpicture}[scale=0.5]
\filldraw[black!20!white] (0, -2) rectangle (1, 3);
	\begin{scope}[thin, black!25!white]
		\draw [<->] (-2, 0.5) -- (3, 0.5);
		\draw [<->] (0.5, -2) -- (0.5, 3);
	\end{scope}
	\draw[step=1, black!50!white, very thin] (-1.9, -1.9) grid (2.9, 2.9);
\node[] at (0.5,0.5) {\small $0$};
\end{tikzpicture}};

\node[label=north:$A_1$](A1) at (4,0) {
\begin{tikzpicture}[scale=0.5]
\filldraw[black!5!white] (0, -2) rectangle (1, 1);
\filldraw[black!20!white] (-2, 1) rectangle (1, 2);
	\begin{scope}[thin, black!25!white]
		\draw [<->] (-2, 0.5) -- (3, 0.5);
		\draw [<->] (0.5, -2) -- (0.5, 3);
	\end{scope}
	\draw[step=1, black!50!white, very thin] (-1.9, -1.9) grid (2.9, 2.9);
\node[] at (-0.5,1.5) {\small $0$};
\node[] at (0.5,-1.5) {\small $-1$};
\end{tikzpicture}};

\node[label=south:$B_1$](B1) at (4,-4) {
\begin{tikzpicture}[scale=0.5]
\filldraw[black!5!white] (0, -2) rectangle (1, 3);
	\begin{scope}[thin, black!25!white]
		\draw [<->] (-2, 0.5) -- (3, 0.5);
		\draw [<->] (0.5, -2) -- (0.5, 3);
	\end{scope}
	\draw[step=1, black!50!white, very thin] (-1.9, -1.9) grid (2.9, 2.9);
\node[] at (0.5,0.5) {\small $-1$};
\end{tikzpicture}};

\draw [->] (A-1) -- (-6,-2);
\draw [->] (A-1) -- (B-1) node[midway,anchor=west] {\scriptsize $v_{-1}$};
\draw [->] (A0) -- (B-1) node[midway,anchor=west] {\scriptsize $h_{0}$};
\draw [->] (A0) -- (B0) node[midway,anchor=west] {\scriptsize $v_{0}$};
\draw [->] (A1) -- (B0) node[midway,anchor=west] {\scriptsize $h_{-1}$};
\draw [->] (A1) -- (B1) node[midway,anchor=west] {\scriptsize $v_{1}$};
\draw [->] (6,-2) -- (B1);
\end{tikzpicture} \label{sfig: mapping-cone-ex-neg}
}

\caption{A portion of the filtered mapping cone for $d=1$ and $k=\pm 1$. The shaded regions represent the portion of each complex $A_s^\infty$ or $B_s^\infty$ with $\II_\spinct=0$, or in other words the subquotients $A_s^0$ and $B_s^0$. The level of shading indicates the $\JJ_\spinct$ filtration. The values of both $\II_\spinct$ and $\JJ_\spinct$ on the remainder of each complex are determined by the fact that multiplication by $U$ (i.e. translation down and to the left) decreases both filtrations by $1$.  Taking the homology of the mapping cone of the (two) maps relating shaded regions with a fixed integer label  yields $\HFKa(Y_{\pm1}(K), K_{\pm1})$ in the Alexander grading corresponding to the label.}
\label{fig: mapping-cone-ex}

\end{figure}

\begin{example}
If the knot $K$ is null-homologous (i.e. $d=1$), the Alexander grading for $(Y,K)$ is integer-valued, so the values of $s_l$ are integers, and the spin$^c$ structures $\spincs_l$ are all the same. In particular, when $k=\pm 1$, the bounds \eqref{eq: xil-bound-pos} and \eqref{eq: xil-bound-neg} imply that $s_l = \pm l$. A portion of the mapping cone complex in the $k = \pm 1 $ cases is shown in Figure \ref{fig: mapping-cone-ex}, where we index the $A$ and $B$ complexes by the integers $s_l$, as in \cite{OSzSurgery}.
\end{example}

The proof of Theorem \ref{thm: mapping-cone} follows the same template as Ozsv\'ath and Szab\'o's original proof in \cite{OSzSurgery, OSzRational}, with some modifications. Specifically, we examine the construction of an exact triangle relating the Heegaard Floer homologies of $Y$, $Y_\lambda(K)$, and $Y_{\lambda+m\mu}(K)$ where $m$ is large. The main new ingredient is the behavior of the maps in the exact triangle with respect to the Alexander gradings, which turns out to be quite subtle. Specifically, we must show that the chain maps and chain homotopies used in the exact triangle detection lemma are filtered with respect to the relevant Alexander filtrations. This turns out to be true only for the subquotients of the Heegaard Floer complexes consisting of generators with bounded powers of $U$, and it requires modification of the construction of the maps by considering only certain spin$^c$ structures on the various cobordisms involved.

In an unpublished preprint from 2006 \cite{EftekharyMorse},  Eftekhary described a similar mapping cone formula for $\CFKi(Y_\lambda(K), K_\lambda)$. Although there are certain technical problems with that formula, primarily related to the behavior of the flip maps $\Psi^\infty_\xi$ and the filtration issues discussed in the previous paragraph, the overarching ideas are similar. Moreover, the ``hat'' version of our formula (i.e., the associated graded complex, which computes $\HFKa(Y_\lambda(K), K_\lambda)$) coincides with Eftekhary's description in \cite{EftekharyIncompressible}; see Corollary \ref{cor: mapping-cone-hat} below.

A key technical tool which allows us to get a handle on the grading subtleties is a formula for the rational Alexander grading of knot Floer homology generators in terms of data on the Heegaard diagram. This formula is analogous to Ozsv\'ath and Szab\'o's formula for the evaluation of the Chern class of a spin$^c$ structure associated to a Floer complex generator on  the homology class of a periodic domain.  We expect this formula to be a useful addition to the Heegaard Floer tool-box, independent of the present paper. (For instance, it was recently used by Raoux \cite{RaouxTau}). For that reason, we take the time to state it here.  Recall that a \emph{relative periodic domain} on a doubly-pointed Heegaard diagram is a linear combination of its regions whose boundary consists of multiples of the $\alpha$ and $\beta$ curves and a longitude for the knot, drawn as a union of arcs connecting the basepoints. (See \cite[Definition 2.1]{HeddenPlamenevskayaRational}).

\begin{proposition} \label{prop: abs-alex} Let $(\Sigma, \alpha,\beta,w,z)$ be a doubly-pointed Heegaard diagram for a knot $(Y,K)$  representing a class in $H_1(Y)$ of order $d$, and let $P$ be a relative periodic domain specifying a homology class $[P]\in H_2(Y,K)$.  Then
the absolute Alexander grading of a generator $\x \in \T_\alpha \cap \T_\beta$, taken with respect to $[P]$, is given by
\begin{align}
\label{eq: abs-alex} \AlNorm_{w,z}(\x) &= \frac{1}{2d} \left( \hat\chi(P) + 2n_\x(P) - n_{z}(P) - n_{w}(P) \right)
\end{align}
Here $\hat\chi(P)$ denotes the Euler measure of $P$, and $n_\x(P)$ denotes the sum, taken over all $x_i \in \x$, of the average of the four local multiplicities of $P$ in the regions abutting $x_i$. Finally, $n_{w}(P)$ (resp.~$n_{z}(P)$) denotes the average of the multiplicities of $P$ on either side of the longitude at $w$ (resp.~$z$).

\end{proposition}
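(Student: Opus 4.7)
The plan is to derive the formula by combining the defining relation \eqref{eq: spinc-alex} for the Alexander grading with a Heegaard-diagrammatic computation of $\langle c_1(\xi_{w,z}(\x)), [P]\rangle$. This generalizes the Ozsv\'ath--Szab\'o formula $\langle c_1(\mathfrak{s}_w(\x)), [\mathcal{P}]\rangle = \hat\chi(\mathcal{P}) + 2n_\x(\mathcal{P}) - 2n_w(\mathcal{P})$ for closed periodic domains $\mathcal{P}$, modified to account for the boundary of the relative periodic domain on the torus $\partial(\nbd(K))$.

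First, I would verify that a relative periodic domain $P$ on $(\Sigma, \alpha, \beta, w, z)$ determines a class $[P] \in H_2(Y,K)$ with $[\mu] \cdot [P]$ equal to the net algebraic multiplicity of the longitude arcs in $\partial P$ at the basepoints; the $\alpha$- and $\beta$-boundary components cap off with cores of compressing disks in the two handlebodies, while the arcs between $w$ and $z$ glue to copies of the longitude on $\partial(\nbd(K))$ determined by the basepoints. By rescaling, one may assume $[P] = [F]$ for a rational Seifert surface $F$, so that $[\mu]\cdot [P] = d$.

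Next, I would adapt the original Ozsv\'ath--Szab\'o argument, which represents $\xi_{w,z}(\x)$ by a Morse-theoretic vector field $v$ on $Y\setminus \nbd(K)$ and computes $c_1$ as the obstruction to extending a canonical section of $v^\perp$ over a surface representative of $[P]$ built from $P$. The interior contributions of the Heegaard surface, the generator, and the basepoints aggregate as in the closed case, but the boundary on $\partial(\nbd(K))$ contributes extra terms: the asymmetry at $z$ versus $w$ replaces the closed-case contribution $-2n_w(\mathcal{P})$ by $-n_z(P) - n_w(P)$, and the longitude loops on the boundary torus contribute $-d = -[\mu]\cdot[P]$ when paired against the fixed nowhere-vanishing reference vector field tangent to the torus. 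This yields
\[
\langle c_1(\xi_{w,z}(\x)), [P]\rangle = \hat\chi(P) + 2n_\x(P) - n_z(P) - n_w(P) - d.
\]

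Substituting into \eqref{eq: spinc-alex}, using that $[F] = [P]$ and $[\mu]\cdot[F] = d$, gives
\[
A_{Y,K}(\x) = \frac{\langle c_1(\xi_{w,z}(\x)), [P]\rangle + d}{2d} = \frac{\hat\chi(P) + 2n_\x(P) - n_z(P) - n_w(P)}{2d},
\]
as claimed. The main obstacle is the bookkeeping in the Chern class calculation: carefully comparing the closed periodic domain case with the relative case along the longitudinal boundary on $\partial(\nbd(K))$, and correctly deriving the asymmetric basepoint contributions from the behavior of $v^\perp$ in neighborhoods of $w$ and $z$ where the longitude crosses the Heegaard surface. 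A sanity check is provided by the null-homologous case $d=1$, where the formula recovers the standard Ozsv\'ath--Szab\'o computation of the integral Alexander grading.
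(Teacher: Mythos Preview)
Your approach is the direct topological one that the paper explicitly acknowledges as possible but chooses not to pursue; the paper says ``it is possible to give an explicit topological proof \ldots\ along the lines of the first Chern class formula \ldots\ taking into account both basepoints,'' and then opts for an indirect argument instead. The paper defines $A'_{w,z}(\x)$ to be the right-hand side of the claimed formula and shows that $A'_{w,z}$ satisfies two properties that uniquely pin down $A_{w,z}$: the relative grading formula $A'_{w,z}(\x) - A'_{w,z}(\y) = -[\ul\epsilon(\x,\y)]\cdot[F]/d$ (checked via $n_\y(P) - n_\x(P) = [\ul\epsilon(\x,\y)]\cdot[P]$), and the conjugation symmetry $\HFKa(Y,K, A'_{w,z}=r) \cong \HFKa(Y,K, A'_{w,z}=-r)$ (checked by first verifying that $A'_{w,z}$ is independent of the choices of $P$, of $\lambda$, and of Heegaard moves, and then using that $(\Sigma,\bm\alpha,\bm\beta,w,z)$ and $(-\Sigma,\bm\beta,\bm\alpha,z,w)$ present the same oriented knot while the basepoint swap negates the formula). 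Your route would instead establish the relative Chern class identity $\langle c_1(\ul\spincs_{w,z}(\x)),[P]\rangle = \hat\chi(P) + 2n_\x(P) - n_z(P) - n_w(P) - d$ directly and substitute into the definition. The trade-off: your approach is conceptually sharper and yields the Chern class evaluation itself as a byproduct, but the obstruction-theoretic bookkeeping along $\partial(\nbd(K))$ --- which you outline but do not actually execute --- is exactly the technical work the paper sidesteps; the paper's approach trades that single hard computation for several softer invariance checks and a symmetry argument.
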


\subsection{Future directions}

Before turning to the proof of Theorem \ref{thm: mapping-cone}, we discuss a few potential applications and directions for future investigation.

Our formula should  be useful for computing the Heegaard Floer homology of a splice of knot complements in terms of their knot Floer homology. Indeed, let $K$ and $K'$ be knots in $S^3$, and let $M$ be the manifold obtained by gluing the exteriors of $K$ and $K'$, where the gluing identifies the meridian of $K$ (resp.~$K'$) with a longitude $\lambda'$ (resp.~$\lambda$) of $K'$ (resp.~$K$). Then $M$ can be viewed as Dehn surgery on the knot $K_\lambda \conn K' \subset S^3_\lambda(K)$. Thus, we can determine the Heegaard Floer homology of $M$ (and, better yet, the knot Floer complex of a certain knot in $M$) as follows: use Theorem \ref{thm: mapping-cone} to determine $\CFKi(Y_\lambda(K), K_\lambda)$, take the tensor product with $\CFKi(S^3,K')$ to obtain $\CFKi(Y_\lambda(K), K_\lambda \conn K')$, and then use the surgery formula again to determine the Heegaard Floer homology of $M$. The only difficulty is that for the second application, we need to understand the flip map on $\CFKi(Y_\lambda(K), K_\lambda \conn K')$. This can always be done explicitly if $Y_\lambda(K)$ is an L-space; see Lemma \ref{lemma: Psi-Lspace} below. The general case would be tractable if we could compute the flip map on $\CFKi(Y_\lambda(K), K_\lambda)$ in terms of the mapping cone formula, but at present we do not know of such a description.

%
%

In another direction, the knot Floer homology of fibered  knots carries geometric information about their associated contact structures.  As mentioned above, this idea was used in \cite{HeddenPlamenevskayaRational} in conjunction with the ``large surgery'' version of our formula to give conditions for surgeries on a fibered knot to admit tight contact structures.  The present work allows us to extend the scope of these results.  In particular, the formula for the knot Floer homology of the dual knot to $\pm 1$ surgery on a fibered knot significantly extends the potential scope of applications to contact geometry.  This is because the dual knot to  $-1$ (resp. $+1$) surgery on a fibered knot $K$ is a fibered knot whose monodromy differs from that of $K$ by a right-handed (resp. left-handed) boundary Dehn twist.  As an application,  coupling our formula with the strong detection by knot Floer homology of the identity mapping class (\cite[Theorem 4]{HeddenWatsonGeography}) should allow us to prove that knot Floer homology determines whether a knot is fibered with monodromy consisting of a boundary Dehn twist; we plan to return to this question in a future paper.

In the same vein, our formula will allow us to derive conditions on the knot Floer homology of a fibered knot which determine whether adding a left-handed (respectively right-handed) Dehn twist along the boundary will kill the contact invariant (respectively have non-trivial contact invariant).  This understanding, in turn, should lead to restrictions on the fractional Dehn twist coefficient of the monodromy of a fibered knot in terms of its Floer homology and its flip map. To the latter end, it would be quite useful to have a formula for the Floer homology of the dual knot to a rational surgery. This would allow for a determination of the integral part of the fractional Dehn twist in terms of knot Floer homology. (In Section \ref{sec: rational-surgery}, we describe $\CFKi$ of the knot in a rational surgery obtained from the meridian of the surgery curve; however, when the surgery slope is not integral, the meridian is not isotopic to the core of the surgery solid torus.)

In another direction, it may be possible to generalize our formula to a much broader situation. Let $L \subset Y$ be a link of $n$ components with a framing $\Lambda$. Manolescu and Ozsv\'ath \cite{ManolescuOzsvathLink} give a formula for determining the Heegaard Floer homology of any surgery $Y_\Lambda(L)$ on $L$ in terms of the link Floer complex $\CFLi(Y,L)$. If $K$ is any knot in $Y \minus L$, and $K^*$ is the induced knot in $Y_\Lambda(L)$, it might be possible to obtain a similar formula for $\CFKi(Y_\Lambda(L), K^*)$ in terms of $\CFLi(Y, L \cup K)$, by tracing through the Manolescu--Ozsv\'ath's proof while keeping track of an additional Alexander grading corresponding to $K$, along the lines of our argument below. Carrying out this proof seems like a duanting task, given the technical issues involved in our substantially simpler situation.   If one could, then one could likely recover Theorem \ref{thm: mapping-cone} by applying this more general result along with following two observations. First, for any knot $K \subset Y$ with framing $\lambda$ and meridian $\mu$, the knot $K_\lambda \subset Y_\lambda$ is isotopic to $\mu^*$ (in the terminology of this section). Second, the link Floer complex $\CFLi(Y, K \cup \mu)$ can be determined from $\CFKi(Y,K)$ since $K \cup \mu$ is the connected sum of $L$ with a Hopf link. This would then lead to a description of $\CFKi(Y_\lambda, K_\lambda)$ in terms of $\CFKi(Y,K)$, which presumably would agree with Theorem \ref{thm: mapping-cone}.

More abstractly, our main theorem can be viewed as a stand-in for the infinity or minus version of the bordered Floer homology of a knot complement in a general $3$-manifold.  More precisely, the bordered invariant of a manifold with torus boundary will admit a splitting with respect to  idempotents corresponding to a basis for its first homology.  The basis can be taken as a meridian and longitude for a knot in any $3$-manifold obtained by Dehn filling.  In these terms, our formula allows us to compute the invariant gotten by projection to one of the idempotents in terms of the invariant gotten by projection to the other.    In principle, higher structure maps will be necessary to understand the full $A_\infty$ module associated to a knot complement by any minus version of bordered Floer homology, but in practice it should be feasible to work solely with our formula. For many applications this should prove easier.

\subsection*{Organization}
In Section \ref{sec: preliminaries}, we collect various preliminary results, many of which can be described as ``Heegaard Folkloer,'' and prove Proposition \ref{prop: abs-alex}. In Section \ref{sec: mapping-cone}, we provide more details about the mapping cone formula, outline the proof, and discuss an example. In Section \ref{sec: alex-surgery}, we study the behavior of the Alexander grading under $2$-handle cobordisms. In Section \ref{sec: large}, we look at the Heegaard quadruple diagrams relating $Y$, $Y_\lambda(K)$, and $Y_{\lambda+m\mu}(K)$, and give a formula for $\CFKi$ of the dual knot in large surgery. The most technical part of the paper is Section \ref{sec: exact-sequence}, where we go through the construction of the surgery exact sequence relating $Y$, $Y_\lambda(K)$, and $Y_{\lambda+m\mu}(K)$ and study the behavior of each map with respect to the Alexander gradings. In Section \ref{sec: proof}, we assemble the pieces to prove Theorem \ref{thm: mapping-cone}. Finally, in Section \ref{sec: rational-surgery}, we discuss rational surgeries.

\subsection*{Acknowledgments} We are deeply grateful for helpful discussions (over the course of many years) with Peter Ozsv\'ath, Zolt\'an Szab\'o, Eaman Eftekhary, Jen Hom,  Tye Lidman, Tom Mark, Olga Plamenevskaya, and Andr\'as Stipsicz.

\section{Preliminaries} \label{sec: preliminaries}

\subsection{Homological algebra} \label{ssec: algebra}

We begin by stating a few basic facts about filtered chain complexes that will be useful later on.

\begin{definition} \label{def: filt}
Let $S$ be a partially ordered set. An \emph{$S$-filtered chain complex} is a chain complex $C$ (over any ring) equipped with an exhausting family of subcomplexes $\{\FF_s C \mid s \in S\}$, such that $\FF_s C \subset \FF_{s'} C$ whenever $s \le s'$. The \emph{associated graded complex} of $C$ is
\[
\Gr(C) = \bigoplus_{s \in S}  \Gr_s(C), \quad \text{where} \quad \Gr_s(C) = \frac{\FF_s C }{\sum_{s'< s} \FF_{s'} C},
\] 
with the induced differential. Given two $S$-filtered chain complexes $B$ and $C$, a chain map $f \co B \to C$ is called a \emph{filtered chain map} if $f(\FF_s B) \subset \FF_s C$ for all $s \in S$. Two filtered chain maps $f, g \co B \to C$ are \emph{filtered homotopic} if they are related by a chain homotopy $h$ such that $h(\FF_s B) \subset \FF_s C$ for all $s \in S$. We call $f$ a \emph{filtered homotopy equivalence} if there is a filtered chain map $g \co C \to B$ such that $g \circ f$ and $f \circ g$ are each filtered homotopic to the respective identity maps. We call $f$ a \emph{filtered quasi-isomorphism} if it induces an isomorphism on the homology of the associated graded complexes.
(We emphasize that a homotopy equivalence that is filtered is not necessarily a filtered homotopy equivalence, and a quasi-isomorphism that is filtered is not necessarily a filtered quasi-isomorphism. This terminology is unfortunately fairly standard in the literature.)
\end{definition}

A filtered chain homotopy equivalence is immediately seen to be a filtered quasi-isomorphism, but the converse does not hold in general, even over a field. Indeed, considerable caution is required when working with filtrations by an arbitrary partially ordered set as opposed to $\Z$.  For instance, suppose $C$ is generated over $\F$ by a single element $x$, with vanishing differential, and we define a $\Z \times \Z$ filtration on $C$ by
\[
\FF_{(i,j)}C =
\begin{cases}
  C & i \ge 0 \\
  0 & i < 0.
\end{cases}
\]
According to Definition \ref{def: filt}, we then have $\Gr_{(i,j)} (C) = 0$ for all $(i,j) \in \Z \times \Z$, even though $C$ has nontrivial homology! Moreover, $C$ is filtered quasi-isomorphic to the trivial complex, even though it is clearly not homotopy equivalent to this complex.\footnote{We are grateful to the anonymous referee of \cite{HomLevineLidmanPL} for pointing out this example.}

We therefore specialize to a particular type of filtration, as follows. Let $(C, \FF)$ be a filtered complex over a field $\F$. We call $(C,\FF)$ \emph{special} if there exists a basis $\mathcal{B}_i$ for each chain group $C_i$, and a function $\JJ \co \mathcal{B}_i \to S$, such that
\begin{equation} \label{eq: filt-function}
\FF_s C_i = \Span \{x \in \mathcal{B}_i \mid \JJ(x) \le s\}.
\end{equation}
Such a basis is called a \emph{filtered basis}. Given a complex $C$, we may describe a special filtration by simply specifying a function $\JJ$ defined on some basis, provided that the differential of each basis element $x$ is a linear combination of basis elements whose $\JJ$ values are less than or equal to $\JJ(x)$, and taking \eqref{eq: filt-function} as the definition of the filtration. If $(C,\FF)$ is special, then any choice of filtered basis induces an isomorphism of vector spaces from $C$ to $\Gr(C)$. (Thus, the filtered complex in the previous paragraph is not special.)

The following lemma is immediate:
\begin{lemma} \label{lemma: assoc-gr-isom}
If $(C,\FF)$ is a special $S$-filtered chain complex over a field $\F$, then $C$ and $\Gr(C)$ are isomorphic as graded vector spaces over $\F$ (although not necessarily as chain complexes), where we take the homological grading on both complexes. Moreover, if $f \co B \to C$ is a filtered chain map between special $S$-filtered complexes, and $\Gr(f) \co \Gr(B) \to \Gr(C)$ is the induced map on associated graded objects, then we may choose isomorphisms to make the square
\begin{equation} \label{eq: assoc-gr-isom}
\xymatrix{ B \ar[r]^f \ar[d]^\cong & C \ar[d]^\cong \\ \Gr(B) \ar[r]^{\Gr(f)} & \Gr(C) }
\end{equation}
commute. \qed
\end{lemma}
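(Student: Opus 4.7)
The plan is to build the claimed isomorphisms explicitly from filtered bases, then arrange compatibility with $f$ by adjusting those bases.

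For the first statement, I would fix a filtered basis $\mathcal{B}_i$ of $C_i$ with function $\JJ$. The crucial identification is $\sum_{s' < s} \FF_{s'} C_i = \Span\{y \in \mathcal{B}_i : \JJ(y) < s\}$, which holds because for any $y$ with $\JJ(y) < s$ one may take $s' = \JJ(y)$ in the sum. It follows that for each $s$ the classes $\{[x] : x \in \mathcal{B}_i,\ \JJ(x) = s\}$ form a basis of $\Gr_s(C)_i$---spanning because the quotient kills precisely the lower-filtration basis elements, and linearly independent because any nontrivial relation would descend from one in $\FF_s C_i$, contradicting the basis property. Assembling over $s$ yields a basis of $\Gr(C)_i$ in bijection with $\mathcal{B}_i$, and the rule $x \mapsto [x]$ extends linearly to the desired graded vector space isomorphism.

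For the second statement, I would pick filtered bases $\mathcal{B}^B$ and $\mathcal{B}^C$ and consider the naive isomorphisms $\phi_B, \phi_C$ they induce. Expanding $f(x) = \sum_{\JJ(y) \le \JJ(x)} c_y\, y$ for $x \in \mathcal{B}^B$, one computes $\phi_C(f(x)) = \sum_y c_y [y]$, whereas $\Gr(f)(\phi_B(x)) = \sum_{\JJ(y) = \JJ(x)} c_y [y]$; the two agree precisely when $f$ has no strict filtration-lowering terms with respect to the chosen bases. My plan is therefore to replace $\mathcal{B}^C$ by a new filtered basis in which this condition holds, via a triangular change of basis organized by $S$ and carried out one filtration level at a time from the bottom up: at stage $s$, modify basis elements of $C$ with $\JJ \le s$ so as to absorb the lower-order components of $f(x)$ for each $x \in \mathcal{B}^B$ with $\JJ(x) = s$.

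The hard part will be verifying that these adjustments are mutually consistent and do not undo earlier work. An adjustment at level $s$ only involves basis elements of $C$ with filtration $\le s$, so later stages cannot disturb earlier ones; but one must still check that at each stage a single choice of new basis cancels the relevant lower-order terms for all the $x$'s at that level simultaneously. I expect this to go through by transfinite induction on $S$ (in practice finite, since the chain groups are finitely generated in the relevant applications), with the inductive hypothesis supplying exactly the slack needed to absorb the lower-order image of $f$ at the current level. Once the compatible bases are obtained, the naive isomorphisms of the first part make the square commute on the nose.
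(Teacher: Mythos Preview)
The paper gives no proof; the lemma is declared immediate and closed with a \qed. Your argument for the first assertion is correct and is exactly what is intended.

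For the second assertion, your absorption strategy cannot succeed, because the claim as written is actually false. Take $B=C$ concentrated in a single homological degree, with filtered basis $\{x_1,x_2\}$, $\JJ(x_1)=1$, $\JJ(x_2)=2$, zero differential, and define $f$ by $f(x_1)=0$, $f(x_2)=x_1$. Then $f$ is a filtered chain map, but $\Gr(f)=0$ since $x_1$ dies in $\Gr_2(C)$. Were the square to commute with vertical isomorphisms, $f$ and $\Gr(f)$ would be conjugate linear maps and hence of equal rank; they are not. Your triangular change of basis fails here precisely because $f(x_2)$ has \emph{no} component at level $2$ into which the lower-order piece could be absorbed, and the consistency obstruction you anticipated is in fact genuine and not removable by any choice of filtered bases on either side. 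The one consequence the paper actually draws from this lemma---that a filtered quasi-isomorphism between reduced special complexes is an isomorphism---is nonetheless true, but it should be argued directly (for a nonzero $b\in\ker f$, pass to $\Gr_s$ for a suitable $s$ and contradict injectivity of $\Gr(f)$) rather than via the commuting square.
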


A filtered chain complex is called \emph{reduced} if the induced differential on the associated graded complex vanishes, or equivalently if $\partial(\FF_s C) \subset \sum_{s' < s} \FF_{s'} C$. As an immediate consequence of Lemma \ref{lemma: assoc-gr-isom}, note that any filtered quasi-isomorphism between reduced special complexes is an isomorphism. Moreover, we have:
\begin{lemma} \label{lemma: reduction-special}
Any finitely generated, special $S$-filtered chain complex is filtered homotopy equivalent to a reduced complex.
\end{lemma}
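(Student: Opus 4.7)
The plan is to proceed by induction on the cardinality of a chosen filtered basis $\mathcal{B}$ for $C$, peeling off two basis elements at a time via the standard cancellation lemma (\emph{Gaussian elimination for chain complexes}). The base case $|\mathcal{B}|\le 1$ is trivial, since the induced differential on $\Gr(C)$ is then automatically zero. For the inductive step, I would first observe that $C$ failing to be reduced means exactly that there is a pair of basis elements $x,y\in\mathcal{B}$ with $\langle \partial x, y\rangle \neq 0$ and $\JJ(y) = \JJ(x)$. (The filteredness of $\partial$ forces $\JJ(y)\le\JJ(x)$ automatically; a strict inequality would not contribute to the induced differential on $\Gr(C)$.)

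Given such a pair, the cancellation lemma produces a chain complex $C'$ whose underlying vector space is $\Span(\mathcal{B}\setminus\{x,y\})$, together with chain maps $\pi\co C\to C'$ and $\sigma\co C'\to C$ and a chain homotopy $H\co C\to C$ whose only nontrivial effect is $H(y)=x$, satisfying $\pi\sigma=\id_{C'}$ and $\id_C-\sigma\pi = \partial H + H\partial$. Concretely, for $w\in\mathcal{B}':=\mathcal{B}\setminus\{x,y\}$ the new differential is $\partial' w = \partial w - \langle \partial w, y\rangle\, \partial x$ projected onto $\Span(\mathcal{B}')$. I would then equip $C'$ with the filtration induced by $\JJ|_{\mathcal{B}'}$, so that $C'$ inherits a special $S$-filtration in the sense of \eqref{eq: filt-function}.

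The heart of the argument is to check that $C'$ is a chain complex with this filtration and that the maps $\pi$, $\sigma$, and $H$ are all filtered. If $w\in\mathcal{B}'$ has $\langle \partial w, y\rangle \neq 0$, then the filteredness of $\partial$ on $C$ gives $\JJ(y)\le\JJ(w)$, and combined with $\JJ(x)=\JJ(y)$ this yields $\partial x \in \FF_{\JJ(w)} C$. Hence the correction term lies in $\FF_{\JJ(w)}C$, and its projection lies in $\FF_{\JJ(w)}C'$, so $\partial'$ is filtered. The analogous verification for $\sigma$ uses the same inequality, while for $H$ the only point to check is that sending $y$ to $x$ preserves filtration, which holds by our choice $\JJ(x)=\JJ(y)$. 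Consequently $\pi$ and $\sigma$ are mutually inverse filtered homotopy equivalences. Since $|\mathcal{B}'| = |\mathcal{B}| - 2$, the inductive hypothesis furnishes a reduced complex filtered homotopy equivalent to $C'$, and hence to $C$.

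I do not expect any serious conceptual obstacle; the argument is essentially careful bookkeeping. The only point requiring attention is that the partial order on $S$ is never forced to satisfy any condition beyond what is needed to even state that $\partial$ is filtered, because every filtration inequality encountered in the argument is a direct consequence of the filteredness of $\partial$ on $C$ itself. Finite generation of $C$ is used solely to guarantee termination of the induction.
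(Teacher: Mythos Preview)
Your proposal is correct and is essentially the same approach as the paper's: the paper simply cites \cite[Section 4.1]{HeddenNiUnlink}, which carries out exactly this Gaussian-elimination/cancellation argument, replacing their ``filtration level'' with the function $\JJ$. One small bookkeeping point: over a general field the correction terms should carry a factor of $\langle \partial x, y\rangle^{-1}$, and you should also note (as you implicitly do) that $\pi$ is filtered because its only nontrivial correction, on $y$, lands in $\partial x|_{\mathcal{B}'}\subset \FF_{\JJ(x)}C'=\FF_{\JJ(y)}C'$.
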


\begin{proof}
Follow the discussion in \cite[Section 4.1]{HeddenNiUnlink}, using the filtration function $\JJ$ in place of the notion of ``filtration level.''
\end{proof}

\begin{lemma} \label{lemma: filt-QI-special}
If $B$ and $C$ are finitely generated, special $S$-filtered chain complexes, then any filtered quasi-isomorphism from $B$ to $C$ is a filtered homotopy equivalence.
\end{lemma}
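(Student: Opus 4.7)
The plan is to use Lemma \ref{lemma: reduction-special} to reduce to the case where both complexes are reduced, and then argue that a filtered quasi-isomorphism between reduced special complexes is actually a filtered isomorphism. Concretely, I would first pick filtered homotopy equivalences $\iota_B \co \tilde B \to B$, $\pi_B \co B \to \tilde B$ and $\iota_C \co \tilde C \to C$, $\pi_C \co C \to \tilde C$ to reduced special complexes, and set $\tilde f = \pi_C \circ f \circ \iota_B$. Since filtered homotopy equivalence is preserved by pre- and post-composition with filtered homotopy equivalences, it suffices to prove $\tilde f$ is one, so I reduce to the case where $B$ and $C$ are themselves reduced.

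Because $B$ and $C$ are reduced, the induced differentials on $\Gr(B)$ and $\Gr(C)$ vanish, so $\Gr(\tilde f)$ inducing an isomorphism on the homology of the associated graded complexes is the same as $\Gr(\tilde f)$ being an isomorphism of graded vector spaces. Lemma \ref{lemma: assoc-gr-isom} then gives a commutative square identifying $\tilde f$ with $\Gr(\tilde f)$ up to the chosen basis isomorphisms, so $\tilde f$ is an isomorphism of vector spaces, and its set-theoretic inverse $\tilde f^{-1}$ is automatically a chain map. The main step is showing this inverse is filtered, which I would do using the filtered bases.

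The hard part is the following argument, carried out in the poset $S$. Fix filtered bases $\{b_i\}$ of $\tilde B$ and $\{c_j\}$ of $\tilde C$, and write $\tilde f(b_i) = \sum_j \alpha_{ji} c_j$; filteredness of $\tilde f$ forces $\alpha_{ji}=0$ unless $\JJ(c_j) \le \JJ(b_i)$, and for each $t \in S$ the submatrix $(\alpha_{ji})_{\JJ(b_i)=\JJ(c_j)=t}$ is the matrix of $\Gr(\tilde f)_t$ in these bases, hence invertible. Now suppose $y \in \FF_s \tilde C$ and $x = \tilde f^{-1}(y) = \sum_i \gamma_i b_i$; I want $x \in \FF_s \tilde B$. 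Otherwise the finite set $T' = \{\JJ(b_i) \mid \gamma_i \ne 0,\ \JJ(b_i) \not\le s\}$ is nonempty, so it has a maximal element $t$. This $t$ is in fact maximal in the full support $T = \{\JJ(b_i) \mid \gamma_i \ne 0\}$, since any strictly larger element $t' > t$ in $T$ would satisfy $t' \le s$ (else $t' \in T'$), forcing $t \le t' \le s$, a contradiction. Consequently the coefficient of any $c_j$ with $\JJ(c_j)=t$ in $\tilde f(x)$ comes only from the leading block at level $t$, namely $\sum_{i:\JJ(b_i)=t}\gamma_i \alpha_{ji}$. But $t \not\le s$ means no such $c_j$ appears in $y$, so this sum vanishes for every such $j$; invertibility of the leading block then forces $\gamma_i = 0$ for all $i$ with $\JJ(b_i)=t$, contradicting $t \in T'$.

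Thus $\tilde f^{-1}$ is filtered, $\tilde f$ is a filtered chain isomorphism, and hence a filtered homotopy equivalence (with trivial homotopies); composing with the filtered homotopy equivalences from the reduction step produces a filtered homotopy equivalence realizing $f$. The main obstacle is the last paragraph: the subtleties of working in a general poset force one to replace simple induction on filtration level with a maximal-element argument, and this is precisely where finite generation of the filtered bases is used, to guarantee such a maximal element exists.
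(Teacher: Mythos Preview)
Your proof is correct and follows the same overall strategy as the paper: reduce to the case of reduced complexes via Lemma~\ref{lemma: reduction-special}, then show that a filtered quasi-isomorphism between reduced special complexes is a filtered isomorphism, and finally transport back along the reduction homotopy equivalences. The paper's proof is considerably terser---it simply asserts that the composition $B' \to B \xrightarrow{f} C \to C'$ is ``a filtered chain isomorphism'' by appeal to the remark following Lemma~\ref{lemma: assoc-gr-isom}---whereas you supply a careful maximal-element argument in the poset $S$ to verify that the inverse is filtered; this is exactly the detail the paper leaves implicit, and your use of finite generation to guarantee existence of a maximal element in $T'$ is the right way to handle a general poset.
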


\begin{proof}
By Lemma \ref{lemma: reduction-special}, we may find reduced complexes $B'$ and $C'$ which are filtered homotopy equivalent to $B$ and $C$, respectively. The composition $B' \xrightarrow{\simeq} B \xrightarrow{f} C \xrightarrow{\simeq} C'$ is a filtered quasi-isomorphism, and therefore a filtered chain {\em isomorphism} of complexes of $\F[U]$-modules. It then follows that $f$ is a filtered homotopy equivalence.
\end{proof}

Because Lemmas \ref{lemma: reduction-special} and \ref{lemma: filt-QI-special} are stated for finitely generated chain complexes, we need a slightly modified version for the types of complexes that arise in Heegaard Floer homology. Let $\F$ be any field. Analogous to \cite[Definition 2.6]{OSzSurgery} and \cite[Definition 10.2]{ManolescuOzsvathLink}, we say that a \emph{chain complex of torsion $\CFm$ type} is a finitely generated, free module $C$ over $\F[U]$, equipped with an absolute $\Q$-grading that lifts a relative $\Z$-grading, for which multiplication by $U$ has degree $-2$, and a differential $\partial$ with degree $-1$.\footnote{Unlike in loc.~cit., we restrict our attention to complexes that are actually finitely generated, free modules, rather than complexes that are quasi-isomorphic to such complexes.} Given a basis $\{x_1, \dots, x_k\}$ for $C$ consisting of homogeneous elements, note that if the coefficient of $U^n x_j$ in $\partial(x_i)$ is nonzero, then $n = (\gr(x_j) - \gr(x_i) + 1)/2$.

Next, we discuss filtrations. Suppose $C$ is a complex of torsion $\CFm$ type, and let $x_1, \dots, x_k$ be homogeneous elements which form an $\F[U]$--basis for $C$. Let $\JJ \co \{x_1, \dots, x_k\} \to \Q$ be a function whose values are congruent mod $\Z$, and extend this function to the set of all translates $\{U^n x_i\}$ by declaring $\JJ(U^n x_i) = \JJ(x_i) - n$. Suppose that whenever $U^n x_j$ appears in $\partial(x_i)$, we have $\JJ(U^n x_i) \le \JJ(x_j)$. Then the subspaces of $C$ spanned by the sublevel sets of $\JJ$ give a filtration of $C$ by $\F[U]$--subcomplexes. A filtration obtained in this way is said to be \emph{of Alexander type}. We will typically just refer to $\JJ$ as the filtration.

Note that the filtration of $C$ by the subcomplexes $U^n C$ is itself of Alexander type, defined via a function $\II$ that is identically $0$ on the elements of any basis for $C$. We call this the \emph{trivial filtration}. Any $\F[U]$--equivariant quasi-isomorphism between complexes of $\CFm$ type is a filtered quasi-isomorphism with respect to the trivial filtration. If we are given a second filtration $\JJ$ of Alexander type, $C$ acquires the structure of a special $\Z \times \Z$--filtered complex, using the above terminology. Moreover, $C$ is reduced if there are no terms in the differential that preserve both the $\II$ and $\JJ$ filtrations; in other words, if $U^n x_j$ appears in $\partial(x_i)$, then either $n>0$ or $\JJ(U^n x_j) < \JJ(x_i)$. Note that a reduced complex is isomorphic to its associated graded complex as an $\F[U]$--module, not just as an $\F$--vector space. Moreover, the analogue of Lemma \ref{lemma: reduction-special} also holds here:
\begin{lemma} \label{lemma: reduction}
Let $C$ be a complex of $\CFm$ type equipped, equipped with an Alexander-type filtration $\JJ$. Then $C$ is filtered homotopy equivalent (over $\F[U]$) to a reduced complex.  \qed
\end{lemma}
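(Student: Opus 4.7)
The plan is to iterate a Gaussian elimination step that cancels pairs of basis elements related by a non-reducing term of $\partial$, exactly as in the proof of Lemma \ref{lemma: reduction-special}, while verifying that the $\F[U]$-equivariant cancellation preserves the Alexander-type filtration $\JJ$. The induction parameter is the number of elements in a homogeneous $\F[U]$-basis of $C$; finite generation ensures this is well-defined and decreases in each step.

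Fix a homogeneous basis $\{x_1, \dots, x_k\}$ of $C$ and call an ordered pair $(a, b)$ non-reducing if the coefficient of $U^0 x_b$ in $\partial x_a$ is a nonzero scalar $c \in \F$ and $\JJ(x_a) = \JJ(x_b)$. If no such pair exists, then every term $U^n x_j$ in any $\partial x_i$ either has $n > 0$ or satisfies $\JJ(U^n x_j) < \JJ(x_i)$, so $C$ is already reduced. Otherwise, pick one; the grading constraint forces $\gr(x_b) = \gr(x_a) - 1$, and after rescaling we may assume $c = 1$. Writing $\partial x_a = x_b + y$ with $y$ containing no $U^0 x_b$ summand, I would define, for each $i \notin \{a, b\}$,
\[
x_i' = x_i - \sum_n \gamma_i^{(n)} U^n x_a,
\]
where $\gamma_i^{(n)} \in \F$ is the coefficient of $U^n x_b$ in $\partial x_i$ (only finitely many of these are nonzero, and the grading forces a unique $n$ for each $i$). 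A routine computation shows that in the new basis no $\partial x_i'$ contains any $U$-power of $x_b$, so the $\F[U]$-submodule $A$ generated by $x_a$ (with $\partial x_a = x_b + y$) splits off as an acyclic direct summand, and the complement $C'$ spanned by $\{x_i'\}_{i \ne a,b}$ is $\F[U]$-equivariantly chain homotopy equivalent to $C$.

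The main obstacle, and the only point where one must work rather than cite standard homological algebra, is showing that this equivalence is \emph{filtered} in both directions when we assign $\JJ(x_i') := \JJ(x_i)$. For this, observe that $\gamma_i^{(n)} \ne 0$ forces $\JJ(U^n x_b) \le \JJ(x_i)$, so
\[
\JJ(U^n x_a) = \JJ(x_a) - n = \JJ(x_b) - n = \JJ(U^n x_b) \le \JJ(x_i),
\]
using the equality $\JJ(x_a) = \JJ(x_b)$ that comes from the pair being non-reducing; the same calculation applies to the inverse substitution $x_i' \mapsto x_i + \sum_n \gamma_i^{(n)} U^n x_a$. Hence the projection $C \to C'$ and its inverse both send $\FF_s$ to $\FF_s$, establishing a filtered $\F[U]$-chain homotopy equivalence onto a complex with $k-2$ basis elements, and the induction closes. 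The fact that the relevant coefficients $\gamma_i^{(n)}$ are $\F$-scalars (rather than arbitrary elements of $\F[U]$) — a consequence of the absolute $\Q$-grading — is precisely what keeps the $\F[U]$-equivariant cancellation from being more delicate than its ungraded analogue in Lemma \ref{lemma: reduction-special}.
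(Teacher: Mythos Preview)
Your proof is correct and follows exactly the approach the paper has in mind: the paper does not give its own argument but cites the Reduction Lemma in \cite{HeddenNiUnlink,HeddenWatsonGeography,LOTBordered}, noting only that ``the cancellations taking $C$ to a reduced complex can be performed $U$-equivariantly.'' Your Gaussian-elimination step is precisely that cancellation, and your verification that $\JJ(U^n x_a) = \JJ(U^n x_b) \le \JJ(x_i)$ (using the non-reducing hypothesis $\JJ(x_a)=\JJ(x_b)$) is the filtration check the paper leaves implicit.

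One small imprecision: the phrase ``the $\F[U]$-submodule $A$ generated by $x_a$'' literally denotes the rank-one module $\F[U]\langle x_a\rangle$, which is not a subcomplex. What you want is the acyclic subcomplex $\F[U]\langle x_a,\partial x_a\rangle = \F[U]\langle x_a, x_b+y\rangle$. With that correction, the direct-sum splitting $C = A \oplus C'$ as chain complexes does hold; the point that $\partial x_i' \in C'$ (not just that it avoids $x_b$) follows from your observation that $\partial x_i'$ has no $U$-power of $x_b$, together with $\partial^2=0$. Since the paper works over $\F=\Z/2\Z$, the rescaling step is also unnecessary, though harmless.
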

See, e.g., \cite[Section 4.1]{HeddenNiUnlink}, \cite[Reduction Lemma, p.~1005]{HeddenWatsonGeography}, or \cite[Proposition 11.57]{LOTBordered} for a proof; the key point is that the cancellations taking $C$ to a reduced complex can be performed $U$-equivariantly. Likewise, akin to Proposition \ref{lemma: filt-QI-special} above, we have:
\begin{proposition} \label{prop: filt-quasi}
Let $B$ and $C$ be complexes of $\CFm$ type, each equipped with an Alexander-type filtration. Then any filtered quasi-isomorphism $f \co B \to C$ (over $\F[U]$) is a filtered homotopy equivalence.
\end{proposition}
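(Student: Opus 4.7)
The plan is to mimic the proof of Lemma \ref{lemma: filt-QI-special}, but carrying the extra $\F[U]$-module structure through each step, so the reductions and cancellations are performed $U$-equivariantly rather than just over $\F$.

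First, I would apply Lemma \ref{lemma: reduction} to replace $B$ and $C$ by reduced complexes $B'$ and $C'$, together with filtered $\F[U]$-chain homotopy equivalences $\phi_B \co B \to B'$ and $\phi_C \co C \to C'$. The composition $\bar f = \phi_C \circ f \circ \phi_B^{-1} \co B' \to C'$ is then a filtered $\F[U]$-equivariant quasi-isomorphism between reduced complexes of $\CFm$ type, and proving that $\bar f$ is a filtered homotopy equivalence is enough, since the property is stable under pre- and post-composition with filtered homotopy equivalences.

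Second, I would argue that any filtered quasi-isomorphism between \emph{reduced} complexes of $\CFm$ type is in fact a filtered $\F[U]$-chain \emph{isomorphism}. The point is that for a reduced complex $C'$ the induced differential on $\Gr(C')$ vanishes, so $H_*(\Gr(C')) = \Gr(C')$ as graded $\F[U]$-modules, and the reduced complex is isomorphic to its own associated graded object via any filtered basis (the $\CFm$-type analogue of Lemma \ref{lemma: assoc-gr-isom}). Thus the hypothesis that $\bar f$ induces an isomorphism on the homology of the associated graded is equivalent to saying that the map $\Gr(\bar f) \co \Gr(B') \to \Gr(C')$, viewed as a map of graded $\F[U]$-modules, is an isomorphism. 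Fixing filtered $\F[U]$-bases $\{x_i\}$ and $\{y_j\}$ for $B'$ and $C'$ respectively, one can write $\bar f(x_i) = \sum_j c_{ij} y_j$ with $c_{ij} \in \F[U]$, and the matrix $(c_{ij})$ is block upper-triangular with respect to a total order on basis elements refining the $(\II,\JJ)$-filtration; the diagonal blocks represent $\Gr(\bar f)$ and are therefore invertible. Hence the full matrix is invertible over $\F[U]$, so $\bar f$ admits a two-sided inverse $\bar g \co C' \to B'$. Because the inverse of a filtration-preserving invertible matrix (in filtered bases) is itself filtration-preserving, $\bar g$ is a filtered $\F[U]$-chain map, and $\bar f$ is a filtered isomorphism — in particular, a filtered homotopy equivalence.

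Third, reassembling, the composition $\phi_C^{-1} \circ \bar g \circ \phi_B \co C \to B$ gives a filtered $\F[U]$-chain map which is a two-sided filtered homotopy inverse to $f$, completing the proof.

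The main obstacle, as in the $\F$-vector-space case, is the second step: verifying that a filtered quasi-isomorphism between reduced complexes is an honest isomorphism. The subtlety is that the filtration is two-dimensional (the trivial filtration $\II$ by powers of $U$ together with the Alexander-type filtration $\JJ$), so one must choose the total order on basis elements carefully so that $\bar f$ really becomes block-upper-triangular and the $\F[U]$-equivariance is preserved throughout the inversion. Once this bookkeeping is in place the argument is parallel to the proof of Lemma \ref{lemma: filt-QI-special}.
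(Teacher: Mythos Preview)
Your overall strategy is exactly what the paper intends: it gives no explicit proof of this proposition, only the remark that it follows ``akin to'' Lemma~\ref{lemma: filt-QI-special}, i.e.\ reduce via Lemma~\ref{lemma: reduction}, argue that a filtered quasi-isomorphism between reduced complexes is a filtered isomorphism, and reassemble. Your three-step outline matches this precisely.

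There is, however, a genuine gap in your second step. The matrix $(c_{ij})$ is \emph{not} block upper-triangular over $\F[U]$: when $\JJ(y_j) > \JJ(x_i)$ the filtration only forces $c_{ij} \in U\cdot\F[U]$, not $c_{ij}=0$. Without further input this is fatal: take $B'=C'=\F[U]$ with zero differential, $\JJ(x)=\JJ(y)=0$, and $\bar f(x)=(1+U)y$. This map is a filtered quasi-isomorphism (each $\Gr_{(-n,-n)}(\bar f)$ sends $[U^n x]\mapsto[U^n y]$), yet $1+U$ is not a unit in $\F[U]$, so $\bar f$ is neither an isomorphism nor a homotopy equivalence. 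The missing ingredient is the absolute $\Q$-grading carried by complexes of $\CFm$ type. If $\bar f$ is homogeneous (as all maps in this context are, and as the $(1+U)$ example is not), then each $c_{ij}$ is a \emph{monomial} in $U$; every term in the expansion of $\det(c_{ij})$ then has the same $U$-exponent $\tfrac12\bigl(\sum_i\gr(x_i)-\sum_j\gr(y_j)\bigr)$, so the determinant is a monomial. Since its reduction mod $U$ is the determinant of the genuinely block-triangular matrix of $\Gr(\bar f)$ and hence nonzero, $\det(c_{ij})\in\F^\times$ and the matrix is invertible over $\F[U]$.

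Your final assertion that the inverse is automatically filtered also needs justification. The cleanest route is again via the grading: restrict to each homological degree $d$, where $\bar f_d\colon B'_d\to C'_d$ is a filtered linear map of \emph{finite-dimensional} $\F$-vector spaces with $\Gr(\bar f_d)$ an isomorphism. A dimension count then gives $\bar f_d(\FF_s B'_d)=\FF_s C'_d$ for every filtration level $s$, so $\bar f_d^{-1}$ is filtered; assembling over $d$ yields the filtered $\F[U]$-inverse you want.
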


%

Next, we introduce the machinery of ``vertical truncation.'' Given a chain complex $C$ of torsion $\CFm$ type, for any $t \in \N$, let $C^t$ denote the quotient $C / U^{t+1} C$. Any filtration of $C$ of Alexander type descends to a filtration of $C^t$. Note that $C^t$ is a free module over $\F[U]/(U^{t+1})$, and any basis for $C$ descends to a basis for $C^t$. Moreover, for any $t<t'$, we have natural isomorphisms $U^{t'-t} \cdot C^{t'} \cong C^t$ (with a grading shift of $2(t'-t)$). The following lemmas imply that a (filtered) complex $C$ of torsion $\CFm$ type is determined up to (filtered) quasi-isomorphism by the complexes $C^t$ for large $t$. (Compare \cite[Lemma 2.7]{OSzSurgery} and \cite[Lemma 10.4]{ManolescuOzsvathLink}.)

\begin{lemma} \label{lemma: trunc-QI}
Let $C$ be a complex of torsion $\CFm$ type, equipped with a filtration of Alexander type. Then for large $t$, $C$ is filtered quasi-isomorphic to $C^t$ in sufficiently large gradings. To be precise, for any $\delta \in \Q$, there exists $T \in \N$ such that for all $t \ge T$, all gradings $d \ge \delta$, and all filtration levels $s$, the projection map $C \to C^t$ induces isomorphisms $H_d(C) \to H_d(C^t)$ and $H_d(\Gr_s(C)) \to H_d (\Gr_s(C^t))$.
\end{lemma}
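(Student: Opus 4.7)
The strategy is to compare $C$ with $C^t$ via the short exact sequence of filtered chain complexes
\[
0 \to U^{t+1}C \to C \to C^t \to 0,
\]
which passes to a short exact sequence of associated graded complexes at each filtration level $s$ because the filtration is special (coming from an Alexander-type filtered basis) and the inclusion $U^{t+1}C \hookrightarrow C$ is strict. Since $C$ is free over $\F[U]$, multiplication by $U^{t+1}$ gives a chain isomorphism $C \xrightarrow{\cong} U^{t+1}C$ that shifts the homological grading by $-2(t+1)$ and the filtration $\JJ$ by $-(t+1)$; passing to associated graded, this yields $\Gr_s(U^{t+1}C) \cong \Gr_{s+t+1}(C)$ up to a shift of $-2(t+1)$ in homological grading.

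For the first isomorphism, the key observation is that finite generation of $C$ over $\F[U]$, combined with $U$ having degree $-2$, forces the homological grading on $C$ to be bounded above by some $M \in \Q$, so $H_d(C) = 0$ for $d > M$. Via the identification above, this gives $H_d(U^{t+1}C) \cong H_{d+2(t+1)}(C) = 0$ whenever $d + 2(t+1) > M$. Choosing $T$ large enough that $\delta - 1 + 2(T+1) > M$ ensures that both $H_d(U^{t+1}C)$ and $H_{d-1}(U^{t+1}C)$ vanish for all $t \ge T$ and $d \ge \delta$; the homology long exact sequence then forces the projection to induce an isomorphism $H_d(C) \xrightarrow{\cong} H_d(C^t)$.

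For the associated graded statement, I would apply the analogous argument at each filtration level. Finite generation ensures that only finitely many $\Gr_s(C)$ are nonzero (those with $s \le S := \max_i \JJ(x_i)$), and within each, the homological grading is bounded above by $A + 2s$ for a constant $A = \max_i(\gr(x_i) - 2\JJ(x_i))$. Hence $H_d(\Gr_s(U^{t+1}C)) \cong H_{d+2(t+1)}(\Gr_{s+t+1}(C))$ vanishes either because $\Gr_{s+t+1}(C) = 0$ (when $s > S - t - 1$) or because $d + 2(t+1)$ exceeds the grading bound $A + 2(s+t+1)$ (when $s \le S - t - 1$ and $d > A + 2s$). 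A single choice of $T$ large enough to handle the worst case $s = S - t - 1$ works uniformly in $s$, and a final application of the long exact sequence delivers the desired isomorphism $H_d(\Gr_s(C)) \xrightarrow{\cong} H_d(\Gr_s(C^t))$.

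The main technical obstacle is achieving uniformity over all filtration levels $s$ in the associated graded statement, and this is precisely where the finite generation hypothesis does its essential work: it simultaneously bounds the support of $\Gr(C)$ in the filtration direction and the homological grading within each nonzero level by a linear function of $s$, so that a single $T$ depending only on $\delta$ can control all $s$ at once.
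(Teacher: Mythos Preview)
Your proof is correct. Both your argument and the paper's rest on the same basic finiteness fact: since $C$ is finitely generated over $\F[U]$ with $U$ of degree $-2$, the chain groups (and their associated graded pieces) vanish in all sufficiently large homological gradings.

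The paper's approach, however, is considerably more direct. Rather than invoking the long exact sequence associated to $0 \to U^{t+1}C \to C \to C^t \to 0$ and arguing that the homology of $U^{t+1}C$ vanishes in the relevant range, the paper simply observes that once $t$ is large enough (so that every element of $U^{t+1}C$ has grading below $\delta - 1$), the projection $C \to C^t$ is literally the identity map on chain groups in every grading $d \ge \delta - 1$, and the filtrations agree there as well. An identity map of chain complexes in a range of gradings trivially induces an isomorphism on homology (and on associated graded homology) in that range, so there is nothing further to check. Your long exact sequence argument recovers this conclusion, but through an extra layer of homological algebra; in particular, your careful uniformity analysis for the associated graded case, distinguishing the regimes $s > S - t - 1$ and $s \le S - t - 1$, is unnecessary once one sees that the quotient map is already a chain-level isomorphism in the relevant gradings.
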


\begin{proof}
Given $\delta$, for all $t$ sufficiently large, the projection $C$ to $C^t$ is simply the identity map in all gradings $d \ge \delta -1$, and the filtrations on those portions of $C$ and $C^t$ agree by construction. The result then follows immediately.
\end{proof}

\begin{lemma} \label{lemma: filt-QI}
Let $B$ and $C$ be chain complexes of torsion $\CFm$ type, each equipped with a filtration of Alexander type. Suppose that for all $t \ge 0$, the complexes $B^t$ and $C^t$ are $\F[U]$--equivariantly filtered quasi-isomorphic. Then $B$ and $C$ are $\F[U]$--equivariantly filtered quasi-isomorphic.
\end{lemma}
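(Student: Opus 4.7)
The plan is to reduce to the case of reduced complexes and then lift a single truncation-level filtered quasi-isomorphism to a filtered quasi-isomorphism $B \to C$, exploiting the finite generation of both complexes over $\F[U]$. First I would apply Lemma \ref{lemma: reduction} to replace $B$ and $C$ with filtered homotopy equivalent reduced complexes. Filtered homotopy equivalence descends to filtered homotopy equivalence on each truncation $B^t$, $C^t$, and in particular yields filtered quasi-isomorphisms at each level, so the hypothesis is preserved. Hence I may assume from the outset that $B$ and $C$ are themselves reduced, fix homogeneous $\F[U]$-bases $\{x_i\}$ for $B$ and $\{y_j\}$ for $C$, and let $D$ be a bound on the diameter of the gradings of these basis elements.

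For $t > D$, any $\F[U]/(U^{t+1})$-linear, grading- and filtration-preserving map $\phi_t \co B^t \to C^t$ is described, on each $x_i$, by a combination $\sum_j p_{ij}(U)\, y_j \pmod{U^{t+1}}$ in which every polynomial $p_{ij}(U)$ has $U$-degree at most $D/2 < t$. Such polynomials lift uniquely to elements of $\F[U]$, producing an $\F[U]$-linear, grading- and filtration-preserving map $\tilde\phi \co B \to C$. Now I would invoke the hypothesis to supply such a $\phi_t$ that is also a filtered quasi-isomorphism and denote its lift by $\tilde\phi$.

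What remains is to verify that $\tilde\phi$ is a chain map and a filtered quasi-isomorphism. For any basis element $x_i$, the identity $\partial_C \tilde\phi(x_i) - \tilde\phi(\partial_B x_i) = 0$ lives in the grading $\gr(x_i) - 1$; for $t$ chosen large enough, the projection $C \to C^t$ is injective in every such grading, and the identity holds after projection because $\phi_t$ is a chain map. Passing to associated graded, the map $\Gr(\tilde\phi)\co \Gr(B) \to \Gr(C)$ agrees with $\Gr(\phi_t)$ in all sufficiently high bigradings and is therefore a quasi-isomorphism there; since both complexes are finitely generated over $\F[U]$ and $\Gr(\tilde\phi)$ is $\F[U]$-equivariant, multiplication by $U$ propagates the isomorphism to every bigrading, so $\tilde\phi$ is indeed a filtered quasi-isomorphism.

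The main obstacle is the grading and $U$-power bookkeeping needed to guarantee that a single value of $t$ can be taken large enough for each of the three requirements --- uniqueness of the polynomial lift, the chain-map identity, and the quasi-isomorphism on the associated graded --- to transfer simultaneously from $C^t$ back to $C$. This is essentially parallel to the truncation arguments in the proofs of \cite[Lemma 2.7]{OSzSurgery} and \cite[Lemma 10.4]{ManolescuOzsvathLink}, with the extra care that the Alexander-type filtration is preserved throughout.
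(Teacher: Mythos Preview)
Your proposal is correct and shares the paper's core strategy --- choose a single large $t$, lift a filtered quasi-isomorphism $\phi_t \colon B^t \to C^t$ to an $\F[U]$-linear map on the full complexes using the grading bound, and verify that the lift is a chain map --- but the two arguments finish differently. The paper does \emph{not} pass to reduced complexes; instead, after constructing the lift $f$, it shows that $f$ induces filtered quasi-isomorphisms $B^{t'} \to C^{t'}$ for \emph{every} $t'$ (for $t' < t_0$ by restricting $f^{t_0}$ to $\ker U^{t'+1}$, and for $t' \ge t_0$ by a five-lemma induction), and then invokes the immediately preceding Lemma~\ref{lemma: trunc-QI}. Your route instead reduces to reduced complexes first, so that the associated graded has zero differential and ``filtered quasi-isomorphism'' becomes ``isomorphism on $\Gr$''; your $U$-propagation then works because with $t > D$ the isomorphism is already established in all homological gradings above $\max_i\gr(x_i) - 2(t+1) < \min_i\gr(x_i)$, and below $\min_i\gr(x_i)$ multiplication by $U$ is bijective on each bigraded piece of $\Gr(B)$ and $\Gr(C)$. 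The paper's version is more modular (it recycles the lemma just proved), while yours is more self-contained but depends on the reduction step; both are valid.
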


\begin{proof}
To begin, we show that there is a single chain map $f \co B^t \to C^t$ that induces filtered quasi-isomorphisms $B^t \to C^t$ for all $t$ simultaneously. (A priori, the hypotheses of the lemma only stipulate that there exist such maps for each $t$, without requiring them to be related in any way.)

Let $\{x_1, \dots, x_k\}$ and $\{y_1, \dots, y_l\}$ be bases for $B$ and $C$, respectively, on which we have functions $\JJ_B$ and $\JJ_C$ specifying the filtrations. Choose some $t_0$ large enough that for all $z, z' \in \{x_1, \dots, x_k, y_1, \dots, y_l\}$, we have $t_0 > (\gr(z') - \gr(z) + 1)/2$. Let $\partial_B$ and $\partial_C$ denote the differentials on $B$ and $C$ respectively, and $\partial_B^{t_0}$ and $\partial_C^{t_0}$ the induced differentials on $B^{t_0}$ and $C^{t_0}$. Choose a filtered quasi-isomorphism $f^{t_0} \co B^{t_0} \to C^{t_0}$.

Let us write
\[
\partial_B^{t_0}([x_i]) = \sum_j p_{i,j}[x_j], \qquad
\partial_C^{t_0}([y_i]) = \sum_j q_{i,j} [y_j], \quad \text{and} \quad
f^{t_0}([x_i]) = \sum_j r_{i,j} [y_j],
\]
where each coefficient $p_{i,j}$, $q_{i,j}$, and $r_{i,j}$ is either $0$ or a multiple of $U^n$ for some $0 \le n \le t$. We claim that the differential $\partial_B$ must be given by precisely the same formula: $\partial_B(x_i) = \sum_j p_{i,j} x_j$. Indeed, every nonzero term in $\partial_B^{t_0}$ must be induced from the corresponding term in $\partial_B$. The only possible additional terms would have to involve powers of $U$ that vanish in $\F[U]/U^{t_0}$; however, this contradicts our hypothesis on $t$. The same applies identically to $\partial_C$. Likewise, the map $f \co B \to C$ defined by $f(x_i) = \sum_j r_{i,j} y_j$ is a chain map: any nonzero term in $\partial_C \circ f$ must also occur in $\partial_C^{t_0} \circ f^{t_0}$, and hence be cancelled by a term in $f^{t_0} \circ \partial_B^{t_0}$, which then also occurs in $f \circ \partial_B$.

Next, we claim that $f$ induces a filtered quasi-isomorphism $B^t \to C^t$ for all $t$. For $t<t_0$, this follows by restricting $f^{t_0}$ to the kernel of $U^{t+1}$, while for $t \ge t_0$, it then follows by induction using the five-lemma.

By the previous lemma, for any grading $d$ and filtration level $s$, we may find $t$ for which the map induced by $f$ factors as
\[ \xymatrix{
H_d(\Gr_s (B)) \ar[r]_{\cong} & H_d(\Gr_s(B^t)) \ar[r]^{f^t_*} _{\cong} & H_d(\Gr_s(C^t))  \ar[r]_{\cong}  &H_d(\Gr_s (C)) }.
\]
Thus, $f$ is a filtered quasi-isomorphism, as required.
\end{proof}

The reason for dwelling on the distinction between filtered quasi-isomorphism and filtered homotopy equivalence is that the proof our main theorem relies on a filtered version of the mapping cone detection lemma \cite[Lemma 4.2]{OSzDouble}, which takes place in the filtered derived category. Although we will mainly work over $\Z/2\Z$, we state the lemma with signs for completeness:
\begin{lemma} \label{lemma: mapping-cone}
Let $S$ be a partially ordered set, and let $(C_i, \partial_i)_{i \in \Z}$ be a family of $S$-filtered chain complexes (over any ring). Suppose we have filtered maps $f_i \co C_i \to C_{i+1}$ and $h_i \co C_i \to C_{i+2}$ so that:
\begin{enumerate}
    \item $f_i$ is an anti-chain map, i.e., $f_i \circ \partial_i + \partial_{i+1} \circ f = 0$.
    \item $h_i$ is a null-homotopy of $f_{i+1} \circ f_i$, i.e., $f_{i+1} \circ f_i + h_i \circ \partial_i + \partial_{i+2} \circ h_i = 0$;
    \item $h_{i+1} \circ f_i + f_{i+2} \circ h_i$ is a filtered quasi-isomorphism from $C_i$ to $C_{i+3}$.
\end{enumerate}
Then the anti-chain map
\[
\begin{pmatrix} f_i \\ h_i \end{pmatrix}\co C_i \to \Cone(f_{i+1})
\]
is a filtered quasi-isomorphism (and hence a filtered homotopy equivalence when working over a field).
\end{lemma}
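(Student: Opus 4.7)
The plan is to reduce to the unfiltered mapping cone detection lemma by passing to the associated graded complex. Nothing essentially new is needed beyond the standard argument; the point is to verify that every piece of structure in the hypothesis survives the passage to the associated graded without loss.

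First, I would equip $\Cone(f_{i+1}) = C_{i+1}[1] \oplus C_{i+2}$ with the direct sum filtration
\[
\FF_s \Cone(f_{i+1}) = \FF_s C_{i+1} \oplus \FF_s C_{i+2}.
\]
Because $f_{i+1}$ is a filtered map, the mapping cone differential preserves this filtration, and a direct inspection shows that the canonical identification $\Gr(\Cone(f_{i+1})) = \Cone(\Gr(f_{i+1}))$ holds as chain complexes. The maps $f_i$ and $h_i$, being filtered, induce maps $\Gr(f_i) \co \Gr(C_i) \to \Gr(C_{i+1})$ and $\Gr(h_i) \co \Gr(C_i) \to \Gr(C_{i+2})$. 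The three algebraic identities of the hypothesis hold on the nose rather than merely up to filtered chain homotopy, so they pass verbatim to the associated graded: $\Gr(f_i)$ is an anti-chain map, $\Gr(h_i)$ is a null-homotopy of $\Gr(f_{i+1}) \circ \Gr(f_i)$, and $\Gr(h_{i+1}) \circ \Gr(f_i) + \Gr(f_{i+2}) \circ \Gr(h_i)$ equals the associated graded of $h_{i+1} \circ f_i + f_{i+2} \circ h_i$, which by hypothesis is a quasi-isomorphism.

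Next, I would apply the unfiltered mapping cone detection lemma (cf.\ \cite[Lemma 4.2]{OSzDouble}) to the associated graded data. This yields that
\[
\begin{pmatrix} \Gr(f_i) \\ \Gr(h_i) \end{pmatrix} \co \Gr(C_i) \longrightarrow \Cone(\Gr(f_{i+1})) \;=\; \Gr(\Cone(f_{i+1}))
\]
is a quasi-isomorphism, which, unwinding definitions, is precisely the assertion that $\begin{pmatrix} f_i \\ h_i \end{pmatrix}$ is a filtered quasi-isomorphism.

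Finally, for the parenthetical promotion over a field: in the settings where this lemma is applied in this paper, namely to special $S$-filtered complexes or to finitely generated complexes of torsion $\CFm$ type carrying Alexander-type filtrations, Lemma \ref{lemma: filt-QI-special} (respectively Proposition \ref{prop: filt-quasi}) upgrades the filtered quasi-isomorphism to a filtered homotopy equivalence. The only real point to watch in the argument is the definitional bookkeeping in the first step, namely confirming that the mapping cone of filtered maps, equipped with the direct sum filtration, has associated graded equal to the mapping cone of the associated graded maps; this is the step on which the parenthetical reliance on being over a field is not needed. Once that is verified the proof is entirely formal, requiring no new chain-level constructions beyond those already present in the unfiltered version.
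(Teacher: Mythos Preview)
Your proposal is correct and follows essentially the same approach as the paper: pass to the associated graded, verify that the hypotheses descend so that the unfiltered detection lemma \cite[Lemma 4.2]{OSzDouble} applies, and use the identification $\Gr(\Cone(f_{i+1})) \cong \Cone(\Gr(f_{i+1}))$ to conclude. The only cosmetic difference is that the paper works one filtration level $\Gr_s$ at a time rather than with the full $\Gr$, and it leaves the parenthetical upgrade over a field implicit; your explicit invocation of Lemma~\ref{lemma: filt-QI-special} and Proposition~\ref{prop: filt-quasi} for that step is appropriate.
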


(Note that the sign convention follows \cite[Lemma 7.1]{KronheimerMrowkaUnknot}, which we have verified independently.)

\begin{proof}
For each $s \in S$, the maps $f_i$ and $h_i$ induce maps $\Gr_s(f_i) \co \Gr_s(C_i) \to \Gr_s(C_{i+1})$ and $\Gr_s(h_i) \co \Gr_s(C_i) \to \Gr_s(C_{i+2})$, which satisfy the hypotheses of \cite[Lemma 4.2]{OSzDouble}. Therefore,
\[
\begin{pmatrix} \Gr_s(f_i) \\ \Gr_s(h_i) \end{pmatrix}\co \Gr_s(C_i) \to \Cone(\Gr_s(f_{i+1})),
\]
is a quasi-isomorphism. Moreover, there is a natural identification of $\Cone(\Gr_s(f_{i+1}))$ with $\Gr_s(\Cone(f_{i+1}))$ under which $\begin{pmatrix} \Gr_s(f_i) \\ \Gr_s(h_i) \end{pmatrix} = \Gr_s \begin{pmatrix} f_i \\ h_i \end{pmatrix}$. We thus deduce that $\begin{pmatrix} f_i \\ h_i \end{pmatrix}$ is a filtered quasi-isomorphism.
\end{proof}

Even over an arbitrary ring, one can also prove a version of the (filtered) mapping cone detection lemma in the (filtered) homotopy category, but it requires a stronger set of hypotheses.  We state it here for posterity:

\begin{lemma} \label{lemma: homotopy-mapping-cone}
Let $S$ be a partially ordered set, and let $(C_i, \partial_i)_{i \in \Z}$ be a $3$-periodic family of $S$-filtered chain complexes. Suppose we have filtered maps $f_i \co C_i \to C_{i+1}$, $h_i \co C_i \to C_{i+2}$, $g_i \co C_i \to C_{i+3}$, and $r_i \co C_i \to C_{i+4}$ so that:
\begin{enumerate}
  \item $f_i$ is an anti-chain map;
  \item $h_i$ is a null-homotopy of $f_{i+1} \circ f_i$;
  \item $h_{i+1} \circ f_i + f_{i+2}\circ h_i$ is filtered homotopic to the identity, i.e.,
\[\id_{C_i}+ h_{i+1} \circ f_i + f_{i+2} \circ h_i= \partial_i \circ g_i + g_i\circ\partial_i.\]
  \item $ h_i\circ h_{i+1}+ f_i\circ g_i + g_{i+1}\circ f_i = \partial_{i+1}\circ r_i+ r_i\circ\partial_i$
\end{enumerate}
Then the map
\[
\begin{pmatrix} f_i \\ h_i \end{pmatrix}\co C_i \to \Cone(f_{i+1})
\]
is a filtered homotopy equivalence,  with homotopy inverse given by
\[
\begin{pmatrix} h_{i+1} & f_{i+2} \end{pmatrix} \co \Cone(f_{i+1}) \to C_{i+3} = C_i.
\]
\end{lemma}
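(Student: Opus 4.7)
The plan is to exhibit an explicit filtered homotopy inverse to $\alpha := \bigl(\begin{smallmatrix} f_i \\ h_i \end{smallmatrix}\bigr)\co C_i \to \Cone(f_{i+1})$, namely $\beta := \bigl(\begin{smallmatrix} h_{i+1} & f_{i+2} \end{smallmatrix}\bigr) \co \Cone(f_{i+1}) \to C_{i+3} = C_i$, together with explicit filtered null-homotopies of $\beta \circ \alpha + \id_{C_i}$ and $\alpha \circ \beta + \id_{\Cone(f_{i+1})}$. First I verify that $\alpha$ and $\beta$ are anti-chain maps; for $\alpha$ this is the same computation used in Lemma \ref{lemma: mapping-cone}, relying on hypotheses (1) and (2), and the analogous check with the indices shifted by one (using $3$-periodicity) handles $\beta$. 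Both are filtered since their entries are. The null-homotopy of $\beta \circ \alpha + \id_{C_i}$ is then immediate: the composition unpacks to $h_{i+1} \circ f_i + f_{i+2} \circ h_i$, and $g_i$ is the desired homotopy by hypothesis (3) on the nose.

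For the composition $\alpha \circ \beta + \id_{\Cone(f_{i+1})}$ I propose the filtered homotopy
\[
H := \begin{pmatrix} g_{i+1} & h_{i+2} \\ r_{i+1} & g_{i+2} \end{pmatrix} \co \Cone(f_{i+1}) \to \Cone(f_{i+1}),
\]
each entry of which is filtered by hypothesis. Expanding $\partial_{\Cone} \circ H + H \circ \partial_{\Cone}$ entry by entry with $\partial_{\Cone} = \bigl(\begin{smallmatrix} \partial_{i+1} & 0 \\ f_{i+1} & \partial_{i+2} \end{smallmatrix}\bigr)$ and matching against
\[
\alpha \circ \beta + \id = \begin{pmatrix} f_i \circ h_{i+1} + \id & f_i \circ f_{i+2} \\ h_i \circ h_{i+1} & h_i \circ f_{i+2} + \id \end{pmatrix},
\]
the diagonal entries reduce to hypothesis (3) applied at $C_{i+1}$ and $C_{i+2}$ respectively, with the cross terms involving $h_{i+2} \circ f_{i+1}$ and $f_{i+1} \circ h_{i+2}$ cancelling pairwise; the $(1,2)$-entry reduces to hypothesis (2) at index $i+2$, which says precisely that $h_{i+2}$ null-homotopes $f_{i+3} \circ f_{i+2} = f_i \circ f_{i+2}$; and the $(2,1)$-entry reduces to hypothesis (4) applied at index $i+1$, which is exactly what the auxiliary map $r_{i+1}$ is introduced to provide.

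The main obstacle I anticipate is sign bookkeeping. Over $\F_2$---the setting in which every application of this lemma in the paper takes place---all signs collapse and the identities above hold on the nose as written. In general, one must decorate the entries of $\alpha$, $\beta$, and $H$ with signs consistent with the cone sign convention inherited from \cite{KronheimerMrowkaUnknot} and adopted in Lemma \ref{lemma: mapping-cone}, so that $\partial_{\Cone} \circ H + H \circ \partial_{\Cone}$ recovers $\id_{\Cone(f_{i+1})} - \alpha \circ \beta$ (rather than the plus version). Once the signs are fixed consistently, each entry-by-entry identity is a direct rewriting of one of the four stated hypotheses, with no further obstacles beyond careful bookkeeping.
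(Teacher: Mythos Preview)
Your proof is correct and is exactly the direct verification the paper has in mind: the authors explicitly leave this lemma as an exercise, remarking only that it ``is easier to derive than Lemma~\ref{lemma: mapping-cone}.'' The explicit homotopy $H$ you write down, with the four matrix entries checked against hypotheses (2), (3), and (4) at the appropriate shifted indices, is precisely the expected argument; your caveat about signs over $\Z$ is also apt, since the paper works over $\F = \Z/2\Z$ throughout.
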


Oddly enough, Lemma \ref{lemma: homotopy-mapping-cone} is easier to derive than Lemma \ref{lemma: mapping-cone} (and is hence left to the reader as an exercise), but its hypotheses are clearly more difficult to verify. In the context of surgery exact triangles in Floer theory, in particular,  the families of complexes considered are not $3$-periodic;  it is only their isomorphism type which is $3$-periodic.  This fact makes Lemma \ref{lemma: homotopy-mapping-cone} rather unwieldy for our purposes and forces us to rely on Lemma \ref{lemma: mapping-cone} instead. However, by Lemma \ref{lemma: filt-QI}, we are then able to deduce filtered homotopy equivalence without

Henceforth, we always set $\F = \Z/2\Z$.

\subsection{Relative \texorpdfstring{spin$^c$}{spinc} structures and Alexander gradings} \label{ssec: rel-spinc}

We now discuss some more details about relative spin$^c$ structures and Alexander gradings.

As above, let $Y$ be a closed, oriented $3$-manifold, and let $K$ be an oriented, rationally null-homologous knot in $Y$, representing a class of order $d>0$ in $H_1(Y)$.\footnote{In \cite{OSzRational}, the notation $\overline{K}$ is used when the orientation of $K$ is relevant; here, we dispense with that convention and always treat $K$ as oriented.} For any class $P \in H_2(Y,K)$, the intersection number $[\mu] \cdot P$ is divisible by $d$. In particular, if $P = [F]$, where $F$ is a rational Seifert surface for $K$, then $[\mu] \cdot P = d$.

As in \eqref{eq: spinc-alex}, for any $P$ with $P \cdot \mu \ne 0$, and any relative spin$^c$ structure $\xi$, the \emph{Alexander grading of $\xi$ with respect to $P$} is defined as
\begin{align}
\label{eq: alex-gr-spinc-P}
\AlNorm_{Y,K, P}(\xi) &= \frac{\gen{c_1(\xi), P} + [\mu] \cdot P }{2 [\mu] \cdot P} \in \frac{1}{2d} \Z.
\end{align}
By construction, $\AlNorm_{Y,K, P}(\xi)$ is unchanged under multiplying $P$ by a nonzero scalar; in particular, if $Y$ is a rational homology sphere, then the Alexander grading is independent of $P$.\footnote{\label{fn: alex-norm} Some authors (e.g. Ni \cite{NiThurstonNorm}) normalize the Alexander grading differently, without the factor of $[\mu] \cdot P$ in the denominator. The disadvantage of that convention is that the independence of scaling $P$ no longer holds.}
More generally, suppose $P,P'$ are nonzero classes in $H_2(Y,K)$ whose restrictions to $K$ agree; after scaling, assume that $[\mu] \cdot P = [\mu] \cdot P' = d$. Then $P-P'$ is the image of a class $Q \in H_2(Y)$. For any $\xi \in \ul\Spin^c(Y,K)$, we have
\[
\AlNorm_{Y,K,P}(\xi) - \AlNorm_{Y,K,P'}(\xi)  = \frac{1}{2d} \gen{c_1(\xi), P-P'} = \frac{1}{2d} \gen{c_1(G_{Y,K}(\xi)), Q}.
\]
In particular, if $G_{Y,K}(\xi)$ is a torsion spin$^c$ structure on $Y$, then $\AlNorm_{Y,K,P}(\xi)$ is completely independent of the choice of $P$. We will henceforth drop $P$ from the notation and just denote the Alexander grading by $\AlNorm_{Y,K}$.

A framing for $K$ is determined by the choice of a longitude $\lambda$, which we view as an oriented curve in $\partial(\nbd(K))$. Let $F$ be a rational Seifert surface for $K$. As elements of $H_1(\partial (\nbd(K));\Z)$, we have $\partial F = d\lambda - k\mu$ for some $k\in\Z$. For any other framing $\lambda' = \lambda+m\mu$, we have $\partial F = d(\lambda + m\mu) - (k+dm)\mu$. Thus, the framing determines and is determined by $k$, and the class of $k$ mod $d$ is independent of the choice of framing. The \emph{rational self-linking} of $K$ is $[\frac{k}{d}] \in \Q/\Z$; it depends only on the homology class of $K$.

Let $Y_\lambda = Y_\lambda(K)$ denote the manifold obtained by $\lambda$-framed surgery on $K$. The meridian $\mu$ is isotopic to a core circle of the glued-in solid torus. Let $K_\lambda$ denote this core circle, with the orientation inherited from the \emph{left-handed} meridian $-\mu$. The curve $\lambda \subset \partial (\nbd(K_\lambda)) = \partial (\nbd(K))$ then serves as a right-handed meridian for $K_\lambda$, since $\#(-\mu \cdot \lambda) = 1$ when $\partial (\nbd(K_\lambda))$ is given its boundary orientation.

The sets $\ul\Spin^c(Y,K)$ and $\ul\Spin^c(Y_\lambda, K_\lambda)$ are canonically identified, since they depend only on the complement. Viewing $[F]$ as an element of $H_2(Y_\lambda(K), K_\lambda)$, we have $[\lambda] \cdot [F] = k$. We thus have
\[
\AlNorm_{Y_\lambda, K_\lambda}(\xi) = \frac{ \gen{c_1(\xi), [F]} + k }{2k} =  \frac{2d \AlNorm_{Y,K}(\xi) - d +k}{2k}.
\]
This justifies \eqref{eq: A(xi-l)}.

As shown in \cite{OSzKnot}, a doubly-pointed Heegaard diagram $(\Sigma, \bm\alpha, \bm\beta, w, z)$ determines a $3$-manifold $Y$ and an oriented knot $K \subset Y$. To be precise, let $H_\alpha$ and $H_\beta$ be the two handlebodies in the Heegaard splitting; recall that $\Sigma$ is oriented as the boundary of the $\alpha$ handlebody. Let $\lambda$ be an immersed curve in $\Sigma$ obtained as $\lambda = t_\alpha + t_\beta$, where $t_\alpha$ is an embedded arc in $\Sigma \minus \bm\alpha$ from $z$ to $w$, and $t_\beta$ is an embedded arc in $\Sigma \minus \bm\beta$ from $w$ to $z$. We obtain $K \subset Y$ by pushing $t_\alpha$ into $H_\alpha$ and $t_\beta$ into $H_\beta$. Thus, $K$ intersects $\Sigma$ positively at $w$ and negatively at $z$. In other words, we may write $K = \gamma_w - \gamma_z$, where $\gamma_w$ (resp.~$\gamma_z$) is the upward-oriented flowline from the index-$0$ critical point $p_0$ to the index-$3$ critical point $p_3$ of the a function associated with the Heegaard diagram.  The meridian $\mu$ can be realized as a counterclockwise circle in $\Sigma$ around $w$.

Note that both possible conventions for how to orient $K$ exist in the literature, leading to some sign confusions. Our convention agrees with \cite{OSzKnot}, but not with \cite{OSzLink, OSzRational}.


Ozsv\'ath and Szab\'o show how to associate to each generator $\x \in \T_\alpha \cap \T_\beta$ a relative spin$^c$ structure $\ul\spincs_{w,z}(\x) \in \ul\Spin^c(Y,K)$, with the property that $G_{Y,K} (\ul\spincs_{w,z}(\x)) = \spincs_w(\x)$. The Alexander grading of $\x$ is defined as
\begin{align}
\label{eq: alex-def}
\AlNorm_{w,z}(\x) &= \AlNorm_{Y,K}(\ul\spincs_{w,z}(\x)) = \frac{1}{2d} \gen{c_1(\ul\spincs_{w,z}(\x)), [F]} + \frac{1}{2}
\end{align}
where $F$ is a rational Seifert surface for $K$.

For any generators $\x, \y$ with $\spincs_w(\x) = \spincs_w(\y)$, and any disk $\phi \in \pi_2(\x,\y)$, we have the familiar formula
\begin{equation} \label{eq: rel-alex}
\AlNorm_{w,z}(\x) - \AlNorm_{w,z}(\y) = n_z(\phi) - n_w(\phi).
\end{equation}
We will verify this formula below.

More generally, given any $\x, \y \in \T_\alpha \cap \T_\beta$, let $a$ and $b$ be $1$-chains in $\bm\alpha$ and $\bm\beta$, respectively, with $\partial a = \partial b = \y - \x$, and let $\ul\epsilon(\x,\y)$ be the $1$-cycle $a - b$. (That is, $\ul\epsilon(\x,\y)$ goes from $\x$ to $\y$ along $\bm\alpha$ and from $\y$ to $\x$ along $\bm\beta$.) This is well-defined up to adding multiples of the $\alpha$ and $\beta$ circles. Note that $\ul\epsilon(\x,\y)$ is homologous in $H_1(Y \minus K)$ to the difference $\gamma_\y - \gamma_\x$, where $\gamma_\x$ (resp.~$\gamma_\y$) is the sum of the upward gradient flow lines through $\x$ (resp.~$\y$) of the Morse function on $Y$ associated to the Heegaard diagram. (We see this by pushing the interior of $a$ into $H_\alpha$ and the interior of $b$ into $H_\beta$.) This $1$-cycle represents a class in $H_1(Y \minus K)$ which is independent of the choices of $a$ and $b$ (that is, up to adding multiples of $\alpha$ and $\beta$ circles). By \cite[Lemma 3.11]{OSzLink} and \cite[Lemma 2.1]{OSzRational}, we have:\footnote{Formula \ref{eq: rel-spinc-epsilon} was stated with signs reversed in \cite[Lemma 3.11]{OSzLink}, on account of $\ul\epsilon(\x,\y)$ implicitly being oriented the wrong way. However, the proof of \cite[Lemma 2.19]{OSz3Manifold} shows that our statement has the correct signs.}
\begin{equation} \label{eq: rel-spinc-epsilon}
\ul\spincs_{w,z}(\x) - \ul\spincs_{w,z}(\y) = -\PD[\ul\epsilon(\x,\y)].
\end{equation}
Therefore,
\begin{align}
\nonumber \AlNorm_{w,z}(\x) - \AlNorm_{w,z}(\y) &= \frac{1}{2d} \gen{c_1(\ul\spincs_{w,z}(\x)) - c_1(\ul\spincs_{w,z}(\y)), [F]} \\
\nonumber &= -\frac{\gen{\PD[\ul\epsilon(\x,\y)], [F]}}{d} \\
\label{eq: rel-alex-epsilon} &= -\frac{[\ul\epsilon(\x,\y)] \cdot [F]}{d}.
\end{align}
This formula completely characterizes the Alexander grading up to an overall shift, even when $Y$ is not a rational homology sphere.

If $\x$ and $\y$ represent the same (absolute) spin$^c$ structure, and $D$ is the domain of a disk $\phi \in \pi_2(\x,\y)$, then $\partial D = \ul\epsilon(\x,\y)$. More generally, suppose the image of $\ul\epsilon(\x,\y)$ in $H_1(Y)$ has finite order $n$. (If $Y$ is a rational homology sphere, this is true for all $\x$ and $\y$.) Then there is a domain $D$ in $\Sigma$ (that is, an integral linear combination of regions) with $\partial D = n \ul\epsilon(\x,\y)$.
We may interpret the intersection number $[\ul\epsilon(\x,\y)] \cdot [F]$ from \eqref{eq: rel-alex-epsilon} as the linking number between the disjoint $1$-cycles $\epsilon(\x,\y)$ and $d K$. Symmetry of the linking number then implies that
\[
\AlNorm_{w,z}(\x) - \AlNorm_{w,z}(\y) = - \frac{[D] \cdot [K]}{n}
\]
Since $K$ meets $\Sigma$ positively at $w$ and negatively at $z$, we deduce that
\begin{equation} \label{eq: rel-alex-fractional}
\AlNorm_{w,z}(\x) - \AlNorm_{w,z}(\y) = \frac{n_z(D) - n_w(D)}{n}.
\end{equation}
The $n=1$ case is \eqref{eq: rel-alex}, as claimed above.

Next, we explain conjugation symmetry of knot Floer homology, which motivates the second term in the numerator of \eqref{eq: spinc-alex}. It is shown in \cite[Lemma 3.12 and Proposition 8.2]{OSzLink} that for each $\xi \in \ul\Spin^c(Y,K)$, we have
\begin{equation} \label{eq: HFK-sym-spinc}
\HFKa(Y,K, \xi) \cong \HFKa(Y,K, J(\xi)-\PD[\mu]),
\end{equation}
with an appropriate shift in the Maslov grading, where $J$ denotes spin$^c$ conjugation.\footnote{Ozsv\'ath and Szab\'o \cite{OSzLink} state this formula with $+\PD[\mu]$ instead of $-$. However, as noted above, their orientation convention for $K$ is opposite ours, so the sign of the meridian is reversed as well. Ni's definition of the Alexander grading  \cite[Section 4.4]{NiThurstonNorm} follows the same convention as \cite{OSzLink}; this explains the sign discrepancy between our definition \eqref{eq: spinc-alex} and Ni's.} By our definition \eqref{eq: spinc-alex}, we have:
\begin{align*}
\AlNorm_{Y,K}(J(\xi) - \PD[\mu]) &= \frac{1}{2d} \left(\gen{c_1(J(\xi) - \PD[\mu]), [F]} + [\mu] \cdot [F] \right) \\
&= \frac{1}{2d} \left(\gen{c_1(J(\xi)) - 2\PD[\mu], [F]} + [\mu] \cdot [F] \right) \\
&= \frac{1}{2d} \left(\gen{-c_1(\xi), [F]} - [\mu] \cdot [F] \right) \\
&= - \AlNorm_{Y,K}(\xi).
\end{align*}
Therefore, if we define (for any rational number $r$)
\begin{equation} \label{eq: HFK-alex-decomp}
\HFKa(Y,K,  r) = \bigoplus_{\{\xi \in \ul\Spin^c(Y,K) \mid \AlNorm_{Y,K}(\xi) = r\}} \HFKa(Y,K,\xi),
\end{equation}
we have the symmetrization property
\begin{equation} \label{eq: HFK-sym-alex}
\HFKa(Y,K, r) \cong \HFKa(Y,K, -r).
\end{equation}
This property together with \eqref{eq: rel-alex-epsilon} completely determines the function $\AlNorm_{w,z}$. Note that the sum in \eqref{eq: HFK-alex-decomp} may range over relative spin$^c$ structures which induce different absolute spin$^c$ structures on $Y$. Note, too, that the symmetry does not necessarily hold within each individual absolute spin$^c$ structure. (However, it does hold if we sum over all $\xi \in \ul\Spin^c(Y,K)$ which map to all the torsion spin$^c$ structures on $Y$; this is relevant for Lemma \ref{lemma: wz-maslov} below.)

\subsection{Relative periodic domain formula} \label{ssec: rel-per-domain}

We now prove Proposition \ref{prop: abs-alex}, which shows how the absolute Alexander grading can be computed directly from a Heegaard diagram.

\begin{proof}[Proof of Proposition \ref{prop: abs-alex}]
It is possible to give an explicit topological proof of \eqref{eq: abs-alex} along the lines of the first Chern class formula from \cite[Proposition 7.5]{OSzProperties}, taking into account both basepoints. Here, we take a more indirect approach. As noted in the previous section, the function $\AlNorm_{w,z}\co \T_\alpha \cap \T_\beta \to \Q$ is completely determined by the properties \eqref{eq: rel-alex-epsilon} and \eqref{eq: HFK-sym-alex}. It thus suffices to show that the function
\[
\AlNorm'_{w,z}(\x) := \frac{1}{2d} \left( \hat\chi(P) + 2n_\x(P) - n_z(P) - n_w(P) \right)
\]
satisfies the same two properties.

To check that $\AlNorm'_{w,z}$ satisfies the analogue of \eqref{eq: rel-alex-epsilon}, it suffices to see that
\[
[\ul\epsilon(\x,\y)] \cdot [P] = n_\y(P)-n_\x(P).
\]
This is immediate from the description of $[\epsilon(\x,\y)]$ as $[\gamma_\y-\gamma_\x]$ as above, together with the construction of a relative $2$-cycle representing $[F]$ from the relative periodic domain $P$.  Details are provided in \cite[Lemma 2.1]{HeddenPlamenevskayaRational}. Thus, $\AlNorm'_{w,z}$ agrees with $\AlNorm_{w,z}$ up to adding an overall constant.

Verifying the symmetry
\begin{equation} \label{eq: HFK-sym-alex2}
\HFKa(Y,K,\AlNorm'_{w,z} = r) \cong \HFKa(Y,K, \AlNorm'_{w,z}=-r)
\end{equation}
is somewhat more involved, though straightforward.  The first step is to check that the absolute grading on $\CFKa(\Sigma, \bm\alpha, \bm\beta, w, z)$ induced by $\AlNorm'_{w,z}$ does not depend on the Heegaard diagram or auxiliary choices. This entails several verifications, whose details are left as an exercise:
\begin{itemize}
\item
If we leave $\lambda$ fixed, any other relative periodic domain representing $[F]$ differs from $P$ by adding a multiple of $\Sigma$. Note that
\begin{align*}
\hat\chi(P + \Sigma) &= \hat\chi(P) + 2-2g  && \\
n_\x(P + \Sigma) &= n_\x(P) + g & n_\y(P + \Sigma) &= n_\y(P) + g \\
n_{w}(P + \Sigma) &= n_{w}(P) + 1 & n_{z}(P + \Sigma) &= n_{z}(P) + 1.
\end{align*}
Therefore, $\AlNorm'_{w,z}$ is unchanged under replacing $P$ by $P+\Sigma$ in the definition.

\item
Any two choices of the arc $t_\alpha$ differ by isotopy rel endpoints or by a handleslide over the $\alpha$ circles. Either operation may introduce new intersections between $t_\alpha$ and either the $\beta$ circles or $t_\beta$. By looking at how the local multiplicities of $P$ change under each operation, one can verify that $\AlNorm'_{w,z}$ is unchanged. An analogous argument holds for $t_\beta$.

\item
Finally, if we modify the Heegaard diagram by an isotopy, handleslide, or (de)stabilization away from both $w$ and $z$, the induced homotopy equivalence on $\CFKa$ preserves $\AlNorm_{w,z}$. If this map takes a generator $\x$ of the old diagram to a generator $\y$ of the new one, then by looking at an appropriately defined $1$-cycle $\ul\epsilon(\x,\y)$ and its intersection with the Seifert surface as above, one can verify that
\[
\AlNorm_{w,z}'(\x) - \AlNorm_{w,z}'(\y) = \AlNorm_{w,z}(\x) - \AlNorm_{w,z}(\y) = 0.
\]
Hence, the homotopy equivalence preserves $\AlNorm'_{w,z}$ as well.  For instance, in the map associated to a handleslide, such a 1-cycle is provided by the obstruction class for finding a Whitney triangle connecting $\x$ to $\y$ which misses the basepoints.
\end{itemize}

Next, recall that the Heegaard diagrams $(\Sigma, \bm\alpha, \bm\beta, w, z)$ and $(-\Sigma, \bm\beta, \bm\alpha, z, w)$ both present $(Y,K)$ with the same orientations and have isomorphic $\CFKa$, which gives the spin$^c$ conjugation symmetry \eqref{eq: HFK-sym-spinc}. Because we swap $w$ and $z$,  $-P$ plays the role of the relative periodic domain in the latter Heegaard diagram; this has the effect of negating each term on the right side of \eqref{eq: abs-alex}. For each $\x \in \T_\alpha \cap \T_\beta$, we thus have $\AlNorm'_{w,z}(\x)= -\AlNorm'_{z,w}(\x) $, where the former refers to the proposed grading on  $\CFKa(\Sigma, \bm\alpha, \bm\beta, w, z)$ and the latter refers to its values on $\CFKa(-\Sigma, \bm\beta, \bm\alpha, z, w)$. Thus, we have an isomorphism
\[
\HFKa(\Sigma, \bm\alpha, \bm \beta, w,z, \AlNorm'_{w,z} = r)  \cong \HFKa(-\Sigma,\bm\beta, \bm\alpha,z,w, \AlNorm'_{z,w}=-r).
\]
Combining this with the isomorphism
\[
\HFKa(-\Sigma, \bm\beta, \bm\alpha,z,w, \AlNorm'_{z,w}=-r) \cong \HFKa(\Sigma, \bm\alpha, \bm\beta,w,z, \AlNorm'_{w,z} = -r)
\]
induced by the Heegaard moves taking $(-\Sigma, \bm\beta, \bm\alpha, z, w)$  to $(\Sigma, \bm\alpha, \bm\beta, z, w)$, followed by the map induced by the half-twist diffeomorphism of pointed knots moving the basepoints half-way around $K$ (to yield  $(\Sigma, \bm\alpha, \bm\beta, w, z)$),  we deduce that the symmetry \eqref{eq: HFK-sym-alex2} holds, as required.
 \end{proof}

\begin{figure}
\labellist \small
 \pinlabel $\bullet$ at 68.5 84
 \pinlabel $z$ [l] at 69 84
 \pinlabel $\bullet$ at 68.5 76
 \pinlabel $w$ [l] at 69 75
 \pinlabel $-1$ at 55 82
 \pinlabel $0$ at 81 82
 \pinlabel $4$ at 61 68
 \pinlabel $5$ at 76 68
 \pinlabel $2$ at 97 61
 \pinlabel $3$ at 118 61
 \pinlabel $1$ at 149 61
 \pinlabel $1$ at 18 61
 \pinlabel $2$ at 18 22
 \pinlabel $2$ at 18 150
 \pinlabel $3$ at 38 137
 \pinlabel $1$ at 58 114
 \pinlabel $2$ at 79 114
 \pinlabel $4$ at 98 99
 \pinlabel $5$ at 119 99
 \pinlabel $3$ at 149 99
 \pinlabel $2$ at 160 150
 \pinlabel $2$ at 160 22
 \pinlabel $a$ [Bl] at 48 90
 \pinlabel $b$ [Bl] at 88 90
 \pinlabel $c$ [Bl] at 128 90
 \pinlabel $x$ [tl] at 67 57
 \pinlabel $\bullet$ at 56.5 68
 \pinlabel $w'$ [r] at 57 69
 \pinlabel $\bullet$ at 80.5 68
 \pinlabel $z'$ [l] at 80 69
 \pinlabel $p$ [Bl] at 68 90
 \pinlabel $q$ [Bl] at 108 90
 \pinlabel $r$ [br] at 29 37
 \pinlabel $s$ [t] at 64 49
 \pinlabel $t$ [b] at 115 127
 \pinlabel {{\color{red} $\alpha_1$}} [b] at 29 88
 \pinlabel {{\color{red} $\alpha_2$}} [l] at 28 155
 \pinlabel {{\color{blue} $\beta_1$}} [r] at 50 114
 \pinlabel {{\color{blue} $\beta_2$}} [tl] at 75 62
 \pinlabel {{\color{green} $\lambda$}} [t] at 56 36
\endlabellist
\includegraphics[scale=1.5]{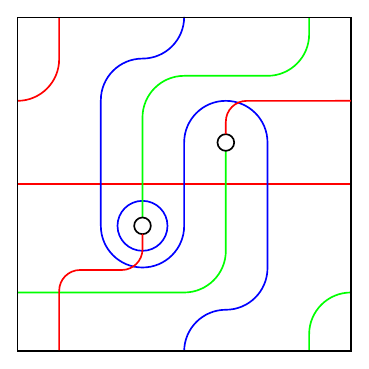}
\caption{Heegaard diagram $(\Sigma, \bm\alpha, \bm\beta, w, z)$ for the right-handed trefoil. The green curve $\lambda$ represents a $+5$-framed longitude.}
\label{fig: trefoil}
\end{figure}

\begin{remark}
The above discussion provides an alternative proof of Ozsv{\'a}th and Szab{\'o}'s Chern class evaluation formula \cite[Proposition 7.5]{OSzProperties}.  Given a generator $\x\in \T_\alpha\cap\T_\beta$, their formula expresses the evaluation of the  Chern class of the absolute spin$^c$ structure $\spincs_w(\x)$ structure on the absolute homology class of a periodic domain $P$:
\begin{equation}\label{eq:OSchern} \langle c_1(\spincs_w(\x)),[P]\rangle =\hat\chi(P) + 2n_\x(P) - 2n_{w}(P).  \end{equation}

To recover this formula from Proposition \ref{prop: abs-alex}, we consider an unknot $U$ bounding  a disk  in a coordinate  ball of $Y$.  Given a pointed Heegaard for $Y$, we obtain a doubly pointed Heegaard diagram for $(Y,U)$ by placing $z$ in the same region as $w$, and choose $\lambda$ to be the boundary of a small disk $D$ contained inside this region. Given an absolute homology class $[P] \in H_2(Y)$ represented by a periodic domain $P$, we consider the relative periodic domain $P' = P+D$, which represents a relative homology class  $[P']\in H_2(Y,U)$. We have $i_*[P] = [P']$, where $i\co Y\rightarrow (Y,K)$ is the inclusion map. Naturality of relative and absolute Chern classes  \cite{KervaireRelative} implies that the left hand side of \ref{eq:OSchern} can be identified with $\gen{c_1(\ul\spincs_{w,z}(\x)),i_*[P]}$.

On the other hand, the formula for the Alexander grading, taken with respect to the relative homology class $[P']$, shows that
\begin{equation}\label{eq:AlexP'} 
\langle c_1(\ul\spincs_{w,z}(\x)),[P']\rangle + [\mu]\cdot P'= \hat\chi(P') + 2n_\x(P') - n_{w}(P')-n_{z}(P')  
\end{equation}
Observe that $n_w(P')=n_z(P')=n_w(P)+\frac{1}{2}$, that  $\hat\chi(P')=\hat\chi(P)+1$, and that $2n_\x(P') = 2n_\x(P)$. Therefore, the right-hand side of \ref{eq:AlexP'} equals the right-hand side of \ref{eq:OSchern}.  Next, observe that
 \[
 \gen{ c_1(\ul\spincs_{w,z}(\x)),[P']} + [\mu]\cdot P' = \gen{ c_1(\ul\spincs_{w,z}(\x)),i_*[P]} +  \gen{c_1(\ul\spincs_{w,z}(\x)),[D]} + [\mu]\cdot D.\]
Since the Alexander grading for an unknot relative to a disk Seifert surface is identically zero, we have $\gen{ c_1(\ul\spincs_{w,z}(\x)),[D]} + [\mu]\cdot [D]=0$, which concludes the proof of \eqref{eq:OSchern}.
\end{remark}

\begin{example} \label{ex: trefoil}
Consider the genus-2 Heegaard diagram for the right-handed trefoil shown in Figure \ref{fig: trefoil}. We have $\T_\alpha \cap \T_\beta = \{ax,bx,cx\}$. The green curve $\lambda$, which passes through the basepoints $w$ and $z$, represents a $+5$-framed longitude. The coefficients in Figure \ref{fig: trefoil} represent a relative periodic domain $P$. We have
\begin{align*}
\hat\chi(P)
&= -4 &
n_z(P) &= -\tfrac12 &
n_w(P) &= \tfrac92 && \\
n_a(P) &= 1 &
n_b(P) &= 2 &
n_c(P) &= 3 &
n_x(P) &= 2
\end{align*}
and therefore
\begin{align*}
\AlNorm_{w,z}(ax) &= -1 & \AlNorm_{w,z}(bx) &= 0 & \AlNorm_{w,z}(cx) &= 1.
\end{align*}
This is consistent with both \eqref{eq: rel-alex} and \eqref{eq: HFK-sym-alex}.

For a second example, note that the Heegaard diagram $(\Sigma, \bm\alpha, \bm\gamma)$ presents $\lambda$-framed surgery on $K$, where $\bm\gamma = \{\beta_1, \lambda\}$. Moreover, the curve $\beta_2$ determines the knot $K_\lambda$ induced by the surgery, so we can represent $K_\lambda$ with basepoints $w'$, $z' \in \beta_2$ as shown. The ordering of $w'$ and $ z'$ is chosen to be consistent with the orientation on $\beta_2$ that makes it occur in the boundary of $P$ with positive coefficient. The reader can check that the Alexander gradings of generators of $\CFKa(\Sigma, \bm\alpha, \bm\gamma, w', z')$ are:
\begin{gather*}
\AlNorm_{w',z'}(ar) = -\frac15  \qquad \qquad
\AlNorm_{w',z'}(ps) = -\frac35 \\
\AlNorm_{w',z'}(pt) = \AlNorm_{w',z'}(qs) = \AlNorm_{w',z'}(br) = 0 \\
\AlNorm_{w',z'}(qt) = \frac35   \qquad \qquad
\AlNorm_{w',z'}(cr) = \frac15
\end{gather*}
Once again, the symmetry \eqref{eq: HFK-sym-alex} is satisfied.

In the complex $\CFKa(\Sigma, \bm\alpha, \bm\gamma, w', z')$, we have $\partial(pt) = \partial (qs) = br$, which implies that the knot $K_\lambda$ is Floer simple. As a sanity check, since $\abs{\alpha_2 \cap \lambda} = 1$, we can destabilize this pair of curves to produce a standard genus-$1$ Heegaard diagram for a simple knot in a lens space, which is consistent with known results about $+5$ surgery on the trefoil.

For additional examples, see \cite[Section 6]{RaouxTau}.
\end{example}

\begin{remark} \label{rmk: per-domain-sign}
Suppose we choose a Heegaard diagram $(\Sigma, \bm\alpha, \bm\beta, w, z)$ for an oriented knot $K$, but consider a relative periodic domain $P$ that represents $-[F]$ rather than $[F]$; in other words, we assume $\partial P = -d\lambda + \cdots$, where $d>0$. Then \eqref{eq: abs-alex} still holds, provided that we take $-d$ in place of $d$ in the denominator. In other words, the denominator is simply the coefficient of $\lambda$ in $\partial P$, whether positive or negative. This is one of the reasons we prefer our normalization for the Alexander grading.
\end{remark}

We conclude this section with another helpful fact about the relationship between the Alexander and Maslov gradings (c.f. \cite[Proof of Lemma 4.10]{BakerGrigsbyHedden}, \cite[Proof of Theorem 3.3]{NiWuRational}). For any generator $\x \in \T_\alpha \cap \T_\beta$, we have $\spincs_z(\x) = \spincs_w(\x) + \PD[K]$. Since we assume that the knot $K$ is rationally null-homologous, this implies that $\spincs_w(\x)$ is a torsion spin$^c$ structure iff $\spincs_z(\x)$ is. If so, then $\x$ admits two separate absolute Maslov gradings when viewed as an element of $\CFa(\Sigma, \bm\alpha, \bm\beta, w)$ and $\CFa(\Sigma, \bm\alpha, \bm\beta, z)$; we denote these by $\absgr_w$ and $\absgr_z$ respectively.

\begin{lemma} \label{lemma: wz-maslov}
For any $\x \in \T_\alpha \cap \T_\beta$ for which $\spincs_w(\x)$ is torsion, we have
\[
\absgr_w(\x) - \absgr_z(\x) = 2 \AlNorm_{w,z}(\x).
\]
\end{lemma}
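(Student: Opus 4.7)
The strategy is to verify the identity in two steps: first reduce to showing that the difference is constant on each torsion $\spincs$ summand, then pin down that constant by a four-dimensional computation.

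For the first step, I would apply the Maslov index formula at each basepoint. Given two generators $\x, \y \in \T_\alpha \cap \T_\beta$ with $\spincs_w(\x) = \spincs_w(\y)$ torsion (hence also $\spincs_z(\x) = \spincs_z(\y)$ torsion, since these differ by $\PD[K]$), pick any Whitney disk $\phi \in \pi_2(\x,\y)$ and observe
\begin{align*}
\absgr_w(\x) - \absgr_w(\y) &= \mu(\phi) - 2n_w(\phi), \\
\absgr_z(\x) - \absgr_z(\y) &= \mu(\phi) - 2n_z(\phi).
\end{align*}
Subtracting these and invoking \eqref{eq: rel-alex} shows that the quantity $D(\x) := \absgr_w(\x) - \absgr_z(\x) - 2\AlNorm_{w,z}(\x)$ depends only on $\spincs_w(\x)$ (among torsion spin$^c$ structures).

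For the second step, I would use the $4$-manifold characterization of the absolute Maslov grading. Since $K$ is rationally null-homologous, I can choose a cobordism $W \co S^3 \to Y$ containing a properly embedded rational $2$-chain $\hat F \subset W$ with $\partial \hat F = dK$ that extends a rational Seifert surface $F$. Any relative spin$^c$ structure $\xi$ with $G_{Y,K}(\xi)$ torsion extends to a spin$^c$ structure on $W \smallsetminus \nu(\hat F)$, and admits two canonical extensions $\tilde\spincs^w$ and $\tilde\spincs^z$ across $\hat F$ that restrict to $\spincs_w(\x)$ and $\spincs_z(\x)$ respectively (for any $\x$ with $\ul\spincs_{w,z}(\x) = \xi$) and differ by $\tfrac{2}{d}\PD[\hat F]$. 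The cobordism grading-change formula then yields
\[
\absgr_w(\x) - \absgr_z(\x) = \frac{c_1(\tilde\spincs^w)^2 - c_1(\tilde\spincs^z)^2}{4} = \frac{1}{d}\gen{c_1(\tilde\spincs^w), [\hat F]} - \frac{1}{d^2}\,\hat F \cdot \hat F.
\]
By naturality of Chern classes (as in \cite{KervaireRelative}) the first term recovers the relative pairing $\tfrac{1}{d}\gen{c_1(\xi),[F]}$ used in \eqref{eq: spinc-alex}, while the self-intersection term provides exactly the normalization needed to produce the $+[\mu]\cdot[F]$ correction in the numerator of \eqref{eq: spinc-alex}. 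The right-hand side thus simplifies to precisely $2\AlNorm_{w,z}(\x)$, forcing $D \equiv 0$.

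The main obstacle will be the sign-and-normalization bookkeeping in Step 2: constructing $\hat F$ so that its self-intersection in $W$ matches the framing data determined by $\partial F = d\lambda - k\mu$, and checking that the two canonical extensions of $\xi$ across $\hat F$ really do restrict to $\spincs_w(\x)$ and $\spincs_z(\x)$ with the claimed difference $\tfrac{2}{d}\PD[\hat F]$. An appealing alternative would be to bypass the $4$-manifold input by computing $D$ directly on a single test generator in each torsion $\spincs$-summand (for instance on an unknot in $Y$, where both $\absgr_w$, $\absgr_z$, and $\AlNorm_{w,z}$ vanish simultaneously) and then propagating the answer via Step 1; this, however, still requires exhibiting a generator of known absolute Maslov grading inside each torsion $\spincs$ class, which ultimately comes back to a cobordism computation of comparable difficulty.
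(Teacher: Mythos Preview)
Your Step~1 is correct and is close to the paper's opening move, though the paper proves something slightly stronger: rather than only comparing generators in the \emph{same} torsion spin$^c$ structure via a Whitney disk, it takes any two generators in (possibly different) torsion spin$^c$ structures, finds a domain $D$ with $\partial D = n\,\ul\epsilon(\x,\y)$, and applies the Lee--Lipshitz fractional relative grading formula to get
\[
\tfrac12\bigl(\absgr_w(\x)-\absgr_z(\x)\bigr) - \tfrac12\bigl(\absgr_w(\y)-\absgr_z(\y)\bigr) = \frac{n_z(D)-n_w(D)}{n} = \AlNorm_{w,z}(\x)-\AlNorm_{w,z}(\y).
\]
So in the paper's version there is a single constant to pin down, not one per spin$^c$ structure.

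Your Step~2 is where the two arguments diverge completely. The paper does \emph{no} four-manifold computation at all. Instead it sets $\AlNorm'_{w,z}(\x) := \tfrac12(\absgr_w(\x)-\absgr_z(\x))$ and observes that swapping basepoints (passing to $(-\Sigma,\bm\beta,\bm\alpha,z,w)$) gives $\AlNorm'_{z,w}(\x)=-\AlNorm'_{w,z}(\x)$. Since $\AlNorm_{w,z}$ has the same antisymmetry, and both functions are preserved under the Heegaard moves relating $(-\Sigma,\bm\beta,\bm\alpha,z,w)$ back to $(\Sigma,\bm\alpha,\bm\beta,w,z)$, the symmetry argument from the proof of Proposition~\ref{prop: abs-alex} forces the constant to be zero. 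This is much cleaner than a cobordism computation and sidesteps all the bookkeeping you flag as the ``main obstacle.''

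Your cobordism approach is plausible in outline but the sketch as written has gaps. First, the displayed formula
\[
\absgr_w(\x) - \absgr_z(\x) = \frac{c_1(\tilde\spincs^w)^2 - c_1(\tilde\spincs^z)^2}{4}
\]
is missing the basepoint-dependent contribution from the triangle: if $\psi$ is the triangle used to define the absolute grading, the two gradings involve $2n_w(\psi)$ versus $2n_z(\psi)$, so there is an extra $2(n_w(\psi)-n_z(\psi))$ term you must track. Second, the claim that the two canonical extensions differ by $\tfrac{2}{d}\PD[\hat F]$ cannot be right as stated, since spin$^c$ structures on $W$ differ by integral cohomology classes; the correct difference is $\PD[C]$ for an integral relative $2$-cycle $C$ with $\partial C = K$ (so $d[C]=[\hat F]$ in $H_2(W,Y)$), and you would need to argue that $\spincs_z(\psi)=\spincs_w(\psi)+\PD[C]$ for the specific $\psi$ in play. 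These issues are fixable, but once you sort them out you will essentially have re-derived the Chern class evaluation formulas the paper uses elsewhere, whereas the symmetry argument gets the answer in a few lines.
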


\begin{proof}
Define $\AlNorm_{w,z}'(\x) = \frac12(\absgr_w(\x) - \absgr_z(\x))$. Suppose $\x$ and $\y$ are generators representing (possibly different) torsion spin$^c$ structures. Choose a domain $D$ with $\partial(D) = n \epsilon(\x,\y)$. By the Lee--Lipshitz relative grading formula \cite[Proposition 2.13]{LeeLipshitz} together with \eqref{eq: rel-alex-fractional}, we compute:
\begin{align*}
\AlNorm_{w,z}'(\x) - \AlNorm'_{w,z}(\y) &= \frac{1}{2n} \left( (\mu(D) - 2n_w(D)) - (\mu(D) - 2n_z(D)) \right) \\
&= \frac{1}{n} \left( n_z(D) - n_w(D) \right) \\
&= \AlNorm_{w,z}(\x) - \AlNorm_{w,z}(\y).
\end{align*}
Thus, $\AlNorm_{w,z}'$ agrees with $\AlNorm_{w,z}$ (on all generators representing torsion spin$^c$ structures) up to an overall constant. To pin down the constant, note that $\AlNorm_{z,w}'(\x) = -\AlNorm'_{w,z}(\x)$, where the former refers to the grading on $\CFK(-\Sigma, \bm\beta, \bm\alpha, z, w)$, and proceed just as in the proof of Proposition \ref{prop: abs-alex}.
\end{proof}

\subsection{The knot Floer complex for rationally null-homologous knots} \label{ssec: CFKi}

For any Heegaard diagram $(\Sigma, \bm\alpha, \bm\beta, w)$, the complex $\CFi(\Sigma, \bm\alpha, \bm\beta, w)$ is generated (over $\F$) by all pairs $[\x, i]$, where $\x \in \T_\alpha \cap \T_\beta$ and $i \in \Z$. The differential on the chain complex $\CFi(\Sigma, \bm\alpha, \bm\beta, w)$ is given by
\begin{equation} \label{eq: CFi-diff}
\partial [\x,i] = \sum_{\y \in \T_\alpha \cap \T_\beta} \sum_{\substack{\phi \in \pi_2(\x,\y) \\ \mu(\phi)=1}} \# \widehat{\MM}(\phi) [\y, i-n_w(\phi)].
\end{equation}
For each spin$^c$ structure $\spincs$, the summand $\CFi(\Sigma, \bm\alpha, \bm\beta, w, \spincs)$ is generated by all $[\x,i]$ with $\spincs_w(\x) = \spincs$. The action of $U$ is given by $U \cdot [\x,i] = [\x, i-1]$. Let $\CFm(\Sigma, \bm\alpha, \bm\beta, w, \spincs)$ denote the subcomplex generated by all $[\x,i]$ with $i<0$, and $\CFp(\Sigma, \bm\alpha, \bm\beta, w, \spincs)$ the quotient of $\CFi$ by this complex. For $t \in \N$, let $\CF^t(\Sigma, \bm\alpha, \bm\beta, w, \spincs)$ denote the kernel of the action of $U^{t+1}$ on $\CFp$; concretely, it is generated by all $[\x,i]$ with $0 \le i \le t$.\footnote{In \cite{OSzSurgery, OSzRational} and elsewhere in the literature, the notation $\CF^\delta$ is used; we have chosen to use $t$ to avoid confusion with the $\delta$ curves used in Sections \ref{sec: large} and \ref{sec: exact-sequence}.} Note that $\CF^t(\Sigma, \bm\alpha, \bm\beta, w, \spincs)$ is isomorphic (up to a grading shift) to the quotient
\[
\CFm(\Sigma, \bm\alpha, \bm\beta, w, \spincs) / (U^{t+1} \cdot \CFm(\Sigma, \bm\alpha, \bm\beta, w, \spincs)),
\]
and we sometimes use this perspective instead.


Let $(\Sigma, \bm\alpha, \bm\beta, w, z)$ be a doubly pointed Heegaard diagram for a rationally null-homologous knot $K \subset Y$.  For each $\spincs \in \Spin^c(Y)$, let $\CFKi(\Sigma, \bm\alpha, \bm\beta, w, z, \spincs)$ be generated by all $[\x, i, j]$, where $\spincs_w(\x) = \spincs$, $i \in \Z$, and $j-i = \AlNorm_{w,z}(\x)$. (Note that $j$ need not be an integer!) The action of $U$ is given by $U \cdot [\x,i,j] = [\x,i-1,j-1]$, and the differential is given by
\begin{equation} \label{eq: CFKi-diff}
\partial [\x,i,j] = \sum_{\y \in \T_\alpha \cap \T_\beta} \sum_{\substack{\phi \in \pi_2(\x,\y) \\ \mu(\phi)=1}} \# \widehat{\MM}(\phi) [\y, i-n_w(\phi), j-n_z(\phi)],
\end{equation}
which is valid by \eqref{eq: rel-alex}. There is a canonical isomorphism
\[
\Omega_w \co \CFKi(\Sigma, \bm\alpha, \bm\beta, w, z, \spincs) \to \CFi(\Sigma, \bm\alpha, \bm\beta, w, \spincs)
\]
given by $\Omega_w([\x,i,j]) = [\x,i]$. In other words, the $j$ coordinate can be seen as giving an extra filtration on $\CFi(\Sigma, \bm\alpha, \bm\beta, w)$, which we call the \emph{Alexander filtration}. Using the terminology of Section \ref{ssec: algebra}, this is a filtration of Alexander type, given by the function $\AlNorm_{w,z}([\x,i]) = \AlNorm_{w,z}(\x) + i$.

This filtration descends to the other flavors; when thinking of them as doubly-filtered objects, we will sometimes denote them by $\CFKm$, $\CFKp$, and $\CFK^t$. In particular,  $\CFK^0(\Sigma, \bm\alpha, \bm\beta, w, z)$ is simply $\CFa(\Sigma, \bm\alpha, \bm\beta, w)$, equipped with its Alexander filtration. The associated graded complex of the latter is $\CFKa(\Sigma, \bm\alpha, \bm\beta, w, z)$, whose homology is the knot Floer homology $\HFKa(Y,K)$.

Each of these complexes is a topological invariant of $(Y,K)$ up to doubly-filtered chain homotopy equivalence; as in the introduction, we sometimes denote them by $\CFKi(Y,K, \spincs)$, etc.

\begin{remark} \label{rmk: Cxi}
In \cite{OSzRational}, Ozsv\'ath and Szab\'o define a separate doubly filtered complex for each relative spin$^c$ structure. Specifically, they define $\CFKi(\Sigma, \bm\alpha, \bm\beta, w, z, \xi)$ to be generated by all $[\x, i, j] \in (\T_\alpha \cap \T_\beta) \times \Z \times \Z$ with
\[
\ul\spincs_{w,z}(\x) + (i-j) \PD[\mu] = \xi.
\]
For all $\xi$ within a given fiber $G_{Y,K}^{-1}(\spincs)$ (for $\spincs \in \Spin^c(Y)$), the resulting complexes are isomorphic by a shift in $j$. To translate between the Ozsv\'ath--Szab\'o description of $\CFKi$ and ours, for each $\xi \in G_{Y,K}^{-1}(\spincs)$, there is an isomorphism
\begin{align}
\label{eq: Cxi-isom} \CFKi(\Sigma, \bm\alpha, \bm\beta, w, z, \xi)  &\to \CFKi(\Sigma, \bm\alpha, \bm\beta, w, z, \spincs) \\
\nonumber [\x,i,j] &\mapsto [\x,i,j + \AlNorm_{Y,K}(\xi) ].
\end{align}
\end{remark}

We now describe the so-called ``flip map'' alluded to in the introduction. To begin, note that for any $\x \in \T_\alpha \cap \T_\beta$, we have $\spincs_z(\x) = \spincs_w(\x) + \PD[K]$. Thus, for each $s\in \Z+ \AlNorm_{Y,K}(\spincs)$, there is an isomorphism
\[
\Omega_{z,s} \co \CFKi(\Sigma, \bm\alpha, \bm\beta, w, z, \spincs) \to \CFi(\Sigma, \bm\alpha, \bm\beta, z, \spincs + \PD[K])
\]
given by $\Omega_{z,s}([\x,i,j]) = [\x, j-s]$. Let
\[
\Gamma \co \CFi(\Sigma, \bm\alpha, \bm\beta, z, \spincs + \PD[K]) \to \CFi(\Sigma, \bm\alpha, \bm\beta, w, \spincs + \PD[K])
\]
be the $\F[U]$-equivariant chain homotopy equivalence induced by Heegaard moves taking the diagram $(\Sigma, \bm\alpha, \bm\beta, z)$ to the diagram $(\Sigma, \bm\alpha, \bm\beta, w)$.

The naturality theorem of Juh\'asz, Thurston, and Zemke \cite{JuhaszThurstonZemkeNaturality} addresses the dependence of $\Gamma$ (up to $\F[U]$-equivariant chain homotopy) on the choice of Heegaard moves.  To describe this, note that the main theorem of \cite{JuhaszThurstonZemkeNaturality} assigns to a {\em pointed} three-manifold a transitive system of chain complexes.  In this framework, a diagram adapted to  the surgery should be regarded as  embedded in a fixed pointed 3-manifold.  Changing the basepoint from $w$ to $z$, however,  does not leave us with the same pointed 3-manifold.  To work with a fixed pointed 3-manifold and its associated transitive system, we view the effect of switching $w$ to $z$ instead as an isotopy of the meridian of $K$ over $w$ so that it lies on the other side (and then calling it $z$, but noting that is still the same point in the same three-manifold). We can realize the new diagram as the ``pushforward" of the original diagram under a diffeomorphism of the pointed 3-manifold.  This diffeomorphism is taken as the  time one map of an ambient extension  of the isotopy of the Heegaard surface which pushes the meridian over the baspoint.  While the resulting diffeomorphism is isotopic to the identity, the isotopy does not preserve the basepoint.     The definition of the pointed mapping class group action on Floer homology now associates to this diffeomorphism the chain homotopy equivalence gotten by composing the canonical ``pushforward" identification of Floer complexes induced by the diffeomorphism, with any set of {\em pointed} Heegaard moves bringing the meridian back to its original position (the homotopy class of $\Gamma$ described above).  The content of the Juh\'asz--Thurston--Zemke theorem is that the homotopy class of this  homotopy equivalence is a well-defined invariant of the pointed mapping class used to define it.    Note, however, that there was a choice involved in the construction of our pointed mapping class: it can be distilled down to the homotopy class of arc connecting the basepoint $w$ to $z$ on the Heegaard diagram.  In the end, this dependence is essentially equivalent to the $\pi_1(Y,p)$ action on Heegaard Floer homology, and so it is indeed important to specify an arc; this latter action, however, is homotopic to the identity for rational homology spheres by work of Zemke \cite{ZemkeGraph}, and so in those cases the homotopy class of the homotopy equivalence $\Gamma$ is independent of all choices involved.

Let
\[
\Psi^\infty_{\spincs, s} \co \CFKi(\Sigma, \bm\alpha, \bm\beta, w, z, \spincs) \to \CFKi(\Sigma, \bm\alpha, \bm\beta, w, z, \spincs + \PD[K])
\]
denote the composition $(\Omega_w)^{-1} \circ \Gamma \circ \Omega_{z, s}$, which is a chain-homotopy equivalence. Since the pair $(\spincs, s)$ determines, and is determined by, a relative spin$^c$ structure $\xi$, we may also denote this map by $\Psi^\infty_\xi$. For varying $s$, the maps $\Psi_{\spincs, s}$  are related by:
\[
\Psi_{\spincs, s+1} = U \circ \Psi_{\spincs, s} = \Psi_{\spincs, s} \circ U.
\]
Thus, it really suffices to know only one of them. When $K$ is null-homologous, so that the Alexander grading is integer-valued, it is most convenient to take $s=0$.

\begin{lemma} \label{lemma: Psi-filtered}
The map $\Psi^\infty_{\spincs, s}$ is a filtered homotopy equivalence with respect to the $j$ filtration on the domain and the $i$ filtration (shifted) on the range, in the following sense: for any $t \in \AlNorm_{Y,K}(\spincs)$, $\Psi^\infty_{\spincs, s}$  restricts to a homotopy equivalence from the $j \le t$ subcomplex of $\CFKi(\Sigma, \bm\alpha, \bm\beta, w, z, \spincs)$ to the $i \le t-s$ subcomplex of $\CFKi(\Sigma, \bm\alpha, \bm\beta, w, z, \spincs + \PD[K])$. Moreover, $\Psi^\infty_{\spincs,s}$ is homogeneous of degree $-2s$ with respect to the Maslov grading $\absgr$.
\end{lemma}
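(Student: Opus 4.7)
The plan is to prove both statements by tracing each constituent of the composition $\Psi^\infty_{\spincs,s} = (\Omega_w)^{-1} \circ \Gamma \circ \Omega_{z,s}$ through the coordinate bookkeeping, using Lemma \ref{lemma: wz-maslov} to translate between the $w$- and $z$-based Maslov gradings. First I would unwind the definitions: on the domain, a generator $[\x,i,j]$ with $\spincs_w(\x) = \spincs$ satisfies $j - i = \AlNorm_{w,z}(\x)$, and $\Omega_{z,s}$ sends it to $[\x, j-s]$ in $\CFi(\Sigma, \bm\alpha, \bm\beta, z, \spincs + \PD[K])$. The isomorphism $\Omega_w^{-1}$ on the other end is a pure relabeling of the second coordinate into a triple, so it preserves both the $i$-index and every grading.

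For the filtration statement, the $j \le t$ subcomplex of the domain is sent by $\Omega_{z,s}$ precisely to the $i \le t-s$ subcomplex of $\CFi(\Sigma, \bm\alpha, \bm\beta, z, \spincs + \PD[K])$, since $\Omega_{z,s}$ identifies the $j$-coordinate with $i + s$. The map $\Gamma$ is $\F[U]$-equivariant up to $\F[U]$-equivariant chain homotopy, so it preserves the subcomplex defined by $i \le t-s$ (indeed, the only filtration on $\CFi(\Sigma, \bm\alpha, \bm\beta, w, \spincs + \PD[K])$ we need here is the trivial one, and $\F[U]$-equivariance gives the restriction to the image). Finally, $\Omega_w^{-1}$ carries $i \le t-s$ back to $i \le t-s$. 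Composing these three restrictions gives the claimed homotopy equivalence between the $j \le t$ subcomplex of the domain and the $i \le t-s$ subcomplex of the range.

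For the grading statement, the point is that $\Gamma$ preserves the absolute Maslov grading, since both the source and target compute $\HFi(Y, \spincs + \PD[K])$ with its canonical absolute $\Q$-grading. It therefore remains to compare the $\Omega_w$- and $\Omega_{z,s}$-induced gradings on the domain. The grading of $[\x,i,j]$ in the domain is $\absgr_w(\x) + 2i$, while the grading of $\Omega_{z,s}([\x,i,j]) = [\x, j-s]$ is $\absgr_z(\x) + 2(j-s)$. Using $j - i = \AlNorm_{w,z}(\x)$ together with Lemma \ref{lemma: wz-maslov}, which gives $\absgr_z(\x) = \absgr_w(\x) - 2\AlNorm_{w,z}(\x)$, a short calculation shows
\[
\absgr_z(\x) + 2(j-s) = \absgr_w(\x) + 2i - 2s,
\]
so $\Omega_{z,s}$ (and hence the entire composition) is homogeneous of degree $-2s$.

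The only point requiring real care is the invocation of Lemma \ref{lemma: wz-maslov}: it applies precisely when $\spincs_w(\x)$ is torsion, which is exactly the setting in which $\absgr$ is defined, so there is no issue. I do not anticipate any genuine obstacle; the entire argument is a careful unwinding of definitions together with one application of the $w$–$z$ Maslov comparison.
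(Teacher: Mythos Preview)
Your approach is the same as the paper's: trace each factor of $\Psi^\infty_{\spincs,s} = \Omega_w^{-1}\circ\Gamma\circ\Omega_{z,s}$ and invoke Lemma~\ref{lemma: wz-maslov} for the grading computation. The grading argument is correct exactly as you wrote it.

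There is one imprecision in the filtration argument. You write that $\Gamma$ ``is $\F[U]$-equivariant up to $\F[U]$-equivariant chain homotopy, so it preserves the subcomplex defined by $i \le t-s$,'' and that ``$\F[U]$-equivariance gives the restriction.'' On $\CFi$ this implication fails: multiplication by $U^{-1}$ is $\F[U]$-equivariant but raises the $i$-coordinate. The statement from Section~\ref{ssec: algebra} that $\F[U]$-equivariance implies filteredness with respect to the trivial filtration applies to complexes of $\CFm$ type (finitely generated free $\F[U]$-modules), not to $\CFi$. The paper's justification is different: $\Gamma$ is by construction a composition of maps induced by Heegaard moves, each of which counts holomorphic polygons with non-negative multiplicity at the basepoint and is therefore filtered with respect to the basepoint ($i$-coordinate) filtration. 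Equivalently, $\Gamma$ is the $U$-localization of an $\F[U]$-module map on $\CFm$, and such a map is automatically filtered. Either of these fixes your argument; the rest goes through unchanged.
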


\begin{proof}
For any $[\x,i,j] \in \CFKi(\Sigma, \bm\alpha, \bm\beta, w, z, \spincs)$, we have:
\begin{align*}
\Psi^\infty_{\spincs, s} ([\x,i,j]) &= (\Omega_w)^{-1} (\Gamma([\x, j-s])) \\
&= (\Omega_w)^{-1} \left(  \sum_p [\y_p, i'_p] \right) \\
&= \sum_p [\y_p, i'_p, i'_p + \AlNorm_{w,z}(\y_p)],
\end{align*}
where $p$ is taken from some finite indexing set and each $i'_p$ is an integer $\le j-s$. This fact follows from the definition of $\Gamma$, which is a composition of maps which are filtered homotopy equivalences with respect to the basepoint filtration.

Finally, for the statement about the Maslov grading, Lemma \ref{lemma: wz-maslov} implies that $\Omega_{z,s}$ is homogeneous of degree $-2s$ (using $\absgr_w$ on the domain and $\absgr_z$ on the target), while $\Gamma$ and $\Omega$ are grading-preserving.
\end{proof}

However, we emphasize that $\Psi^\infty_{\spincs, s}$ is \emph{not} necessarily filtered with respect to the other filtration on the domain and target; see Section \ref{ssec: trefoil-surgery} for an example. In particular, in the case of a null-homologous knot, the complex $\CFKi(Y,K,\spincs)$ is symmetric (up to isomorphism) under interchanging $i$ and $j$, but the map $\Psi^\infty_{\spincs,0}$ does not necessarily realize that symmetry.

The maps $\Psi^\infty_{\spincs, s}$ are actually invariants of the pointed knot $K$, in the following sense:

\begin{lemma}
Let $(\Sigma, \bm\alpha, \bm\beta, w, z)$ and $(\Sigma', \bm\alpha', \bm\beta', w, z)$ be two doubly-pointed Heegaard diagrams diagram which present the pointed knot $(Y,K)$. Then for each pair $(\spincs, s)$ as above, the following diagram commutes up to homotopy:
\begin{equation}
\xymatrix{
\CFKi(\Sigma, \bm\alpha, \bm\beta, w, z, \spincs) \ar[r]^-{\Psi^\infty_{\spincs, s}} \ar[d]^{\Phi_\spincs}_\simeq & \CFKi (\Sigma, \bm\alpha, \bm\beta, w, z, \spincs+\PD[K]) \ar[d]^{\Phi_{\spincs+\PD[K]}}_\simeq \\
\CFKi(\Sigma', \bm\alpha', \bm\beta', w, z, \spincs) \ar[r]^-{\Psi^{\infty\prime}_{\spincs, s}} & \CFKi(\Sigma', \bm\alpha', \bm\beta', w, z, \spincs+\PD[K]) }
\end{equation}
where $\Phi_\spincs$ and $\Phi_{\spincs+\PD[K]}$ are the doubly-filtered chain homotopy equivalences induced by a sequence of Heegaard moves taking $(\Sigma, \bm\alpha, \bm\beta, w, z)$ to $(\Sigma', \bm\alpha', \bm\beta', w, z)$, and the homotopy can be assumed to satisfy the same filteredness property as $\Psi^\infty_{\spincs, s}$ (see Lemma \ref{lemma: Psi-filtered}).
\end{lemma}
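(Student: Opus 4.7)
The plan is to reduce the square to a statement about singly-pointed Heegaard Floer homology, and then invoke the Juh\'asz--Thurston--Zemke naturality theorem \cite{JuhaszThurstonZemkeNaturality} exactly as in the discussion preceding the construction of $\Psi^\infty_{\spincs,s}$.

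First, I would unpack the definitions on each diagram, writing
\[
\Psi^\infty_{\spincs,s} = \Omega_w^{-1}\circ\Gamma\circ\Omega_{z,s}, \qquad \Psi^{\infty\prime}_{\spincs,s} = (\Omega_w')^{-1}\circ\Gamma'\circ\Omega_{z,s}',
\]
where $\Gamma$ and $\Gamma'$ are the singly-pointed chain homotopy equivalences realizing the basepoint isotopy on $(\Sigma,\bm\alpha,\bm\beta)$ and $(\Sigma',\bm\alpha',\bm\beta')$ respectively. The identifications $\Omega_w$ and $\Omega_{z,s}$ are tautological at the level of chain groups: any doubly-pointed sequence of Heegaard moves realizing $\Phi_\spincs$ restricts, at each individual basepoint, to a singly-pointed sequence realizing change-of-diagram maps $\Phi^w_\spincs$ and $\Phi^z_{\spincs+\PD[K]}$, and by construction
\[
\Omega_w'\circ\Phi_\spincs = \Phi^w_\spincs\circ\Omega_w, \qquad \Omega_{z,s}'\circ\Phi_\spincs = \Phi^z_{\spincs+\PD[K]}\circ\Omega_{z,s}.
\]
After substituting these into the target square and cancelling the $\Omega$'s, the claim reduces to producing a filtered chain homotopy
\[
\Gamma'\circ\Phi^z_{\spincs+\PD[K]} \simeq \Phi^w_{\spincs+\PD[K]}\circ\Gamma.
\]

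Second, I would realize both sides of this equation as Floer maps associated to sequences of pointed Heegaard moves on the same pointed three-manifold $(Y,w)$. Fix an arc $\gamma$ from $w$ to $z$ in $Y$ that defines $\Gamma$ on the unprimed diagram; transport it via the ambient diffeomorphism implicit in the change of Heegaard diagram to an arc $\gamma'$ on the primed diagram and use $\gamma'$ to define $\Gamma'$. Then $\Phi^w\circ\Gamma$ corresponds to the pointed mapping class that first isotopes $w$ along $\gamma$ and then applies the diffeomorphism changing diagrams, whereas $\Gamma'\circ\Phi^z$ applies the diffeomorphism first and then isotopes along $\gamma'$. Since the two isotopies are conjugate through the diffeomorphism, these two pointed mapping classes on $(Y,w)$ agree up to isotopy rel basepoint, so the Juh\'asz--Thurston--Zemke theorem produces a chain homotopy between them.

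Finally, to upgrade this to the filtered statement in the lemma, one argues as in Lemma \ref{lemma: Psi-filtered}: all the singly-pointed Heegaard moves used in $\Gamma,\Gamma',\Phi^w,\Phi^z$ and the intermediate diagrams can be chosen so that their Floer chain maps and null-homotopies are $\F[U]$-equivariant and filtered with respect to the opposite basepoint, and this property descends through $\Omega_w^{-1}$ and $\Omega_{z,s}$ to give a homotopy with the required filteredness. The main obstacle is precisely this compatibility bookkeeping, namely verifying that one can choose $\gamma'$ to be transported from $\gamma$ by the change-of-diagram diffeomorphism and that the JTZ homotopy admits a representative whose singly-pointed filtered behavior matches that of $\Psi^\infty_{\spincs,s}$ itself; once this is in place, naturality does all of the remaining work.
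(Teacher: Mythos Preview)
Your approach is essentially the paper's: both appeal to Juh\'asz--Thurston--Zemke naturality, with the paper invoking it directly at the level of the transitive system of doubly-filtered complexes for the pointed knot, while you unpack $\Psi^\infty_{\spincs,s}=\Omega_w^{-1}\circ\Gamma\circ\Omega_{z,s}$ and reduce to singly-pointed naturality.

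One clarification is in order. There is no ``ambient diffeomorphism implicit in the change of Heegaard diagram'': both diagrams are embedded in the same $(Y,K,w,z)$, so the arc $\gamma$ is literally the same for both, and $\Phi^w,\Phi^z$ are transitive-system (Heegaard-move) maps, not diffeomorphism-induced maps. The desired homotopy $\Gamma'\circ\Phi^z\simeq\Phi^w\circ\Gamma$ therefore comes not from conjugating two pointed mapping classes, but from the JTZ statement that the action of a fixed pointed diffeomorphism (the one determined by $\gamma$, which defines both $\Gamma$ and $\Gamma'$) commutes up to homotopy with the transitive-system maps. This is precisely what the paper packages as ``diffeomorphisms between pointed knots induce maps of transitive systems.'' With that rephrasing your argument goes through, and the filteredness of the homotopy follows as you indicate.
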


\begin{proof}
This follows from Juh\'asz--Thurston--Zemke's naturality theorem \cite[Theorem 1.8]{JuhaszThurstonZemkeNaturality}.  More precisely, the proof of their theorem shows that there is a transitive system of doubly-filtered chain complexes associated to a pointed knot $(Y,K)$, and that diffeomorphisms between pointed knots induce  maps of transitive systems.  This means, in particular, that there are canonical filtered homotopy equivalences induced by sequences of  pointed Heegaard moves relating (based, embedded) Heegaard diagrams for a pointed knot, $(Y,K)$.  These homotopy equivalences are represented by the vertical arrows.  Naturality with respect to pointed diffeomorphisms implies the existence of the horizontal chain maps and the homotopy commutativity of the diagram.
\end{proof}

In general, the maps $\Psi^\infty$ are extremely difficult to determine from the definition, since they require understanding the homotopy equivalences induced by a series of Heegaard moves. However, there is a special case in which they can be determined explicitly:

\begin{lemma} \label{lemma: Psi-Lspace}
Let $Y$ be an L-space and $K$ a knot in $Y$. Let
\[
\Psi^\infty, \Psi'{}^\infty \co \CFKi(Y,K, \spincs) \to \CFKi(Y,K, \spincs + \PD[K])
\]
be any two maps which are filtered chain homotopy equivalences (in the sense of Lemma \ref{lemma: Psi-filtered}). Then
$\Psi$ and $\Psi'$ are filtered chain-homotopic.
\end{lemma}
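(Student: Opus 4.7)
The plan is to study the difference $\Delta := \Psi^\infty - \Psi'^\infty$ and show that it is filtered null-homotopic in the sense of Lemma~\ref{lemma: Psi-filtered}. Since both $\Psi^\infty$ and $\Psi'^\infty$ are $\F[U]$-equivariant chain maps sending $\{j\le t\}$ in the source into $\{i\le t-s\}$ in the target, so does $\Delta$. Thus $\Delta$ is itself a filtered chain map, and it suffices to construct a filtered null-homotopy of it.

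The first substantive step is to exploit the L-space hypothesis to show that $\Delta$ induces the zero map on $H_\ast(\{j\le t\})$ for every $t \in \AlNorm_{Y,K}(\spincs)$. For each such $t$, both $\Psi^\infty|_{\{j\le t\}}$ and $\Psi'^\infty|_{\{j\le t\}}$ are $\F[U]$-equivariant chain homotopy equivalences from $\{j\le t\}$ onto $\{i\le t-s\}$. Because $Y$ is an L-space, $\HFi(Y,\spincs') \cong \F[U,U^{-1}]$ for every $\spincs' \in \Spin^c(Y)$; in particular, the homology of each subcomplex $\{i\le n\}$ in the target is isomorphic, as a graded $\F[U]$-module, to a shifted copy of $\F[U]$ (and similarly for the source). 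Since $\F[U]$ has trivial $\F[U]$-module automorphism group, any two $\F[U]$-equivariant chain homotopy equivalences between these subcomplexes must induce the same map on homology, so $\Delta_\ast = 0$ on $H_\ast(\{j\le t\})$ for every $t$.

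With this vanishing in hand, I would apply Lemma~\ref{lemma: reduction} to replace source and target by reduced models $C^{\mathrm{red}}$ and $D^{\mathrm{red}}$, obtaining a filtered chain map $\Delta^{\mathrm{red}}\co C^{\mathrm{red}}\to D^{\mathrm{red}}$ that still induces zero on all filtered homologies. The final step is to build an $\F[U]$-equivariant filtered null-homotopy of $\Delta^{\mathrm{red}}$ by induction over the (well-ordered) set of filtration values of a chosen reduced basis: at each filtration level one uses that the relevant $H_\ast(\{j\le t\})$ agrees with $H_\ast(\{j\le t\}\text{ in } D^{\mathrm{red}})$ as a module on which $\Delta^{\mathrm{red}}_\ast$ vanishes, in order to find a primitive of the image of a basis element compatibly with what was chosen at lower levels. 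Composing with the filtered homotopy equivalences from the reduction then yields the desired filtered null-homotopy of $\Delta$.

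The main obstacle I anticipate is keeping the inductive null-homotopy construction coherent across filtration levels and simultaneously $\F[U]$-equivariant: a priori one only gets null-homotopies on each subcomplex $\{j\le t\}$ separately, and gluing them requires an obstruction-vanishing argument at each step. This should go through precisely because, on the reduced models, the $\F[U]$-submodules appearing as filtered subcomplex homologies in the L-space setting are all free of rank one, so the ambiguity in lifting an $\F[U]$-equivariant null-homotopy across the extension $\{j\le t-1\}\hookrightarrow\{j\le t\}\twoheadrightarrow\Gr_t$ is controlled by a single $\F[U]$-module map whose obstruction to vanishing is exactly the induced map on $H_\ast(\{j\le t\})$, already shown to be zero.
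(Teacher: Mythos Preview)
Your overall strategy is correct and aligns with the paper's: show that $\Delta = \Psi^\infty - \Psi'^\infty$ induces zero on homology using the L-space hypothesis, then produce a filtered null-homotopy. However, you are working much harder than necessary, and the inductive step you flag as the ``main obstacle'' is in fact vacuous once you notice a structural feature of the filtrations involved.

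The key point you are missing is that the $j$-filtration on the source and the $i$-filtration on the target are precisely the filtrations by powers of $U$: one has $\{j \le t-1\} = U \cdot \{j \le t\}$ and $\{i \le t-s-1\} = U \cdot \{i \le t-s\}$. Consequently, any $\F[U]$-equivariant map that respects a \emph{single} filtration level automatically respects all of them. The paper exploits this directly: fix one value of $s$, observe that $\Psi^-$ and $\Psi'^-$ restrict to $\F[U]$-equivariant quasi-isomorphisms $\{j \le s\} \to \{i \le 0\}$ between complexes whose homology is $\F[U]$ (by the L-space hypothesis), conclude that they induce the same map on homology, and hence that $\Psi^- - \Psi'^-$ admits an $\F[U]$-linear null-homotopy $H$. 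This $H$ is then automatically filtered, and extends by $U$-equivariance to all of $\CFKi$.

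So there is no need to pass to reduced models, no need to verify vanishing at every level $t$ separately, and no inductive extension problem to solve. Your obstruction-theoretic discussion is not wrong, but it is solving a problem that does not arise: the compatibility across filtration levels is forced by $\F[U]$-equivariance, not something that needs to be arranged by hand.
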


\begin{proof}
By construction, $\Psi$ and $\Psi'$ each restrict to filtered quasi-isomorphisms
\[
\Psi^-, \Psi'{}^- \co \CFKi(Y,K, \spincs)\{j \le s\} \to \CFKi(Y,K, \spincs + \PD[K])\{i \le 0\}
\]
(for some fixed $s \in \Z + \Al_{w,z}(\spincs)$). Since $Y$ is an L-space, the homology of each of these complexes is isomorphic to $\F[U]$, so $\Psi^-$ and $\Psi'{}^-$ induce the same map on homology. Therefore, $\Psi^- - \Psi'{}^-$ is filtered null-homotopic (with respect to the filtration by $U$ powers), via an $\F[U]$-linear null-homotopy
\[
H \co \CFKi(Y,K, \spincs)\{j \le s\} \to \CFKi(Y,K, \spincs + \PD[K])\{i \le 0\}.
\]
By $U$-equivariance, we can then extend $H$ over all of $\CFKi(Y,K, \spincs)$ to be a filtered null-homotopy of $\Psi^\infty - \Psi'{}^\infty$.
\end{proof}

Thus, when $Y$ is an L-space, it suffices to guess any chain map $\Psi^\infty$ which is a filtered quasi-isomorphism (in the sense of Lemma \ref{lemma: Psi-filtered}); Lemma \ref{lemma: Psi-Lspace} then guarantees that this map is the actual map. In particular, for null-homologous knots in any L-space (e.g. knots in $S^3$), any map realizing the $i \leftrightarrow j$ symmetry suffices. (This principle has been used, implicitly or explicitly, by many authors; see, e.g., \cite[Section 6]{HomLevineLidmanPL}.)

\section{More on the mapping cone formula} \label{sec: mapping-cone}

We now discuss a few more details concerning the mapping cone formula from the introduction, and outline the proof.

\begin{lemma} \label{lemma: h-filt}
For each $l \in \Z$, the map $h^\infty_{\xi_l}$ is filtered with respect to both $\II_\spinct$ and $\JJ_\spinct$ and homogeneous of degree $-1$ with respect to $\gr_\spinct$.
\end{lemma}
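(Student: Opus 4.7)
The plan is to reduce the claim directly to Lemma~\ref{lemma: Psi-filtered}, which already tells us how $\Psi^\infty_{\xi_l}$ interacts with the $i$ and $j$ coordinates and with the absolute Maslov grading, and then to check that the shifts in $\II_\spinct$, $\JJ_\spinct$, and $\gr_\spinct$ defined by formulas \eqref{eq: It-def-A}--\eqref{eq: grt-def-B} are calibrated exactly so that the incremental change from $l$ to $l+1$ compensates for the shifts introduced by Lemma~\ref{lemma: Psi-filtered}. This is really just a matter of unpacking the definitions.

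First I would record the one fact about $h^\infty_{\xi_l}$ that I will use: by Lemma~\ref{lemma: Psi-filtered}, if $[\x,i,j] \in A^\infty_{\xi_l}$ (which lives in the $\spincs_l$-summand), then every term $[\y, i', j']$ appearing in $h^\infty_{\xi_l}([\x,i,j])$ lives in the $\spincs_{l+1}$-summand and satisfies $i' \le j - s_l$, with $j' = i' + \AlNorm_{w,z}(\y)$, and $\absgr([\y,i',j']) = \absgr([\x,i,j]) - 2s_l$. (Setting $t = j$ in Lemma~\ref{lemma: Psi-filtered} gives the filtration bound, and the grading statement is the last sentence of that lemma.) Now the $\II_\spinct$ calculation is immediate: on $B^\infty_{\xi_{l+1}}$ we have $\II_\spinct([\y,i',j']) = i' \le j - s_l \le \max\{i, j-s_l\} = \II_\spinct([\x,i,j])$.

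For $\JJ_\spinct$, using $s_{l+1} = s_l + k/d$ we compute
\[
\JJ_\spinct([\y,i',j']) = i' - 1 + \frac{2ds_{l+1}+k-d}{2k} = i' + \frac{2ds_l + k - d}{2k},
\]
since the shift $\frac{2d\cdot(k/d)}{2k} = 1$ exactly cancels the $-1$. Combining this with $i' \le j - s_l$ and the trivial inequality $j - s_l \le \max\{i-1, j-s_l\}$ gives $\JJ_\spinct([\y,i',j']) \le \JJ_\spinct([\x,i,j])$, as desired. For the absolute grading, using $\absgr$-degree $-2s_l$ we get
\[
\gr_\spinct([\y,i',j']) - \gr_\spinct([\x,i,j]) = -2s_l + \frac{(2ds_{l+1}-k)^2 - (2ds_l - k)^2}{4dk} + \frac{-2 - (+2)}{4}.
\]
Expanding $(2ds_{l+1}-k)^2 = (2ds_l + k)^2$, the numerator of the middle term is $8kds_l$, so that term equals $2s_l$. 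Hence the right-hand side is $-2s_l + 2s_l - 1 = -1$, which is the required degree.

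The hard part, such as it is, is just being careful about bookkeeping: the asymmetry between the $\max\{i-1,j-s_l\}$ in $\JJ_\spinct$ on the $A$-side and the plain $i-1$ on the $B$-side is the only nontrivial point, and it works out precisely because Lemma~\ref{lemma: Psi-filtered} gives the bound $i' \le j - s_l$ (rather than anything involving $i$). Once that observation is made, the verification for $\JJ_\spinct$ is parallel to that for $\II_\spinct$, and the grading identity is forced by the specific choice of the quadratic correction term $(2ds_l - k)^2/(4dk)$, whose discrete derivative in $l$ is designed to kill the $-2s_l$ shift coming from $\Psi^\infty_{\xi_l}$.
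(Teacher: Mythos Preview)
Your proof is correct and is exactly the straightforward verification the paper intends: the paper's own proof simply says ``This is a straightforward exercise using Lemma~\ref{lemma: Psi-filtered},'' and you have carried out that exercise in full detail.
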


\begin{proof}
This is a straightforward exercise using Lemma \ref{lemma: Psi-filtered}.
\end{proof}

\begin{lemma} \label{lemma: Xab-indep-ab}
For all $a$ sufficiently negative and all $b$ sufficiently positive, the doubly-filtered chain homotopy type of $X^\infty_{\lambda,\spinct,a,b}$ is independent of $a$ and $b$, and likewise for $X^-_{\lambda,\spinct,a,b}$, $X^+_{\lambda,\spinct,a,b}$, and $X^t_{\lambda,\spinct,a,b}$.
\end{lemma}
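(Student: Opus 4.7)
The strategy is to show that incrementing $b$ to $b+1$ (once $b$ is sufficiently large), and symmetrically decrementing $a$ to $a-1$, yields a doubly-filtered chain homotopy equivalence, by cancelling the newly added pair in the mapping cone. The engine is the observation that for $|l|$ large one of $v^\infty_{\xi_l}$ or $h^\infty_{\xi_l}$ becomes a doubly-filtered homotopy equivalence, and so the corresponding two summands collapse.

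First I would check the filtration degeneracy at extreme indices. Because $\CFKi(Y,K,\spincs_l)$ is finitely generated over $\F[U,U^{-1}]$, and the sequence $(\spincs_l)$ is $d$-periodic, the Alexander differences $j-i$ of generators of all the $A^\infty_{\xi_l}$ and $B^\infty_{\xi_l}$ lie in a single bounded interval. Assume $k>0$. For $l \gg 0$ the quantity $s_l$ is so large that $j-s_l < i$ on every generator $[\x,i,j] \in A^\infty_{\xi_l}$; the formulas \eqref{eq: It-def-A}--\eqref{eq: Jt-def-A} then reduce identically to \eqref{eq: It-def-B}--\eqref{eq: Jt-def-B}, so $v^\infty_{\xi_l}=\id$ is a strict isomorphism of doubly-filtered complexes. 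For $l \ll 0$, $s_l$ is so negative that $j-s_l > i$ on every generator of $A^\infty_{\xi_l}$; then $\II_\spinct = j-s_l$ and $\JJ_\spinct - \II_\spinct = C_l$ is constant on $A^\infty_{\xi_l}$, and similarly $\JJ_\spinct-\II_\spinct = C_{l+1}-1$ is constant on $B^\infty_{\xi_{l+1}}$. Lemma \ref{lemma: Psi-filtered} guarantees that $h^\infty_{\xi_l}=\Psi^\infty_{\xi_l}$ is a filtered homotopy equivalence with respect to the $j$-filtration on the domain and the $i$-filtration (shifted by $s_l$) on the target, which is precisely $\II_\spinct$ on each side; hence $h^\infty_{\xi_l}$ is a doubly-filtered homotopy equivalence. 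The case $k<0$ is identical with the roles of $v$ and $h$ swapped.

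Next, the cancellation step. Fix $k>0$ and $b \gg 0$. Inside $X^\infty_{\lambda,\spinct,a,b+1}$, the summand $A^\infty_{\xi_{b+1}} \oplus B^\infty_{\xi_{b+1}}$ is a subcomplex equal to $\Cone(v^\infty_{\xi_{b+1}})$, because $h^\infty_{\xi_{b+1}}$ is absent from this truncation (its image would lie in $B^\infty_{\xi_{b+2}}$). The quotient is canonically $X^\infty_{\lambda,\spinct,a,b}$, since the cross-map $h^\infty_{\xi_b} \co A^\infty_{\xi_b} \to B^\infty_{\xi_{b+1}}$ lands in the subcomplex and dies in the quotient. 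By Step 1, $\Cone(v^\infty_{\xi_{b+1}})$ is doubly-filtered contractible via $H = (v^\infty_{\xi_{b+1}})^{-1}$. Writing the total differential, over $\F = \Z/2\Z$, in the block form coming from the splitting of graded modules $X^\infty_{\lambda,\spinct,a,b+1} \cong \Cone(v^\infty_{\xi_{b+1}}) \oplus X^\infty_{\lambda,\spinct,a,b}$, a direct computation shows that the map $\iota(x) = (HK(x),\,x)$, where $K$ denotes the cross-piece contributed by $h^\infty_{\xi_b}$, is a doubly-filtered chain map splitting the quotient projection. Consequently the projection is a doubly-filtered homotopy equivalence. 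The analogous argument using $h^\infty_{\xi_{a-1}}$ handles the $a \to a-1$ direction, and the $k<0$ case is symmetric.

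The same cancellation is compatible with the $\II_\spinct \ge 0$ quotient, $\II_\spinct<0$ subcomplex, and vertical $t$-truncation of each summand, so the conclusion carries over verbatim to $X^+_{\lambda,\spinct,a,b}$, $X^-_{\lambda,\spinct,a,b}$, and $X^t_{\lambda,\spinct,a,b}$. The chief technical point to nail down is that the contraction used to cancel the added pair is simultaneously filtered with respect to both $\II_\spinct$ and $\JJ_\spinct$: for the $v$-cancellation this is immediate from $v^\infty_{\xi_{b+1}}$ being a doubly-filtered isomorphism, while for the $h$-cancellation it relies on the observation that in the extreme regime $\JJ_\spinct$ and $\II_\spinct$ differ by a constant on both $A_l$ and $B_{l+1}$, so the single-filtration statement of Lemma \ref{lemma: Psi-filtered} upgrades to the required doubly-filtered statement.
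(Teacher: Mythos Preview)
Your proposal is correct and follows essentially the same approach as the paper's proof: both observe that the bounded spread of $j-i$ forces $v^\infty_{\xi_l}$ (resp.\ $h^\infty_{\xi_l}$) to be a doubly-filtered isomorphism (resp.\ homotopy equivalence) for $s_l \gg 0$ (resp.\ $s_l \ll 0$), and then cancel the resulting filtered-acyclic cone when passing from $X^\infty_{\lambda,\spinct,a,b}$ to $X^\infty_{\lambda,\spinct,a,b+1}$ or $X^\infty_{\lambda,\spinct,a-1,b}$. Your write-up is more explicit than the paper's on two points---the constant-$(\JJ_\spinct-\II_\spinct)$ observation that upgrades Lemma~\ref{lemma: Psi-filtered} to a doubly-filtered statement, and the explicit splitting $\iota(x)=(HK(x),x)$ with homotopy $G(s,q)=(Hs,0)$---but these are elaborations of what the paper records as ``cancel the filtered-acyclic subcomplex.'' One small caveat: for $k<0$ the added pair $A^\infty_{\xi_{b+1}}\oplus B^\infty_{\xi_{b+2}}$ is a \emph{quotient} rather than a subcomplex (and dually for $a\to a-1$), so the cancellation uses the inclusion rather than the projection; your ``symmetric'' remark covers this, and the paper is equally brief here.
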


\begin{proof}
The values of $j-i$ for all nonzero elements of $\CFKi(Y,K)$ are bounded above and below by constants. Therefore, when $s_l \gg 0$ (which holds for $l \gg 0$ if $k>0$ and for $l \ll 0$ if $k < 0$), the filtrations on $A^\infty_{\xi_l}$ both agree precisely with \eqref{eq: It-def-B} and \eqref{eq: Jt-def-B}, so $v^\infty_{\xi_l}$ is a doubly-filtered isomorphism. Similarly, when $s_l \ll 0$ (which holds for $l \gg 0 $ if $k<0$ and for $l \ll 0$ if $k>0$),
the filtrations on $A^\infty_{\xi_l}$ are given by
\[
\II_\spinct([\x,i,j]) = j-s_l \quad \text{and} \quad \JJ_\spinct([\x,i,j]) = j-s_l + \frac{2ds_l+k-d}{2k},
\]
which is just the vertical ($z$ basepoint) filtration, shifted appropriately. It follows from Lemma \ref{lemma: Psi-filtered} that $h^\infty_{\xi_l}$ is a doubly-filtered quasi-isomorphism.

Now, suppose $k>0$; the other case is handled similarly. If $b$ is large enough, then in the complex $X^\infty_{\lambda,\spinct,a,b+1}$, we can cancel the filtered-acyclic subcomplex $A^\infty_{\xi_{b+1}} \xrightarrow{v^\infty_{\xi_{b+1}}} B^\infty_{\xi_{b+1}}$, and the resulting complex is filtered isomorphic to $X^\infty_{\lambda,\spinct,a,b}$. Likewise, if $a$ is negative enough, then in $X^\infty_{\lambda,\spinct,a-1,b}$, we can cancel the filtered-acyclic subcomplex $A^\infty_{\xi_{a-1}} \xrightarrow{h^\infty_{\xi_{a-1}}} B^\infty_{\xi_{a}}$.
\end{proof}

\begin{remark} \label{rmk: suff-large-ab}
The range of values of $a$ and $b$ for which the conclusion of Lemma \ref{lemma: Xab-indep-ab} holds (i.e., how large is sufficiently large) depends on the spread of the Alexander grading on $\HFKa(Y,K)$, and thus on the genus of $K$. For simplicity, suppose that $K$ is null-homologous, so that $d=1$, and let $g= g(K)$. If we used a reduced complex for $\CFKi(Y,K)$, then all nonzero elements of $\CFKi(Y,K)$ satisfy $-g \le j-i \le g$. By examining \eqref{eq: It-def-A} and \eqref{eq: Jt-def-A}, we see that $v^\infty_{\xi_l}$ is a doubly-filtered isomorphism when $s_l \ge g+1$, and $h^\infty_{\xi_l}$ is a doubly-filtered quasi-isomorphism when $s_l \le -g$. Thus, for example, when $k=\pm 1$, we may compute $\CFKi(Y_\lambda(K), K_\lambda)$ using the complex $X^\infty_{\lambda, \spinct, 1-g, g}$.
\end{remark}

Let $A^-_{\xi_l}$ (resp.~$B^-_{\xi_l}$) denote the subcomplex of $A^\infty_{\xi_l}$ (resp.~$B^\infty_{\xi_l}$) generated by elements with $\II<0$, and let $A^+_{\xi_l}$ (resp.~$B^+_{\xi_l}$) denote the quotient by this subcomplex. As a result, these maps $v^\infty_{\xi_l}$, $h^\infty_{\xi_l}$ descend to give maps
\[
v^\pm_{\xi_l} \co A^\pm_{\xi_l} \to B^\pm_{\xi_l} \quad \text{and} \quad h^\pm_{\xi_l} \co A^\pm_{\xi_l} \to B^\pm_{\xi_{l+1}}
\]
on the plus and minus versions of the complexes. Under the isomorphisms from Remark \ref{rmk: Cxi}, our construction of the $A^+$ and $B^+$ complexes and the $v^+$ and $h^+$ maps agrees with Ozsv\'ath and Szab\'o's description in \cite[Section 4]{OSzRational}. Additionally, for any $t\in \N$, let $A^t_{\xi_l}$ (resp.~$B^t_{\xi_l}$) denote the kernel of $U^{t+1}$ on $A^+_{\xi_l}$ (resp.~$B^+_{\xi_l}$); concretely, these are generated by all generators with $0 \le \II \le t$. We also define the $U$-completed versions of the minus and infinity complexes:
\begin{align*}
\mathbf{A}^-_{\xi_l} &= A^-_{\xi_l} \otimes_{\F[U]} \F[[U]] &
\mathbf{B}^-_{\xi_l} &= B^-_{\xi_l} \otimes_{\F[U]} \F[[U]] \\
\mathbf{A}^\infty_{\xi_l} &= A^\infty_{\xi_l} \otimes_{\F[U,U^{-1}]} \F[[U,U^{-1}] &
\mathbf{B}^\infty_{\xi_l} &= B^\infty_{\xi_l} \otimes_{\F[U,U^{-1}]} \F[[U,U^{-1}].
\end{align*}
Concretely, the elements of each group are countably infinite sums $\sum_\alpha [\x_\alpha,i_\alpha, i_\alpha + \AlNorm(\x_\alpha)]$ such that for each $I \in \Z$, there are at most finitely terms with $i_\alpha \ge I$. As such, the $\II_\spinct$ and $\JJ_\spinct$ filtrations still make sense. We may thus define corresponding versions of the mapping cone, which we denote by $X^-_{\lambda, \spinct, a,b}$, $X^+_{\lambda, \spinct, a,b}$, $X^t_{\lambda, \spinct, a,b}$, $\mathbf{X}^-_{\lambda, \spinct, a,b}$, and $\mathbf{X}^\infty_{\lambda, \spinct, a,b}$. In particular, the finite $U$-power versions $X^t_{\lambda, \spinct, a,b}$ will play a crucial role in the proof.

\begin{remark}
Ozsv\'ath and Szab\'o originally stated the surgery formula only for $\HFp$, not for $\HFi$, and they made use of an infinite version of the mapping cone. Specifically, let
\begin{equation}\label{eq: D-plus}
D^+_{\lambda,\spinct} \co \bigoplus_{l\in\Z} A^+_{\xi_l} \to \bigoplus_{l\in\Z} B^+_{\xi_l}
\end{equation}
be the sum of all the $v^+_{\xi_l}$ and $h^+_{\xi_l}$ maps, and let $X^+_{\lambda,\spinct}$ be the mapping cone of $D^+_{\lambda,\spinct}$. Ozsv\'ath and Szab\'o proved that $X^+_{\lambda,\spinct}$ is quasi-isomorphic to $\CFp(Y_\lambda(K), \spinct)$. Manolescu and Ozsv\'ath \cite{ManolescuOzsvathLink} showed that the analogous results for $\HFm$ and $\HFi$ hold if one uses the $U$-completed versions and infinite direct products: that is, $\mathbf{HF}^-(Y_\lambda(K))$ and $\mathbf{HF}^\infty(Y_\lambda(K))$ are respectively isomorphic to the mapping cones of
\[
\mathbf{D}^-_{\lambda,\spinct} \co \prod_{l\in\Z} \mathbf{A}^-_{\xi_l} \to \prod_{l\in\Z} \mathbf{B}^-_{\xi_l}  \quad \text{and} \quad
\mathbf{D}^\infty_{\lambda,\spinct} \co \prod_{l\in\Z} \mathbf{A}^\infty_{\xi_l} \to \prod_{l\in\Z} \mathbf{B}^\infty_{\xi_l}.
\]
(See \cite[Section 4.3]{ManolescuOzsvathLink} for a discussion of why direct products rather than direct sums are needed.) The technique of ``horizontal truncation'' from \cite[Section 10.1]{ManolescuOzsvathLink} shows that the finite and infinite versions yield filtered quasi-isomorphic complexes. We find it preferable to avoid using infinite direct sums and products entirely, at the cost of being more explicit about the roles of $a$ and $b$.
\end{remark}

We now discuss the proof of Theorem \ref{thm: mapping-cone}. The proof follows the same basic outline as Ozsv\'ath and Szab\'o's \cite{OSzSurgery, OSzRational}, with a few modifications. Our main technical result, which occupies most of the remainder of the paper, is the following:
\begin{proposition} \label{prop: mapping-cone-CFt}
Let $\spinct \in \Spin^c(Y_{\lambda}(K))$. Then for any $a \ll 0$ and $b \gg 0$ and any $t \in \N$, $\CFK^t(Y_\lambda(K), K_\lambda, \spinct)$ is filtered homotopy equivalent to the mapping cone $X^t_{\lambda, \spinct, a, b}$, equipped with the filtrations $\II_\spinct$ and $\JJ_\spinct$.
\end{proposition}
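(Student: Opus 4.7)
The plan is to follow the template of Ozsv\'ath and Szab\'o's surgery exact triangle proof from \cite{OSzSurgery, OSzRational}, enhanced to track the extra Alexander filtration coming from the dual knot $K_\lambda$, and working throughout with the finite-$U$-power truncations $\CFK^t$ so that the complexes involved are genuinely finitely generated and Lemma \ref{lemma: reduction-special} and Lemma \ref{lemma: filt-QI-special} apply. The argument will proceed in four main steps: (i) set up an auxiliary Heegaard multi-diagram with a large auxiliary framing; (ii) invoke the large-surgery formula to identify one leg of the triangle; (iii) identify the connecting maps with $v^t_{\xi_l}$ and $h^t_{\xi_l}$; and (iv) apply the filtered mapping cone detection lemma (Lemma \ref{lemma: mapping-cone}).

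For step (i), I would fix a large integer $m \gg 0$ and consider the three framings $\infty$ (the meridian), $\lambda$, and $\lambda + m\mu$ on $K$. Choosing a doubly-pointed Heegaard multi-diagram $(\Sigma, \bm\alpha, \bm\beta, \bm\gamma, \bm\delta, w, z)$ adapted to this system, the surgery exact triangle relating $Y$, $Y_\lambda(K)$, and $Y_{\lambda+m\mu}(K)$ is realized at the chain level by counts of holomorphic triangles and quadrilaterals. For step (ii), I would appeal to Section \ref{sec: large}, where one shows that when $m \gg 0$, the complex $\CFK^t(Y_{\lambda+m\mu}(K), K_{\lambda+m\mu}, \spinct')$ is doubly-filtered homotopy equivalent to the appropriate $A^t_{\xi_l}$, with the identification dictated by the relative spin$^c$ structure $\xi_l$ corresponding to $\spinct'$ under the canonical bijection $\ul\Spin^c(Y,K) = \ul\Spin^c(Y_{\lambda+m\mu},K_{\lambda+m\mu})$.

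For step (iii), I would turn to Section \ref{sec: exact-sequence}: the connecting map in the surgery exact sequence, after restriction to a suitable finite set of spin$^c$ structures on the $2$-handle cobordism and after passing to $\CFK^t$, realizes the sum of the $v^t_{\xi_l}$ and $h^t_{\xi_l}$ maps. The map $v^t_{\xi_l}$ arises from triangle counts in the component of the cobordism where the basepoints $w$ and $z$ remain on opposite sides of the surgery curve, so that the Alexander grading is preserved in the obvious way; the map $h^t_{\xi_l}$ arises from the component where the surgery carries $w$ to the other side of the analogous curve, and is precisely the geometric origin of the flip map $\Psi^\infty_{\xi_l}$. Finally, for step (iv), I would apply Lemma \ref{lemma: mapping-cone} to the three-periodic (up to isomorphism) sequence of complexes $\CFK^t(Y)$, $\CFK^t(Y_\lambda, K_\lambda, \spinct)$, $\CFK^t(Y_{\lambda+m\mu}, K_{\lambda+m\mu}, \cdot)$, restricted to the relevant spin$^c$ structures, which will identify $\CFK^t(Y_\lambda(K), K_\lambda, \spinct)$ with the mapping cone $X^t_{\lambda, \spinct, a, b}$ for suitable $a, b$ determined by the range of spin$^c$ structures visible at truncation level $t$.

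The hardest part will be verifying the filteredness hypotheses of Lemma \ref{lemma: mapping-cone} with respect to \emph{both} $\II_\spinct$ and $\JJ_\spinct$ simultaneously: specifically, that the polygon-counting maps and the chain homotopy implementing hypothesis (3) are filtered, and that the resulting quasi-isomorphism is filtered with respect to the $\JJ_\spinct$ filtration (the one coming from the Alexander grading of $K_\lambda$). The $\II_\spinct$ behavior is standard and follows from $n_w = 0$ for the relevant holomorphic polygons, but the $\JJ_\spinct$ behavior requires the Alexander grading formula from Proposition \ref{prop: abs-alex} together with the grading shifts under $2$-handle cobordisms established in Section \ref{sec: alex-surgery}. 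This is precisely why the truncation to $\CFK^t$ is essential: without bounding powers of $U$, higher-area polygons contribute terms whose $\JJ_\spinct$-shift is uncontrolled, and one must restrict the polygon sums to a bounded set of spin$^c$ structures on the cobordisms for the formulas to behave well. Having done this, Lemma \ref{lemma: filt-QI} will ultimately allow us to upgrade the filtered quasi-isomorphism to a filtered homotopy equivalence, completing the proof.
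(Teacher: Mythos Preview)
Your proposal is essentially correct and follows the same strategy as the paper: set up the exact triangle for $Y$, $Y_\lambda(K)$, $Y_{\lambda+m\mu}(K)$, use the large-surgery formula to identify the $(\alpha,\delta)$ complex with the $A^t_{\xi_l}$'s, show the cobordism map realizes $v^t$ and $h^t$, and apply the filtered mapping-cone detection lemma after restricting the polygon counts to controlled spin$^c$ structures. Two small inaccuracies worth flagging: the complex for $Y$ must carry twisted coefficients in $\GR = \F[\Z/m\Z]$ (it is $\ul\CF^t(\bm\alpha,\bm\beta,w;\GR)$, and the $B^t_{\xi_l}$'s arise as its $T^{-s_l}$-summands under the trivialization $\theta$), and $\II_\spinct$-filteredness comes from $n_w(\psi)\ge 0$ for holomorphic polygons rather than $n_w=0$. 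Also, the splitting of the cobordism map into $v^t$ and $h^t$ is governed by the two extremal spin$^c$ structures $\spincx_\spincu$ and $\spincy_\spincu$ on $W'_m$, not by ``sides'' of the surgery curve, and the spin$^c$ truncation of the rectangle maps $\tilde h^t_j$ requires a separate check (carried out in Section~\ref{ssec: rect-filt}) that the truncated maps remain null-homotopies.
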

The new (but surprisingly subtle) ingredient in this result is that the equivalence respects the second filtrations; the rest was shown by Ozsv\'ath and Szab\'o. Assuming Proposition \ref{prop: mapping-cone-CFt}, the rest of the main theorem follows immediately:

\begin{proof}[Proof of Theorem \ref{thm: mapping-cone}]
In the terminology of Section \ref{ssec: algebra}, $X^-_{\lambda, \spinct, a, b}$ is a complex of torsion $\CFm$ type equipped with a filtration of Alexander type, and $X^t_{\lambda, \spinct, a, b}$ is filtered isomorphic (with a shift in the grading) to  $X^-_{\lambda, \spinct, a, b} / U^{t+1}  X^-_{\lambda, \spinct, a, b}$. Therefore, Lemma \ref{lemma: filt-QI} and Proposition \ref{prop: mapping-cone-CFt} imply that $X^-_{\lambda, \spinct, a, b}$ is filtered quasi-isomorphic to $\CFKm(Y_\lambda, K_\lambda, \spinct)$. By taking the tensor product of each complex with $\F[U,U^{-1}]$, we then see that $X^\infty_{\lambda, \spinct, a, b}$ is filtered quasi-isomorphic to $\CFKi(Y_\lambda, K_\lambda, \spinct)$, as required.
\end{proof}

Next, we describe the version of the mapping cone which computes $\HFKa(Y_\lambda,K_\lambda)$. For knots in homology spheres, this agrees with Eftekhary's results in \cite{EftekharyIncompressible}.

\begin{corollary} \label{cor: mapping-cone-hat}
For any $\spinct \in \Spin^c(Y_\lambda)$, and any $\xi_l \in G_{Y_\lambda, K_\lambda}^{-1}(\spinct)$, $\HFKa(Y_\lambda, K_\lambda, \xi_l)$ is isomorphic to the homology of the mapping cone of
\begin{equation} \label{eq: mapping-cone-hat}
(h_{\xi_l}, v_{\xi_{l+1}}) \co A_{\xi_l} \{i\le 0, j = s_l\} \oplus A_{\xi_{l+1}} \{i=0, j \le s_{l+1}-1\} \to B_{\xi_{l+1}} \{i=0\}.
\end{equation}
\end{corollary}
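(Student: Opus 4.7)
The strategy is to deduce the corollary directly from Theorem \ref{thm: mapping-cone} by identifying the appropriate bigraded subquotient of the mapping cone complex. By definition, $\HFKa(Y_\lambda, K_\lambda, \xi_l)$ is the homology of the associated graded piece of $\CFKi(Y_\lambda, K_\lambda, \spinct)$ at the bigrading level $\II_\spinct = 0$ and $\JJ_\spinct = \AlNorm_{Y_\lambda, K_\lambda}(\xi_l) = (2ds_l + k - d)/(2k)$, using \eqref{eq: A(xi-l)}. By Theorem \ref{thm: mapping-cone}, this agrees with the homology of the corresponding subquotient of $X^\infty_{\lambda, \spinct, a, b}$, taken for any $a \ll 0$ and $b \gg 0$ (in particular, for $a \le l$ and $l+1 \le b$).

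The first step is to identify, for each summand $A^\infty_{\xi_{l'}}$ and $B^\infty_{\xi_{l'}}$, the set of generators $[\x,i,j]$ lying at this bigrading. Using the defining formulas \eqref{eq: It-def-A}--\eqref{eq: Jt-def-B} together with the identity $s_{l'} - s_l = (l' - l)k/d$, the equation $\JJ_\spinct = (2ds_l + k - d)/(2k)$ reduces to $\max\{i-1,\, j - s_{l'}\} = l - l'$ in the $A$ case and to $i = l - l' + 1$ in the $B$ case. Combining these with $\II_\spinct = 0$ and running through the signs of $l - l'$ yields exactly three nonzero contributions: the generators of $A_{\xi_l}\{i \le 0,\, j = s_l\}$ (coming from $l' = l$), of $A_{\xi_{l+1}}\{i = 0,\, j \le s_{l+1} - 1\}$ (from $l' = l+1$), and of $B_{\xi_{l+1}}\{i = 0\}$ (from $l' = l+1$); all other $l'$ are ruled out because the two max/equality conditions become incompatible.

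Next I would determine which components of the differential survive in the associated graded. Lemma \ref{lemma: h-filt} guarantees that $h^\infty_{\xi_l}$ is filtered with respect to both $\II_\spinct$ and $\JJ_\spinct$, and $v^\infty_{\xi_{l+1}}$ trivially is, so their restrictions induce genuine maps from the two $A$ pieces into $B_{\xi_{l+1}}\{i=0\}$, namely $h_{\xi_l}$ and $v_{\xi_{l+1}}$. All other components of $D^\infty_{\lambda,\spinct,a,b}$ either connect summands $A^\infty_{\xi_{l'}}$ or $B^\infty_{\xi_{l'}}$ that do not contribute at this bigrading, or are internal differentials of $A^\infty_{\xi_l}$, $A^\infty_{\xi_{l+1}}$, or $B^\infty_{\xi_{l+1}}$ that strictly drop one of the filtrations and hence vanish after passing to the associated graded. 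Thus the subquotient is exactly the mapping cone \eqref{eq: mapping-cone-hat}.

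The only real work is the case analysis pinning down which $(l', i, j)$-triples realize the prescribed bigrading; this is elementary but requires keeping careful track of the two max functions in \eqref{eq: It-def-A}--\eqref{eq: Jt-def-A}. Once that bookkeeping is done, the rest is a formal unwinding of Theorem \ref{thm: mapping-cone} and Lemma \ref{lemma: h-filt}, so I anticipate no serious obstacle beyond that.
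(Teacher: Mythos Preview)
Your proposal is correct and follows essentially the same approach as the paper: identify $\HFKa(Y_\lambda, K_\lambda, \xi_l)$ with the subquotient of $X^\infty_{\lambda,\spinct,a,b}$ at $\II_\spinct=0$ and $\JJ_\spinct = (2ds_l+k-d)/(2k)$, and then verify from the definitions \eqref{eq: It-def-A}--\eqref{eq: Jt-def-B} that only the three pieces in \eqref{eq: mapping-cone-hat} contribute. The paper's proof simply asserts that this verification can be done, whereas you have sketched out the case analysis (using $s_{l'}-s_l=(l'-l)k/d$) and the survival of $h_{\xi_l}$, $v_{\xi_{l+1}}$ in more detail.
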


\begin{proof}
The complex $X^0_{\lambda, \spinct}$ (or $X^0_{\lambda, \spinct, a,b}$ for $a \ll 0$ and $b \gg 0$) computes $\CFa(Y_\lambda)$ with its Alexander filtration, so its associated graded complex computes $\CFKa(Y_\lambda, K_\lambda)$. In particular, for each $\xi_l \in G_{Y_\lambda, K_\lambda}^{-1}(\spinct)$, $\CFKa(Y_\lambda, K_\lambda, \xi_l)$ is given by the subquotient of $X^\infty_{\lambda,\spinct}$ with
\[
\II_\spinct=0 \quad \text{and} \quad \JJ_\spinct = \AlNorm_{Y_\lambda, K_\lambda}(\xi_l) = \frac{2ds_l + k -d}{2k}.
\]
Using the definitions, we may verify that the only portions of the complex for which both of these conditions hold are the three terms listed in \eqref{eq: mapping-cone-hat}.
\end{proof}

\begin{remark}
The mapping cone formula in \cite{OSzSurgery, OSzRational} is stated with coefficients in $\Z$, not just in $\F$. Our proof should go through with coefficients in $\Z$ as well, but this requires understanding signed counts of holomorphic rectangles and pentagons, which is a technical headache and not fully spelled out in the literature. (One particular difficulty that arises is described below in Remark \ref{rmk: pentagon-signs}.) Therefore, we have chosen to work over $\F$ for simplicity.
\end{remark}

\subsection{Example: Surgery on the trefoil} \label{ssec: trefoil-surgery}

In Lemma \ref{lemma: Psi-filtered}, we saw that the flip map on $\CFKi$ is filtered with respect to the vertical ($j$) filtration on the domain and the horizontal ($i$) filtration on the target. Using the mapping cone formula, we now show an example illustrating that the map can be quite badly behaved with respect to the second filtration on each complex. (Another example can be found in \cite[Section 3.2]{JabukaMarkSurface}, although the pathologies there  become apparent only when using $\Z$ coefficients.)

Let $K \subset S^3$ denote the right-handed trefoil. The complex $\CFKi(S^3,K)$ can be generated (over $\F[U,U^{-1}]$) by generators $a,b,c$ in $(i,j)$ filtration levels $(0,-1)$, $(0,0)$, $(0,1)$ and Maslov gradings $-2,-1,0$ respectively. The differential is given by $\partial(b) = a + Uc$ and $\partial(a) = \partial(c) = 0$, and the flip map is an involution which fixes $b$ and interchanges $a$ and $UC$. This complex is shown in Figure \ref{fig: trefoil-CFK}.

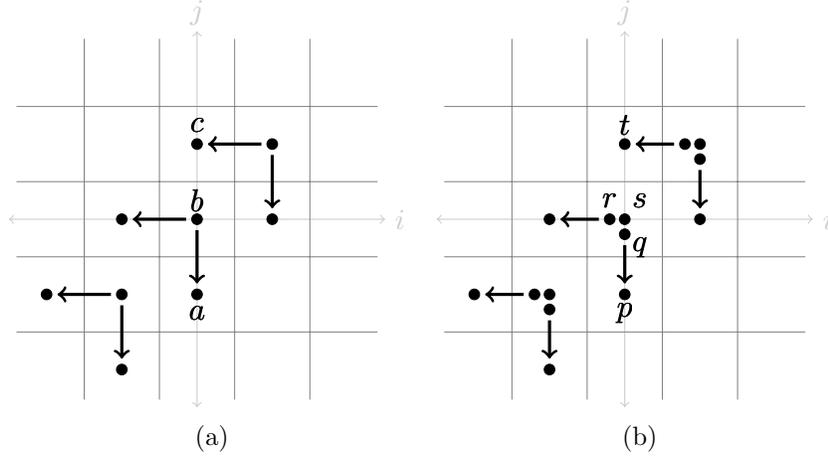
\begin{figure}
\subfigure[]{\begin{tikzpicture}
	\begin{scope}[thin, black!20!white]
		\draw [<->] (-2, 0.5) -- (3, 0.5);
		\draw [<->] (0.5, -2) -- (0.5, 3);
        \node[] at (3.2,0.5) {\small $i$};
        \node[] at (0.5,3.25) {\small $j$};
	\end{scope}
	\draw[step=1, black!50!white, very thin] (-1.9, -1.9) grid (2.9, 2.9);

\foreach \x in {-2,...,0}
{	
	\filldraw (\x+0.5, \x+1.5) circle (2pt) node[] (c){};
	\filldraw (\x+1.5, \x+1.5) circle (2pt) node[] (b){};
	\filldraw (\x+1.5, \x+0.5) circle (2pt) node[] (a){};
	\draw [very thick, ->] (b) -- (a);
	\draw [very thick, ->] (b) -- (c);
\node[] at (0.5, 1.75) {$c$};
\node[] at (0.5, 0.75) {$b$};
\node[] at (0.5, -0.75) {$a$};
}	
\end{tikzpicture} \label{subfig: trefoil-CFK}}
\subfigure[]{\begin{tikzpicture}
	\begin{scope}[thin, black!20!white]
		\draw [<->] (-2, 0.5) -- (3, 0.5);
		\draw [<->] (0.5, -2) -- (0.5, 3);
        \node[] at (3.2,0.5) {\small $i$};
        \node[] at (0.5,3.25) {\small $j$};
	\end{scope}
	\draw[step=1, black!50!white, very thin] (-1.9, -1.9) grid (2.9, 2.9);

\foreach \x in {-2,...,0}
{	
	\filldraw (\x+0.5, \x+1.5) circle (2pt) node[] (t){};
	\filldraw (\x+1.5, \x+1.5) circle (2pt) node[] (s){};
	\filldraw (\x+1.3, \x+1.5) circle (2pt) node[] (r){};
	\filldraw (\x+1.5, \x+1.3) circle (2pt) node[] (q){};
	\filldraw (\x+1.5, \x+0.5) circle (2pt) node[] (p){};
	\draw [very thick, ->] (q) -- (p);
	\draw [very thick, ->] (r) -- (t);
\node[] at (0.5, 1.75) {$t$};
\node[] at (0.7, 0.75) {$s$};
\node[] at (0.7, 0.125) {$q$};
\node[] at (0.3, 0.75) {$r$};
\node[] at (0.5, -0.75) {$p$};
}	
\end{tikzpicture} \label{subfig: trefoil-surgery-CFK}}

\caption{The complex $\CFKi$ for (a) the right-handed trefoil and (b) the dual knot in its $-1$ surgery. (The pattern repeats infinitely in both directions.)}
\label{fig: trefoil-CFK}
\end{figure}

Let $(Y,J) = (S^3_{-1}(K), K_{-1})$. Let us first apply Theorem \ref{thm: mapping-cone} to compute $\CFKi(Y,J)$. Since $g(K)=1$, it suffices to look at the mapping cone
\[(A^\infty_0 \oplus A^\infty_1) \to (B^\infty_{-1} \oplus B^\infty_0 \oplus B^\infty_1),\]
which we denote by $X$. Let us write $a_s, b_s, c_s$ for the copies of $a, b,c$ in $A_s^\infty$ (for $s=0,1$), and $a'_s, b'_s, c'_s$ for the copies in $B_s^\infty$ (for $s=-1,0,1$). By canceling differentials which preserve both the $\II$ and $\JJ$ filtrations, it is not hard to check that $X$ can be reduced to the complex generated (over $\F[U,U^{-1}]$) by generators $\bar p, \bar q, \bar r, \bar s, \bar t$ as in the following table:
\begin{center}
\begin{tabular}{|l|c|c|c|c|} \hline
Generator & $\II$ & $\JJ$ & $\absgr$ & $\partial$ \\ \hline
$\bar p  = c'_1$ & $0$ & $-1$ & $-2$ & $0$ \\
$\bar q = a_0+c_1$ & $0$ & $0$ & $-1$ & $\bar p + U \bar t$\\
$\bar r = a_0 + b'_0$ & $0$ & $0$ & $-1$ & $U \bar s + U \bar t$ \\
$\bar s=c'_0$ & $0$ & $0$ & $0$ & $0$ \\
$\bar t = c'_{-1}$ & $0$ & $1$ & $0$ & $0$ \\ \hline
\end{tabular}
\end{center}
We can then make a filtered change of basis to simplify the differential further: set $p = \bar p + U \bar s$, $q = \bar q + \bar r$, $r = \bar r$, $s = \bar s$, and $t = \bar t + \bar s$, so that $\partial(q) = p$, $\partial(r) = Ut$, and $\partial(p) = \partial(s) = \partial(t) = 0$. The complex $\CFKi(Y,J)$ is shown with respect to this basis in Figure \ref{subfig: trefoil-surgery-CFK}. (See \cite[Section 6]{HomLevineLidmanPL} for a more extensive computation that illustrates the technique in more detail.)

We now study the flip map $\Psi^\infty$ on $C = \CFKi(Y,J)$. Let us just consider the induced map $\hat \Psi \co C\{j=0\} \to C\{i=0\}$, which is necessarily a quasi-isomorphism. The complexes $C\{j=0\}$ and $C\{i=0\}$ are each filtered (the former by $i$, the latter by $j$) and are filtered quasi-isomorphic, but we claim that $\hat \Psi$ cannot be a filtered map. The grading requires that $\hat\Psi(q) = r$ and that $\hat\Psi(s)$ is a nonzero linear combination of $s$ and $t$. Suppose, toward a contradiction, that $\hat \Psi$ is filtered; then $\hat\Psi(s) = s$. However, observe that $(Y_{+1}(J), J_{+1}) = (S^3,K)$. Consider the associated graded complex of the filtered mapping cone formula for $+1$ surgery, as described in Corollary \ref{cor: mapping-cone-hat}. The part in Alexander grading $0$ has the form
\[
\xy
(0,0)*{A_0\{i\le 0,j=0\}}="A0";
(40,0)*{B_1\{i=0\}}="B1";
(80,0)*{A_1\{i=0,j \le 0\}}="A1";
{\ar^-{h_0} "A0";"B1"};
{\ar_-{v_1} "A1";"B1"};
\endxy
\]
which is the following complex:
\[
\xy
(0,25)*{s_{0}}="A0s";
(0,20)*{r_{-1}}="A0r";
(0,15)*{q_{-1}}="A0q";
(-25,20)*{Ut_{-2}}="A0t";
(50,40)*{t_{-1}}="B1t";
(50,25)*{s_{-1}}="B1s";
(50,20)*{r_{-2}}="B1r";
(50,15)*{q_{-2}}="B1q";
(50,0)*{p_{-3}}="B1p";
(100,25)*{s_{0}}="A1s";
(100,20)*{r_{-1}}="A1r";
(100,15)*{q_{-1}}="A1q";
(100,0)*{p_{-2}}="A1p";
{\ar "A0r";"A0t"};
{\ar "B1q";"B1p"};
{\ar "A1q";"A1p"};
{\ar "A1p";"B1p"};
{\ar "A1q";"B1q"};
{\ar "A1r";"B1r"};
{\ar "A1s";"B1s"};
{\ar@{-->} "A0r";"B1r"};
{\ar@{-->} "A0r";"B1q"};
{\ar "A0q";"B1r"};
{\ar "A0s";"B1s"};
{\ar@{-->}@/_2pc/ "A0t";"B1p"};
\endxy
\]
(Here, the subscripts indicate the Maslov grading on the mapping cone, given by \eqref{eq: grt-def-A} and \eqref{eq: grt-def-B}, and the dashed arrows indicate possible additional terms in $\hat\Psi$.) Examining this complex, we see that its homology has rank $3$, which contradicts the fact that $\HFK(S^3,K,0) \cong \F$. The only way to remedy this issue is to add a component taking $s \in A_0\{i\le 0\}$ to $t \in B_1\{i=0\}$, which means that $\hat \Psi$ is not filtered with respect to the second grading. (With further work, one can then use this information to completely pin down $\Psi^\infty$ up to chain homotopy.)

\section{Alexander gradings and surgery cobordisms} \label{sec: alex-surgery}

In this section, we study the relationship between the Alexander grading and spin$^c$ structures on the $2$-handle cobordism associated to a framed knot.

As above, assume that $Y$ is an oriented $3$-manifold and that $K$ is an oriented, rationally null-homologous knot representing a class of order $d > 0$ in $H_1(Y; \Z)$. Let $\lambda$ be a nonzero framing for $K$, and let $W = W_\lambda(K)$ be the corresponding $2$-handle cobordism from $Y$ to $Y_\lambda(K)$.

Let $C\subset W_\lambda(K)$ denote the core disk of the $2$-handle together with $K \times I$, and let $C^* \subset W_\lambda(K)$ denote the cocore disk. We assume these are oriented to intersect positively. Then $[C]$ and $[C^*]$ generate $H_2(W,Y)$ and $H_2(W,Y_\lambda)$, respectively. Consider the Poincar\'e duals $\PD[C] \in H^2(W, Y_\lambda(K))$ and $\PD[C^*] \in H^2(W,Y)$; by a slight abuse of notation, we will also use $\PD[C]$ and $\PD[C^*]$ to denote the images of these classes in $H^2(W)$. Then $\PD[C]$ restricts to $\PD[K] \in H^2(Y)$, and it generates the kernel of $H^2(W_\lambda(K)) \to H^2(Y_\lambda(K))$. In particular, if $\spinct$  and $\spinct'$ are spin$^c$ structures on $W_\lambda(K)$ whose restrictions to $Y_\lambda(K)$ (resp.~$Y$) are the same, then they differ by a multiple of $\PD[C]$ (resp.~$\PD[C^*]$).

Let $F$ be a rational Seifert surface for $K$, and assume that $[\partial F] = d\lambda - k\mu$ in $H_1(\partial(Y \minus \nbd(K)))$. We can cap off $F$ in $W_\lambda(K)$ to obtain a closed surface $\hat F$.\footnote{In \cite{OSzRational}, the notation $\hat F$ is used for what we call $C$.} To understand this surface, it helps to imagine attaching the $2$-handle in two steps: First, attach $S^1 \times D^2 \times I$ to $Y \times I$, gluing $S^1 \times D^2 \times \{0\}$ to $\nbd(K) \times \{1\}$; and then attach the $2$-handle along $S^1 \times D^2 \times \{1\}$. Inside $S^1 \times D^2 \times I$, $\partial F \times \{0\}$ is homologous to $(d \text{ parallel copies of } \lambda) \times \{1\}$; let $G$ be a surface joining them, and let $\hat F = F \cup G \cup (d \text{ parallel copies of the core of the 2-handle})$. The homology class $[\hat F] \in H_2(W)$ does not depend on the choice of $G$. Since $[\hat F]$ maps to $d[C]$ in $H_2(W,Y)$ and to $k[C^*]$ in $H_2(W, Y_\lambda(K))$, it follows that $[\hat F]^2=dk$.

We may represent $W$ by a doubly pointed Heegaard triple diagram $(\Sigma, \bm\alpha, \bm\beta, \bm\gamma, w, z)$ with the following properties:
\begin{itemize}
\item
The diagram $(\Sigma, \bm\alpha, \bm\beta, w, z)$ represents $(Y,K)$, as above. Moreover, there is an arc $t_\alpha$ from $z$ to $w$ that meets $\beta_g$ in a single point and is disjoint from all other $\alpha$ and $\beta$ curves.

\item
The curve $\beta_g$ meets $\alpha_g$ in a single point $x_0$ and is disjoint from the remaining $\alpha$ curves.

\item
The curve $\gamma_g$ is a $\lambda$-framed longitude that meets $\beta_g$ once and is disjoint from the remaining $\beta$ curves; it is oriented with the same orientation as $K$. For $i=1, \dots, g-1$, $\gamma_i$ is a small pushoff of $\beta_i$, meeting $\beta_i$ in two points.

\item
The points $w$ and $z$ lie to the right of $\gamma_g$ (with its specified orientation).
\end{itemize}
We say that $(\Sigma, \bm\alpha, \bm\beta, \bm\gamma, w, z)$ is \emph{adapted} to $(Y, K, \lambda)$.

\begin{remark} \label{rmk: ab-PD}
If $b_1(Y)>0$, we will further assume that $(\Sigma, \bm\alpha, \bm\beta, w)$ is admissible for all torsion spin$^c$ structures on $Y$. Indeed, let $\Pi_{\alpha\beta}$ denote the group of $(\alpha,\beta)$ periodic domains satisfying $n_w=0$, and define $\Pi_{\alpha\gamma}$ analogously. Then $\Pi_{\alpha\beta} \cong H_2(Y)$ and $\Pi_{\alpha\gamma} \cong H_2(Y_\lambda(K))$. Because $K$ is rationally null-homologous, every element of $\Pi_{\alpha\beta}$ must have $n_w = n_z$, so the multiplicity of $\beta_g$ in its boundary is $0$. Furthermore, if $k \ne 0$, there is a natural isomorphism $\Pi_{\alpha\beta} \cong \Pi_{\alpha\gamma}$, given by adding thin $(\beta,\gamma)$ periodic domains; thus, the multiplicity of $\gamma_g$ in the boundary of any element of $\Pi_{\alpha\gamma}$ is also $0$. (If $k = 0$, then $\Pi_{\alpha\gamma} \cong \Pi_{\alpha\beta} \oplus \Z$, where the generator of the $\Z$ factor is given by $P$ plus appropriate thin domains, but we will rarely need to consider this case.)
\end{remark}


Orient the curves $\alpha_g, \beta_g, \gamma_g$ so that $\#(\alpha_g \cap \beta_g) = \#(\gamma_g \cap \beta_g) = \#(t_\alpha \cap \beta_g) = 1 $ and $\gamma_g$. Orient the remaining $\alpha$, $\beta$, and $\gamma$ curves arbitrarily, except that $\beta_i$ and $\gamma_i$ are assumed to be oriented parallel to each other for $i=1, \dots, g-1$. There is a triply periodic domain $P$ with $n_z(P) = -k$, $n_w(P) = 0$, and
\[
\partial P = -d\alpha_g  -k \beta_g + d\gamma_g + \sum_{i=1}^{g-1} (a_i \alpha_i + b_i \beta_i)
\]
for some integers $a_i, b_i$ (using the specified orientations). This periodic domain represents the class of a capped-off Seifert surface in $H_2(W_\lambda(K))$.  We may also view $P$ as a relative periodic domain in the sense of the previous subsection. There is a slight caveat: To compute Alexander gradings using Proposition \ref{prop: abs-alex}, we let $\bar w$ and $\bar z$ denote the points on $\gamma_g$ closest to $w$ and $z$ respectively; we then use $n_{\bar w}(P)$ and $n_{\bar z}(P)$ in place of $n_w(P)$ and $n_z(P)$ in \ref{eq: abs-alex}.

If $k>0$, then the diagram $(\Sigma, \bm\alpha, \bm\beta, \bm\gamma, w)$ is admissible since $P$ has both positive and negative coefficients. If $k<0$, an adapted diagram is not necessarily admissible. We can achieve admissibility by winding, as discussed below.

The self-intersection number of the homology class represented by $P$ is given by
\begin{equation} \label{eq: PD-self-int}
[P]^2 = (\partial_\alpha P \cdot \partial_\beta P) = (\partial_\beta P \cdot \partial_\gamma P) = (\partial_\gamma P \cdot \partial_\alpha P).
\end{equation}
In this case, this formula gives
\[
[P]^2 = (-dk) (\beta_g \cdot \gamma_g) = dk,
\]
as expected.

As discussed above, let $K_\lambda \subset Y_\lambda(K)$ be obtained from a left-handed meridian of $K$. Let $z'$ be a basepoint on the other side of $\gamma_g$ from $w$. The Heegaard diagram $(\Sigma, \bm\alpha, \bm\gamma, w, z')$ then represents $K_\lambda$, with the specified orientation.

We now show how to relate the Alexander gradings for $(Y,K)$ and $(Y_\lambda, K_\lambda)$ in terms of Heegaard diagrams. Let $\Theta_{\beta\gamma} \in \T_\beta \cap \T_\gamma$ denote the standard top-dimensional cycle in  $\CF(\Sigma, \bm\beta, \bm\gamma, w)$.

\begin{figure}
\labellist
 \pinlabel {{\color{red} $\alpha_g$}} [l] at 306 45
 \pinlabel {{\color{blue} $\beta_g$}} [r] at 61 126
 \pinlabel {{\color{darkgreen} $\gamma_g$}} [l] at 306 85
 \pinlabel $z$ at 45 70
 \pinlabel $w$ at 74 70
 \pinlabel $z'$ at 74 113
 \small
 \pinlabel $0$ at 89 55
 \pinlabel $-k$ at 25 55
 \pinlabel $d$ at 89 95
 \pinlabel $d-k$ at 25 95
 \pinlabel $d$ at 114 55
 \pinlabel $2d$ at 137 55
 \pinlabel $kd$ at 183 55
 \pinlabel $2d$ at 230 55
 \pinlabel $d$ at 253 55
 \pinlabel $0$ at 283 55
\endlabellist
\includegraphics{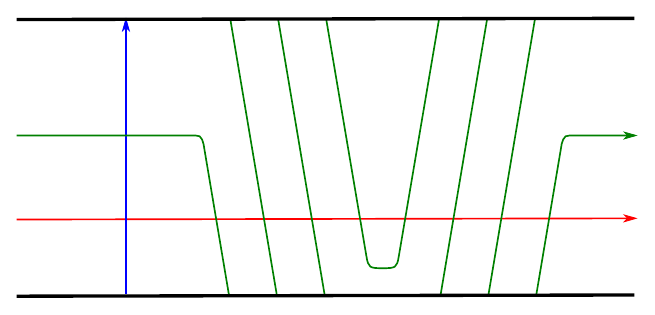}
\caption{Winding $\gamma_g$ in the case where $k>0$. The numbers represent the local multiplicities of the triply periodic domain $P$.}
\label{fig: winding-gamma}
\end{figure}

\begin{lemma} \label{lemma: c1-alex}
For any $\x \in \T_\alpha \cap \T_\beta$, $\q \in \T_\alpha \cap \T_\gamma$, and $\psi\in\pi_2(\x,\Theta_{\beta\gamma},\q)$, we have:
\begin{align}
\label{eq: alex-shift}
d\AlNorm_{w,z}(\x) - k\AlNorm_{w,z'}(\q) &= d n_z(\psi) + k n_{z'}(\psi) - (k+d) n_w(\psi) +\frac{k+d}{2} \\
\label{eq: c1-alex-x}
\gen{c_1(\spincs_w(\psi)), [P]} &= 2d \AlNorm_{w,z}(\x)  + 2d n_w(\psi) - 2d n_z(\psi) - k \\
\label{eq: c1-alex-q}
&= 2k \AlNorm_{w,z'}(\q) + 2k n_{z'}(\psi) - 2k n_w(\psi) + d.
\end{align}
\end{lemma}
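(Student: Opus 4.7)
The three formulas will be established in the order \eqref{eq: c1-alex-x}, \eqref{eq: c1-alex-q}, and \eqref{eq: alex-shift}, with the last being an algebraic consequence of the first two.  The plan is to apply Proposition \ref{prop: abs-alex} twice---once to the doubly-pointed Heegaard diagram $(\Sigma,\bm\alpha,\bm\beta,w,z)$ for $(Y,K)$, and once to $(\Sigma,\bm\alpha,\bm\gamma,w,z')$ for $(Y_\lambda,K_\lambda)$---using the triply periodic domain $P$ as the source of a relative periodic domain in each case.  Combining each Alexander-grading formula with Ozsv\'ath and Szab\'o's Chern class formula for the spin$^c$ structure of a Heegaard triangle, which evaluates $\gen{c_1(\spincs_w(\psi)),[P]}$ in terms of $\hat\chi(P)$, local multiplicities of $P$ at the corners of $\psi$, and the basepoint multiplicities $n_w(\psi)$, $n_z(\psi)$, $n_{z'}(\psi)$, will yield \eqref{eq: c1-alex-x} and \eqref{eq: c1-alex-q}; eliminating $\gen{c_1(\spincs_w(\psi)),[P]}$ between these two then produces \eqref{eq: alex-shift}.

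To carry out the first application, note that $\partial P$ contains $d\gamma_g$, where $\gamma_g$ is a $\lambda$-framed longitude for $K$ lying just to the left of $w$ and $z$.  Isotoping $\gamma_g$ across these basepoints converts $P$ into a relative periodic domain for $(\Sigma,\bm\alpha,\bm\beta,w,z)$ representing the rational Seifert surface class $[F]$, whose boundary consists of $\alpha$ and $\beta$ curves together with $d$ parallel longitude arcs joining $w$ and $z$.  The isotopy changes multiplicities only in a thin neighborhood of $\gamma_g$ and contributes a predictable correction relating the averaged multiplicities $n_{\bar w}(P)$, $n_{\bar z}(P)$ appearing in Proposition \ref{prop: abs-alex} to the triangle-domain multiplicities $n_w(\psi)$, $n_z(\psi)$.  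For the second application, the same $P$ yields a relative periodic domain for $(\Sigma,\bm\alpha,\bm\gamma,w,z')$ in which the $-k\beta_g$ portion of $\partial P$ plays the role of the longitude, since $\beta_g$ is isotopic to a meridian of $K$ and hence to a $\lambda$-framed longitude of $K_\lambda$; the negative coefficient is absorbed using Remark \ref{rmk: per-domain-sign}.  Feeding these into Proposition \ref{prop: abs-alex} and matching against the Chern class formula, then cancelling the common terms $\hat\chi(P)$ and the corner multiplicities $n_\x(P)$, $n_\q(P)$, will yield \eqref{eq: c1-alex-x} and \eqref{eq: c1-alex-q}; the linear combination obtained by multiplying \eqref{eq: c1-alex-x} by $k$ and subtracting $d$ times \eqref{eq: c1-alex-q} produces \eqref{eq: alex-shift}.

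The principal technical obstacle is the careful bookkeeping of half-integer basepoint corrections.  The points $w$, $z$, and $z'$ all lie adjacent to (but not on) the curve $\gamma_g$, which occurs in $\partial P$ with multiplicity $d$, so the averaged multiplicities appearing in Proposition \ref{prop: abs-alex} differ from the honest triangle-domain multiplicities $n_w(\psi)$, $n_z(\psi)$, $n_{z'}(\psi)$ by half-integer shifts depending on $d$ and $k$.  These shifts, once summed against the orientations along $\partial P$, are the source of the constants $\tfrac{k+d}{2}$, $-k$, and $d$ on the right-hand sides of the three formulas, and getting their signs right requires careful attention to the orientation conventions for $\alpha_g$, $\beta_g$, $\gamma_g$ and the placement of $w$, $z$, $z'$ relative to $\gamma_g$.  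As a consistency check, the self-intersection identity $[P]^2=dk$ from \eqref{eq: PD-self-int} constrains the quadratic dependence of the outcome on $d$ and $k$, and the two separate derivations of $\gen{c_1(\spincs_w(\psi)),[P]}$ in \eqref{eq: c1-alex-x} and \eqref{eq: c1-alex-q} must match.
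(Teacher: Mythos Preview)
Your overall strategy---compute both Alexander gradings via Proposition \ref{prop: abs-alex} with $P$ playing the role of the relative periodic domain, compute $\gen{c_1(\spincs_w(\psi)),[P]}$ via the triangle Chern class formula, and compare---is reasonable, but there is a genuine gap in how you describe the Chern class formula.  The formula from \cite[Proposition~6.3]{OSz4Manifold} reads
\[
\gen{c_1(\spincs_w(\psi)),[P]} = \hat\chi(P) + \#(\partial P) - 2n_w(P) + 2\sigma(\psi,P),
\]
where $\sigma(\psi,P)$ is the \emph{dual spider number}: one chooses an interior point of the triangle and arcs to each boundary component, and counts signed intersections of those arcs with pushoffs of $\partial P$.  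This is \emph{not} expressed in terms of the corner multiplicities $n_\x(P)$, $n_\q(P)$, $n_{\Theta_{\beta\gamma}}(P)$ or the basepoint multiplicities $n_w(\psi)$, $n_z(\psi)$, $n_{z'}(\psi)$ of the triangle, and there is no off-the-shelf identity converting $\sigma(\psi,P)$ into such data for a general triangle.  Your plan to ``cancel the common terms $\hat\chi(P)$ and the corner multiplicities $n_\x(P)$, $n_\q(P)$'' therefore does not go through as stated: the $n_\x(P)$ appearing in Proposition \ref{prop: abs-alex} has no evident counterpart in the Chern class formula to cancel against.

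The paper handles exactly this difficulty by a different route.  It first winds $\gamma_g$ so that every spin$^c$ structure on $Y_\lambda$ is represented by a generator in the winding region, and then works with explicit ``small'' triangles $\psi_{\x,i}$ whose domains are unions of tiny triangles.  For these, both the dual spider number and the local multiplicities of $P$ are computable by hand (the contribution at the winding region is $di$ or $d(1-i)$, and elsewhere the small triangles contribute known constants $c_j$), so \eqref{eq: alex-shift} and \eqref{eq: c1-alex-x} can be verified directly.  The general case is then reduced to the model case by writing an arbitrary triangle as $\psi' + rP$ with $\psi'$ differing from some $\psi_{\x,i}$ by an $(\alpha,\gamma)$ disk, and tracking how each side of the formula changes under these moves (using $[P]^2 = dk$).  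Finally, \eqref{eq: c1-alex-q} is obtained algebraically from the other two.  If you want to salvage your direct-comparison approach, you would need to prove a general identity expressing $\sigma(\psi,P)$ in terms of corner and basepoint data; doing so is essentially equivalent to the model computation and extension argument the paper carries out.
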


\begin{proof}
To begin, we may assume that the Heegaard diagram contains a ``winding region'' in a tubular neighborhood of $\beta_g$, shown in Figure \ref{fig: winding-gamma} in the case where $k$ is positive. Specifically, we wind the $\gamma$ curve $\abs{k}$ times in a direction specified by the sign of $k$, so that every spin$^c$ structure on $Y_\lambda(K)$ is represented by a generator that uses a point in the winding region. When $k<0$, this further guarantees that the triple diagram $(\Sigma, \bm\alpha, \bm\beta, \bm\gamma, w)$ is admissible, since $n_w(P)=0$ and $P$ has both positive and negative coefficients. The general case will then follow by tracing through the proof of isotopy invariance.

Up to permuting the indices of the $\beta$ curves, let us assume that $\x$ consists of points $x_j \in \alpha_j \cap \beta_j$ for $j=1, \dots, g$. In particular, $x_g$ is the unique point in $\alpha_g \cap \beta_g$, which is located in the winding region. For $j=1, \dots, g-1$, the local multiplicities of $P$ around $x_j$ are $c_j, c_j+a_j, c_j+a_j+b_j, c_j+b_j$ in some order (for some $c_j$), while the local multiplicities at $x_g$ are $0, d, d-k, -k$ as in Figure \ref{fig: winding-gamma}. Hence, we have
\[
n_\x(P) = \frac{d-k}{2} + \sum_{j=1}^{g-1} \left(c_j + \frac{a_j + b_j}{2} \right) \quad \text{and} \quad n_{\bar wq}(P) + n_{\bar z}(P) = d-k.
\]

For each $\x \in \T_\alpha \cap \T_\beta$ and each $i = 1, \dots, \abs{k}$, let $\x_i \in \T_\alpha \cap \T_\gamma$ be the generator consisting of the $i\Th$ point of $\alpha_g \cap \gamma_g$ over from $x_g$, together with the points of $\alpha_j \cap \gamma_j$ that are ``nearest'' to $x_j$ for $j=1, \dots, g-1$. These generators all represent different spin$^c$ structures on $Y_\lambda(K)$. Let $\psi_{\x,i} \in \pi_2(\x, \Theta_{\beta\gamma}, \x_i)$ be the class whose domain consists of $g$ small triangles $\psi_{\x,i}^j$, where $\psi_{\x,i}^g$ is supported in the winding region (having positive coefficients if $k>0$ and negative coefficients if $k<0$), and for $j=1, \dots, g-1$, $\psi_{\x,i}^j$ connects $x_j$ to its ``nearest point'' in $\alpha_j \cap \gamma_j$ (and is independent of $i$). It is easy to check that
\[
(n_w(\psi_{\x,i}), n_z(\psi_{\x,i}), n_{z'}(\psi_{\x,i})) =
\begin{cases}
  (i, 0, i-1) & \text{if } k > 0 \\
  (1-i, 0, -i) & \text{if } k < 0.
\end{cases}
\]
In particular, $n_w(\psi_{\x,i}) - n_{z'}(\psi_{\x,i}) = 1$ in all cases.

We begin by showing that \eqref{eq: alex-shift} and \eqref{eq: c1-alex-x} hold when $\psi = \psi_{\x,i}$.

Let $P'$ be obtained from $P$ by adding copies of the small periodic domains bounded by $\beta_i - \gamma_i$ for $i=1, \dots, g-1$. Then $P'$ is a relative periodic domain for $K_\lambda$, in the sense of Section \ref{ssec: rel-per-domain}. We consider each of the terms in \eqref{eq: abs-alex}. We have $\hat\chi(P') = \hat\chi(P)$. Let $\tilde w$ and $\tilde z$ be the points on $\beta_g$ closest to $w$ and $z'$, respectively; then
\[
n_{\tilde w}(P') + n_{\tilde z}(P') = n_{\bar w}(P) + n_{\bar z}(P) = d-k .
\]
Finally, for $\x \in \T_\alpha \cap \T_\beta$ and $i=1, \dots, k$, we have:
\begin{align*}
n_{\x_i}(P') &=
\begin{cases}
n_\x(P) - \frac{d-k}{2} + di & \text{if } k>0 \\
n_\x(P) - \frac{d-k}{2} + d(1-i) & \text{if } k<0
\end{cases} \\
&= n_\x(P) - \frac{d-k}{2} + dn_w(\psi_{\x,i}).
\end{align*}
For $j=1, \dots, g-1$, the local multiplicities of $P$ at $x_j$ are the same as those of $P'$ at the nearest point. Combining these facts, we see that
\begin{align*}
d\AlNorm_{w,z}(\x) - k\AlNorm_{w,z'}(\x_i) &= -d n_w(\psi_{\x,i}) + \frac{d-k}{2} \\
&= - k -d n_w(\psi_{\x,i}) + \frac{d+k}{2} \\
&= d n_z(\psi_{\x,i}) + kn_{z'}(\psi_{\x,i}) - (d+k) n_w(\psi_{\x,i}) + \frac{d+k}{2}
\end{align*}
as required.

To prove \eqref{eq: c1-alex-q}, we use the first Chern class formula from \cite[Proposition 6.3]{OSz4Manifold}.\footnote {There is a sign inconsistency in the definition of the dual spider number in \cite[Section 6.1]{OSz4Manifold}: if we compute intersection numbers in the usual way, it should be
\[
\sigma(\psi, P) = n_{u(x)}(P) - \#(a \cap \partial'_\alpha P) - \#(b \cap \partial'_\beta P) - \#(c \cap \partial'_\gamma P),
\]
rather than with $+$ signs throughout. Also, the term $\#(\partial P)$ is a \emph{signed} count of the curves in $\partial P$ relative to some fixed orientations (which are the ones used to define the parallel pushoffs $\partial'_\alpha P$, etc.)}
The local contribution of $\psi_{\x,i}^j$ to the dual spider number $\sigma(\psi_{\x,i}, P)$ is $c_j$ for $j=1, \dots, g-1$ and either $di$ (if $k>0$) or $d(1-i)$  (if $k<0$) for $j=g$. Note that the latter equals $n_w(\psi_{\x,i})$ in either case. Therefore,
\begin{align*}
\gen{c_1(\spincs_w(\psi_{\x,i})), [P]} &= \hat\chi(P) + \#(\partial P) - 2n_w(P) + 2\sigma(\psi_{\x,i}, P) \\
&= \hat\chi(P) + \left(-k + \sum_{j=1}^{g-1} (a_j + b_j) \right) - 2 \cdot 0 + 2 \left( d n_w(\psi_{\x,i}) + \sum_{j=1}^{g-1} c_j \right) \\
&= \hat\chi(P) - k + \sum_{j=1}^{g-1} (a_j + b_j + 2c_j) + 2d n_w(\psi_{\x,i}) \\
&= \hat\chi(P) + 2 n_\x(P) -d + 2d n_w(\psi_{\x,i}) \\
&= \hat\chi(P) + 2 n_\x(P) - n_{\bar w}(P) - n_{\bar z}(P) -k + 2d n_w(\psi_{\x,i}) \\
&= 2d \AlNorm_{w,z}(\x) + 2d n_w(\psi_{\x,i}) - 2d n_z(\psi_{\x,i}) - k
\end{align*}
as required.

Now, we consider an arbitrary triangle $\psi \in \pi_2(\x, \Theta_{\beta\gamma}, \q)$. (Assume that $k>0$; the other case follows similarly.) Choose $r \in \Z$ and $i \in \{1, \dots, k\}$ such that
\[
n_w(\psi) - n_z(\psi)  = rk+i.
\]
Let $\psi' = \psi - rP \in \pi_2(\x, \Theta_{\beta\gamma}, \q)$; then $n_w(\psi') - n_z(\psi') = i$. The composite domain $\phi$ with $\DD (\phi)= \DD(\psi') - \DD(\psi_{\x,i})$ is a disk in $\pi_2(\x_i, \q)$, so $\spincs_w(\psi') = \spincs_w(\psi_{\x,i})$.  We then compute:
\begin{align*}
\AlNorm_{w,z'}(\x_i) - \AlNorm_{w,z'}(\q) &= n_{z'}(\phi) - n_w(\phi) \\
&= n_{z'}(\psi') - n_{z'}(\psi_{\x,i}) - n_w(\psi') + n_w(\psi_{\x,i}) \\
&= n_{z'}(\psi) - n_w(\psi) -rd + 1 \\
d\AlNorm_{w,z}(\x) - k\AlNorm_{w,z'}(\q)
&= (d\AlNorm_{w,z}(\x) - k\AlNorm_{w,z'}(\x_i)) + k(\AlNorm_{w,z'}(\x_i) - \AlNorm_{w,z'}(\q)) \\
&= -di + \frac{d-k}{2} + k (n_{z'}(\psi) - n_w(\psi) -rd +1) \\
&= -di + \frac{d+k}{2} + k (n_{z'}(\psi) - n_w(\psi)) - rdk \\
&= k (n_{z'}(\psi) -  n_w(\psi)) - d(n_w(\psi) - n_z(\psi)) + \frac{d+k}{2}   \\
&= d n_z(\psi) + k n_{z'}(\psi) - (k+d) n_w(\psi) +\frac{d+k}{2}
\end{align*}
as required. Similarly, we have:
\begin{align*}
\gen{c_1(\spincs_w(\psi)), [P]}
&= \gen{c_1(\spincs_w(\psi' + rP)), [P]} \\
&= \gen{c_1(\spincs_w(\psi') + r \PD[P]), [P]} \\
&= \gen{c_1(\spincs_w)(\psi'), [P]} +   2r [P]^2 \\
&= \gen{c_1(\spincs_w)(\psi_{\x,i}), [P]} +   2r kd \\
&= 2d \AlNorm_{w,z}(\x) - k + 2di +   2r kd \\
&= 2d \AlNorm_{w,z}(\x) + 2d n_w(\psi) - 2d n_z(\psi) - k
\end{align*}
as required.

Finally, \eqref{eq: c1-alex-q} follows immediately from \eqref{eq: alex-shift} and \eqref{eq: c1-alex-x}.
\end{proof}

\begin{remark}
In \cite[Proposition 2.2]{OSzRational}, Ozsv\'ath and Szab\'o construct a bijection
\[
E_{Y, \lambda, K} \co \Spin^c(W_\lambda(K)) \to \ul\Spin^c(Y,K)
\]
characterized by the property that for $\psi \in \pi_2(\x, \Theta_{\beta\gamma}, \q)$,
\[
E_{Y,\lambda, K}(\spincs_w(\psi)) = \ul\spincs_{w,z}(\x) + (n_z(\psi) - n_w(\psi))\PD[\mu]
\]
(where, as above, our $\mu$ is the negative of theirs). Lemma \ref{lemma: c1-alex} gives an explicit and diagram-independent description of $E_{Y, \lambda, K}$: for any $\spinct \in \Spin^c(W_\lambda(K))$, $E_{Y,\lambda,K}(\spinct)$ is the relative spin$^c$ structure that satisfies
\[
G_{Y,K}(E_{Y,\lambda,K}(\spinct)) = \spinct|_Y \quad \text{and} \quad
\AlNorm_{Y,K}(E_{Y,\lambda, K}(\spinct)) = \frac{\gen{c_1(\spinct), [\hat F]} + k}{2d} .
\]
%
%
\end{remark}

Let $W'_\lambda = W'_\lambda(K)$ denote $W_\lambda$ with orientation reversed, viewed as a cobordism from $Y_\lambda(K)$ to $Y$. This cobordism can be represented by the triple diagram $(\Sigma, \bm\alpha, \bm\gamma, \bm\beta)$. The periodic domain $P$ still generates $H_2(W'_\lambda)$; with respect to the reversed orientation, we have $[P]^2 = -dk$. Let $\Theta_{\gamma\beta}$ be the corresponding top generator. The analogue of Lemma \ref{lemma: c1-alex} for $(\alpha, \gamma, \beta)$ triangles then states:

\begin{lemma} \label{lemma: c1-alex-rev}
For any $\q \in \T_\alpha \cap \T_\gamma$, $\x \in \T_\alpha \cap \T_\beta$, and $\psi \in \pi_2(\q, \Theta_{\gamma\beta}, \x)$, we have:
\begin{align}
\label{eq: alex-shift-rev}
k\AlNorm_{w,z'}(\q) - d\AlNorm_{w,z}(\x) &= d n_z(\psi) + k n_{z'}(\psi) - (k+d) n_w(\psi) -\frac{k+d}{2} \\
\label{eq: c1-alex-q-rev}
\gen{c_1(\spincs_w(\psi)), [\hat F]} &=   2k\AlNorm_{w,z'}(\q)  +2k n_w(\psi) -2k n_{z'}(\psi)  + d \\
\label{eq: c1-alex-x-rev}
&= 2d\AlNorm_{w,z}(\x) + 2d n_z(\psi) - 2d n_w(\psi) - k
\end{align}
\end{lemma}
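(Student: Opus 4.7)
The argument mirrors that of Lemma \ref{lemma: c1-alex}, applied to the triple diagram $(\Sigma, \bm\alpha, \bm\gamma, \bm\beta, w)$ and to triangle classes $\psi \in \pi_2(\q, \Theta_{\gamma\beta}, \x)$. The plan is to keep the same winding region near $\beta_g$ and the same triply-periodic domain $P$; what changes is that $P$ now represents a class in $H_2(W'_\lambda(K))$ of self-intersection $-dk$, reflecting the orientation reversal of the cobordism. For each $\q \in \T_\alpha \cap \T_\gamma$ whose $\gamma_g$-coordinate lies in the winding region and each $i \in \{1,\dots,|k|\}$, I would construct canonical small triangles $\psi'_{\q,i} \in \pi_2(\q, \Theta_{\gamma\beta}, \x_i)$ whose domain consists of a single small triangle in the winding region together with $g-1$ thin triangles near each $(\beta_j, \gamma_j)$ pair for $j<g$. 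A local analysis of the winding picture (which is the mirror of Figure \ref{fig: winding-gamma} reflecting the swap of vertex order) will yield the multiplicities $(n_w(\psi'_{\q,i}), n_z(\psi'_{\q,i}), n_{z'}(\psi'_{\q,i}))$; crucially, the identity $n_w - n_{z'} = -1$ now holds in place of $n_w - n_{z'} = 1$, which is the source of the sign flip on the constant term in \eqref{eq: alex-shift-rev}.

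Next, I would verify the three identities on the canonical triangles. Identity \eqref{eq: alex-shift-rev} follows by applying Proposition \ref{prop: abs-alex} to compute $\AlNorm_{w,z}(\x_i)$ and $\AlNorm_{w,z'}(\q)$ from the modified relative periodic domain $P'$ (obtained from $P$ by adding thin $(\beta,\gamma)$ domains) and substituting the computed multiplicities. Identities \eqref{eq: c1-alex-q-rev} and \eqref{eq: c1-alex-x-rev} come from Ozsv\'ath--Szab\'o's dual-spider-number formula for $\spincs_w(\psi'_{\q,i})$ applied to $P$; the contribution at the winding-region vertex is once again read off from the local picture, with sign keeping track of the reversed vertex order. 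Finally, for an arbitrary $\psi \in \pi_2(\q, \Theta_{\gamma\beta}, \x)$, I would write $\psi = \psi'_{\q,i} + rP + \phi$ for appropriate integer $r$, index $i$, and disk class $\phi$; the shift by $rP$ transforms $(n_w, n_z, n_{z'})$ in a controlled way and shifts the Chern class pairing by $2r[P]^2 = -2rdk$ (now negative, in contrast to Lemma \ref{lemma: c1-alex}), while the disk $\phi$ contributes the expected Alexander grading difference via \eqref{eq: rel-alex-fractional}. Assembling these contributions will produce precisely the stated formulas.

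The main obstacle will be sign bookkeeping: the combined effect of reversing the cobordism orientation (which flips $[P]^2$), transposing $\bm\beta$ and $\bm\gamma$ in the triangle vertex order (which flips the constant in \eqref{eq: alex-shift-rev} and the $n_w$ term in the spider number), and the conventions in the first Chern class formula must all be reconciled. I do not expect any new conceptual ingredient beyond those already present in the proof of Lemma \ref{lemma: c1-alex}; once the local winding-region picture is correctly mirrored, the global computation should proceed in parallel with only the sign conventions to track.
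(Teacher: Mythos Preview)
Your approach is correct in outline but takes a substantially longer route than the paper. The paper's proof is a three-line reduction: observe that there is a disk $\phi \in \pi_2(\Theta_{\beta\gamma}, \Theta_{\gamma\beta})$ supported on small bigons away from the winding region, so that $n_w(\phi)=n_z(\phi)=n_{z'}(\phi)=0$; then for any $\psi \in \pi_2(\q,\Theta_{\gamma\beta},\x)$ the class $\psi'=\overline{\psi*\phi}$ lies in $\pi_2(\x,\Theta_{\beta\gamma},\q)$ with $n_w(\psi')=-n_w(\psi)$, $n_z(\psi')=-n_z(\psi)$, $n_{z'}(\psi')=-n_{z'}(\psi)$; applying the already-proved Lemma~\ref{lemma: c1-alex} to $\psi'$ and negating where appropriate yields all three identities immediately. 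This bypasses entirely the sign bookkeeping you identify as the main obstacle: there is no need to redraw the winding picture, recompute dual spider numbers, or track the change in $[P]^2$, since everything is inherited from Lemma~\ref{lemma: c1-alex}.

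Your from-scratch recomputation would work, and there is nothing conceptually wrong with it, but it duplicates the entire argument of Lemma~\ref{lemma: c1-alex} with mirrored conventions. One small notational wrinkle: your indexing of the canonical small triangles by pairs $(\q,i)$ with $\q$ a winding-region generator is slightly off---the natural index set is pairs $(\x,i)$ with $\x\in\T_\alpha\cap\T_\beta$ arbitrary and $i\in\{1,\dots,|k|\}$, and the winding-region generator $\x_i$ then appears as the other vertex of the triangle (just as in Lemma~\ref{lemma: c1-alex}). This does not affect the validity of your plan, only its bookkeeping.
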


\begin{proof}
There is a disk $\phi \in \pi_2(\Theta_{\beta\gamma}, \Theta_{\gamma\beta})$ consisting entirely of small bigons outside the winding region, with $n_w(\phi) = n_z(\phi) = n_{z'}(\phi) = 0$. Hence, for any $\q \in \T_\alpha \cap \T_\gamma$, $\x \in \T_\alpha \cap \T_\beta$, and $\psi \in \pi_2(\q, \Theta_{\gamma\beta}, \x)$, we have a class $\psi' = \overline{\psi * \phi} \in \pi_2(\x, \Theta_{\beta\gamma}, \q)$. We now apply Lemma \ref{lemma: c1-alex} to this class.
\end{proof}

As a consequence of either of the two previous lemmas, we see that the cosets in $\Q/\Z$ in which the Alexander gradings for $Y_\lambda(K)$ are contained is closely connected to spin$^c$ structures on $W_\lambda(K)$:

\begin{corollary} \label{cor: c1-alex-q-cong}
Let $\spinct \in \Spin^c(Y_\lambda(K))$, and let $\spincv$ be any spin$^c$ structure on $W_\lambda(K)$ extending $\spinct$. Then
\begin{equation}
\label{eq: c1-alex-q-cong}
\AlNorm_{Y_\lambda, K_\lambda}(\spinct) \equiv \frac{\gen{c_1(\spincv), [P]} - d}{2k} \pmod \Z
\end{equation}
\end{corollary}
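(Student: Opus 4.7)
The plan is to derive the congruence directly from equation \eqref{eq: c1-alex-q} of Lemma \ref{lemma: c1-alex}, and then check that the right-hand side of \eqref{eq: c1-alex-q-cong} is independent mod $\Z$ of the chosen extension $\spincv$.

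First I would fix a generator $\q \in \T_\alpha \cap \T_\gamma$ representing $\spinct$ (using a Heegaard triple diagram adapted to $(Y,K,\lambda)$), and choose any Whitney triangle $\psi \in \pi_2(\x, \Theta_{\beta\gamma}, \q)$ for some $\x \in \T_\alpha \cap \T_\beta$; such a $\psi$ always exists after, if necessary, winding transverse to $\bm\beta$ so that every spin$^c$ structure on $Y_\lambda(K)$ is realized. By \eqref{eq: c1-alex-q},
\[
\AlNorm_{w,z'}(\q) \;=\; \frac{\gen{c_1(\spincs_w(\psi)), [P]} - d}{2k} + \bigl(n_w(\psi) - n_{z'}(\psi)\bigr).
\]
Since $n_w(\psi), n_{z'}(\psi) \in \Z$, reducing mod $\Z$ gives
\[
\AlNorm_{Y_\lambda,K_\lambda}(\spinct) \;\equiv\; \AlNorm_{w,z'}(\q) \;\equiv\; \frac{\gen{c_1(\spincs_w(\psi)), [P]} - d}{2k} \pmod{\Z},
\]
where the first congruence holds because $\AlNorm_{w,z'}(\q)$ is an element of the coset $\AlNorm_{Y_\lambda, K_\lambda}(\spinct)$. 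This proves the statement with the particular choice $\spincv = \spincs_w(\psi)$, which extends $\spinct$ since $\spincs_w(\psi)|_{Y_\lambda} = \spincs_w(\q) = \spinct$.

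It then remains to verify that the right-hand side of \eqref{eq: c1-alex-q-cong} depends only on $\spinct$, not on the extension. Any two extensions differ by an element of $\ker\bigl(H^2(W) \to H^2(Y_\lambda)\bigr)$, which is generated by $\PD[C]$; thus $\spincv' = \spincv + n \PD[C]$ for some $n \in \Z$. Since $[\hat F]$ maps to $k[C^*]$ in $H_2(W, Y_\lambda)$ and $C \cdot C^* = 1$, we have $\gen{\PD[C], [P]} = \gen{\PD[C], [\hat F]} = k$, so
\[
\gen{c_1(\spincv'), [P]} - \gen{c_1(\spincv), [P]} = 2n \gen{\PD[C], [P]} = 2nk,
\]
and therefore $\frac{\gen{c_1(\spincv'), [P]} - d}{2k} - \frac{\gen{c_1(\spincv), [P]} - d}{2k} = n \in \Z$, as required. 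The only minor subtlety is checking that the triangle $\psi$ can always be chosen (and that the self-intersection computation $[\hat F]^2 = dk$, established above, gives the correct coupling with $\PD[C]$); both were handled in the discussion preceding the corollary, so no further work is needed.
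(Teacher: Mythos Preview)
Your proof is correct and follows essentially the same approach as the paper: both derive the congruence directly from \eqref{eq: c1-alex-q} by reducing mod $\Z$. The paper's one-line argument applies \eqref{eq: c1-alex-q} to ``any triangle representing $\spincv$,'' implicitly using that every extension is realized by some triangle, whereas you prove it for one extension and then explicitly verify independence of the choice via the $\PD[C]$ computation --- a minor elaboration, but the core idea is identical.
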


\begin{proof}
Apply \eqref{eq: c1-alex-q} to any triangle representing $\spincv$.
\end{proof}

\section{Large surgeries} \label{sec: large}

In this section, we will restate the large surgery formulas from \cite[Section 4]{OSzRational} and \cite[Section 4.1]{HeddenPlamenevskayaRational} with more details about the Alexander and Maslov gradings, and prove some key lemmas that will be needed for studying the surgery exact triangle in Section \ref{sec: exact-sequence}. Throughout this section, let $\lambda$ denote a fixed longitude for $K$ as above, corresponding to some integer $k \ne 0$. We will be studying Heegaard diagrams for $Y_{\lambda + m \mu}(K)$, where $m$ is a large positive integer.

\subsection{Well-adapted Heegaard diagrams}

\begin{figure}
\labellist
 \pinlabel $w$ at 190 42
 \pinlabel $z$ at 163 42
 \pinlabel $z'$ at 190 109
 \pinlabel $u$ at 191 72
 \pinlabel {{\color{red} $\alpha_g$}} [l] at 345 26
 \pinlabel {{\color{blue} $\beta_g$}} [r] at 178 126
 \pinlabel {{\color{darkgreen} $\gamma_g$}} [l] at 345 78
 \pinlabel {{\color{purple} $\delta_g$}} [l] at 345 65
 \tiny
 \pinlabel $\Theta_{\beta\gamma}$ [br] at 178 77
 \pinlabel $\Theta_{\gamma\delta}$ [br] at 204 77
 \pinlabel $\Theta_{\delta\beta}$ [tl] at 178 66
 \pinlabel {{\color{blue} $q$}} [tl] at 178 30
 \pinlabel {{\color{purple} $p_{-1}$}} [tl] at 145 30
 \pinlabel {{\color{purple} $p_{b-m}$}} [tl] at 83 30
 \pinlabel {{\color{purple} $p_{0}$}} [tl] at 221 30
 \pinlabel {{\color{purple} $p_{b-1}$}} [tl] at 283 30
\endlabellist
\includegraphics{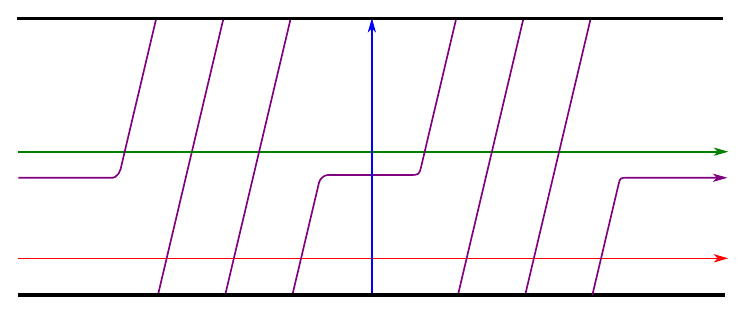}
\caption{The winding region of a well-adapted Heegaard diagram, in the case where $m=6$ and $b=3$.}
\label{fig: twist}
\end{figure}

Assume we have fixed a Heegaard diagram $(\Sigma, \bm\alpha, \bm\beta, \bm\gamma, w, z, z')$ adapted to $\lambda$-surgery on $K$. If $k<0$, we wind $\gamma_g$ to achieve admissibility as in the proof of Lemma \ref{lemma: c1-alex}. Let $A$ be an annular neighborhood of $\beta_g$ containing all three basepoints $w, z, z'$, and let $A' \subset A$ be a smaller such neighborhood. For any natural numbers $0 \le b \le m$, let $\bm\delta^{m,b} = (\delta_1^{m,b}, \dots, \delta_g^{m,b})$ be a tuple of curves obtained from $\bm\gamma$ as follows. For $i=1, \dots, g-1$, $\delta_i^{m,b}$ is a small translate of $\gamma_i$ meeting it in two points. The curve  $\delta_g$ is obtained from a parallel pushoff of $\gamma_g$ by performing $m$ left-handed Dehn twists parallel to $\beta_g$, where $b$ (resp. $m-b$) of these twists are performed in the component of $A \minus A'$ on the same side of $\beta_g$ as $w$ (resp. $z$). (See Figure \ref{fig: twist}.) We say that the Heegaard diagram $(\Sigma, \bm\alpha, \bm\delta^{m,b}, w,z,z')$ is \emph{well-adapted} to $(\lambda+m\mu)$-surgery on $K$. We call $A$ the \emph{winding region}.
If $m$ and $b$ are understood from context, we omit the superscripts from the $\delta$ curves.

The Heegaard triple diagram $(\Sigma, \bm\alpha, \bm\beta, \bm\delta^{m,b}, w, z,z')$ is adapted to $(\lambda+m\mu)$--surgery on $K$. Hence, all the results of the previous section apply, where we take $k+dm$ in place of $k$ throughout.

As shown in Figure \ref{fig: twist}, let $u$ be a basepoint located on the $w$ side of $\beta_g$, in between $\gamma_g$ and $\delta_g$. This will be needed later on to understand the effect of $(\alpha,\gamma,\delta)$ triangles on the Alexander grading.

We will typically use $\spincs$, $\spinct$, and $\spincu$ to refer to spin$^c$ structures on $Y_{\alpha\beta}$, $Y_{\alpha\gamma}$, and $Y_{\alpha\delta}$,  respectively, and use $\spincv$ for spin$^c$ structures on the various cobordisms between them. As a notational convenience, define $\Gens(\bm\alpha, \bm\beta) = \T_\alpha \cap \T_\beta$ and
\begin{equation} \label{eq: Gens}
\Gens(\bm\alpha, \bm\beta, w, \spincs) = \{\x \in \T_\alpha \cap \T_\beta \mid \spincs_w(\x) = \spincs\},
\end{equation}
and likewise for the spin$^c$ decompositions of the other complexes.

\subsection{Periodic domains}

We now discuss the periodic domains present in the Heegaard multi diagram $(\Sigma, \bm\alpha, \bm\beta, \bm\gamma, \bm\delta)$.

To begin, for any $j=1, \dots, g-1$, there are small periodic domains $S_{\beta\gamma}^j$ and $S_{\gamma\delta}^j$ with $\partial S_{\beta\gamma}^j = \beta_j - \gamma_j$ and $\partial S_{\gamma\delta}^j = \gamma_j - \delta_j$, supported in a small neighborhood of each pair of curves. We will refer to these as \emph{thin domains}. As in the previous section, the groups $\Pi_{\alpha\beta}$, $\Pi_{\alpha\gamma}$, and $\Pi_{\alpha\delta}$ are naturally isomorphic, by adding thin domains as needed.

Let $P_\gamma$ and $P_\delta$ be the triply periodic domains for $(\bm\alpha, \bm\beta, \bm\gamma)$ and $(\bm\alpha, \bm\beta, \bm\delta)$, respectively, which correspond to $P$ from Section \ref{sec: alex-surgery}. Specifically, we have $n_w(P_\gamma) = n_w(P_\delta) = 0$ and
\begin{align}
\label{eq: Pgamma-boundary}
\partial P_\gamma &= -d\alpha_g  -k \beta_g + d\gamma_g + \sum_{i=1}^{g-1} (a_i \alpha_i + b_i \gamma_i) \\
\label{eq: Pdelta-boundary}
\partial P_\delta &= -d\alpha_g  -(k+dm) \beta_g + d\delta_g + \sum_{i=1}^{g-1} (a_i \alpha_i + b_i \delta_i).
\end{align}
There is a $(\beta,\gamma,\delta)$ triply periodic domain $Q$ with
\[
\partial Q = m\beta_g + \gamma_g - \delta_g ,
\]
so that
\[
P_\gamma - P_\delta = dQ + \text{thin domains}.
\]
Finally, define the $(\alpha, \gamma, \delta)$ periodic domain
\[
R = \tfrac{m}{\nu} P_\gamma + \tfrac{k}{\nu} Q
\]
where $\nu = \gcd(k,m)$; it has
\[
\partial R = -\frac{dm}{\nu} \alpha_g  +  \frac{k+dm}{\nu}\gamma_g  - \frac{k}{\nu} \delta_g + \frac{m}{\nu} \sum_{i=1}^{g-1} (a_i \alpha_i + b_i \gamma_i).
\]

The multiplicities of the periodic domains at the various basepoints are as follows:
\begin{center}
\begin{tabular} {|c|c|c|c|c|} \hline
& $n_w$ & $n_z$ & $n_{z'}$ & $n_u$ \\ \hline
$P_\gamma$ & $0$ & $-k$ & $d$ & $0$ \\
$P_\delta$ & $0$ & $-k-dm$ & $d$ & $d$ \\
$Q$ & $0$ & $m$ & $0$ & $-1$ \\
$R$ & $0$ & $0$ & $\frac{dm}{\nu}$ & $-\frac{k}{\nu}$ \\ \hline
\end{tabular}
\end{center}

Let $\Pi_{\alpha\beta\gamma\delta}$ denote the group of integral $(\alpha, \beta, \gamma, \delta)$ periodic domains with $n_w=0$, and let $\bar \Pi_{\alpha\beta\gamma\delta}$ denote its quotient by the thin domains. Define $\Pi_{\alpha\beta\gamma}$, $\Pi_{\alpha\beta\delta}$, $\Pi_{\alpha\gamma\delta}$, and $\Pi_{\beta\gamma\delta}$ and their barred versions similarly. The following lemma is left as an exercise for the reader:

\begin{lemma}
The group $\bar \Pi_{\alpha\beta\gamma\delta}$ is free abelian of rank $2 + b_1(Y)$, generated (over $\Z$) by $P_\gamma$ and $Q$ together with any basis for $\Pi_{\alpha\beta} \cong H_2(Y)$.
\end{lemma}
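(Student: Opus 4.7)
The plan is to build $\bar\Pi_{\alpha\beta\gamma\delta}$ as an iterated split extension, peeling off one family of curves at a time. The first step uses the homomorphism $\Psi_\delta\co \bar\Pi_{\alpha\beta\gamma\delta} \to \Z$ that records the coefficient of $\delta_g$ in the boundary of a periodic domain. From the tabulated multiplicities above one reads off $\Psi_\delta(Q) = -1$, so $\Psi_\delta$ is surjective and $Q$ furnishes a splitting. To identify the kernel, I would argue that any $P$ with $\Psi_\delta(P) = 0$ has no $\delta_g$ in its boundary, and then subtracting copies of the thin domains $S_{\gamma\delta}^j$ for $j<g$ removes the remaining $\delta$ curves without changing the class in $\bar\Pi_{\alpha\beta\gamma\delta}$. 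Hence such a $P$ is represented by an element of $\Pi_{\alpha\beta\gamma}$, producing the split short exact sequence
\[
0 \to \bar\Pi_{\alpha\beta\gamma} \to \bar\Pi_{\alpha\beta\gamma\delta} \xrightarrow{\Psi_\delta} \Z \to 0.
\]

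The second step is to compute $\bar\Pi_{\alpha\beta\gamma}$. The cleanest route is to invoke the standard dictionary between $(\alpha,\beta,\gamma)$ periodic domains with $n_w=0$ and $H_2(W_\lambda(K);\Z)$, and then run the long exact sequence of the pair $(W_\lambda(K), Y)$. The connecting map $H_2(W_\lambda(K), Y) \to H_1(Y)$ sends the generator $[C]$ of $H_2(W_\lambda(K), Y) \cong \Z$ to $[K]$, which has order $d$, so the image of $H_2(W_\lambda(K))$ in $H_2(W_\lambda(K), Y)$ is $d\Z$, and this image is realized by $[\hat F] = [P_\gamma]$. Since $H_2(Y)$ is free abelian and injects into $H_2(W_\lambda(K))$ as the subgroup $\Pi_{\alpha\beta}$, the sequence splits via $P_\gamma$, yielding $\bar\Pi_{\alpha\beta\gamma} \cong \Pi_{\alpha\beta} \oplus \Z\langle P_\gamma\rangle$.

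Splicing the two split extensions gives $\bar\Pi_{\alpha\beta\gamma\delta} \cong \Pi_{\alpha\beta} \oplus \Z\langle P_\gamma\rangle \oplus \Z\langle Q\rangle$ as abelian groups. Since $\Pi_{\alpha\beta} \cong H_2(Y;\Z)$ is free of rank $b_1(Y)$, the total rank is $2 + b_1(Y)$, with the asserted generating set. The main obstacle, and the only nontrivial point, is the divisibility-by-$d$ statement underlying the identification of $\bar\Pi_{\alpha\beta\gamma}$: one needs that the coefficient of $\gamma_g$ in the boundary of any $(\alpha,\beta,\gamma)$ periodic domain lies in $d\Z$. This is precisely where the hypothesis that $K$ is rationally null-homologous of order $d$ enters essentially, and it reflects the same phenomenon already alluded to in Remark \ref{rmk: ab-PD}. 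Once this divisibility is in hand, every remaining verification is purely bookkeeping with thin domains.
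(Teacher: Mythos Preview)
The paper leaves this lemma as an exercise and provides no proof, so there is nothing to compare against directly. Your argument is correct and is precisely the kind of bookkeeping the authors have in mind. The two-step splitting---first using the $\delta_g$-coefficient map (split by $Q$) to reduce to $\bar\Pi_{\alpha\beta\gamma}$, then invoking the long exact sequence of $(W_\lambda(K),Y)$ to exhibit $\bar\Pi_{\alpha\beta\gamma} \cong H_2(W_\lambda(K)) \cong \Pi_{\alpha\beta} \oplus \Z\langle P_\gamma\rangle$---is clean and complete. One small remark: the identification $\bar\Pi_{\alpha\beta\gamma} \cong H_2(W_\lambda(K))$ technically passes through $H_2(\bar X_{\alpha\beta\gamma})$, as in the commutative square the paper records; since $W_\lambda(K)$ is obtained from $\bar X_{\alpha\beta\gamma}$ by attaching a $4$-handle along the residual $S^3$, this does not affect $H_2$ and your argument goes through verbatim. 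The divisibility observation you flag at the end is indeed exactly the content of $\partial[C] = [K]$ having order $d$ in $H_1(Y)$.
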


For any domain $S$, define
\begin{equation}\label{eq: mult-comb}
\MultComb(S) = d n_z(S) + k n_{z'}(S) + dm n_u(S) - (k+dm+d)n_w(S).
\end{equation}
For any multi-periodic domain $S$ (including those with nonzero multiplicity at $w$), we have $\MultComb(S)=0$, since any such domain is a linear combination of $P_\gamma$, $Q$, $\Sigma$, thin domains, and elements of $\Pi_{\alpha\beta}$, and $\MultComb$ vanishes for each of these. Observe that for different types of domains, the formula for $\MultComb(S)$ simplifies considerably depending on which basepoints are in the same regions. These simplifications are noted in the following table:
\begin{center}
  \begin{tabular}{|c|c|}
     \hline
     Type of domain & $\MultComb(S)$ \\ \hline
     $(\alpha,\beta)$ & $d(n_z(S) - n_w(S))$ \\
     $(\alpha,\gamma)$ & $k(n_{z'}(S) - n_w(S))$ \\
     $(\alpha,\delta)$ & $(k+dm)(n_{z'}(S) - n_w(S))$ \\
     $(\beta,\gamma)$, $(\gamma,\delta)$, or $(\beta,\delta)$ & $0$ \\
     $(\alpha,\beta,\gamma)$  & $d n_z(S) + k n_{z'}(S) - (k+d)n_w(S)$  \\
     $(\alpha,\gamma,\delta)$ & $k n_{z'}(S) + dm n_u(S) - (k+dm)n_w(S)$ \\
     $(\alpha,\delta,\beta)$ & $d n_z(S) + (k+dm) n_{z'}(S) - (k+dm+d)n_w(S) $  \\
     $(\beta,\gamma,\delta)$ & $d n_z(S) + dm n_u(S) - (dm+d)n_w(S)$  \\
     \hline
   \end{tabular}
\end{center}

\subsection{Topology of the cobordisms} \label{ssec: cobordisms}

Let us consider the topology of the cobordisms associated with the quadruple diagram $(\Sigma,
\bm\alpha, \bm\beta, \bm\gamma, \bm\delta)$.

The construction from \cite[Section 8.1.5]{OSz3Manifold} gives rise to three separate $4$-manifolds $X_{\alpha\beta\gamma\delta}$, $X_{\alpha\gamma\delta\beta}$, and $X_{\alpha\delta\beta\gamma}$, with:
\begin{align*}
\partial X_{\alpha\beta\gamma\delta} &= {-Y_{\alpha\beta}} \sqcup {-Y_{\beta\gamma}} \sqcup
{-Y_{\gamma\delta}} \sqcup {Y_{\alpha\delta}} \\
\partial X_{\alpha\gamma\delta\beta} &= {-Y_{\alpha\gamma}} \sqcup {-Y_{\gamma\delta}} \sqcup {-Y_{\delta\beta}} \sqcup Y_{\alpha\beta} \\
\partial X_{\alpha\delta\beta\gamma} &= {-Y_{\alpha\delta}} \sqcup {-Y_{\delta\beta}} \sqcup {-Y_{\beta\gamma}}  \sqcup {Y_{\alpha\gamma}}
\end{align*}
Each one comes with a pair of decompositions:
\begin{align}
\label{eq: Xabgd-decomp}
X_{\alpha\beta\gamma\delta} &= X_{\alpha\beta\gamma} \cup_{Y_{\alpha\gamma}} X_{\alpha\gamma\delta} = X_{\alpha\beta\delta} \cup_{Y_{\beta\delta}} X_{\beta\gamma\delta} \\
\label{eq: Xagdb-decomp}
X_{\alpha\gamma\delta\beta} &= X_{\alpha\gamma\delta} \cup_{Y_{\alpha\delta}} X_{\alpha\delta\beta} =
X_{\alpha\gamma\beta} \cup_{Y_{\gamma\beta}} X_{\gamma\delta\beta} \\
\label{eq: Xadbg-decomp}
X_{\alpha\delta\beta\gamma} &= X_{\alpha\delta\beta} \cup_{Y_{\alpha\beta}} X_{\alpha\beta\gamma} =
X_{\alpha\delta\gamma} \cup_{Y_{\delta\gamma}} X_{\delta\beta\gamma}.
\end{align}
The $3$-manifolds in question are:
\begin{align*}
Y_{\alpha\beta} &= Y & Y_{\alpha\gamma}&= Y_\lambda(K) & Y_{\alpha\delta} &= Y_{\lambda+m\mu}(K) \\
Y_{\beta\gamma} &= \conn^{g-1}(S^1 \times S^2)  & Y_{\gamma\delta} &= L(m,1) \conn^{g-1}(S^1 \times S^2) & Y_{\delta\beta} &= \conn^{g-1}(S^1 \times S^2) \\
Y_{\gamma\beta} &= -Y_{\beta\gamma} & Y_{\delta\gamma} &= -Y_{\gamma\delta} & Y_{\beta\delta} &= -Y_{\delta\beta}
\end{align*}
Note also that $X_{\alpha\gamma\beta} = -X_{\alpha\beta\gamma}$, and so on.

Let $\bar X_{\alpha\beta\gamma}$, $\bar X_{\alpha\beta\gamma\delta}$, etc., be the manifolds obtained by attaching $3$-handles to kill all of the $S^1 \times S^2$ summands in $Y_{\beta\gamma}$, $Y_{\gamma\delta}$, and $Y_{\delta\beta}$, as appropriate; analogues of \eqref{eq: Xabgd-decomp}, \eqref{eq: Xagdb-decomp}, and \eqref{eq: Xadbg-decomp} hold for these manifolds as well. In each case, there are isomorphisms making the following diagram commute:
\[
\xymatrix{
H_2(X_*) \ar@{->>}[r] \ar[d]^\cong & H_2(\bar X_*) \ar[d]^\cong \\
\Pi_* \ar@{->>}[r] & \bar \Pi_*
}
\]
(where $*$ is any $3$- or $4$-element ordered subset of $\{\alpha,\beta,\gamma,\delta\}$). In particular, the periodic domains $\{P_\gamma, P_\delta, Q, R\}$ represent homology classes which survive in $H_2(\bar X_{\alpha\beta\gamma\delta})$ and satisfy the relations
\begin{align*}
[P_\delta] &= [P_\gamma] - d[Q], & [R] = \tfrac{m}{\nu} [P_\gamma] + \tfrac{k}{\nu} [Q].
\end{align*}
(Hence, we may also write $[R] = \tfrac{m}{\nu}[P_\delta] + \tfrac{k+dm}{\nu}[Q]$.) The same relations also hold in $H_2(\bar X_{\alpha\gamma\delta\beta})$ and $H_2(\bar X_{\alpha\delta\beta\gamma})$, which are defined analogously.

Let $W_{\alpha\beta\gamma}$ (resp.~$W_{\alpha\delta\beta}$) be obtained from $\bar X_{\alpha\beta\gamma}$ (resp.~$\bar X_{\alpha\delta\beta}$) by gluing a $4$-handle to the $S^3$ boundary component left over from $Y_{\beta\gamma}$ (resp. $Y_{\delta\beta}$). These cobordisms are simply the $2$-handle cobordisms $W_\lambda(K)$ and $W'_{\lambda+m\mu}(K)$, respectively. However, we cannot do this with $\bar X_{\alpha\gamma\delta}$, since the boundary component left over from $Y_{\gamma\delta}$ is $L(m,1)$ rather than $S^3$. Instead, let $W_{\alpha\gamma\delta}$ be obtained from $\bar X_{\alpha\gamma\delta}$ by deleting a neighborhood of an arc connecting $Y_{\alpha\gamma}$ and $L(m,1)$. This is a cobordism from $Y_\lambda(K) \conn L(m,1)$ to $Y_{\lambda + m \mu}$ given by a single $2$-handle attachment.

Let $\spincs^0_{\beta\gamma}$ denote the unique torsion spin$^c$ structure on $Y_{\beta\gamma}$. Let $\Theta_{\beta\gamma}$ and $\Theta_{\gamma\beta}$ be the standard top-dimensional generators for $\CF^{\le0}(\Sigma, \bm\beta, \bm\gamma, w)$ and $\CF^{\le0}(\Sigma, \bm\gamma, \bm\beta, w)$, both of which use the unique intersection point in $\beta_g \cap \gamma_g$ as shown in Figure \ref{fig: twist}. Define $\spincs^0_{\beta\delta}$, $\Theta_{\beta\delta}$, and $\Theta_{\delta\beta}$ analogously.

The situation for $Y_{\gamma\delta}$ is a bit more complicated. The triple diagram $(\Sigma, \bm\gamma, \bm\beta, \bm\delta)$ is an adapted diagram for $m$-framed surgery on the unknot in $\conn^{g-1}(S^1 \times S^2)$, where $\beta_g$ is the meridian and $\delta_g$ is the longitude, and $-Q$ plays the role of $P$ from Section \ref{sec: alex-surgery}; this confirms that $Y_{\gamma\delta}$ is indeed as describe above. Indeed, if we let $B_m$ denote the Euler number $m$ disk bundle over $S^2$, which has boundary $L(m,1)$, the $2$-handle cobordism associated to $(\Sigma, \bm\gamma, \bm\beta, \bm\delta)$ is diffeomorphic to $((\#^{g-1} S^1 \times S^2) \times I) \bconn B_m$, and $Q$ corresponds to the homology class of the zero section in $B_m$.

Let $\spincs^0_{\gamma\delta} \in \Spin^c(Y_{\gamma\delta})$ denote the canonical spin$^c$ structure from \cite[Definition 6.3]{OSzRational}; that is, $\spincs^0_{\gamma\delta}$ the unique spin$^c$ structure on $Y_{\gamma\delta}$ that is torsion and has an extension $\spinct$ to $X_{\gamma\beta\delta}$ which satisfies $\gen{c_1(\spinct), [S^2]} = \pm m$. The $m$ intersection points of $\gamma_g \cap \delta_g$ can be paired with the top-dimensional intersection points of $\gamma_j \cap \delta_j$ ($j=1, \dots, g-1$) to give $m$ canonical cycles in $\CF^{\le 0}(\Sigma, \bm\gamma, \bm\delta, w)$, each of which represents a different torsion spin$^c$ structure on $Y_{\gamma\delta}$. Let $\Theta_{\gamma\delta}$ denote the generator which uses the point of $\gamma_j \cap \delta_j$ that is adjacent to $w$, $z'$, and $u$, as shown in Figure \ref{fig: twist}. We have:
\begin{lemma} \label{lemma: Theta-gamma-delta}
The generator $\Theta_{\gamma\delta}$ represents $\spincs^0_{\gamma\delta}$.
\end{lemma}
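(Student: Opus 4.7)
The plan is to identify $\mathfrak{s}_w(\Theta_{\gamma\delta})$ by exhibiting a canonical small Whitney triangle connecting it to the other top generators, and then evaluating the first Chern class of the associated spin$^c$ structure on $X_{\gamma\beta\delta}$ against the zero section class $[Q]$. The upshot is that the machinery already developed in Lemma \ref{lemma: c1-alex} applies directly once one observes that the triple $(\Sigma, \bm\gamma, \bm\beta, \bm\delta)$ is itself an adapted diagram for $m$-framed surgery on an unknot.

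First, I would make this identification precise: view $\bm\gamma$ as the ``$\alpha$'' curves, $\bm\beta$ as the ``$\beta$'' curves, and $\bm\delta$ as the ``$\gamma$'' curves of a new adapted triple. Then $Y_{\gamma\beta} = \#^{g-1}(S^1\times S^2)$ contains the unknot $U$ determined by the arcs crossing $\beta_g$, and the triply periodic domain $-Q$ satisfies $\partial(-Q) = -\gamma_g - m\beta_g + \delta_g$, matching the normal form of Section \ref{sec: alex-surgery} with $d=1$ and framing integer $k=m$. All the conclusions of Lemma \ref{lemma: c1-alex} are now available with these substitutions; in particular, the basepoints playing the roles of $w$ and $z$ remain $w$, while a new ``$z'$-type'' basepoint sits on the opposite side of $\delta_g$ from $w$.

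Second, I would construct the triangle. Working in the winding region of Figure \ref{fig: twist}, one locates a small triangle with vertices $\Theta_{\gamma\beta}$ (the unique point in $\beta_g\cap\gamma_g$), $\Theta_{\gamma\delta}$ (the point in $\gamma_g\cap\delta_g$ adjacent to $w$, $z'$, $u$), and the appropriate intersection point of $\beta_g\cap\delta_g$ representing $\Theta_{\beta\delta}$. Taking the disjoint union with the standard small triangles near each thin region for $j<g$ produces a class $\psi_0 \in \pi_2(\Theta_{\gamma\beta},\Theta_{\beta\delta},\Theta_{\gamma\delta})$ whose domain has $n_w(\psi_0)=0$ and whose other local multiplicities at the reference basepoints can be read off directly from the figure.

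Third, I would apply \eqref{eq: c1-alex-q} of Lemma \ref{lemma: c1-alex} (with $d=1$, $k=m$, periodic domain $-Q$) to $\psi_0$. The Alexander grading of $\Theta_{\gamma\beta}$ in the relevant doubly-pointed diagram for $(\#^{g-1}(S^1\times S^2), U)$ can be computed to be $0$ via Proposition \ref{prop: abs-alex}, using that $U$ bounds a disk. Combined with the vanishing multiplicity $n_w(\psi_0)=0$ and the explicit $n_{z'}(\psi_0)$, this yields
\[
\bigl\langle c_1\bigl(\mathfrak{s}_w(\psi_0)\bigr),\,[Q]\bigr\rangle = \pm m.
\]
Since $\mathfrak{s}_w(\psi_0)$ restricts to $\mathfrak{s}_w(\Theta_{\gamma\delta})$ on $Y_{\gamma\delta}$, and since one further checks (by pairing $c_1$ against the remaining $(\beta,\gamma)$- and $(\gamma,\delta)$-periodic domains, which are built from the thin domains $S^j_{\beta\gamma}$, $S^j_{\gamma\delta}$ around standard intersection points of top generators) that $\mathfrak{s}_w(\Theta_{\gamma\delta})$ is torsion, the spin$^c$ structure $\mathfrak{s}_w(\psi_0)$ realizes the defining property of $\spincs^0_{\gamma\delta}$, and uniqueness of the canonical spin$^c$ structure gives $\mathfrak{s}_w(\Theta_{\gamma\delta}) = \spincs^0_{\gamma\delta}$.

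The main obstacles I anticipate are bookkeeping rather than conceptual: identifying the correct intersection point for $\Theta_{\beta\delta}$ so that the small triangle $\psi_0$ actually exists with the asserted multiplicities, and pinning down the sign of $\pm m$ relative to the chosen orientations of $Q$ and of the Heegaard surface. A secondary nuisance is verifying torsionality of $\mathfrak{s}_w(\Theta_{\gamma\delta})$; rather than a direct homological computation, the cleanest route is to observe that $\Theta_{\gamma\delta}$ has a nontrivial Maslov-graded cycle in every sufficiently large Maslov grading, which forces its spin$^c$ structure to be torsion on the rational homology sphere $L(m,1)$ summand.
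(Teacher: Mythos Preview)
Your approach is essentially the same as the paper's: both construct a small triangle in the winding region connecting the three $\Theta$ generators and then compute the evaluation of $c_1$ of the associated spin$^c$ structure on the class $[Q]$, obtaining $\pm m$. The paper works with the $(\bm\beta,\bm\gamma,\bm\delta)$ triple and applies the Ozsv\'ath--Szab\'o first Chern class formula from \cite[Proposition~6.3]{OSz4Manifold} directly to the triangle $\tau_0^\pm$ (part of a family $\tau_l^\pm$ introduced in Lemma~\ref{lemma: bgd-triangles}), whereas you work with the orientation-reversed triple $(\bm\gamma,\bm\beta,\bm\delta)$ and route the computation through Lemma~\ref{lemma: c1-alex}, which is itself derived from that same Chern class formula. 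These are equivalent paths to the same calculation.

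Two small remarks. First, your citation of \eqref{eq: c1-alex-q} is slightly off: you then speak of computing the Alexander grading of $\Theta_{\gamma\beta}$, which plays the role of $\x$ and hence goes with \eqref{eq: c1-alex-x}, not \eqref{eq: c1-alex-q}. Second, the torsionality discussion is unnecessary: the paragraph preceding the lemma already asserts that each of the $m$ canonical cycles in $\CF^{\le 0}(\Sigma,\bm\gamma,\bm\delta,w)$ represents a torsion spin$^c$ structure, so the only content of the lemma is identifying \emph{which} of these $m$ torsion structures $\Theta_{\gamma\delta}$ represents. Your proposed argument for torsionality via ``a nontrivial Maslov-graded cycle in every sufficiently large Maslov grading'' is not well-formed in any case.
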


We prove this by studying the diagram $(\Sigma, \bm\beta, \bm\gamma, \bm\delta)$, which describes the same $4$-manifold as $(\Sigma, \bm\gamma, \bm\beta, \bm\delta)$ but with reversed orientation. The following result is a simple adaptation of \cite[Section 6]{OSz4Manifold}; see also \cite[Section 5]{HeddenMarkFractional}.

\begin{lemma} \label{lemma: bgd-triangles}
For each integer $l \ge 0$, there are positive classes $\tau_l^+, \tau_l^- \in \pi_2(\Theta_{\beta\gamma}, \Theta_{\gamma\delta}, \Theta_{\beta\delta})$, which satisfy the following properties:
\begin{align*}
n_w(\tau_l^+) &= \frac{m l(l+1)}{2} &  n_z(\tau_l^+) &= \frac{m l(l-1)}{2} \\
n_w(\tau_l^-) &= \frac{m l(l+1)}{2} &  n_z(\tau_l^-) &= \frac{m (l+1)(l+2)}{2} 
\end{align*}
(In particular, the intersection of the domain of $\tau_0^+$ with the winding region is the small triangle containing $u$ in Figure \ref{fig: twist}.) Moreover, each of these classes has $\mu(\tau_l^\pm)=0$ and $\#\MM(\tau_l^\pm)=1$, and these are the only classes in $\pi_2(\Theta_{\beta\gamma}, \Theta_{\gamma\delta}, \Theta')$ (for any $\Theta' \in \T_\beta \cap \T_\delta$) with rigid holomorphic representatives. \qed
\end{lemma}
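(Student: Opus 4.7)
The plan is to reduce the enumeration of triangles to a local computation in the winding region (compare \cite[Section 6]{OSz4Manifold} and \cite[Section 5]{HeddenMarkFractional}), then use a universal cover argument to list the positive classes explicitly. Outside the winding region, each of the triples $(\beta_i,\gamma_i,\delta_i)$ for $i=1,\dots,g-1$ consists of three nearby parallel curves, and the top-dimensional cycles $\Theta_{\beta\gamma}, \Theta_{\gamma\delta}, \Theta_{\beta\delta}$ all use the intersection points on these triples in the standard way. Any class in $\pi_2(\Theta_{\beta\gamma},\Theta_{\gamma\delta},\Theta')$ with a positive representative must, in each such local triple, contain exactly the unique small triangle; these contribute $0$ to both $n_w$ and $n_z$, have Maslov index $0$, and carry a unique holomorphic representative. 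This localizes the counting problem to the winding region.

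In the winding region, the triple $(\beta_g,\gamma_g,\delta_g)$ is effectively a genus-one triple diagram for $m$-framed surgery on the unknot: one intersection point in $\beta_g\cap\gamma_g$, one in $\beta_g\cap\delta_g$, and $m$ points in $\gamma_g\cap\delta_g$, of which $\Theta_{\gamma\delta}$ is the point adjacent to $w$, $z'$, and $u$. Passing to the universal cover of the compactified winding torus, one fixes lifts of $\Theta_{\beta\gamma}$ and $\Theta_{\gamma\delta}$; positive triangles with corners at these two points and at any point of $\T_\beta\cap\T_\delta$ correspond to immersed triangles in the cover with the correct orientation. Enumerating these via the combinatorics of the winding region (exactly as in the lens space surgery triangle count), one finds two families of triangles distinguished by which side of $\beta_g$ they open toward, indexed by an integer $l\ge 0$; these are the candidates $\tau_l^+$ and $\tau_l^-$. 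In particular, the only corner point of $\T_\beta\cap\T_\delta$ that can appear is $\Theta_{\beta\delta}$, as any other choice forces a negative local multiplicity somewhere in the winding region.

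Step three is the direct multiplicity computation. Using Figure \ref{fig: twist} and the explicit description of the triangles, $\tau_0^+$ is supported by the small triangle containing $u$, so $n_w(\tau_0^+)=n_z(\tau_0^+)=0$. Each time $l$ increases, the triangular domain wraps one additional time around $\beta_g$, which the reader can verify by inspection adds an extra copy of the annular region between successive crossings of $\delta_g$; summing these contributions gives $n_w(\tau_l^+)=\frac{ml(l+1)}{2}$ and $n_z(\tau_l^+)=\frac{ml(l-1)}{2}$, and similarly for $\tau_l^-$. The Maslov index is then computed from Sarkar's formula $\mu(\tau)=n_\x(\tau)+n_\y(\tau)-\hat\chi(\tau)$, applied using that each $\tau_l^\pm$ has a triangular domain whose Euler measure cancels the vertex contributions; this yields $\mu(\tau_l^\pm)=0$.

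Finally, rigidity $\#\MM(\tau_l^\pm)=1$ follows because each triangular domain admits, by the Riemann mapping theorem applied in the universal cover (or by a direct conformal identification with a standard triangle), a unique holomorphic representative up to reparametrization. Uniqueness of the family among classes with rigid representatives is a consequence of the periodic-domain analysis for $(\beta,\gamma,\delta)$ performed earlier: any positive class differs from a fixed $\tau_l^\pm$ by an element of $\Pi_{\beta\gamma\delta}$, which is generated (modulo thin domains and $\Sigma$) by $Q$, and adding any nonzero multiple of $Q$ either destroys positivity or forces Maslov index at least $2$. The main obstacle will be the careful triangle enumeration in the universal cover, ensuring that no families are missed and that the orientation conventions correctly separate the two series $\tau_l^+$ and $\tau_l^-$; once that is in place, the multiplicity and rigidity statements are routine.
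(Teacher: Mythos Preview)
Your approach is essentially what the paper intends: the lemma is stated without proof (note the \qed), deferring to \cite[Section 6]{OSz4Manifold} and \cite[Section 5]{HeddenMarkFractional}, and your outline follows exactly that standard computation---reduce to the winding region, lift to the universal cover of the genus-one piece, and enumerate immersed triangles.

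Two small corrections are worth flagging. First, the Maslov index formula you quote, $\mu(\tau)=n_\x(\tau)+n_\y(\tau)-\hat\chi(\tau)$, is the bigon formula; for triangles the correct version has three vertex terms and an overall shift (see Sarkar's general formula or compute directly from the embedded-triangle description). Second, your final paragraph is slightly off: the classes $\tau_l^\pm$ for different $l$ \emph{are} related by adding multiples of $Q$ and $\Sigma$ (for instance $\tau_l^+ = \tau_0^+ - lQ + \tfrac{ml(l+1)}{2}\Sigma$), so the uniqueness does not come from ``adding $Q$ destroys positivity or raises index.'' Rather, the universal-cover enumeration you already did in step two \emph{is} the complete classification of positive index-zero classes; the periodic-domain remark is redundant. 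Neither issue affects the validity of the overall argument.
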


\begin{proof}[Proof of Lemma \ref{lemma: Theta-gamma-delta}]
A direct computation using \cite[Proposition 6.3]{OSz4Manifold} shows that
\[
\gen{c_1(\spincs_w(\tau_l^\pm)), [Q]} = \pm(2l+1)m.
\]
Since the restriction of $\spincs_w(\tau_l^\pm)$ to $Y_{\gamma\delta}$ is $\spincs_w(\Theta_{\gamma\delta})$, this shows that $\spincs_w(\Theta_{\gamma\delta})$ is the canonical spin$^c$ structure.
\end{proof}

For each of the $4$-manifolds $X_*$ described above, let $\Spin^c_0(X_*)$ denote the set of spin$^c$ structures that restrict to $\spincs^0_{\beta\gamma}$ on $Y_{\beta\gamma}$, $\spincs^0_{\gamma\delta}$ on $Y_{\gamma\delta}$, and $\spincs^0_{\delta\beta}$ on $Y_{\delta\beta}$, as applicable. Note that all such spin$^c$ structures extend uniquely to $\bar X_*$.

\begin{remark} \label{rmk: H2-evals}
Assuming that $k$ and $k+dm$ are both nonzero, the groups $H_2(Y_{\alpha\beta})$, $H_2(Y_{\alpha\gamma})$, and $H_2(Y_{\alpha\delta})$ are naturally isomorphic. Moreover, these isomorphisms are realized through the cobordisms $X_*$; that is, any element $S_{\alpha\beta} \in H_2(Y_{\alpha\beta})$ is homologous in $X_{\alpha\beta\gamma}$ to a unique element $S_{\alpha\gamma} \in H_2(Y_{\alpha\gamma})$, and so on. As a result, if $\spincs \in \Spin^c(Y_{\alpha\beta})$ and $\spinct \in \Spin^c(Y_{\alpha\gamma})$ are the restrictions of some $\spincv \in \Spin^c_0(X_{\alpha\beta\gamma})$, then $\gen{c_1(\spincs), S_{\alpha\beta}} = \gen{c_1(\spinct), S_{\alpha\gamma}}$. In particular, $\spincs$ is torsion iff $\spinct$ is torsion. (The same applies for the other cobordisms.)
\end{remark}

We conclude this section by discussing the intersection forms on the $4$-manifolds $X_*$. While $H_2(\bar X_{\alpha\beta\gamma\delta})$, $H_2(\bar X_{\alpha\gamma\delta\beta})$, and $H_2(\bar X_{\alpha\delta\beta\gamma})$ are all isomorphic groups, their intersection forms are quite different, as we now explain.

In $X_{\alpha\beta\gamma\delta}$, the classes $[P_\gamma]$, $[P_\delta]$, $[Q]$, and $[R]$ can be represented by surfaces which are contained in $X_{\alpha\beta\gamma}$, $X_{\alpha\beta\delta}$, $X_{\beta\gamma\delta}$, and $X_{\alpha\gamma\delta}$, respectively. 
Using the formula \eqref{eq: PD-self-int}, we can compute that
\begin{equation} \label{eq: abgd-PD-self-int}
\begin{aligned}
[P_\gamma]^2 &= dk & [P_\delta]^2 &= d(k+dm) \\ [Q]^2 &=-m &  [R]^2 &= -\frac{m k (k+dm)}{\nu^2} .
\end{aligned}
\end{equation}
The decomposition \eqref{eq: Xabgd-decomp} shows that
\begin{equation} \label{eq: abgd-int-form}
[P_\gamma] \cdot [R] = [P_\delta] \cdot [Q] = 0,
\end{equation}
since each pair can be represented by disjoint surfaces. The intersection numbers of the other pairs of generators can have nontrivial intersection numbers which can be worked out using bilinearity.

On the other hand, in $X_{\alpha\gamma\delta\beta}$, the above-mentioned classes have different self-intersection numbers (up to sign) and different pairs which are disjoint, namely:
\begin{gather*}
\begin{aligned}
[P_\gamma]^2 &= -dk & [P_\delta]^2 &= -d(k+dm) \\
[Q]^2 &=-m &  [R]^2 &= -\frac{m k (k+dm)}{\nu^2}
\end{aligned} \\
[R] \cdot [P_\delta] = [P_\gamma] \cdot [Q] = 0.
\end{gather*}
The signs of $[P_\gamma]^2$ and $[P_\delta]^2$ are reversed because they are contained in $X_{\alpha\gamma\beta}$ and $X_{\alpha\delta\beta}$, respectively, which are diffeomorphic to $-X_{\alpha\beta\gamma}$ and $-X_{\alpha\beta\delta}$. Note that these determine the reversed cobordisms $W'_\lambda(K)$ and $W'_{\lambda+m\mu}(K)$. Similar analysis applies to $X_{\alpha\delta\beta\gamma}$.

\subsection{Polygons, spin$^c$ structures, and Alexander gradings} \label{ssec: grading-shift}

We now describe the Alexander grading shifts and $c_1$ evaluations associated to Whitney triangles and rectangles in our Heegaard multi-diagram. If $b_1(Y)>0$, then the Alexander grading may depend on the choice of homology class of Seifert surface; if so, we fix such a choice for $K$, and use the corresponding choices for $K_\lambda$ and $K_{\lambda+m\mu}$.

Throughout the rest of the paper, we will generally refer to elements of $\T_\alpha \cap \T_\beta$ as $\x$ or $\y$, elements of $\T_\alpha \cap \T_\gamma$ as $\q$ or $\r$, and elements of $\T_\alpha \cap \T_\delta$ as $\a$ or $\b$. Also, as a notational convenience, let us define:
\begin{align*}
\AlNorm(\x) &= \AlNorm_{w,z}(\x) & \AlNorm(\q) &= \AlNorm_{w,z'}(\q) & \AlNorm(\a) &=  \AlNorm_{w,z'}(\a) \\
\Al(\x) &= d \AlNorm_{w,z}(\x) & \Al(\q) &= k \AlNorm_{w,z'}(\q) & \Al(\a) &= (k+dm) \AlNorm_{w,z'}(\a).
\end{align*}
(That is, $\Al$ denotes the other normalization convention for the Alexander grading, as discussed in footnote \ref{fn: alex-norm}.)

\begin{proposition} \label{prop: alex-triangle}
Let $\x \in \T_\alpha \cap \T_\beta$, $\q \in \T_\alpha \cap \T_\gamma$, and $\a \in \T_\alpha \cap \T_\delta$.
\begin{enumerate}
\item
For any $\psi \in \pi_2(\x, \Theta_{\beta\gamma}, \q)$,
\begin{align} \label{eq: abg-alex}
\Al(\x) - \Al(\q) &= d n_z(\psi) + k n_{z'}(\psi) - (k+d) n_w(\psi) +\frac{k+d}{2} \\
\label{eq: abg-c1-x} \gen{c_1(\spincs_w(\psi)), [P_\gamma]}
&= 2 \Al(\x) + 2d n_w(\psi) - 2d n_z(\psi) - k \\
\label{eq: abg-c1-q}
&= 2 \Al(\q) + 2k n_{z'}(\psi) - 2k n_w(\psi) + d
\end{align}

\item
For any $\psi \in \pi_2(\q, \Theta_{\gamma\delta}, \a)$,
\begin{align}
\label{eq: agd-alex}
\Al(\q) - \Al(\a) &= k n_{z'}(\psi) + dm n_u(\psi)  - (k+dm) n_w(\psi) - \frac{dm}{2} \\
\label{eq: agd-c1-a}
\langle c_1(\spincs_w(\psi)), [R]\rangle
&= \frac{m}{\nu} \left( 2 \Al(\a)  +  2(k+dm)(n_u(\psi) - n_w(\psi)) - k-dm+d \right) \\
\label{eq: agd-c1-q}
&= \frac{m}{\nu} \left( 2 \Al(\q) + 2k n_u(\psi) - 2k n_{z'}(\psi)  -  k  + d \right)
\end{align}

\item
For any $\psi \in \pi_2(\a, \Theta_{\delta\beta}, \x)$,
\begin{equation}  \label{eq: adb-alex}
\Al(\a) - \Al(\x) =  d n_z(\psi) + (k+dm) n_{z'}(\psi) - (k+dm+d) n_w(\psi)  -\frac{k+dm+ d}{2}
\end{equation}
\begin{align}
\label{eq: adb-c1-a}
\gen{c_1(\spincs_w(\psi)), [P_\delta]}
&= 2\Al(\a)  + 2(k+dm)n_w(\psi)  - 2(k+dm) n_{z'}(\psi)  + d \\
\label{eq: adb-c1-x}
&= 2\Al(\x) + 2d n_z(\psi) - 2d n_w(\psi)  - (k+dm)
\end{align}
\end{enumerate}
\end{proposition}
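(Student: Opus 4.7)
I will derive the three parts in parallel with Lemmas \ref{lemma: c1-alex} and \ref{lemma: c1-alex-rev}. Part (1) is immediate: the triple $(\Sigma, \bm\alpha, \bm\beta, \bm\gamma)$ is an adapted Heegaard triple for $\lambda$-surgery in the sense of Section \ref{sec: alex-surgery}, with $P_\gamma$ playing the role of $P$. Multiplying the three identities of Lemma \ref{lemma: c1-alex} by $d$ or $k$ as appropriate, and using $\Al(\x) = d\AlNorm_{w,z}(\x)$ and $\Al(\q) = k\AlNorm_{w,z'}(\q)$, yields \eqref{eq: abg-alex}--\eqref{eq: abg-c1-q}. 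For part (3), the triple $(\Sigma, \bm\alpha, \bm\delta, \bm\beta)$ is adapted to $(\lambda+m\mu)$-surgery on $K$ with orientation reversed, so Lemma \ref{lemma: c1-alex-rev} applies with $\delta_g$ in place of $\gamma_g$, $k+dm$ in place of $k$, and $P_\delta$ in place of $P$ (so that $[P_\delta]^2 = -d(k+dm)$ in $X_{\alpha\delta\beta}$). The basepoint $z'$ sits on the opposite side of $\delta_g$ from $w$ and thus serves as the rational-Seifert-surface basepoint for $K_{\lambda+m\mu}$. Substituting and recalling $\Al(\a) = (k+dm)\AlNorm_{w,z'}(\a)$ delivers \eqref{eq: adb-alex}--\eqref{eq: adb-c1-x}.

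For part (2), which is the new content, I will mimic the proof of Lemma \ref{lemma: c1-alex} in the winding region of Figure \ref{fig: twist}. For each $\q \in \T_\alpha \cap \T_\gamma$ whose $\gamma_g$-coordinate lies in the winding region, I build canonical small triangles $\psi_{\q,i} \in \pi_2(\q, \Theta_{\gamma\delta}, \a_i)$ indexed by the intersection points $p_i$ of $\gamma_g \cap \delta_g$ ($b-m \le i \le b-1$), whose domain is a union of $g-1$ small bigons on the parallel-pushoff components of $\bm\delta$ together with a single small triangle at $p_i$. A direct enumeration of the winding region gives the triple $(n_w, n_{z'}, n_u)$ of $\psi_{\q,i}$ and the local multiplicity $n_{\a_i}(R)$. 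Feeding these into Proposition \ref{prop: abs-alex} (applied once to $(\Sigma, \bm\alpha, \bm\gamma, w, z')$ with $P_\gamma$ and once to $(\Sigma, \bm\alpha, \bm\delta, w, z')$ with $P_\delta$) will verify \eqref{eq: agd-alex} for these canonical classes. I will then compute the dual spider number $\sigma(\psi_{\q,i}, R)$ directly and apply the Chern class formula \cite[Proposition 6.3]{OSz4Manifold} to the periodic domain $R$ to establish \eqref{eq: agd-c1-a} for $\psi_{\q,i}$. For a general $\psi \in \pi_2(\q, \Theta_{\gamma\delta}, \a)$, I will choose $i$ and $r \in \Z$ so that $\psi - rR - \psi_{\q,i}$ splits as a sum of an $(\alpha,\gamma)$-disk and an $(\alpha,\delta)$-disk; the disks shift Alexander gradings and basepoint multiplicities via \eqref{eq: rel-alex-fractional}, while adding $rR$ shifts $\gen{c_1(\spincs_w(\psi)), [R]}$ by $2r[R]^2 = -2rmk(k+dm)/\nu^2$. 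Collecting terms gives \eqref{eq: agd-c1-a} in general, and \eqref{eq: agd-c1-q} follows as an algebraic rearrangement using \eqref{eq: agd-alex}.

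The hard part will be the bookkeeping in (2): the $\gcd$ $\nu$ enters through the normalization of $R$, and the quantity $\MultComb$ from \eqref{eq: mult-comb} must be propagated carefully as $\psi$ varies. The key observation making this manageable is that $\MultComb$ vanishes on every multi-periodic domain --- in particular on $R$, $P_\gamma$, $P_\delta$, $Q$, $\Sigma$, and the thin domains --- so the extension from canonical triangles to arbitrary ones reduces entirely to contributions from disks, where \eqref{eq: rel-alex-fractional} applies directly. If the direct computation becomes unwieldy, an alternative I would fall back on is to take a rectangle $\rho \in \pi_2(\x, \Theta_{\beta\gamma}, \Theta_{\gamma\delta}, \a)$, decompose it both as $\psi_{\alpha\beta\gamma} * \psi_{\alpha\gamma\delta}$ and as $\psi_{\alpha\beta\delta} * \tau_l^{\pm}$ using Lemma \ref{lemma: bgd-triangles}, and then derive (2) from parts (1) and (3) via the resulting multiplicity identities --- trading direct winding-region computation for careful accounting of triangle juxtapositions.
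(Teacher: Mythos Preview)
Your treatment of parts (1) and (3) matches the paper's exactly: both follow by direct application of Lemmas \ref{lemma: c1-alex} and \ref{lemma: c1-alex-rev} with the appropriate substitutions.

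For part (2), the paper actually uses your \emph{fallback} approach, not your primary one. Specifically, given $\psi \in \pi_2(\q, \Theta_{\gamma\delta}, \a)$, the paper chooses an arbitrary $\phi \in \pi_2(\x, \Theta_{\beta\gamma}, \q)$, takes the small $(\beta,\gamma,\delta)$ triangle $\tau = \tau_0^+$ from Lemma \ref{lemma: bgd-triangles}, and observes that $\DD(\phi)+\DD(\psi)-\DD(\tau)$ differs from an honest $(\alpha,\beta,\delta)$ triangle $\sigma' \in \pi_2(\x,\Theta_{\beta\delta},\a)$ by $rQ$ for a computable integer $r$. Applying part (1) to $\phi$ and Lemma \ref{lemma: c1-alex} (with $k+dm$ in place of $k$) to $\sigma'$, then subtracting, yields \eqref{eq: agd-alex}; the $c_1$ formula \eqref{eq: agd-c1-a} follows similarly, using $[P_\gamma]\cdot[R]=0$ in $X_{\alpha\beta\gamma\delta}$ and the known self-intersections from \eqref{eq: abgd-PD-self-int}. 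This route avoids any new winding-region computation: everything is recycled from the two instances of Lemma \ref{lemma: c1-alex}.

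Your primary direct approach is plausible in outline but would be considerably messier to execute. Two small slips: the canonical domains on the parallel-pushoff pairs $\gamma_i,\delta_i$ ($i<g$) are small \emph{triangles}, not bigons; and the points $p_i$ in Figure \ref{fig: twist} lie on $\alpha_g \cap \delta_g$, not $\gamma_g \cap \delta_g$. More substantively, unlike the $(\alpha,\beta,\gamma)$ case where $\alpha_g\cap\beta_g$ is a single point, the $\gamma_g$-coordinate of $\q$ can lie anywhere, so building small canonical $(\alpha,\gamma,\delta)$ triangles from an arbitrary $\q$ and then extending via disks requires more care than you indicate. The composition approach sidesteps all of this by funneling the $(\alpha,\gamma,\delta)$ case through the two $(\alpha,\beta,\ast)$ cases where the geometry is already pinned down.
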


Note that the linear combinations of basepoint multiplicities in \eqref{eq: abg-alex}, \eqref{eq: agd-alex}, and \eqref{eq: adb-alex} are all specializations of $\MultComb(\psi)$ from \eqref{eq: mult-comb}.

\begin{proof}[Proof of Proposition \ref{prop: alex-triangle}]
The statements about $(\alpha,\beta,\gamma)$ and $(\alpha,\delta,\beta)$ triangles are given by Lemmas \ref{lemma: c1-alex} and \ref{lemma: c1-alex-rev}, respectively, where for the latter we take $k+dm$ in place of $k$. It remains to prove the three statements about $(\alpha,\gamma, \delta)$ triangles.

Let $\q \in \T_\alpha \cap \T_\gamma$, $\a \in \T_\alpha \cap \T_\delta$, and $\psi \in \pi_2(\q, \Theta_{\gamma\delta}, \a)$. Observe that $n_z(\psi) = n_w(\psi)$ since $w$ and $z$ are only separated by $\beta_g$.

Choose an arbitrary triangle $\phi \in \pi_2(\x, \Theta_{\beta\gamma}, \q)$ for some $\x \in \T_\alpha \cap \T_\beta$. By \eqref{eq: abg-alex},
\[
\Al(\x) - \Al(\q) = d n_z(\phi) + k n_{z'}(\phi) - (k+d) n_w(\phi) + \frac{k+d}{2}.
\]

Let $\tau = \tau_0^+ \in \pi_2(\Theta_{\beta\gamma}, \Theta_{\gamma\delta}, \Theta_{\beta\delta})$ be the class represented by the small triangle in the center of Figure \ref{fig: twist} (see Lemma \ref{lemma: bgd-triangles}); it has $n_w(\tau) = n_z(\tau) = n_{z'}(\tau) = 0$ and $n_u(\tau)=1$. Let $\sigma$ be the composite domain with $\DD(\sigma) = \DD(\phi) + \DD(\psi) - \DD(\tau)$. This is almost the domain of a triangle in $\pi_2(\x, \Theta_{\beta\delta}, \a)$, except that the boundary of $\sigma$ includes $\gamma_g$ with multiplicity
\begin{align*}
r &= n_{z'}(\sigma) - n_u(\sigma) \\
&= n_{z'}(\phi) + n_{z'}(\psi) - n_u(\phi) - n_u(\psi) + 1 \\
&= n_{z'}(\phi) + n_{z'}(\psi) - n_w(\phi) - n_u(\psi) + 1.
\end{align*}
Therefore, there is an actual triangle class $\sigma' \in \pi_2(\x, \Theta_{\beta\delta}, \a)$ with $\DD(\sigma') = \DD(\sigma) - rQ$. In other words, the composites $\rho_1 = \phi * \psi$ and $\rho_2 = \sigma' * \tau$ are each quadrilaterals in $\pi_2(\x, \Theta_{\beta\gamma}, \Theta_{\gamma\delta}, \a)$ whose domains satisfy $\DD(\rho_1) = \DD(\rho_2) + rQ$.

Using Lemma \ref{lemma: c1-alex} (with $k+dm$ in place of $k$), we now compute:
\begin{align*}
\Al(\x) - \Al(\a)
&= d n_z(\sigma') + (k+dm) n_{z'}(\sigma') - (k+dm+d) n_w(\sigma') + \frac{k+dm+d}{2} \\
&= d n_z(\sigma) - dmr + (k+dm) n_{z'}(\sigma) - (k+dm+d) n_w(\sigma) + \frac{k+dm+d}{2} \\
&= d n_z(\phi) + d n_z(\psi) - dm( n_{z'}(\phi) + n_{z'}(\psi) - n_w(\phi) - n_u(\psi) + 1) \\
  & \qquad + (k+dm) n_{z'}(\phi) + (k+dm) n_{z'}(\psi)\\
  & \qquad - (k+dm+d) n_w(\phi) - (k+dm+d) n_w(\psi) + \frac{k+dm+d}{2} \\
&= d n_z(\phi) + k n_{z'}(\phi) - (k+d) n_w(\phi) + k n_{z'}(\psi)  - (k+dm) n_w(\psi) \\
  &\qquad + dm n_u(\psi) + \frac{k-dm+d}{2}
\end{align*}
Subtracting, we have:
\begin{align*}
\Al(\q) - \Al(\a)
&=  k n_{z'}(\psi)  - (k+dm) n_w(\psi)  + dm n_u(\psi) - \frac{dm}{2}
\end{align*}
which proves \eqref{eq: agd-alex}. Likewise, using \eqref{eq: abgd-PD-self-int}, we have:
\begin{align*}
\gen{ c_1(\spincs_w&(\psi)), [R]} \\
&= \gen{c_1(\spincs_w(\rho_1)), [R]} \\
&= \gen{c_1(\spincs_w(\rho_2) + r\PD[Q]), [R]} \\
&= \gen{c_1(\spincs_w(\rho_2)) + 2r\PD[Q], \frac{m}{\nu} [P_\delta] + \frac{k+dm}{\nu}[Q]} \\
&= \frac{m}{\nu} \gen{c_1(\spincs_w(\sigma')), \  [P_\delta]} + \frac{k+dm}{\nu} \gen{c_1(\spincs_w(\tau)), [Q]} + \frac{2r(k+dm)}{\nu}[Q]^2   \\
&= \frac{m}{\nu} \left( 2 \Al(\a) + 2(k+dm)(n_{z'}(\sigma') - n_w(\sigma')) + d\right) + \frac{k+dm}{\nu}(m) +  \frac{2 r(k+dm)}{\nu}(-m)    \\
&= \frac{m}{\nu} \left( 2 \Al(\a) + 2(k+dm)(n_{z'}(\sigma) - n_w(\sigma)) + d\right) + \frac{m(k+dm)}{\nu}(1-2r) \\
&= \frac{m}{\nu} \left( 2 \Al(\a)  + (k+dm) \left( 2n_{z'}(\sigma) - 2n_w(\sigma) + 1 - 2r \right) + d \right) \\
&= \frac{m}{\nu} \left( 2 \Al(\a)  + (k+dm) \left( 2n_u(\sigma) - 2n_w(\sigma) + 1 \right) + d \right) \\
&= \frac{m}{\nu} \left( 2 \Al(\a)  + (k+dm) \left( 2n_u(\psi) - 2n_w(\psi) - 1 \right) + d \right)
\end{align*}
which proves \eqref{eq: agd-c1-a}. Finally, \eqref{eq: agd-c1-q} follows directly from \eqref{eq: agd-alex} and \eqref{eq: agd-c1-a}.
\end{proof}

Next, we turn to rectangles. Recall that for any rectangle $\rho$, we define
\[
\MultComb(\rho) = d n_z(\rho) + k n_{z'}(\rho) + dm n_u(\rho) - (k+dm+d)n_w(\rho).
\]

\begin{proposition} \label{prop: alex-rectangle}
Let $\x \in \T_\alpha \cap \T_\beta$, $\q \in \T_\alpha \cap \T_\gamma$, and $\a \in \T_\alpha \cap \T_\delta$.
\begin{enumerate}
\item
For any $\rho \in \pi_2(\x, \Theta_{\beta\gamma}, \Theta_{\gamma\delta}, \a)$,
\begin{align}
\label{eq: abgd-alex}
\Al(\x) - \Al(\a) &= \MultComb(\rho)  +\frac{k+d-dm}{2} \\
\label{eq: abgd-c1-Pg}
\gen{c_1(\spincs_w(\rho)), [P_\gamma]} &= 2 \Al(\x) + 2d n_w(\rho) - 2d n_z(\rho) -k \\
\label{eq: abgd-c1-R}
\gen{c_1(\spincs_w(\rho)), [R]}
&= \frac{m}{\nu} \left( 2 \Al(\a)  + 2 (k+dm) (n_u(\rho) - n_w(\rho)) - k-dm  +d \right) \\
\label{eq: abgd-c1-Q}
\gen{c_1(\spincs_w(\rho)), [Q]} &= m \left(  2n_u(\rho) - 2n_{z'}(\rho) - 1 \right).
\end{align}

\item
For any $\rho \in \pi_2(\q, \Theta_{\gamma\delta}, \Theta_{\delta\beta}, \x)$,
\begin{align}
\label{eq: agdb-alex}
\Al(\q) - \Al(\x) &= \MultComb(\rho) - \frac{k+d+2dm}{2} \\
\label{eq: agdb-c1-R}
\gen{c_1(\spincs_w(\rho)), [R]} &= \frac{m}{\nu} \left( 2 \Al(\q) + 2k n_u(\rho) - 2k n_{z'}(\rho)  - k  + d \right) \\
\label{eq: agdb-c1-Pd}
\gen{c_1(\spincs_w(\rho)), [P_\delta]} &= 2\Al(\x) + 2d n_z(\rho) - 2d n_w(\rho)  - (k+dm) \\
\label{eq: agdb-c1-Q}
\gen{c_1(\spincs_w(\rho)), [Q]} &= m \left(  2n_u(\rho) - 2n_w(\rho) - 1 \right).
\end{align}

\item
For any $\rho \in \pi_2(\a, \Theta_{\delta\beta}, \Theta_{\beta\gamma}, \q)$,
\begin{align}
\label{eq: adbg-alex}
\Al(\a) - \Al(\q) &=  \MultComb(\rho) - \frac{dm}{2} \\
\label{eq: adbg-c1-Pd}
\gen{c_1(\spincs_w(\rho)), [P_\delta]}
&= 2\Al(\a)  +2(k+dm) n_w(\rho) - 2(k+dm) n_{u}(\rho) + d \\
\label{eq: adbg-c1-Pg}
\gen{c_1(\spincs_w(\rho)), [P_\gamma]} &= 2 \Al(\q) + 2k n_{z'}(\rho) - 2k n_u(\rho) + d \\
\label{eq: adbg-c1-Q}
\gen{c_1(\spincs_w(\rho), [Q])} &= 2 n_w(\rho) - 2 n_z(\rho)  + m
\end{align}
\end{enumerate}
\end{proposition}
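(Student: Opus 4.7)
The plan is to prove every formula in Proposition~\ref{prop: alex-rectangle} by splitting each rectangle into two triangles at an intermediate vertex and then applying Proposition~\ref{prop: alex-triangle}. Concretely, given a rectangle $\rho$ of any of the three types, I will choose an intermediate generator $\y$ and triangle classes $\psi_1, \psi_2$ joining the appropriate sequence of vertices of $\rho$ through $\y$, so that $\DD(\rho) = \DD(\psi_1) + \DD(\psi_2) + \DD(D)$ for some multi-periodic domain $D$ in the quadruple diagram. Under this decomposition, basepoint multiplicities are additive, and the induced spin$^c$ structures glue: $\spincs_w(\rho) = \spincs_w(\psi_1 * \psi_2) + \PD[D]$ in the corresponding quadruple cobordism.

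For the Alexander-shift formulas, the key observation is that $\MultComb$ vanishes identically on multi-periodic domains, so $\MultComb(\rho) = \MultComb(\psi_1) + \MultComb(\psi_2)$. Adding the two triangle shift formulas from Proposition~\ref{prop: alex-triangle} and simplifying via the table of specializations of $\MultComb$ yields each rectangle shift formula, with the additive constants combining in the expected way. For example, in part~(1) the constants $\tfrac{k+d}{2}$ and $-\tfrac{dm}{2}$ from the $(\alpha,\beta,\gamma)$ and $(\alpha,\gamma,\delta)$ triangle formulas sum to $\tfrac{k+d-dm}{2}$, matching \eqref{eq: abgd-alex}; parts~(2) and~(3) are analogous.

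For the $c_1$-evaluation formulas, we write
\[
\gen{c_1(\spincs_w(\rho)), S} = \gen{c_1(\spincs_w(\psi_1 * \psi_2)), S} + 2\,[D] \cdot S
\]
for each multi-periodic class $S$. Since each of $[P_\gamma], [P_\delta], [Q], [R]$ can be represented by a cycle supported in one of the sub-cobordisms appearing in \eqref{eq: Xabgd-decomp}--\eqref{eq: Xadbg-decomp}, only one of the two triangle Chern-class evaluations from Proposition~\ref{prop: alex-triangle} contributes nontrivially to the first term on the right. For the correction, using the self-intersection and orthogonality relations \eqref{eq: abgd-PD-self-int}--\eqref{eq: abgd-int-form} and their analogues for $X_{\alpha\gamma\delta\beta}$ and $X_{\alpha\delta\beta\gamma}$, a check on each generator of $\bar\Pi_{\alpha\beta\gamma\delta}$ (namely $P_\gamma$, $Q$, the thin domains, and elements of $\Pi_{\alpha\beta}$) verifies identities such as $[D] \cdot [P_\gamma] = d(n_w(D) - n_z(D))$ which express $[D] \cdot S$ purely in terms of basepoint multiplicities of $D$. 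Adding this correction to the relevant triangle Chern-class formula produces each claimed rectangle formula.

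The main obstacle is the sign-and-orientation bookkeeping across the three quadruple $4$-manifolds $X_{\alpha\beta\gamma\delta}, X_{\alpha\gamma\delta\beta}, X_{\alpha\delta\beta\gamma}$: their intersection forms differ in which pairs of generators are orthogonal and in the signs of several self-intersections, and each of the nine $c_1$ formulas in the proposition lives in one of these three cobordisms. Each case therefore requires a separate verification of the correction-term identity, but once the relevant intersection pairings are tabulated the formulas follow by a uniform additive calculation.
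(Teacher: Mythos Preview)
Your proposal is correct and follows essentially the same approach as the paper: decompose the rectangle as $\psi_1 * \psi_2$ plus a multi-periodic domain, use the vanishing of $\MultComb$ on periodic domains for the Alexander shift, and compute the $c_1$ corrections via the intersection form on the appropriate quadruple cobordism. The only minor tactical difference is that the paper writes the periodic domain explicitly as a (rational) combination $xP_\gamma + yR$ with coefficients read off from basepoint multiplicities, and derives the $[Q]$ formula as a linear combination of the $[P_\gamma]$ and $[R]$ formulas rather than by a separate intersection computation.
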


\begin{proof}
We consider the case of $(\alpha, \beta, \gamma, \delta)$ rectangles; the other two cases are similar. 

For any $\x \in \T_\alpha \cap \T_\beta$, $\a \in \T_\alpha \cap \T_\delta$, and $\rho \in \pi_2(\x, \Theta_{\beta\gamma}, \Theta_{\gamma\delta}, \a)$, we may choose $\q \in \T_\alpha \cap \T_\gamma$, $\psi_1 \in \pi_2(\x, \Theta_{\beta\gamma}, \q)$, and $\psi_2 \in \pi_2(\q, \Theta_{\gamma\delta}, \a)$ such that
$\psi_1 \in \pi_2(\x, \Theta_{\beta\gamma}, \q)$ so that $\spincs_w(\psi_1) = \spincs_w(\rho)|_{X_{\alpha\beta\gamma}}$ and $\spincs_w(\psi_2) = \spincs_w(\rho)|_{X_{\alpha\gamma\delta}}$. Moreover, by adding copies of $\Sigma$ to (say) $\psi_2$, which does not change the spin$^c$ structure condition, we may assume that $n_w(\rho) = n_w(\psi_1) + n_w(\psi_2)$. Hence, $S = \DD(\rho) - \DD(\psi_1 * \psi_2)$ is an (integral) quadruply periodic domain with $n_w(S)=0$. Since the function $\MultComb$ vanishes on all periodic domains, we have:
\begin{align*}
\Al(\x) - \Al(\a) &= (\Al(\x) - \Al(\q)) + (\Al(\q) - \Al(\a)) \\
&= \MultComb(\psi_1) + \frac{k+d}{2} + \MultComb(\psi_2) - \frac{dm}{2}
\end{align*}
which proves \eqref{eq: abgd-alex}.

Next, we consider the spin$^c$ evaluations. Up to thin domains, we have $S = x P_\gamma + y R$, where $x = - \frac{n_z(S)}{k}$ and $y = -\frac{\nu n_u(S)}{k}$. Note that $x$ and $y$ need not be integers. The decomposition $X_{\alpha\beta\gamma\delta} = X_{\alpha\beta\gamma} \cup_{Y_{\alpha\gamma} } X_{\alpha\gamma\delta}$ shows that classes $[P_\gamma]$ and $[R]$ can be represented by disjoint surfaces in $X_{\alpha\beta\gamma\delta}$, so $[P_\gamma] \cdot [R] = 0$ in the intersection form on $X_{\alpha\beta\gamma\delta}$. Using \eqref{eq: abgd-PD-self-int}, \eqref{eq: abgd-int-form}, and \eqref{eq: abg-c1-x}, we compute:
\begin{align*}
\gen{c_1(\spincs_w(\rho)), [P_\gamma]}
&= \gen{c_1(\spincs_w(\psi_1 * \psi_2) + \PD[S]), [P_\gamma]} \\
&= \gen{c_1(\spincs_w(\psi_1 * \psi_2)) + 2\PD[S], [P_\gamma]} \\
&= \gen{c_1(\spincs_w(\psi_1)), [P_\gamma]} + 2x [P_\gamma]^2 + 2y[R] \cdot [P_\gamma] \\
&= 2 \Al(\x) + 2d(n_w(\psi_1)-n_z(\psi_1)) - k  - 2dk\cdot \frac{ n_z(S)}{k} \\
&= 2 \Al(\x) + 2d(n_w(\psi_1)-n_z(\psi_1)) - k - 2d( n_z(\rho) - n_z(\psi_1) - n_z(\psi_2)) \\
&= 2 \Al(\x) + 2d(n_w(\psi_1) + n_z(\psi_2) -n_z(\rho)) -k  \\
&= 2 \Al(\x) + 2d(n_w(\psi_1) + n_w(\psi_2) -n_z(\rho)) -k  \\
&= 2 \Al(\x) + 2d(n_w(\rho) -n_z(\rho)) -k,
\end{align*}
which proves \eqref{eq: abgd-c1-Pg}. (Note the similarity between this equation and \eqref{eq: abg-c1-x}.) Formula \eqref{eq: abgd-c1-R} follows from \eqref{eq: agd-c1-a} in a similar manner. 
Finally, to prove \eqref{eq: abgd-c1-Q}, we have:
\begin{align*}
\gen{c_1(\spincs_w(\rho)), [Q]} &= \frac{\nu}{k} \gen{c_1(\spincs_w(\rho)), [R]} - \frac{m}{k}\gen{c_1(\spincs_w(\rho)), [P_\gamma]} \\
&= \frac{m}{k} \left( 2 \Al(\a)  + (k+dm) \left( 2n_u(\rho) - 2n_w(\rho) - 1 \right) +d \right) \\
& \qquad - \frac{m}{k} \left( 2 \Al(\x) + 2d(n_w(\rho) -n_z(\rho)) -k \right) \\
&= m \left(  2n_u(\rho) - 2n_{z'}(\rho) - 1 \right)
\end{align*}
as required.
\end{proof}

\subsection{The filtered large surgery formula} \label{ssec: large-surgery}

We now focus on the special Heegaard diagrams $(\Sigma, \bm\alpha, \bm\delta)$ associated to $Y_{\lambda+m\mu}(K)$, adding some additional details to the discussion from \cite[Section 4]{OSzKnot}, \cite[Section 4]{OSzRational}, \cite[Section 4.1]{HeddenPlamenevskayaRational}, and elsewhere. For ease of notation, let us write $W_m = W_{\lambda+m\mu}(K)$ and $W'_m = W'_{\lambda+m\mu}(K)$; these are the cobordisms induced by $(\Sigma, \bm\alpha, \bm\beta, \bm\delta)$ and $(\Sigma, \bm\alpha, \bm\delta, \bm\beta)$, respectively.

For any $\spincu \in \Spin^c(Y_{\lambda+m\mu}(K))$, the set of spin$^c$ structures on $W'_m$ which extend $\spincu$ form an orbit of the action of $\PD[C]$, where $C$ denotes the core of the $2$-handle attached to $Y$, extended across $Y \times I$. For each such spin$^c$ structure $\spincv$, we have
\[
\gen{c_1(\spincv + \text{PD}[C]), [\hat F]}  = \gen{c_1(\spincv), [\hat F]} + 2(k+dm),
\]
so the values of $\gen{c_1(\spincv), [\hat F]}$ taken over all such $\spincv$ form a single coset in $\Z/2(k+dm)$. Therefore, we can make the following definition.

\begin{definition} \label{def: xu}
For each $\spincu \in \Spin^c(Y_{\lambda+m\mu}(K))$, let $\spincx_\spincu$ denote the unique spin$^c$ structure on $W'_m$ extending $\spincu$ such that
\begin{equation} \label{eq: c1(xu)}
-2(k+dm) < \gen{c_1(\spincx_\spincu), [\hat F]} \le 0,
\end{equation}
and let $\spincs_\spincu = \spincx_\spincu|_Y$. Let $\spincy_\spincu = \spincx_\spincu + \PD[C]$, so that
\begin{equation} \label{eq: c1(yu)}
0 < \gen{c_1(\spincy_\spincu), [\hat F]} \le 2(k+dm).
\end{equation}
Define
\begin{equation} \label{eq: su-def}
s_\spincu = \frac{1}{2d}(\gen{c_1(\spincx_\spincu), [\hat F]} + k + dm),
\end{equation}
so that
\begin{equation} \label{eq: su-bound}
-\frac{k+dm}{2d} < s_\spincu \le \frac{k+dm}{2d}.
\end{equation}
Finally, define
\begin{equation} \label{eq: Delta-u-def}
\Delta_\spincu = \absgr(F^\infty_{W'_m, \spincx_u}) = -\frac{(2ds_\spincu-k-dm)^2}{4d(k+dm)} + \frac14.
\end{equation}
\end{definition}

Note that $\spincx_\spincu$ and $\spincy_\spincu$ achieve the two lowest values of $\abs{\gen{c_1(\spincv), [P_\delta]}}$ among all $\spincv \in \Spin^c(W'_m)$ restricting to $\spincu$. (These two values may be equal.) In the special case where $\gen{c_1(\spincx_\spincu), [P_\delta]}=0$ and $\gen{c_1(\spincy_\spincu), [P_\delta]} = 2(k+dm)$, there is a third spin$^c$ structure whose evaluation is $-2(k+dm)$, but this will not affect our arguments. Finally, Corollary \ref{cor: c1-alex-q-cong} implies that for any $\a \in \T_\alpha \cap \T_\delta$, we have the congruence
\begin{equation} \label{eq: su-A(a)-cong}
2 \Al(\a) + k + dm + d \equiv 2ds_\spincu \pmod {2(k+dm)}.
\end{equation}

Recall from Section \ref{ssec: CFKi} that $\CFKi(Y_{\lambda+m\mu}, K_{\lambda+m\mu}, \spincu)$
denotes the doubly-filtered knot Floer complex of $K_{\lambda+m\mu}$. By ignoring the second ($j$)
filtration, it is canonically identified with $\CFi(Y_{\lambda+m\mu}, \spincu)$.

We define a pair of filtrations $\II_\spincu, \JJ_\spincu$ on $\CFKi(Y,K,\spincs_\spincu)$ by the formula
\begin{align}
\label{eq: Iu-def} \II_\spincu([\x,i,j]) &= \max\{i, j-s_\spincu\} \\
\label{eq: Ju-def} \JJ_\spincu([\x,i,j]) &= \max\{i-1, j-s_\spincu\} + \frac{2ds_\spincu - d + k+ dm}{2(k+dm)} .
\end{align}
(Compare \eqref{eq: It-def-A} and \eqref{eq: Jt-def-A}.) It is clear from the definition that the
differential on $\CFKi(Y,K,\spincs_\spincu)$ is filtered with respect to both $\II_\spincu$ and
$\JJ_\spincu$. Observe that $\II_{\spincu}$ only depends on $\spincu$ via the number $s_{\spincu}$,
while $\JJ_{\spincu}$ also includes a shift that depends on $k+dm$.
\begin{theorem} \label{thm: large-surgery}
If $m$ is sufficiently large, then for every $\spincu \in \Spin^c(Y_{\lambda+m\mu})$, there is a grading-preserving, doubly-filtered quasi-isomorphism
\[
\Lambda_\spincu \co \CFKi(Y_{\lambda+m\mu}, K_{\lambda+m\mu}, \spincu) \to \CFKi(Y,K,\spincs_\spincu)[\Delta_\spincu],
\]
where the latter is equipped with the filtrations $\II_\spincu$ and $\JJ_\spincu$, making the diagrams
\[
\xymatrix{
\CFKi(Y_{\lambda+m\mu}(K), K_{\lambda+m\mu}, \spincu) \ar[r]^-{\Lambda_\spincu} \ar[d]^{=} & \CFKi(Y,K,\spincs_\spincu)[\Delta_{\spincu}] \ar[d]^{v^\infty} \\
\CFi(Y_{\lambda+m\mu}, \spincu) \ar[r]^-{F^\infty_{W'_m, \spincx_\spincu}} & \CFi(Y,\spincs_\spincu)
}
\]
and
\[
\xymatrix{
\CFKi(Y_{\lambda+m\mu}(K), K_{\lambda+m\mu}, \spincu) \ar[r]^-{\Lambda_\spincu} \ar[d]^{=} & \CFKi(Y,K,\spincs_\spincu) [\Delta_{\spincu}] \ar[d]^{h^\infty_{\spincs, s_\spincu}} \\
\CFi(Y_{\lambda+m\mu}, \spincu) \ar[r]^-{F^\infty_{W'_m, \spincy_\spincu}} & \CFi(Y,\spincs_\spincu+\PD[K])
}
\]
commute up to chain homotopy.
\end{theorem}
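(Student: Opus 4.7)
The plan is to adapt Ozsv\'ath--Szab\'o's original large surgery argument (and its $\CFKi$ refinement from \cite{HeddenPlamenevskayaRational}) to the present setting, where the primary new content is that $\Lambda_\spincu$ respects the second filtration $\JJ_\spincu$ in addition to the standard first filtration $\II_\spincu$. First I would fix a well-adapted Heegaard diagram $(\Sigma,\bm\alpha,\bm\delta^{m,b},w,z')$ with $m$ large, and observe that for each $\spincu$ satisfying the Alexander-grading congruence \eqref{eq: su-A(a)-cong}, there is a unique $b$ such that every generator $\a\in\Gens(\bm\alpha,\bm\delta,w,\spincu)$ uses a point of $\alpha_g\cap\delta_g$ inside the winding region and hence canonically corresponds to a generator $\x(\a)\in\Gens(\bm\alpha,\bm\beta,w,\spincs_\spincu)$ via the ``nearest-point'' identification. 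Using the small triangle $\psi_\a\in\pi_2(\a,\Theta_{\delta\beta},\x(\a))$ supported in the winding region, I would define
\[
\Lambda_\spincu(\a) = [\x(\a),\, n_w(\psi_\a)-n_{z'}(\psi_\a)\cdot 0,\, \text{(appropriate shift)}],
\]
more precisely setting $i_\a = n_w(\psi_\a)$ and $j_\a = i_\a + \AlNorm_{w,z}(\x(\a))$, and then extending $U$-equivariantly.

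Next I would invoke the large-$m$ area argument: for $m\gg 0$, every disk $\phi\in\pi_2(\a,\a')$ with $\mu(\phi)=1$ corresponds (by gluing/breaking at canonical small bigons in the winding region) to a unique disk $\bar\phi\in\pi_2(\x(\a),\x(\a'))$ with matching $n_w$ and matching $n_{z'}$, and vice versa. This is the standard Ozsv\'ath--Szab\'o mechanism and immediately yields that $\Lambda_\spincu$ is a chain isomorphism which is filtered with respect to $\II_\spincu$ (where on the $Y_{\lambda+m\mu}$ side, $\II_\spincu$ reduces to the $n_w$ filtration). The grading shift $\Delta_\spincu$ is computed from the dimension formula for $F^\infty_{W'_m,\spincx_\spincu}$ using $[\hat F]^2=d(k+dm)$ and the self-intersection formulas in Section \ref{ssec: cobordisms}, together with the formula \eqref{eq: Delta-u-def}.

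The main new step, and the principal obstacle, is to verify that $\Lambda_\spincu$ is strictly filtered with respect to $\JJ_\spincu$, where on the $Y_{\lambda+m\mu}$ side this filtration is given by the Alexander filtration $\AlNorm_{w,z'}$ for $K_{\lambda+m\mu}$. The key computation is to apply Proposition \ref{prop: alex-triangle}, specifically formula \eqref{eq: adb-alex}, to the canonical triangles $\psi_\a$ to relate $\Al(\a)$ and $\Al(\x(\a))$ in terms of $n_w(\psi_\a)$, $n_z(\psi_\a)$, and $n_{z'}(\psi_\a)$. From the winding-region picture one reads off explicit values for these multiplicities (analogous to the computation of $\psi_{\x,i}$ in the proof of Lemma \ref{lemma: c1-alex}), and plugging these into the definitions \eqref{eq: Iu-def}--\eqref{eq: Ju-def} I expect to get exactly the shift $\frac{2ds_\spincu - d + k+dm}{2(k+dm)}$ appearing in \eqref{eq: Ju-def}. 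A similar matching of multiplicities on holomorphic disks then shows that $\JJ_\spincu$ is preserved by the differential, so $\Lambda_\spincu$ is doubly filtered; since it is moreover a chain isomorphism between reduced special complexes, it is automatically a filtered quasi-isomorphism.

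Finally I would verify the two commutative squares. The cobordism $W'_m$ is represented by the triple $(\Sigma,\bm\alpha,\bm\delta,\bm\beta)$, and its spin$^c$ structures extending $\spincu$ form the $\PD[C]$-orbit through $\spincx_\spincu$ and $\spincy_\spincu$, which by \eqref{eq: c1(xu)}--\eqref{eq: c1(yu)} are characterized by minimal/next-to-minimal Chern class pairing with $[\hat F]$. Using formula \eqref{eq: adb-c1-a} together with the area bound coming from large $m$, I would argue that only triangles in the ``$w$-side'' of the winding region contribute to $F^\infty_{W'_m,\spincx_\spincu}$; under $\Lambda_\spincu$ these correspond precisely to the identity map $v^\infty$ on the image complex. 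Similarly, triangles in the ``$z$-side'' contribute to $F^\infty_{W'_m,\spincy_\spincu}$ and, passing through Heegaard moves that move the basepoint from $w$ to $z$ to $w$ again, these assemble into the flip map $h^\infty_{\spincs_\spincu,s_\spincu} = \Psi^\infty_{\spincs_\spincu,s_\spincu}$ (up to the chain homotopies built into the definition of $\Psi^\infty$ via Juh\'asz--Thurston--Zemke naturality). The two diagrams therefore commute up to filtered chain homotopy, completing the proof.
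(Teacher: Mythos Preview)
Your approach diverges from the paper's at the very definition of $\Lambda_\spincu$. You take $\Lambda_\spincu$ to be the nearest-point map determined by the small triangles alone (what the paper calls $\tilde\Lambda^\infty_\spincu$) and then assert it is a chain map via a bijection of holomorphic disks. The paper instead defines $\Lambda^\infty_\spincu$ as the full triangle-counting map in the spin$^c$ structure $\spincx_\spincu$ (equation \eqref{eq: Lambda-u-def}), which is automatically a chain map; it then shows this map is an isomorphism by an energy-filtration argument with $\tilde\Lambda^\infty_\spincu$ as leading term. With the paper's definition, the first commutative square holds on the nose, since after forgetting $j$ the map \emph{is} $F^\infty_{W'_m,\spincx_\spincu}$.

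This difference is not cosmetic. The paper explicitly remarks that $\tilde\Lambda^\infty_\spincu$ is ``an $\F[U]$--module isomorphism but not necessarily a chain map.'' Your claimed disk bijection is not the mechanism used in \cite{OSzRational} or here, and area arguments of that type only control disks with bounded $n_w$; at best you would get a chain isomorphism on each $\CF^t$ with the required $m$ depending on $t$, not a single $m$ working for $\CFi$. The triangle-count definition is precisely what avoids this.

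More seriously, the new content of the theorem---which your sketch misses---is verifying that \emph{every} triangle contributing to $\Lambda^\infty_\spincu$ (not only the small ones) is $\JJ_\spincu$-filtered. The paper computes the filtration shift for an arbitrary contributing $\psi$ (equation \eqref{eq: Lambda-u-filt}): it equals $\min(n_w(\psi),\,n_z(\psi)-1)$ when $l<0$ and $\min(n_w(\psi)+1,\,n_z(\psi))$ when $l\ge 0$. The dangerous case is $l<0$ with $n_z(\psi)=0$, which would give shift $-1$; this is ruled out by a positivity-of-domain argument (combining \eqref{eq: adb-c1-a}, \eqref{eq: xl-alex}, and \eqref{eq: su-def} forces $n_w(\psi)=n_{z'}(\psi)$, whence one region at $\Theta_{\delta\beta}$ has multiplicity $-1$). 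You only check the small triangles, so even if you adopted the paper's definition of $\Lambda_\spincu$, this step would still be absent.
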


The bulk of Theorem \ref{thm: large-surgery} was proven by Ozsv\'ath and Szab\'o, apart from the presence of the second filtration and a slight technical issue regarding the definition of $\spincx_\spincu$ and $\spincy_\spincu$ (see Remark \ref{rmk: OSz-xu} below). We will follow through their proof while keeping track of the second filtration, clarifying a few details along the way. (An analogous result for large negative surgeries on knots in $S^3$, including the second filtration, was shown by Kim, Livingston, and the first author \cite[Theorem 4.2]{HeddenKimLivingston}.)

Consider any well-adapted Heegaard diagram $(\Sigma, \bm\alpha, \bm\beta, \bm\gamma, \bm\delta^{m,b}, w, z, z')$, as described above. The reader should refer to Figure \ref{fig: twist}.

We begin by discussing the generators in $\Gens(\bm\alpha, \bm\delta^{m,b})$ more carefully. Let $q$ be the unique point of $\alpha_g \cap \beta_g$. Label the $m$ points of $\alpha_g \cap \delta_g^{m,b}$ in the winding region $p_{b-m}, \dots, p_{b-1}$ according to the orientation of $\alpha_g$. Thus, $p_l$ is on the $z$ side of $\beta_g$ if $l < 0$ and on the $w$ side if $l \ge 0$, and $q$ lies between $p_{-1}$ and $p_0$. For any $\x \in \Gens(\bm\alpha, \bm\beta)$ and $l \in \{b-m, \dots, b-1\}$, let $\x_l^{m,b} \in \Gens(\bm\alpha, \bm\delta^{m,b})$ be the point obtained by replacing $q$ with $p_l$ and taking ``nearest points'' elsewhere; these generators are called \emph{interior generators}. (We sometimes omit the superscripts if they are understood from context.) There is a \emph{small triangle} $\psi_{\x,l}^{m,b} \in \pi_2(\x_l^{m,b}, \Theta_{\delta\beta}, \x)$ with
\begin{equation} \label{eq: small-triangle}
( n_w(\psi_{\x,l}^{m,b}), n_z(\psi_{\x,l}^{m,b}), n_{z'}(\psi_{\x,l}^{m,b})) =
\begin{cases}
(0,-l, 0) & l < 0 \\
(l,0, l+1) & l \ge 0.
\end{cases}
\end{equation}
The remaining elements of $\Gens(\bm\alpha, \bm\delta^{m,b})$, called \emph{exterior generators}, are naturally in $1$-to-$1$ correspondence with elements of $\Gens(\bm\alpha, \bm\gamma)$. For each $\q \in \Gens(\bm\alpha, \bm\gamma)$, let $\q^{m,b} \in \Gens(\bm\alpha, \bm\delta^{m,b})$ denote the nearest exterior generator.

The spin$^c$ structures represented by the two types of generators in $\Gens(\bm\alpha, \bm\delta^{m,b})$ are governed by the following lemma:
\begin{lemma} \label{lemma: ad-spinc}
For any $m >0$ and any $0 \le b \le m$, the following hold:
\begin{enumerate}
\item \label{it: ad-spinc-x}
For each $\x \in \Gens(\bm\alpha, \bm\beta)$ and each $l \in \{b-m, \dots, b-1\}$, $\spincs_w(\x_l^{m,b}) \in \Spin^c(Y_{\lambda+m\mu}(K))$ depends only on $\x$ and $l$ and not on $b$. Moreover, the Maslov grading of $\x_l^{m,b}$ is given by
\begin{equation}
\label{eq: xl-gr}
\absgr(\x_l^{m,b})
= \absgr(\x) + \frac{ ( 2 \Al(\x) - 2dl - k-dm )^2}{4d(k+dm)} - \frac14 - \max(0,2l). 
\end{equation}

\item \label{it: ad-spinc-q}
For each $\q \in \Gens(\bm\alpha, \bm\gamma)$, we have $\spincs_w(\q^{m,b+1}) = \spincs_w(\q^{m,b}) - \PD[K_{\lambda+m\mu}]$.
\end{enumerate}
\end{lemma}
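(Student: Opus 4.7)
Approach. Both parts will be proved by a direct calculation using the small triangle $\psi_{\x,l}^{m,b} \in \pi_2(\x_l^{m,b}, \Theta_{\delta\beta}, \x)$ together with the Chern class evaluation formulas from Proposition \ref{prop: alex-triangle} and the framework of relative spin$^c$ structures from \cite{OSzRational}. The main obstacle will be Part (2), where pinning down both the comparison between generators across different diagrams and the sign of the spin$^c$ shift requires care.

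For Part (1), the key observation is that the multiplicities of $\psi_{\x,l}^{m,b}$ listed in \eqref{eq: small-triangle} depend only on $l$, not on $b$. Applying \eqref{eq: adb-c1-x} to $\spincv = \spincs_w(\psi_{\x,l}^{m,b}) \in \Spin^c(W'_m)$ yields
\[
\gen{c_1(\spincv), [\hat F]} = 2\Al(\x) - 2dl - (k+dm),
\]
independent of $b$ (the two cases $l \ge 0$ and $l < 0$ give the same expression). By \cite[Proposition 2.2]{OSzRational}, the class $\spincv$ is determined by its restriction $\spincv|_Y = \spincs_w(\x)$ together with this Chern class, so the other restriction $\spincs_w(\x_l^{m,b}) = \spincv|_{Y_{\lambda+m\mu}(K)}$ depends only on $\x$ and $l$. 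For the grading formula \eqref{eq: xl-gr}: when $m$ is large enough that $k+dm>0$, $W'_m$ has $\chi = 1$ and $\sigma = -1$, so the cobordism map $F^\infty_{W'_m, \spincv}$ shifts Maslov gradings by $(c_1(\spincv)^2+1)/4$. The triangle $\psi_{\x,l}^{m,b}$ is rigid with $\mu(\psi)=0$ and contributes $U^{n_w(\psi)}\cdot \x$ to $F^\infty_{W'_m,\spincv}(\x_l^{m,b})$, giving
\[
\absgr(\x) - 2n_w(\psi) - \absgr(\x_l^{m,b}) = \frac{c_1(\spincv)^2 + 1}{4}.
\]
Using $[\hat F]^2 = -d(k+dm)$ in $W'_m$ gives $c_1(\spincv)^2 = -(2\Al(\x)-2dl-k-dm)^2/(d(k+dm))$, and noting $2n_w(\psi) = 2\max(0,l) = \max(0,2l)$, rearrangement yields \eqref{eq: xl-gr}.

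For Part (2), the isotopy from $\bm\delta^{m,b}$ to $\bm\delta^{m,b+1}$ moves a single Dehn twist of $\delta_g$ from the $z$-side to the $w$-side of $\beta_g$, necessarily sweeping across the basepoint $w$ exactly once and crossing no other basepoint. The exterior generators $\q^{m,b}$ and $\q^{m,b+1}$ correspond under this isotopy, so by the Juh\'asz--Thurston--Zemke naturality framework \cite{JuhaszThurstonZemkeNaturality}, the identification of $\Spin^c(Y_{\lambda+m\mu}(K))$ between the two diagrams shifts the spin$^c$ structure by $\pm\PD$ of the curve sweeping over the basepoint. That curve is a parallel copy of $\beta_g = \mu$, so the shift is $\PD[\mu] = -\PD[K_{\lambda+m\mu}]$ (since $K_{\lambda+m\mu}$ is oriented as the left-handed meridian $-\mu$), with the sign of the direction confirmed by the convention for left-handed twists. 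More explicitly, one can verify this by computing in the Heegaard triple $(\Sigma, \bm\alpha, \bm\delta^{m,b}, \bm\delta^{m,b+1})$: the naturality triangle $\tau \in \pi_2(\q^{m,b}, \Theta_{\delta^{m,b}\delta^{m,b+1}}, \q^{m,b+1})$ realizing the change-of-diagram map has $n_w(\tau) = 1$ with all other basepoint multiplicities zero, and the analogue of \eqref{eq: abgd-c1-Q}--\eqref{eq: abgd-c1-R} for this triple (which is a ``thin'' version since the cobordism is essentially trivial) gives the stated spin$^c$ shift after comparison of restrictions. The main difficulty is disentangling the geometry of the moving twist and the exterior generator to pin down both the triangle's domain and the sign unambiguously.
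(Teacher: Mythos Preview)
Your argument for Part~(1) is essentially the paper's: both apply \eqref{eq: adb-c1-x} to the small triangle $\psi_{\x,l}^{m,b}$ to obtain
\[
\gen{c_1(\spincs_w(\psi_{\x,l}^{m,b})),[P_\delta]} = 2\Al(\x)-2dl-(k+dm),
\]
note this is independent of $b$ (hence so is the spin$^c$ structure on the cobordism, and therefore its restriction to $Y_{\lambda+m\mu}(K)$), and derive \eqref{eq: xl-gr} from the degree of the cobordism map together with $n_w(\psi_{\x,l}^{m,b})=\max(0,l)$.

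For Part~(2) the paper's route is considerably simpler than yours. Rather than tracking an isotopy of curves across a basepoint, the paper observes directly that there is a diffeomorphism of pointed Heegaard diagrams
\[
(\Sigma,\bm\alpha,\bm\delta^{m,b+1},w)\ \cong\ (\Sigma,\bm\alpha,\bm\delta^{m,b},z')
\]
carrying $\q^{m,b+1}$ to $\q^{m,b}$; this is because the curves $\delta_g^{m,b}$ for different $b$ are all isotopic once one disregards basepoints, and the only difference is which side of $\delta_g$ the basepoint lies on. Then \cite[Lemma~2.19]{OSz3Manifold} gives
\[
\spincs_w(\q^{m,b+1}) \;=\; \spincs_{z'}(\q^{m,b}) \;=\; \spincs_w(\q^{m,b}) - \PD[K_{\lambda+m\mu}]
\]
in one line, with no sign ambiguity to resolve.

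Your isotopy argument is in the right spirit but gets tangled. The curve that sweeps across $w$ is an arc of $\delta_g$, not ``a parallel copy of $\beta_g=\mu$''; and the heuristic you invoke (that the spin$^c$ shift equals $\pm\PD$ of the sweeping curve) does not apply as stated, since $\delta_g$ is one of the attaching circles and is null-homologous in $Y_{\lambda+m\mu}(K)$. The correct mechanism is exactly the one the paper uses: crossing $\delta_g$ with the basepoint interchanges $\spincs_w$ and $\spincs_{z'}$, and it is the pair $(w,z')$ that determines $K_{\lambda+m\mu}$ and hence the shift. Your alternative sketch via a triangle count in $(\Sigma,\bm\alpha,\bm\delta^{m,b},\bm\delta^{m,b+1})$ could be made rigorous, but it is substantially more work than the paper's two-line argument.
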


\begin{proof}
For statement \eqref{it: ad-spinc-x}, equations \eqref{eq: adb-c1-x} and \eqref{eq: small-triangle} give:
\begin{align}
\label{eq: c1-small-triangle}
\gen{c_1(\spincs_w(\psi_{\x,l}^{m,b})), [P_\delta]} &= 2 \Al(\x) + 2d(n_z(\psi_{\x,l}^{m,b}) - n_w(\psi_{\x,l}^{m,b})) - (k+dm) \\
\nonumber &= 2\Al(\x) - 2dl - (k+dm).
\end{align}
Thus, $\spincs_w(\psi_{\x,l}^{m,b})$ is completely determined by $\x$ and $l$ and is independent of $b$. The same is therefore true of $\spincs_w(\x_l^{m,b})$. Equation \eqref{eq: xl-gr} then follows because
\[
\absgr(\x) - \absgr(\x_l^{m,b}) = \frac{c_1(\spincs_w(\psi_{\x,l}^{m,b}))^2 +1}{4} + 2 n_w(\psi_{x,l}^{m,b}).
\]

For statement \eqref{it: ad-spinc-q}, the curves $\delta_g^{m,b}$ are isotopic for all choices of $b$ if we allow crossing the basepoint; the only difference is the position of the basepoint $w$. To be precise, there is a diffeomorphism
\[
(\Sigma, \bm\alpha, \bm\delta^{m,b+1}, w) \cong (\Sigma, \bm\alpha, \bm\delta^{m,b}, z')
\]
taking $\q^{m,b+1} \to \q^{m,b}$. Thus, by \cite[Lemma 2.19]{OSz3Manifold}, we have
\[
\spincs_w(\q^{m,b+1}) = \spincs_{z'}(\q^{m,b}) = \spincs_w(\q^{m,b}) - \PD[K_{\lambda+m\mu}]
\]
as required.
\end{proof}

\begin{definition}
Following \cite[Definition 4.3]{OSzRational}, we say that $\spincu \in \Spin^c(Y_{\lambda+m\mu})$ is \emph{supported in the winding region of $(\Sigma, \bm\alpha, \bm\delta^{m,b})$} if every $\a \in \T_\alpha \cap \T_\delta$ with $\spincs_w(\a)=\spincu$ is of the form $\x_l^{m,b}$ for some $l$, and for every pair of such generators $\a, \b$ and any $\phi \in \pi_2(\a,\b)$, $\partial \DD(\phi) \cap \delta_g$ is contained in the winding region. (By Remark \ref{rmk: ab-PD}, the multiplicity of $\delta_g$ in the boundary of any $(\alpha,\delta)$ periodic domain is $0$, so this condition holds for a single $\phi \in \pi_2(\a,\b)$ iff it holds for every $\phi$.) We say that $\spincu$ is \emph{strongly supported in the winding region} if, additionally, for each generator $\x_l$ representing $\spincu$, we have
$c_1(\spincs_w(\psi^{m,b}_{\x,l})) = \spincx_\spincu$ (equivalently, $c_1(\spincs_z(\psi^{m,b}_{\x,l})) = \spincy_\spincu$).
\end{definition}

The following lemma gives a more explicit characterization of what it means for a spin$^c$ structure to be (strongly) supported in the winding region.

\begin{lemma} \label{lemma: same-spinc}
Consider the diagram $(\Sigma, \bm\alpha, \bm\delta^{m,b}, w)$.
\begin{enumerate}
\item \label{item: same-spinc}
For any $\x, \y \in \T_\alpha \cap \T_\beta$ and $i, j \in \{b-m, \dots, b-1\}$, the generators $\x_i^{m,b}$ and $\y_j^{m,b}$ represent the same spin$^c$ structure on $Y_{\lambda + m \mu}$ if and only if for some integer $r$,
\begin{equation} \label{eq: same-spinc}
\spincs_w(\x) - \spincs_w(\y) = -r[K] \quad \text{and} \quad i-j = \AlNorm_{w,z}(\x) - \AlNorm_{w,z}(\y) - \frac{r(k+dm)}{d}.
\end{equation}

\item \label{item: supported}
A spin$^c$ structure $\spincu$ is supported in the winding region of $(\Sigma, \bm\alpha,
\bm\delta^{m,b}, w)$ iff for some spin$^c$ structure $\spincs$ on $Y$ and some constant $L$, we
have
\begin{equation} \label{eq: supported}
\Gens(\bm\alpha, \bm\delta^{m,b}, w, \spincu) = \{\x_{\AlNorm(\x)+L}^{m,b}  \mid \x \in \Gens(\bm\alpha,
\bm\beta, w, \spincs)\}.
\end{equation}

\item \label{item: strongly-supported}
A spin$^c$ structure $\spincu$ is strongly supported in the winding region of $(\Sigma, \bm\alpha,
\bm\delta^{m,b}, w)$ iff
\begin{equation} \label{eq: strongly-supported}
\Gens(\bm\alpha, \bm\delta^{m,b}, w, \spincu) = \{\x_{\AlNorm(\x)-s_\spincu}^{m,b}  \mid \x \in
\Gens(\bm\alpha, \bm\beta, w, \spincs_\spincu)\}.
\end{equation}
\end{enumerate}
\end{lemma}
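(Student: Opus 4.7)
My plan is to handle the three parts sequentially, with Part~(1) doing the heavy lifting via the small triangles $\psi^{m,b}_{\x,l}$ of Lemma~\ref{lemma: ad-spinc}; Parts~(2) and~(3) will then follow as corollaries, combined with a geometric observation and a numerical one respectively.

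For Part~(1), I would start from the fact that $\spincs_w(\x_l^{m,b})$ is the restriction to $Y_{\lambda+m\mu}$ of the spin$^c$ structure $\spincs_w(\psi^{m,b}_{\x,l})$ on the cobordism $W'_m$. Since two spin$^c$ structures on $W'_m$ restrict identically to $Y_{\lambda+m\mu}$ iff they differ by an element of $\langle\PD[C]\rangle$, the hypothesis $\spincs_w(\x_i^{m,b}) = \spincs_w(\y_j^{m,b})$ is equivalent to
\[
\spincs_w(\psi^{m,b}_{\x,i}) - \spincs_w(\psi^{m,b}_{\y,j}) = -r\,\PD[C]
\]
for some $r \in \Z$.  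Restricting to $Y$ and using $\PD[C]|_Y = \PD[K]$ gives the first condition. Evaluating $c_1$ of both sides on $[\hat F] = [P_\delta]$ via \eqref{eq: adb-c1-x} (with $k+dm$ in place of $k$) together with $n_z - n_w = -l$ from \eqref{eq: small-triangle} yields
\[
2(\Al(\x) - \Al(\y)) - 2d(i-j) = 2r\gen{\PD[C], [\hat F]}.
\]
The pairing $\gen{\PD[C], [\hat F]} = k+dm$ can be read off Definition~\ref{def: xu}: the values $\gen{c_1(\spincv), [\hat F]}$ for $\spincv \in \Spin^c(W'_m)$ extending $\spincu$ form a single coset modulo $2(k+dm)$, and the shift $\spincx_\spincu \mapsto \spincy_\spincu = \spincx_\spincu + \PD[C]$ changes this evaluation by exactly $2(k+dm)$. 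Rearranging yields the stated Alexander condition, and the argument is manifestly reversible.

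For Part~(2), I would combine Part~(1) with the geometric winding-region hypothesis to force $r = 0$ for every pair of generators of $\spincu$. The key point is that if $\phi$ is an $(\bm\alpha,\bm\delta)$ disk between interior generators $\x_i^{m,b}$ and $\y_j^{m,b}$ whose $\delta_g$-boundary is supported in the winding region, then subtracting off signed copies of the small triangles $\psi^{m,b}_{\x,i}$ and $\psi^{m,b}_{\y,j}$ from $\phi$ (absorbing any thin $(\bm\beta,\bm\delta)$ periodic pieces using Remark~\ref{rmk: ab-PD}) produces a disk in $\pi_2(\x,\y)$, witnessing $\spincs_w(\x) = \spincs_w(\y)$ and hence $r = 0$. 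With $r = 0$ in hand, Part~(1) reduces to $i - \AlNorm(\x) = j - \AlNorm(\y) =: L$ with $\spincs_w(\x) = \spincs_w(\y) =: \spincs$, which is exactly \eqref{eq: supported}.  The converse is routine: the parametrization \eqref{eq: supported} consists of interior generators whose pairwise differences are visibly captured by disks with trivial $\delta_g$ contribution outside the winding region.

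Part~(3) is then purely numerical. Combining \eqref{eq: c1-small-triangle} with the identity $\gen{c_1(\spincx_\spincu), [\hat F]} = 2ds_\spincu - (k+dm)$ read off Definition~\ref{def: xu}, the strong support condition $\spincs_w(\psi^{m,b}_{\x,l}) = \spincx_\spincu$ is equivalent to $l = \AlNorm(\x) - s_\spincu$, identifying the constant $L$ of Part~(2) as $-s_\spincu$ and yielding \eqref{eq: strongly-supported}. I expect the main obstacle will be making the geometric argument in Part~(2) airtight, i.e.\ rigorously converting the combinatorial statement ``$\partial\DD(\phi) \cap \delta_g$ lies in the winding region'' into the algebraic equality $r=0$ through a careful accounting of the multiplicities of $\delta_g$ (and of its winding strands) in $\partial\DD(\phi)$; the remainder of the proof is a sequence of substitutions into the formulas of Section~\ref{ssec: grading-shift}.
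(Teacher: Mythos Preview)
Your approach is sound and, for Part~(1), takes a genuinely different route from the paper. The paper concatenates $\overline{\psi_{\x,i}} * \phi * \psi_{\y,j}$ for an arbitrary $\phi \in \pi_2(\x_i,\y_j)$, reads off $r = n_{z'}(\rho) - n_w(\rho)$ as the $\delta_g$-multiplicity in $\partial\DD(\rho)$, and then computes $\AlNorm(\x)-\AlNorm(\y)$ by subtracting $rP_\delta$ and counting basepoint multiplicities. Your argument instead works entirely on the cobordism: the $\spincs_w(\psi_{\x,l}^{m,b})$ lie in a single $\PD[C]$-orbit over $\spincu$, and you extract both conditions in \eqref{eq: same-spinc} from the restriction to $Y$ and the $c_1$-evaluation on $[P_\delta]$. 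This is cleaner and more conceptual, though it buys you slightly less for Part~(2), since the paper's integer $r$ is already defined domain-theoretically and hence interfaces directly with the winding-region hypothesis. For the ``if'' direction you should note explicitly that a $\spincv \in \Spin^c(W'_m)$ is determined by $(\spincv|_Y, \gen{c_1(\spincv),[\hat F]})$, which follows since the fibers over $\Spin^c(Y)$ are $\PD[C^*]$-orbits and $\gen{\PD[C^*],[\hat F]} = d \ne 0$; your claim that the argument is ``manifestly reversible'' hides this small step.

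There is one genuine slip in Part~(2): the inference ``$\spincs_w(\x) = \spincs_w(\y)$ and hence $r = 0$'' is invalid as stated, since $\spincs_w(\x) = \spincs_w(\y)$ only yields $r\PD[K] = 0$, i.e.\ $d \mid r$, in your framework. The construction you describe actually proves the stronger statement you need: once the $\delta_g$-boundary of the combined domain is supported in the (contractible) winding-region arc, it is the zero cycle, so after absorbing thin domains you have an honest $(\bm\alpha,\bm\beta)$-disk class $\phi'$ with $\DD(\psi_{\x,i}) - \DD(\psi_{\y,j}) = \DD(\phi) - \DD(\phi') + \text{thin}$. Since disks and thin domains do not change $\spincs_w$ on $X_{\alpha\delta\beta}$, this gives $\spincs_w(\psi_{\x,i}) = \spincs_w(\psi_{\y,j})$ directly, whence $r = 0$. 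You correctly flagged this step as the crux; just make sure the conclusion is drawn from the equality of cobordism $\spin^c$ structures rather than from their restrictions to $Y$. Part~(3) matches the paper's argument.
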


\begin{proof}
For the ``only if'' direction of statement \ref{item: same-spinc}, suppose $\spincs_w(\x_i^{m,b}) = \spincs_w(\y_j^{m,b})$, and choose any class $\phi \in \pi_2(\x_i^{m,b},\y_j^{m,b})$. Consider the concatenation $\rho = \overline{ \psi_{\x,i} } * \phi * \psi_{\y,j}$. The domain of $\rho$ has boundary equal to $\epsilon(\x,\y)$ (viewed as a $1$-chain in $\bm\alpha \cup \bm\beta$) together with $r$ copies of $\delta_g$, where $r = n_{z'}(\rho) - n_w(\rho) \in \Z$.
This shows that $\epsilon(\x,\y)$ is homologous to $-r[K]$ in $H_1(Y)$. Moreover, $B = d \DD(\rho) - r P_\delta$ (plus thin domains) is a domain whose boundary is $d \epsilon(\x,\y)$. Therefore,
\begin{align*}
\AlNorm(\x) - \AlNorm(\y) &= \frac{1}{d}( n_z(B) - n_w(B) ) \\
&= n_z(\rho) - n_w(\rho) - \frac{r}{d} \left(n_z(P_\delta) - n_w(P_\delta) \right) \\
&= -n_z(\psi_{\x,i}) + n_w(\psi_{\x,i}) + n_z(\psi_{\y,j}) - n_w(\psi_{\y,j}) + \frac{r(k+dm)}{d} \\
&= i-j + \frac{r(k+dm)}{d}
\end{align*}
as required. The ``if'' direction follows similarly, by applying the same construction in reverse.

For the ``only if'' direction of statement \ref{item: supported}, suppose $\spincu$ is supported
in the winding region. If $\x_i^{m,b}$ and $\y_j^{m,b}$ are generators representing $\spincu$, let $\phi \in
\pi_2(\x_i^{m,b}, \y_j^{m,b})$ be a class whose $\delta_g$ boundary segment is contained in the winding region.
Applying the above construction, we find that $r=0$. Thus, $\spincs_w(\x) = \spincs_w(\y)$ and
$\AlNorm(\x) - \AlNorm(\y) = i-j$, so both $\x_i^{m,b}$ and $\y_j^{m,b}$ are of the stated form. Moreover, given
any generators $\x, \y$ with $\spincs_w(\x) = \spincs_w(\y)$ and $\spincs_w(\x_i^{m,b}) = \spincu$ for
some $i$, we can find some $j$ for which $\spincs_w(\y_j^{m,b}) = \spincu$ as well, and therefore $i-j =
\AlNorm(\x) - \AlNorm(\y)$.  The converse follows similarly.

Finally, for statement \ref{item: strongly-supported}, we apply \eqref{eq: c1-small-triangle} and \eqref{eq: su-def}.
\end{proof}

\begin{lemma} \label{lemma: supported}
Let $(\Sigma, \bm\alpha, \bm\beta, \bm\gamma, w,z)$ be an adapted Heegaard diagram for
$\lambda$-surgery on $K$. Then there exists an $M$ such that for all $m \ge M$ and each spin$^c$
structure $\spincu \in \Spin^c(Y_{\lambda+m\mu}(K))$, $\spincu$ is strongly supported in the winding
region of $(\Sigma, \bm\alpha, \bm\delta^{m,b}, w)$ for some $b$. (Note, however, that $b$ depends on the choice of $\spincu$.)
\end{lemma}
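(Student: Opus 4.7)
The plan is to exhibit, for each $\spincu$, a value of $b$ for which Lemma~\ref{lemma: same-spinc}(\ref{item: strongly-supported}) applies. Setting $L_\x := \AlNorm_{w,z}(\x) - s_\spincu$, I seek $b \in \{0,\dots,m\}$ such that the window $\{b-m,\dots,b-1\}$ contains $L_\x$ for every $\x \in \Gens(\bm\alpha,\bm\beta,w,\spincs_\spincu)$, while no other interior or exterior generator of $(\Sigma,\bm\alpha,\bm\delta^{m,b})$ represents $\spincu$.

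Let $D$ denote the diameter of $\AlNorm_{w,z}$ on $\Gens(\bm\alpha,\bm\beta)$ and $N_\gamma = |\Gens(\bm\alpha,\bm\gamma)|$; these are constants depending only on the diagram. The constraint $\{L_\x\} \subset \{b-m,\dots,b-1\}$ defines an interval $\mathcal{B}$ of allowable $b$'s, nonempty and of length roughly $m - D$ once $m \gg D$. For such $b$ and each $\x \in \Gens(\bm\alpha,\bm\beta,w,\spincs_\spincu)$, equation \eqref{eq: c1-small-triangle} together with Definition~\ref{def: xu} gives
\[
\gen{c_1(\spincs_w(\psi_{\x,L_\x}^{m,b})),[P_\delta]} = 2\Al(\x) - 2dL_\x - (k+dm) = 2ds_\spincu - (k+dm) = \gen{c_1(\spincx_\spincu),[\hat F]}.
\]
Since $\spincs_w(\psi_{\x,L_\x}^{m,b})$ and $\spincx_\spincu$ both restrict to $\spincs_\spincu$ on $Y$, and extensions of a fixed spin$^c$ structure on $Y$ to $W'_m$ differ by multiples of $\PD[C]$ which evaluates nontrivially on $[\hat F]$, they must coincide. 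Hence $\spincs_w(\x_{L_\x}^{m,b}) = \spincu$, and the ``correct'' interior generators exist with the intended cobordism spin$^c$ structure.

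It remains to rule out spurious generators. For an interior generator $\y_j^{m,b}$ with $\spincs_w(\y) = \spincs_\spincu + r\PD[K]$ also representing $\spincu$, Lemma~\ref{lemma: same-spinc}(\ref{item: same-spinc}) forces $j = \AlNorm_{w,z}(\y) - s_\spincu + r(k+dm)/d$. For $r=0$ this is precisely the desired generator. For $r \neq 0$, the shift $r(k+dm)/d = r(m + k/d)$ has magnitude close to $m$ when $|r|=1$ and larger otherwise, so the ``$r$-th row'' of admissible $j$-values is displaced from the $r=0$ row by approximately $m$, with fluctuation bounded by $D$. Consequently, by positioning $b$ within $\mathcal{B}$ so that the window is centered on the $r=0$ row, the $r = \pm 1$ rows (and all higher $|r|$) are pushed outside the window. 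Such a placement exists inside $\mathcal{B}$ once $m$ exceeds a bound depending only on $D$ and $|k|/d$. For exterior generators $\q^{m,b}$, Lemma~\ref{lemma: ad-spinc}(\ref{it: ad-spinc-q}) gives $\spincs_w(\q^{m,b}) = \spincs_w(\q^{m,0}) - b\PD[K_{\lambda+m\mu}]$; since $[K_{\lambda+m\mu}]$ has order at least $(k+dm)/d \gg 1$ in $H_1(Y_{\lambda+m\mu})$, the set of $b \in \{0,\dots,m\}$ with $\spincs_w(\q^{m,b}) = \spincu$ has size bounded by a constant per $\q$, contributing at most $O(N_\gamma)$ ``bad'' $b$'s.

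Taking $M$ large enough that the length of $\mathcal{B}$ strictly exceeds the combined number of bad $b$'s from both sources delivers a $b$ satisfying all constraints, and this $b$ witnesses that $\spincu$ is strongly supported. The main technical delicacy lies in the row-separation estimate for $r \neq 0$ interior generators: the Alexander grading $\AlNorm_{w,z}$ shifts in a controlled but nontrivial manner when passing between the spin$^c$ components $\spincs_\spincu + r\PD[K]$, and one must verify that the net displacement of $(k+dm)/d$ per unit of $r$ genuinely dominates the bounded fluctuation across generators, so the window can be arranged to fit inside a single $r$-row once $m$ is sufficiently large.
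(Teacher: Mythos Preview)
Your proposal is correct and follows essentially the same approach as the paper: find an interval of values of $b$ (of length comparable to $m$) for which the required interior generators $\x_{\AlNorm(\x)-s_\spincu}$ lie in the window, then avoid the bounded number of $b$'s at which an exterior generator represents $\spincu$, using Lemma~\ref{lemma: ad-spinc}(\ref{it: ad-spinc-q}). You are in fact somewhat more thorough than the paper's proof in that you explicitly exclude the $r\ne 0$ interior generators via the row-separation estimate; the paper establishes only the containment~\eqref{eq: strongly-supported-contains} and the absence of exterior generators before concluding, leaving this step implicit.
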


\begin{remark} \label{rmk: OSz-xu}
In \cite[Lemmas 4.5 and 4.6]{OSzRational}, it is shown that each spin$^c$ structure $\spincu$ can
be supported in the winding region, in such a way that for each small triangle $\psi$, we have
\[
-C -2(k+dm) \le \gen{c_1(\spincs_w(\psi)), [P_\delta]} \le C
\]
for some constant $C \ge 0$ independent of $m$. However, this is not quite as strong as saying that $\spincu$
is strongly supported in the winding region, since these bounds do not uniquely determine $\spincs_w(\psi)$.
\end{remark}

\begin{proof}[Proof of Lemma \ref{lemma: supported}]
We begin by establishing a sufficient condition for $\Gens(\bm\alpha,
\bm\delta^{m,b}, w, \spincu)$ to contain all the generators called for by \eqref{eq:
strongly-supported}, i.e.,
\begin{equation} \label{eq: strongly-supported-contains}
\Gens(\bm\alpha, \bm\delta^{m,b}, w, \spincu) \supset \{\x^{m,b}_{\AlNorm(\x)-s_\spincu}  \mid \x \in
\Gens(\bm\alpha, \bm\beta, w, \spincs_\spincu)\}.
\end{equation}
By construction, we require that $0 \le b \le m$. Let $C$ be a constant such that for all $\x \in
\T_\alpha \cap \T_\beta$, $\abs{\AlNorm(\x)} \le C$. For any $\x \in \Gens(\bm\alpha, \bm\beta, w,
\spincs_\spincu)$, $\x^{m,b}_{\AlNorm(\x)-s_\spincu}$ is one of the elements of $\Gens(\bm\alpha,
\bm\delta^{m,b}, w, \spincu)$ iff
\[
 b-m \le \AlNorm(\x)-s_\spincu \le b-1
\]
or equivalently
\[
 \AlNorm(\x)-s_\spincu+1 \le b \le \AlNorm(\x)-s_\spincu+m.
\]
Thus, a sufficient condition for \eqref{eq: strongly-supported-contains} is that
\begin{equation} \label{eq: strongly-supported-b-bounds}
\max\{0, \, -C-s_\spincu+1\} \le b \le \min\{m, \, C-s_\spincu+m\}.
\end{equation}
Let $B(\spincu)$ denote the difference between the upper and lower bounds in \eqref{eq:
strongly-supported-b-bounds}; then
\[
B(\spincu) \ge \min\{m+C+s_\spincu-1, \, m+C-s_\spincu, \, m+2C-1,m\}.
\]
By \eqref{eq: su-bound}, we have
\[
m \pm s_\spincu \ge \frac{m}{2} - \frac{k}{2d}.
\]
Therefore, we have a lower bound $B(\spincu) \ge \frac{m}{2} + C'$, where $C'$ is a constant
independent of $m$ and $\spincu$. In other words, for each $\spincu$, there are at least
$\frac{m}{2}+C'$ consecutive values of $b$ for which \eqref{eq: strongly-supported-contains} holds.
(The ranges may differ for different choices of $\spincu$, of course.)

As noted by Ozsv\'ath and Szab\'o in the proof of \cite[Lemma 4.5]{OSzRational}, the number of exterior generators in $\Gens(\bm\alpha, \bm\delta^{m,b})$, and hence the number of spin$^c$ structures on $Y_{\lambda + m\mu}$ represented by the exterior generators for any particular $b$, is bounded by a constant independent of $m$. The set of such spin$^c$ structures varies with $b$ according to Lemma \ref{lemma: ad-spinc}\eqref{it: ad-spinc-q}. In particular, if $m$ is sufficiently large, then for any $\spincu$ and any $\frac{m}{2}$ consecutive values of $b \in \{0, \dots, m\}$, we find some $b$ within the specified range such that none of the exterior generators in $\Gens(\bm\alpha, \bm\delta^{m,b})$ represent $\spincu$. In particular, if we use the range of $b$ values specified by \eqref{eq: strongly-supported-b-bounds}, we see that $\spincu$ is strongly supported for some value of $b$.
\end{proof}

Next, we discuss the Alexander grading on $\CF(\bm\alpha, \bm\delta, w)$ induced by the knot $K_{\lambda+m\mu}$. (Henceforth, we omit the $m,b$ superscripts for conciseness.) For any interior generator $\x_l$, \eqref{eq: adb-alex} gives:
\begin{align*}
\Al(\x_l) - \Al(\x) &= d n_z(\psi_{\x,l}) +  (k+dm) n_{z'}(\psi_{\x,l}) - (k+dm + d) n_w(\psi_{\x,l}) -\frac{k+dm +d}{2} \\
&=
\begin{cases}
-dl -\frac{k+dm+d}{2} & l < 0 \\
-dl + (k+dm) -\frac{k+dm+d}{2} & l \ge 0
\end{cases} \\
&=
\begin{cases}
-dl - \frac{d}{2} - \frac{k+dm}{2} & l < 0 \\
-dl - \frac{d}{2} + \frac{k+dm}{2} & l \ge 0.
\end{cases}
\end{align*}
If we assume that $\x_l$ represents a spin$^c$ structure $\spincu$ which is strongly supported in
the winding region, then $l = \AlNorm(\x) - s_\spincu$, and therefore we have:
\begin{align}
\label{eq: xl-alex}
\Al(\x_l) &=
\begin{cases}
d s_\spincu - \frac{d}{2} - \frac{k+dm}{2} & l < 0 \\
d s_\spincu - \frac{d}{2} + \frac{k+dm}{2} & l \ge 0
\end{cases} \\
\label{eq: xl-alex-norm}
\AlNorm(\x_l) &=
\begin{cases}
\frac{d(2s_\spincu -1)}{2(k+dm)} - \frac{1}{2} & l < 0 \\
\frac{d(2s_\spincu -1)}{2(k+dm)} + \frac{1}{2} & l \ge 0.
\end{cases}
\end{align}
Thus, the Alexander grading for $\spincu$ takes exactly two values, which differ by $1$. (Cf.~\cite[Theorem 4.2]{HeddenPlamenevskayaRational}).

\begin{proof}[Proof of Theorem \ref{thm: large-surgery}]
Using Lemma \ref{lemma: supported}, we may assume that $\spincu$ is strongly supported in the winding region of $(\Sigma, \bm\alpha, \bm\delta^{m,b}, w, z')$ for some $b$. Define
\begin{equation} \label{eq: Lambda-u}
\Lambda^\infty_\spincu \co \CFi(\Sigma, \bm\alpha, \bm\delta, w, \spincu) \to \CFKi(\Sigma, \bm\alpha, \bm\beta, w, z, \spincs_\spincu)[\Delta_\spincu]
\end{equation}
by
\begin{equation} \label{eq: Lambda-u-def}
\Lambda^\infty_\spincu([\a,i]) =
\sum_{\substack{\x \in \T_\alpha \cap \T_\beta \\ \spincs_w(\x) = \spincs_\spincu}}
\sum_{\substack{\psi \in \pi_2(\a, \Theta_{\delta\beta}, \x) \\ \mu(\psi) = 0 \\ \spincs_w(\psi) = \spincx_\spincu}}
\#\MM(\psi) [\x, i-n_w(\psi), i-n_z(\psi) + s_\spincu].
\end{equation}
Standard arguments show that $\Lambda^\infty_\spincu$ is a chain map, and the shift in the Maslov grading by $\Delta_\spincu$ makes $\Lambda^\infty_\spincu$ grading-preserving. Using the standard identification of $\CFi(\Sigma, \bm\alpha, \bm\delta, w, \spincu)$ with $\CFKi(\Sigma, \bm\alpha, \bm\delta, w, z', \spincu)$, we may also think of $\Lambda_\spincu$ as being defined on the latter.

Because $\spincu$ is strongly supported in the winding region, every element of $\Gens(\bm\alpha, \bm\delta, w, \spincu)$ is of the form $\x_l$, where $\x \in \Gens(\bm\alpha,\bm\beta, w, \spincs_\spincu)$ and $l = \AlNorm(\x) - s_\spincu$. Denote the contributions to $\Lambda^\infty_\spincu$ coming from the small triangles $\psi_{\x,l}$ by $\tilde \Lambda^\infty_\spincu$. By \eqref{eq: small-triangle}, these terms are given by
\begin{equation} \label{eq: Lambda-u-tilde}
\tilde \Lambda^\infty_\spincu([\x_l,i]) =
\begin{cases}
[\x, i, i + s_\spincu + l] & l < 0 \\
[\x, i-l, i + s_\spincu] & l \ge 0.
\end{cases}
\end{equation}
Note that $\tilde \Lambda^\infty_\spincu$ is a $\F[U]$--module isomorphism but not necessarily a chain map. Indeed, it is easy to check that for $[\x,i,j]$ with $j=i+\AlNorm(\x)$,  the inverse of $\tilde \Lambda^\infty_\spincu$ is given by:
\begin{align} \label{eq: Lambda-u-tilde-inv}
(\tilde \Lambda^\infty_\spincu)^{-1}([\x,i,j]) &=
\begin{cases}
[\x_l,i] & \AlNorm(\x) \le s_\spincu \\
[\x_l,i + \AlNorm(\x) - s_\spincu] & \AlNorm(\x) \ge s_\spincu,
\end{cases} \\
&= \nonumber [\x_l, \max(i,j-s_\spincu)]
\end{align}
where $l = \AlNorm(\x) - s_\spincu$ as above.

The proof that $\Lambda^\infty_\spincu$ is a chain isomorphism uses the fact that
\[
\Lambda^\infty_\spincu = \tilde \Lambda^\infty_\spincu + \text{higher order terms}
\]
with respect to an energy filtration, as proved by Ozsv\'ath and Szab\'o. We just need to check that every triangle contributing to $\Lambda^\infty_\spincu$ decreases or preserves both filtrations, and that every triangle contributing to $\tilde \Lambda^\infty_\spincu$ preserves both filtrations. The proof for $\II_\spincu$ is obvious from the definition, so we focus on $\JJ_\spincu$.

Thus, consider any term in \eqref{eq: Lambda-u-def}, corresponding to a triangle $\psi \in \pi_2(\a, \Theta_{\delta\beta},\x)$ admitting holomorphic representatives. If $\a = \y_l$, where $l = \AlNorm(\y) - s_\spincu$, observe that
\begin{align*}
\AlNorm_{w,z'}([\y_l,i]) &=
\begin{cases}
\frac{d(2 s_\spincu - 1)}{2(k+dm)} + \frac{1}{2} + i-1 & l < 0 \\
\frac{d(2 s_\spincu - 1)}{2(k+dm)}  + \frac{1}{2} + i & l \ge 0.
\end{cases} \\
\JJ_\spincu([\x, i-n_w(\psi), i-n_z(\psi) + s_\spincu]) &= \frac{d(2 s_\spincu - 1)}{2(k+dm)}   + \frac12 + i + \max(-n_w(\psi)-1, -n_z(\psi))
\end{align*}
and therefore
\begin{multline} \label{eq: Lambda-u-filt}
\AlNorm_{w,z'}([\y_l,i]) -\JJ_\spincu([\x, i-n_w(\psi), i-n_z(\psi) + s_\spincu])  = \\
\begin{cases}
\min(n_w(\psi), n_z(\psi)-1)  & l<0 \\
\min(n_w(\psi)+1, n_z(\psi)) & l\ge 0.
\end{cases}
\end{multline}
In the case where $n_z(\psi)=0$ and $l<0$, equations \eqref{eq: adb-c1-a}, \eqref{eq: xl-alex} (with $\y$ in place of $\x$), and \eqref{eq: su-def} imply that
\[
2d s_\spincu - k-dm = \gen{c_1(\spincx_\spincu), [P_\delta]} = 2d s_\spincu + (k+dm)(2n_w(\psi) - 2n_{z'}(\psi) -1),
\]
and hence $n_w(\psi) = n_{z'}(\psi)$. However, the multiplicity of $\psi$ in the fourth region abutting $\Theta_{\delta\beta}$ in the winding region must then be $-1$, a contradiction. Thus, the left-hand side of \eqref{eq: Lambda-u-filt} is always nonnegative, as required.

Finally, the small triangles $\psi_{\x,l}$ (which contribute to $\tilde\Lambda^\infty_\spincu$) all have either $n_w(\psi)=0$ or $n_z(\psi)=0$, and hence the difference \eqref{eq: Lambda-u-filt} vanishes for those terms.
\end{proof}

\begin{example} \label{ex: lens}
As a sanity check, consider the example where $Y = S^3$ and $K$ is the unknot, so that $Y_m(K)$ is the lens space $L(m,1)$. Denote the induced knot by $O_m$. In this case, we may assume that Figure \ref{fig: twist} (with its left and right edges glued together) is the entire Heegaard diagram, and we may take $k=0$, $d=1$, and $b=m$. The unique generator $\x \in \T_\alpha \cap \T_\beta$ has $\AlNorm(\x)=0$. For $l=0, \dots, m-1$, let $\spincu_l$ be the generator represented by $\x_l$. Then $s_{\spincu_l} = -l$, and $\spincu_l$ is characterized by the following property: if $B_n$ denotes the Euler number $m$ disk bundle over $S^2$, and $\spincv$ is any extension of $\spincu_l$, then
\[
\gen{c_1(\spincv), [S^2]} + m \equiv -2l \pmod {2m}.
\]
Equation \eqref{eq: xl-alex-norm} shows that $\HFKa(L(m,1), O_m, \spincu_l) \cong \F$, supported in Alexander grading $\frac{m-2l-1}{2m}$. It is easy to check that the symmetry \eqref{eq: HFK-sym-alex} holds. Moreover, by equation \eqref{eq: xl-gr}, the Maslov grading of $\x_l$ is
\begin{equation} \label{eq: lens-gr}
\absgr(\x_l)
=  \frac{ (2l-m)^2 -m }{4m},
\end{equation}
which agrees with the computation of $d$ invariants for lens spaces in \cite[Proposition 4.8]{OSzAbsolute}.
\end{example}

\subsection{Maslov gradings on the large surgery}

We will now prove some bounds on the Maslov gradings on the complexes $\CF^t(\Sigma, \bm\alpha, \bm\delta^{m,b}, w)$ as a function of $m$, provided that $b$ is within a bounded distance of $\frac{m}{2}$. These bounds will be used in Section \ref{sec: exact-sequence} to control the spin$^c$ structures in the surgery exact triangle. The technical statement is as follows:

\begin{proposition} \label{prop: maslov-bound}
Let $(\Sigma, \bm\alpha, \bm\beta, \bm\gamma, w, z, z')$ be a Heegaard triple diagram adapted to $(Y,K,\lambda)$ as above. For any integer $e \ge 0$, there is a constant $C \ge 0$ such that for all $m$ sufficiently large, and any $b$ with $\frac{m-e}{2} \le b \le \frac{m+e}{2}$,\footnote{In the terminology of \cite[Definition 4.4]{OSzRational}, this condition is equivalent to saying that the meridian $\beta_g$ is $e$-centered in $(\Sigma, \bm\alpha, \bm\delta^{m,b}, w)$.} every generator $\a \in \T_\alpha \cap \T_{\delta^{m,b}}$ satisfies
\begin{equation} \label{eq: maslov-bound}
-C \le \absgr(\a) \le \frac{m}{4} + C.
\end{equation}
\end{proposition}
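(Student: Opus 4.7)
The plan is to verify \eqref{eq: maslov-bound} separately on the \emph{interior} generators $\x_l$ (for $\x \in \T_\alpha \cap \T_\beta$ and $l \in \{b-m, \ldots, b-1\}$) and on the \emph{exterior} generators $\q^{m,b}$ (for $\q \in \T_\alpha \cap \T_\gamma$). Since $\T_\alpha \cap \T_\beta$ and $\T_\alpha \cap \T_\gamma$ are finite sets independent of $m$ and $b$, the quantities $|\absgr(\x)|$, $|\Al(\x)|$, and $|\absgr(\q)|$ are bounded by a constant $M$ depending only on the fixed Heegaard diagram.

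For interior generators, formula \eqref{eq: xl-gr} rewrites the Maslov grading as $\absgr(\x_l) = \absgr(\x) - \tfrac14 + \phi(l)$, where
\[
\phi(l) = \frac{L(l)^2}{4d(k+dm)} - \max(0,2l), \qquad L(l) = 2\Al(\x) - 2dl - k - dm.
\]
The function $\phi$ is piecewise quadratic. On $\{l \le 0\}$ it is nonnegative, with parabola vertex at $l_{**} = \AlNorm(\x) - (k+dm)/(2d) \approx -m/2$ where $\phi$ vanishes, and a direct expansion gives $\phi(0) = (k+dm)/(4d) - \AlNorm(\x) + O(1/m) = m/4 + O(1)$. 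On $\{l \ge 0\}$ it is strictly convex with global minimum $-2\AlNorm(\x) = O(1)$ attained at $l_* = \AlNorm(\x) + (k+dm)/(2d) \approx m/2$. The hypothesis $|b - m/2| \le e/2$ places the integer endpoints $b-m$ and $b-1$ within $O(1)$ of $l_{**}$ and $l_*$ respectively, so routine calculation gives $\phi(b-m) = O(1/m)$ and $\phi(b-1) = O(1)$. Assembling these estimates yields $-C \le \phi(l) \le m/4 + C$ uniformly on the integer interval $[b-m, b-1]$.

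For exterior generators, the plan is to compare $\q^{m,b}$ to $\q$ via a triangle $\psi \in \pi_2(\q, \Theta_{\gamma\delta}, \q^{m,b})$ with $n_w(\psi) = 0$ (achieved by adjusting by multiples of $\Sigma$). Applying the Maslov grading formula to the 2-handle cobordism $W_{\alpha\gamma\delta}$ of Subsection \ref{ssec: cobordisms} (which has $\chi=1$ and signature determined by the sign of $[R]^2$) yields
\[
\absgr(\q^{m,b}) = \absgr(\q) + \absgr(\Theta_{\gamma\delta}) + \frac{c_1(\spincs_w(\psi))^2 - 2\chi(W_{\alpha\gamma\delta}) - 3\sigma(W_{\alpha\gamma\delta})}{4}.
\]
By Lemma \ref{lemma: Theta-gamma-delta} together with the lens-space $d$-invariant computation recalled in Example \ref{ex: lens}, $\absgr(\Theta_{\gamma\delta}) = (m-1)/4 + O(1)$. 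Using \eqref{eq: agd-c1-q} to evaluate $\gen{c_1(\spincs_w(\psi)), [R]}$, the ability to adjust $\psi$ by multiples of the periodic domains $Q$ and $P_\gamma$ to keep this evaluation of magnitude at most $O(m)$, and the identity $c_1^2 = \gen{c_1, [R]}^2 / [R]^2$ with $[R]^2$ of order $m^2$, one concludes $c_1^2 = O(1)$. Combining gives $\absgr(\q^{m,b}) = m/4 + O(1)$, within the target range.

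The hard part is the exterior case. The interior estimate is a direct calculus exercise on the formula \eqref{eq: xl-gr}; by contrast, the exterior estimate requires threading together three independent ingredients---the $d$-invariant of the $L(m,1)$ summand in $Y_{\gamma\delta}$, the Chern class evaluation \eqref{eq: agd-c1-q}, and the intersection form on $W_{\alpha\gamma\delta}$---with careful bookkeeping of the basepoint $u$ introduced expressly for tracking $(\alpha,\gamma,\delta)$-triangle contributions.
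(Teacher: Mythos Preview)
Your treatment of the interior generators is correct and matches the paper's Lemma~\ref{lemma: gr(xl)} almost exactly: the calculus analysis of the function $\phi(l)$ on $[b-m,b-1]$ is the right argument.

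The exterior case, however, has a genuine gap. The periodic domains $Q$ and $P_\gamma$ are $(\beta,\gamma,\delta)$ and $(\alpha,\beta,\gamma)$ domains respectively; neither can be added to an $(\alpha,\gamma,\delta)$ triangle $\psi \in \pi_2(\q,\Theta_{\gamma\delta},\q^{m,b})$. The only triply periodic domain available here is $R$, and adding $R$ changes $\gen{c_1,[R]}$ by $2[R]^2 = -2mk(k+dm)/\nu^2$, which is of order $m^2$ --- far too coarse to bring the evaluation to $O(m)$. More fundamentally, the grading formula is invariant under such adjustments once the compensating $\mu(\psi)$ and $n_w(\psi)$ corrections are included, so there is no freedom here at all: $\gen{c_1(\spincs_w(\psi)),[R]}$ is determined by $\q$ and $\q^{m,b}$. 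In fact its true order is $m^{3/2}$, not $m$, so that $c_1^2 \sim -m$ exactly cancels the $m/4$ contribution from $\Theta_{\gamma\delta}$; the correct conclusion is $\absgr(\q^{m,b}) = O(1)$, not $m/4 + O(1)$. (Your weaker conclusion would happen to suffice for the proposition, but you have not established it.)

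The paper avoids this difficulty entirely by an inductive argument (Lemma~\ref{lemma: gr(qmb)}): rather than comparing $\q^{m,b}$ directly to $\q$, it compares $\q^{m,b}$ to $\q^{m+2,b+1}$ via two small index-$0$ triangles (one $+$ step, one $-$ step). Each step has a Chern-class evaluation that is explicitly $O(m)$ by \eqref{eq: agd-c1-q} and Lemma~\ref{lemma: A(qmb)-induct}, so the two-step grading difference is $O(1/m^2)$ and telescopes to a bounded sum. That is the missing idea.
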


\begin{remark}
In \cite[Corollary 4.7]{OSzRational}, which is stated when $Y$ is an integer homology sphere (hence $d=1$), Ozsv\'ath and Szab\'o proved the upper bound from \eqref{eq: maslov-bound} but gave an incorrect lower bound, asserting that the gradings on $\CF^t(\Sigma, \bm\alpha, \bm\delta, w)$ are always within a bounded distance of $m/4$. Equation \eqref{eq: lens-gr} above shows that only a constant lower bound is possible. Additionally, the statement of \cite[Corollary 4.7]{OSzRational} requires looking at different Heegaard diagrams for different spin$^c$ structures on $Y_{\lambda+m\mu}(K)$ (namely, arranging for the chosen spin$^c$ structure to be supported in the winding region), whereas Proposition \ref{prop: maslov-bound} applies simultaneously to all of the generators in the same Heegaard diagram.
\end{remark}

To prove Proposition \ref{prop: maslov-bound}, there are two types of generators in $\T_\alpha \cap \T_\delta$ to consider: interior generators $\x_l^{m,b}$ (for $\x \in \T_\alpha \cap \T_\beta$) and exterior generators $\q^{m,b}$ (for $\q \in \T_\alpha \cap \T_\gamma$). These two types of generators will require separate arguments.

\begin{lemma} \label{lemma: gr(xl)}
Fix $e \ge 0$. There is a constant $C_1 \ge 0$ such that for any $m$ sufficiently large, if we take $\frac{m-e}{2} \le b \le \frac{m+e}{2}$, then for any $\x \in \T_\alpha \cap \T_\beta$ and any $l = b-m, \dots, b-1$, the grading of the generator $\x_l$ satisfies
\[
-C_1 \le \absgr(\x_l^{m,b}) \le \frac{m}{4} + C_1.
\]
\end{lemma}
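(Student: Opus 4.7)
The plan is to apply the explicit Maslov grading formula~\eqref{eq: xl-gr} and analyze the resulting expression as a function of $l$, using the hypothesis $\tfrac{m-e}{2}\le b\le\tfrac{m+e}{2}$ to control the range $l\in\{b-m,\dots,b-1\}$, which is an interval approximately centered at $0$ with half-width $\tfrac{m}{2}+O(e)$.

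Since $\T_\alpha \cap \T_\beta$ is a finite set determined by the fixed adapted diagram, there is a constant $C_0$, depending only on that diagram, such that $|\absgr(\x)|\le C_0$ and $|2\Al(\x)-k|\le C_0$ for all $\x\in\T_\alpha\cap\T_\beta$. Writing $B=2\Al(\x)-k$ and substituting into~\eqref{eq: xl-gr}, it suffices to produce a constant $C_1'$, depending only on the diagram and on $e$, such that
\[
F(l) := \frac{(B-2dl-dm)^2}{4d(k+dm)} - \max(0,2l)
\]
satisfies $-C_1'\le F(l)\le \tfrac{m}{4}+C_1'$ for all $l\in[b-m,b-1]$ and all $m$ sufficiently large.

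The heart of the argument is an elementary case analysis of the piecewise-quadratic function $F$. On the piece $l<0$, $F(l)$ equals the nonnegative quadratic $Q(l)=(B-2dl-dm)^2/(4d(k+dm))$, whose vertex $l_0=(B-dm)/(2d)\approx -m/2$ lies inside $[b-m,0)$ for $m$ large, so $\min Q=0$ and $\max Q$ is achieved at the endpoint $l=-1$; a direct expansion gives $Q(0)=\tfrac{m}{4}+O(1)$ as $m\to\infty$, and the same estimate holds at $l=-1$. On the piece $l\ge 0$, write $G(l)=Q(l)-2l$. Then $G$ is convex, with $G'(l)=0$ at $l^*=\tfrac{m}{2}+\tfrac{B+2k}{2d}$; a short substitution gives $G(l^*)=-\tfrac{B+k}{d}$, which is uniformly bounded. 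By convexity, $G(l)\ge -\tfrac{B+k}{d}$ everywhere, and $G$ is bounded above by $\max(G(0),G(b-1))$; since $G(0)=Q(0)=\tfrac{m}{4}+O(1)$, while at $l=b-1\approx m/2$ a direct substitution gives $G(b-1)=O(1)$, the upper bound is $\tfrac{m}{4}+O(1)$.

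The main technical nuisance will be verifying that for $m$ sufficiently large (relative to the constants $C_0$, $|k|$, $d$, and $e$), the two critical points $l_0$ and $l^*$ both lie in the interval $[b-m,b-1]$, since the case analysis depends on this; this follows directly from the hypothesis on $b$ together with the uniform bound on $|B|$. With those inclusions in hand, all of the above estimates are uniform in $\x$, $l$, and $b$, and combining the upper bound $\tfrac{m}{4}+O(1)$ near $l=0$ with the bounded lower bound at $l^*$ yields the claim.
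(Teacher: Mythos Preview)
Your proposal is correct and follows essentially the same approach as the paper: both analyze the piecewise-quadratic function coming from \eqref{eq: xl-gr}, locate its critical points (which are the same as the paper's local minima at $l=\AlNorm(\x)\pm\frac{k+dm}{2d}$), and evaluate at the endpoints of the interval and at the kink $l=0$ to obtain the bounds $m/4+O(1)$ above and $O(1)$ below.

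One small correction: your claimed ``technical nuisance'' is a red herring. The differences $l_0-(b-m)=\frac{B}{2d}-\frac{2b-m}{2}$ and $l^*-(b-1)=\frac{B+2k}{2d}-\frac{2b-m-2}{2}$ are $O(1)$ quantities independent of $m$ and can have either sign, so the critical points need \emph{not} lie in $[b-m,b-1]$ for $m$ large. Fortunately your argument does not actually use this: on the piece $l<0$ the lower bound follows from $Q\ge 0$ globally (and the upper bound from convexity plus the endpoint values $Q(b-m)=O(1/m)$, $Q(-1)=m/4+O(1)$), while on the piece $l\ge 0$ the lower bound $G(l)\ge G(l^*)=-\frac{B+k}{d}$ holds for all $l$ by convexity, regardless of whether $l^*$ lies in $[0,b-1]$. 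With that clarification, everything goes through.
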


\begin{proof}
By our hypothesis on $b$, we may assume that $-\frac{m+e}{2} \le l \le \frac{m+e}{2}$. Using Lemma \ref{lemma: ad-spinc}, for each $\x \in \T_\alpha \cap \T_\beta$, define
\begin{equation} \label{eq: g(l)}
g_\x(l) := \absgr(\x_l^{m,b}) - \absgr(\x) + \frac14 = \frac{ ( 2d \AlNorm(\x) - 2dl - k-dm )^2}{4d(k+dm)} - \max(0,2l),
\end{equation}
which we may view as a function of all real numbers $l$.

Assuming $m$ is sufficiently large, one can easily check that the local minima of $g_\x$ occur at
$l = \AlNorm(\x) \pm \frac{k+dm}{2d}$, with values of $0$ and $- 2\AlNorm(\x)$. The local maxima of $g_\x$ on the interval $[-\frac{m+e}{2}, \frac{m+e}{2}]$ occur for $l \in \{-\frac{m+e}{2}, 0, \frac{m+e}{2}\}$, with values given by:
\begin{align*}
g_\x(-\tfrac{m+e}{2}) 
&= \frac{ ( 2d \AlNorm(\x) - k + de  )^2}{4d(k+dm)}   \\
g_\x(0) &= \frac{ ( 2d \AlNorm(\x) - k-dm )^2}{4d(k+dm)}   \\
&= \frac{m}{4}  - \AlNorm(\x)  + \frac{k}{4d} + \frac{ d \AlNorm (\x)^2}{k+dm} \\
g_\x(\tfrac{m+e}{2}) 
 &= \frac{ ( 2d \AlNorm(\x) +k - de - 2(k+dm))^2}{4d(k+dm)}  - m-e \\
 &= \frac{ ( 2d \AlNorm(\x) +k - de )^2 + 4(k+dm) (-2\Al(\x)-k + de + k+dm ) }{4d(k+dm)}  - m-e \\
 &= \frac{ ( 2d \AlNorm(\x) +k - de )^2}{4d(k+dm)}  -2\AlNorm(\x)
\end{align*}
Both $g_\x(-\frac{m+e}{2})$ and $g_\x(\frac{m+e}{2})$ are bounded above independent of $m$, whereas $g(0) = \absgr(\x_0)$ is not, so $g$ attains its global maximum on the interval $[-\frac{m+e}{2}, \frac{m+e}{2}]$ at $l=0$. Thus, there is a constant $C_\x$ such that for any $b$ satisfying $\frac{m-e}{2} \le b \le \frac{m+e}{2}$ and any $l = b-m, \dots, b-1$, we have
\[
-C_\x \le \absgr(\x_l) \le \frac{m}{4} + C_\x.
\]
Maximizing $C_\x$ over all $\x \in \T_\alpha \cap \T_\beta$ gives the desired result.
\end{proof}

%
%

%
%

%

\begin{lemma} \label{lemma: gr(qmb)}
Fix $e \ge 0$. Then for all $m$ for which $k+dm >0$, and  all $b$ with $\frac{m-e}{2} \le b \le \frac{m+e}{2}$, the absolute gradings of all the generators $\q_{m,b}$ are bounded by a constant independent of $m$.
\end{lemma}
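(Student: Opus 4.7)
My plan is to adapt the strategy of Lemma~\ref{lemma: gr(xl)} to exterior generators, using the triangle grading shift formula applied to Whitney triangles in the triple $(\Sigma, \bm\alpha, \bm\gamma, \bm\delta^{m,b})$. For each $\q \in \T_\alpha \cap \T_\gamma$, I will choose a natural triangle $\psi_\q \in \pi_2(\q, \Theta_{\gamma\delta}, \q^{m,b})$ whose domain reflects the geometric correspondence between the exterior intersections on $\alpha_g \cap \gamma_g$ (where $\q$ lives) and on $\alpha_g \cap \delta_g$ (where $\q^{m,b}$ lives). Applying the grading shift formula in the cobordism $\bar X_{\alpha\gamma\delta}$ gives
\[
\absgr(\q^{m,b}) = \absgr(\q) + \absgr(\Theta_{\gamma\delta}) + \frac{c_1^2(\spincs_w(\psi_\q)) - 2\chi(\bar X_{\alpha\gamma\delta}) - 3\sigma(\bar X_{\alpha\gamma\delta})}{4} - 2n_w(\psi_\q).
\]

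The terms $\absgr(\q)$, $\chi$, and $\sigma$ are bounded independently of $m$: $\T_\alpha \cap \T_\gamma$ is a fixed finite set, and the topological invariants of the cobordism stabilize once $k+dm>0$. To compute $\absgr(\Theta_{\gamma\delta})$, I will apply the triangle grading formula in $\bar X_{\beta\gamma\delta}$ to the rigid triangle $\tau_0^+$ from Lemma~\ref{lemma: bgd-triangles}: since $n_w(\tau_0^+) = 0$ and $\gen{c_1(\spincs_w(\tau_0^+)), [Q]} = m$ (from the proof of Lemma~\ref{lemma: Theta-gamma-delta}), while $[Q]^2 = -m$ and $\absgr(\Theta_{\beta\gamma}) = \absgr(\Theta_{\beta\delta}) = (g-1)/2$, I will obtain $\absgr(\Theta_{\gamma\delta}) = m/4 + O(1)$, in agreement with the $d$-invariant of $L(m,1)$ in its canonical spin$^c$ structure. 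For the Chern class term in the main formula, equation~\eqref{eq: agd-c1-q} from Proposition~\ref{prop: alex-triangle} combined with $[R]^2 = -mk(k+dm)/\nu^2$ from \eqref{eq: abgd-PD-self-int} yields
\[
c_1^2(\spincs_w(\psi_\q)) = \frac{-m B_\q^2}{k(k+dm)}
\]
where $B_\q$ is a bounded linear combination of $\Al(\q)$ and the basepoint multiplicities of $\psi_\q$, so this contribution tends to a bounded limit as $m \to \infty$.

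For $\absgr(\q^{m,b})$ to remain bounded, the $m/4$ growth from $\absgr(\Theta_{\gamma\delta})$ must be cancelled by the term $-2n_w(\psi_\q)$. The centered hypothesis $|b - m/2| \le e/2$ plays a role analogous to that of $l$ close to $0$ in the proof of Lemma~\ref{lemma: gr(xl)}: it forces the natural triangle $\psi_\q$ to have $n_w(\psi_\q) = m/8 + O(1)$, so that $\absgr(\Theta_{\gamma\delta}) - 2 n_w(\psi_\q) = O(1)$ and all $m$-dependent contributions combine into a bounded residue.

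The main obstacle will be the precise computation of $n_w(\psi_\q)$ for the natural geometric triangle, which requires a careful analysis of how the thin strip between $\gamma_g$ and $\delta_g$ traverses the winding region, where $\delta_g$ has been Dehn-twisted $b$ times around $\beta_g$ on the $w$-side. Tracking these multiplicities and verifying the exact cancellation constitute the principal technical content; the evenness of the twist distribution guaranteed by the hypothesis on $b$ is precisely what makes this cancellation succeed.
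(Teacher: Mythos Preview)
Your approach is genuinely different from the paper's: the paper argues inductively, comparing $\absgr(\q_{m,b})$ with $\absgr(\q_{m+2,b+1})$ via small triangles $\psi_{\q,m,b}^{\pm}$ in the triples $(\bm\alpha,\bm\delta^{m,b},\bm\delta^{m+1,b'})$ (each with $n_w=n_z=n_{z'}=0$), and obtains a telescoping sum
\[
\absgr(\q_{m,b})-\absgr(\q_{m_0,b_0}) = \frac{1}{d}\Big(\Al(\q_{m_0,b_0})+\tfrac d2\Big)^2\Big(\frac{1}{k+dm}-\frac{1}{k+dm_0}\Big),
\]
which is manifestly bounded. You instead attempt a single-shot computation via one triangle $\psi_\q\in\pi_2(\q,\Theta_{\gamma\delta},\q^{m,b})$.

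The proposal has a genuine gap: your two key claims about $\psi_\q$ are mutually inconsistent. You assert that the $c_1^2$ contribution is bounded; by \eqref{eq: agd-c1-q} and $[R]^2=-mk(k+dm)/\nu^2$ this requires $B_\q = 2\Al(\q)+2k(n_u(\psi_\q)-n_{z'}(\psi_\q))-k+d$ to be bounded, hence $n_u(\psi_\q)-n_{z'}(\psi_\q)=O(1)$. You also assert $n_w(\psi_\q)=m/8+O(1)$. But applying the Alexander shift formula \eqref{eq: agd-alex}, together with the boundedness of $\Al(\q)-\Al(\q^{m,b})$ under the centered hypothesis (Lemma~\ref{lemma: A(qmb)-induct}), gives
\[
k\,n_{z'}(\psi_\q)+dm\,n_u(\psi_\q)-(k+dm)\,n_w(\psi_\q)=\frac{dm}{2}+O(1).
\]
Substituting $n_w=m/8+O(1)$ and $n_{z'}=O(1)$ forces $n_u(\psi_\q)=m/8+O(1)$, so $n_u-n_{z'}$ is \emph{unbounded} and $c_1^2\sim -km^2/(16d)$, not $O(1)$. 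Thus the $m/4$ term cannot be cancelled in the way you describe.

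The underlying difficulty is that there is no geometrically evident Maslov-index-$0$ triangle in $\pi_2(\q,\Theta_{\gamma\delta},\q^{m,b})$ with the small multiplicities you implicitly assume: the thin strip between $\gamma_g$ and $\delta_g$ must traverse the full winding of $\delta_g$, and its basepoint multiplicities grow with $m$. The paper's inductive scheme sidesteps this precisely because only one twist is added at a time, so the relevant triangles are genuinely small.
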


To prove this lemma, we will work inductively on $m$ and $b$. Note that the Heegaard quadruple diagrams $(\Sigma, \bm\alpha, \bm\beta, \bm\delta^{m,b}, \bm\delta^{m+1,b})$ and $(\Sigma, \bm\alpha, \bm\beta, \bm\delta^{m,b}, \bm\delta^{m+1,b+1})$ are well-adapted, where now we are treating $\lambda+m\mu$ as the ``original'' longitude and $\lambda+(m+1)\mu$ as the new one. The cases correspond to $m=1,b=0$ and $m=1,b=1$ respectively. For each of these quadruple diagrams, an analogue of Proposition \ref{prop: alex-triangle} holds, where we plug in $1$ for $m$, $k+dm$ for $k$, and $1$ for $\nu$ in each of the formulas. Let $R_{m,b}^-$ and $R_{m,b}^+$ respectively denote the triply periodic domains that are analogous to $R$ in the two cases; note that $[R_{m,b}^{\pm}]^2  = -(k+dm)(k+dm+d)$. We may also refer to the original $\bm\gamma$ as $\bm\delta^{0,0}$.

For any $\q \in \T_\alpha \cap \T_\gamma$, there are triangles
\[
\psi_{\q,m,b}^- \in \pi_2(\q_{m,b}, \Theta, \q_{m+1,b}) \quad\text{and} \quad \psi_{\q,m,b}^+ \in \pi_2(\q_{m,b}, \Theta, \q_{m+1,b+1})
\]
with Maslov index $0$, which satisfy
\[
n_w(\psi_{\q,m,b}^\pm) = n_z(\psi_{\q,m,b}^\pm) = n_{z'}(\psi_{\q,m,b}^\pm) = 0, \qquad n_u(\psi_{\q,m,b}^+) = 1, \qquad n_u(\psi_{\q,m,b}^-) = 0.
\]

\begin{lemma} \label{lemma: A(qmb)-induct}
Let $m_0$ be an integer for which $k+dm_0 > 0$. For any $\q \in \T_\alpha \cap \T_\gamma$, any $m \ge m_0$, and any $0 \le b_0 \le m_0$ and $0 \le b \le m$, we have
\begin{equation} \label{eq: A(qmb)-induct}
\Al(\q_{m,b}) = \Al(\q_{m_0,b_0}) + \frac{d((m-2b) - (m_0-2b_0))}{2}.
\end{equation}
\end{lemma}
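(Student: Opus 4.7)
The plan is to exploit the two triangles $\psi_{\q,m,b}^{\pm}$ that were just constructed: each connects $\q_{m,b}$ to a neighboring exterior generator at level $m+1$, and each has explicitly controlled basepoint multiplicities. Plugging these into the analogue of equation~\eqref{eq: agd-alex} for the triple $(\bm\alpha, \bm\delta^{m,b}, \bm\delta^{m+1,*})$ will immediately yield two simple recursions for the quantity $\Al(\q_{m,b})$, from which the stated formula falls out.

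First I would apply the $(\alpha,\gamma,\delta)$ Alexander shift formula \eqref{eq: agd-alex} to the Heegaard triple $(\Sigma,\bm\alpha,\bm\delta^{m,b},\bm\delta^{m+1,b})$, in which $\bm\delta^{m,b}$ plays the role of $\bm\gamma$ and $\bm\delta^{m+1,b}$ plays the role of $\bm\delta$. The relevant substitutions, as flagged in the paragraph preceding the lemma, are $k \mapsto k+dm$ and $m \mapsto 1$, since the framing jumps from $\lambda+m\mu$ to $\lambda+(m+1)\mu$. For the triangle $\psi_{\q,m,b}^-$, all four basepoint multiplicities vanish, so \eqref{eq: agd-alex} collapses to
\[
\Al(\q_{m,b}) - \Al(\q_{m+1,b}) = -\frac{d}{2}.
\]
Analogously, $\psi_{\q,m,b}^+$ has $n_u = 1$ and $n_w = n_z = n_{z'} = 0$, so applied to the triple $(\Sigma,\bm\alpha,\bm\delta^{m,b},\bm\delta^{m+1,b+1})$ it gives
\[
\Al(\q_{m,b}) - \Al(\q_{m+1,b+1}) = d \cdot 1 - \frac{d}{2} = \frac{d}{2}.
\]

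Next, I would subtract the two recursions (comparing $\q_{m+1,b+1}$ to $\q_{m+1,b}$ via $\q_{m,b}$) to deduce the sideways recursion $\Al(\q_{m,b+1}) - \Al(\q_{m,b}) = -d$, valid whenever $0 \le b \le m-1$. Setting $f(m,b) := \Al(\q_{m,b}) - \tfrac{d(m-2b)}{2}$, a direct check from the two recursions shows $f(m+1,b) = f(m,b+1) = f(m,b)$ on the admissible lattice $\{(m,b) : m \ge m_0,\ 0 \le b \le m\}$.

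The only point requiring real care—and the closest thing to an obstacle—is verifying that any two admissible pairs $(m_0,b_0)$ and $(m,b)$ can be joined by a path inside the admissible region along which $f$ has been shown to be invariant. Fortunately, one can simply first increment $m$ from $m_0$ up to $m$ while holding $b=b_0$ fixed (each step is valid because we only need $0 \le b_0 \le m_0 \le m$), and then change $b$ from $b_0$ to $b$ in unit steps at fixed $m$ (each step stays in the range $0\le b' \le m$, and the sideways recursion applies whenever $b' < m$, which is the only direction one ever needs). Because $f$ is constant along this path, $f(m,b)=f(m_0,b_0)$, which rearranges to the claimed identity~\eqref{eq: A(qmb)-induct}.
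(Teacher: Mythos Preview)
Your proposal is correct and follows exactly the paper's approach: derive the two one-step recursions $\Al(\q_{m,b})-\Al(\q_{m+1,b})=-\tfrac{d}{2}$ and $\Al(\q_{m,b})-\Al(\q_{m+1,b+1})=\tfrac{d}{2}$ from the analogue of \eqref{eq: agd-alex}, then finish by induction. The paper merely says ``The lemma follows by induction,'' so you have actually supplied more detail than the original.

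One tiny remark on your path argument: the sideways recursion you derive lives at level $m+1$, so after relabeling it is available only for levels $\ge m_0+1$. Your described path (increment $m$, then adjust $b$ at the final level) therefore does not literally cover the edge case $m=m_0$ with $b\neq b_0$. This is trivially patched: lift both $(m_0,b_0)$ and $(m_0,b)$ to level $m_0+1$ via the $-$-recursion and compare there.
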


\begin{proof}
By applying the analogue of \eqref{eq: agd-alex} to the triangles $\psi_{\q,m,b}^-$ and $\psi_{\q,m,b}^+$, we have:
\begin{gather*}
\Al(\q_{m,b}) - \Al(\q_{m+1,b}) = -\frac{d}{2} \\
\Al(\q_{m,b}) - \Al(\q_{m+1,b+1}) = \frac{d}{2}
\end{gather*}
The lemma follows by induction.
\end{proof}

\begin{proof}[Proof of Lemma \ref{lemma: gr(qmb)}]
Choose some $m_0$ for which $k+dm_0>0$, and any $b_0$ with $\frac{m_0 -e}{2} \le b_0 \le \frac{m_0+e}{2}$. We will inductively obtain a bound (as in the statement of the lemma) which applies for all of the pairs $(m_0+2i, b_0+i)$. We can then repeat the argument for each of the finitely many choices for $b_0$, obtaining a different bound each time, and repeat it again with $m_0$ replaced by $m_0+1$ (and all possible corresponding values of $b_0$). The largest of the resulting bounds will then apply to all pairs $(m,b)$ with $\frac{m-e}{2} \le b \le \frac{m+e}{2}$.

The induction proceeds as follows. Suppose $m = m_0+2i$ and $b = b_0+i$. By applying \eqref{eq: agd-c1-q} to the triangle $\psi_{\q,m,b}^+$, we obtain:
\begin{align*}
\gen{c_1(\psi_{\q,m,b}^+), [R_{m,b}^+]} &= 2\Al(\q_{m,b}) + k+dm + d \\
&= 2\Al(\q_{m_0,b_0}) + d((m-2b) - (m_0-2b_0)) + k+dm+d \\
&= 2\Al(\q_{m_0,b_0}) + k+dm + d
\end{align*}
\begin{align*}
\absgr(\q_{m+1,b+1}) - \absgr(\q_{m,b})
&= \frac{c_1(\spincs_w(\psi_{\q,m,b}^+))^2+1}{4} \\
&= - \frac{ \gen{c_1(\psi_{\q,m,b}^+), [R_{m,b}^+]}^2}{4(k+dm)(k+dm+d)} + \frac14 \\
&= - \frac{ (2\Al(\q_{m_0,b_0}) + k+dm + d) ^2}{4(k+dm)(k+dm+d)} + \frac14 \\
&= - \frac{ \Al(\q_{m_0,b_0})^2 }{(k+dm)(k+dm+d) } - \frac{\Al(\q_{m_0,b_0})}{k+dm} - \frac{k+dm+d}{4(k+dm)} + \frac14 \\
&= - \frac{ \Al(\q_{m_0,b_0})^2 }{(k+dm)(k+dm+d) } - \frac{\Al(\q_{m_0,b_0})}{k+dm} - \frac{d}{4(k+dm)}.
\end{align*}

Similarly, applying \eqref{eq: agd-c1-q} to $\psi_{\q,m+1,b+1}^-$:
\begin{align*}
\gen{c_1(\psi_{\q,m+1,b+1}^-), [R_{m+1,b+1}^-]} &= 2\Al(\q_{m+1,b+1}) - (k+dm+d) + d \\
&= 2\Al(\q_{m_0,b_0}) + d((m+1-2b-2) - (m_0-2b_0)) - k-dm  \\
&= 2\Al(\q_{m_0,b_0})  - k-dm-d
\end{align*}
\begin{align*}
\absgr(\q_{m+2,b+1}) &- \absgr(\q_{m+1,b+1}) \\
&= \frac{c_1(\spincs_w(\psi_{\q,m+1,b+1}^-))^2+1}{4} \\
&= - \frac{ \gen{c_1(\psi_{\q,m+1,b+1}^-), [R_{m+1,b+1}^-]}^2}{4(k+dm+d)(k+dm+2d)} + \frac14 \\
&= - \frac{ (2\Al(\q_{m_0,b_0}) -k-dm-d)^2}{4(k+dm+d)(k+dm+2d)} + \frac14 \\
&= - \frac{ \Al(\q_{m_0,b_0})^2 }{(k+dm+d)(k+dm+2d) } + \frac{\Al(\q_{m_0,b_0})}{k+dm+2d} - \frac{k+dm+d}{4(k+dm+2d)} + \frac14 \\
&= - \frac{ \Al(\q_{m_0,b_0})^2 }{(k+dm+d)(k+dm+2d) } + \frac{\Al(\q_{m_0,b_0})}{k+dm+2d} + \frac{d}{4(k+dm+2d)}
\end{align*}
Combining these statements,
\begin{align*}
\absgr(\q_{m+2,b+1}) &- \absgr(\q_{m,b})  \\
&= - \frac{\Al(\q_{m_0,b_0})^2}{k+dm+d}\left( \frac{1}{k+dm} + \frac{1}{k+dm+2d} \right) \\
& \qquad    + \left(\Al(\q_{m_0,b_0}) + \frac{d}{4} \right) \left( \frac{1}{k+dm+2d} - \frac{1}{k+dm} \right) \\
&= - \frac{2 \Al(\q_{m_0,b_0})^2}{(k+dm)(k+dm+2d)} + \left(\Al(\q_{m_0,b_0}) + \frac{d}{4} \right) \left( \frac{1}{k+dm+2d} - \frac{1}{k+dm} \right) \\
&= \left(\frac{\Al(\q_{m_0,b_0})^2}{d} + \Al(\q_{m_0,b_0}) + \frac{d}{4} \right) \left( \frac{1}{k+dm+2d} - \frac{1}{k+dm} \right) \\
&= \frac{1}{d} \left(\Al(\q_{m_0,b_0}) + \frac{d}{2} \right)^2 \left( \frac{1}{k+dm+2d} - \frac{1}{k+dm} \right)
\end{align*}
Therefore, we may compute $\absgr(\q_{m,b}) - \absgr(\q_{m_0,b_0})$ by a telescoping sum:
\begin{align*}
\absgr(\q_{m,b}) - \absgr(\q_{m_0,b_0}) &= (\absgr(\q_{m,b}) - \absgr(\q_{m-2, b-1})) + \cdots + (\absgr(\q_{m_0+2,b_0+1}) - \absgr(\q_{m_0,b_0})) \\
&= \frac{1}{d} \left(\Al(\q_{m_0,b_0}) + \frac{d}{2} \right)^2 \left( \frac{1}{k+dm} - \frac{1}{k+dm_0} \right)
\end{align*}
Since this has a finite limit as $m \to \infty$, the values of $\absgr(\q_{m_0+2i,b_0+i})$, ranging over all $\q \in \T_\alpha \cap \T_\gamma$, are globally bounded by constants, as required.
\end{proof}

\begin{proof}[Proof of Proposition \ref{prop: maslov-bound}]
Apply Lemmas \ref{lemma: gr(xl)} and \ref{lemma: gr(qmb)}.
\end{proof}

\section{The surgery exact sequence} \label{sec: exact-sequence}

In this section, we will examine the construction of the long exact sequence relating the Floer homologies of $Y$, $Y_\lambda(K)$, and $Y_{\lambda+m\mu}(K)$ for $m$ large. In fact, we will make all statements on the level of chain complexes, rather than discussing the resulting exact sequence on homology. Ozsv\'ath and Szab\'o's original proof of the surgery formula \cite{OSzSurgery, OSzRational} does not explicitly discuss the maps that count holomorphic rectangles and pentagons (first used in \cite{OSzDouble}), so we will need to describe these maps in more detail, based on the description given by Mark and the first author \cite{HeddenMarkFractional}. \footnote{Our cyclic indexing of the groups and maps is shifted from that of \cite{HeddenMarkFractional}: our $\bm\beta$, $\bm\gamma$, and $\bm\delta$ respectively correspond to $\bm\gamma^2$, $\bm\gamma^0$, and $\bm\gamma^1$ there, and our $f^\circ_j$, $h^\circ_j$, and $g^\circ_j$ (defined below) correspond to $f_{j+2}$, $h_{j+2}$, and $g_{j+2}$ (indices mod $3$). Also, our $T$ corresponds to $\zeta$ in \cite{HeddenMarkFractional}.}

Throughout the proof, we will use a well-adapted diagram $(\Sigma, \bm\alpha, \bm\beta, \bm\gamma, \bm\delta^{b,m}, w, z, z')$, as described above. We will make a series of statements about ``all $m$ sufficiently large.'' To be precise, this means that we fix some integer $e \ge 0$, and consider pairs $(m,b)$ for which $\frac{m-e}{2} \le b \le \frac{m+e}{2}$, as in Proposition \ref{prop: maslov-bound}. This will be implicit throughout; we will generally suppress $b$ from the notation.

\subsection{Construction of the exact sequence}

\def\GR {\Gamma_m}

We begin by defining the twisted chain complex associated to $(\Sigma, \bm\alpha, \bm\beta, w)$. Let $\GR$ denote the group ring $\F[\Z/m\Z]$, which we realize as the quotient $\F[T]/(T^m-1)$. It is convenient to think of $\GR$ as a subring of $\F[\Q/m\Z]$, which is the ring of rational-exponent polynomials in $T$ (i.e., sums $\sum_{r \in \Q} a_r T^r$ with only finitely many $a_r \ne 0$) modulo the relation $T^m=1$. In particular, for any $r \in \Q$, the coset $T^r \GR$ depends only on the fractional part of $r$ (and is isomorphic to $\F^m$ as a vector space).

The twisted complex $\CFit(\bm\alpha, \bm\beta, w; \GR)$ is generated over $\GR$ by all pairs $[\x,i]$ as usual, with differential\footnote{In \cite{OSzSurgery, OSzRational}, the exponent of $T$ is described as the intersection number between $\partial(\phi)$ and a subvariety of $\Sym^g(\Sigma)$ determined by a marked point $p$ that sits on $\beta_g$ between $w$ and $z$, which is the same as our formulation.}
\begin{equation}
\partial(T^s \cdot [\x,i]) = \sum_{\y \in \T_\alpha \cap \T_\beta} \sum_{\substack{ \phi \in \pi_2(\x,\y) \\ \mu(\phi)=1}} \# \widehat\MM(\phi) \, T^{s+ n_w(\phi) - n_z(\phi)} [\y, i-n_w(\phi)].
\end{equation}
The other versions $\ul\CF^-$, $\ul\CF^+$, and $\ul\CF^t$ (for $t \in \N$) are derived from the infinity version in accordance with their definitions in the untwisted setting.

As in \cite{OSzSurgery, OSzRational}, the complex $\CFit(\bm\alpha, \bm\beta, w; \GR)$ is isomorphic to a direct sum of $m$ copies of $\CFi(\Sigma, \bm\alpha, \bm\beta, w)$, but we define the isomorphism slightly differently. Let
\begin{equation} \label{eq: theta}
\theta\co \CFpt(\bm\alpha, \bm\beta, w; \GR) \to
\bigoplus_{\spincs \in \Spin^c(Y)} \CFp(\bm\alpha, \bm\beta, \spincs, w) \otimes T^{-\AlNorm(\spincs)} \GR
\end{equation}
be defined by
\begin{equation} \label{eq: theta-def}
\theta(T^s [\x,i]) = [\x,i] \otimes T^{s - \AlNorm(\x)}.
\end{equation}
It is simple to check that this is an isomorphism of chain complexes.

Let us look more closely at the right-hand side of \eqref{eq: theta}. For each $\spincs \in \Spin^c(Y)$,
there are $m$ different powers of $T$ occurring in $\CFp(\bm\alpha, \bm\beta, \spincs, w) \otimes T^{-\AlNorm(\spincs)} \GR$, with exponents in $\Q/m\Z$. We will frequently need to lift these exponents to $\Q$; we do so by choosing the $m$ values of $r$ satisfying
\begin{equation} \label{eq: r-bounds}
r \equiv -\AlNorm_{Y,K}(\spincs) \pmod \Z \quad \text{and} \quad \frac{-k-dm}{2d} \le r < \frac{-k+dm}{2d}.
\end{equation}
We may thus write
\begin{equation} \label{eq: ab-twisted-decomp}
\CFi(\bm\alpha, \bm\beta, \spincs, w) \otimes T^{-\AlNorm_{w,z}(\spincs)} \GR =
\bigoplus_{r \in \Q \text{ satis.~\eqref{eq: r-bounds}}}  \CFi(\bm\alpha, \bm\beta, \spincs, w) \otimes T^r.
\end{equation}


Define chain maps
\begin{align}
\label{eq: f0}
f_0^+\co & \CFpt(\bm\alpha, \bm\beta, w; \GR) \to \CFp(\bm\alpha, \bm\gamma, w) \\
\label{eq: f1}
f_1^+ \co & \CFp(\bm\alpha, \bm\gamma, w) \to \CFp(\bm\alpha, \bm\delta, w) \\
\label{eq: f2}
f_2^+\co & \CFp(\bm\alpha, \bm\delta, w) \to \CFpt(\bm\alpha, \bm\beta, w; \GR)
\end{align}
by the following formulas:
\begin{align}
\label{eq: f0-def}
f_0^+(T^s \cdot [\x,i]) &= \sum_{\q \in \T_\alpha \cap \T_\gamma}  \sum_{\substack{\psi \in \pi_2(\x, \Theta_{\beta\gamma}, \q) \\ \mu(\psi)=0 \\ \mathclap{s + n_w(\psi) - n_z(\psi) \equiv 0 \pmod m}}}   \#\MM(\psi) \, [\q, i-n_w(\psi)] \\
\label{eq: f1-def}
f_1^+([\q,i]) &= \sum_{\a \in \T_\alpha \cap \T_\delta} \sum_{\substack{\psi \in \pi_2(\q, \Theta_{\gamma\delta}, \a) \\ \mu(\psi)=0}} \#\MM(\psi) \, [\a, i-n_w(\psi)] \\
\label{eq: f2-def}
f_2^+([\a,i]) &= \sum_{\x \in \T_\alpha \cap \T_\beta} \sum_{\substack{\psi \in \pi_2(\a, \Theta_{\delta\beta}, \x) \\ \mu(\psi)=0}} \#\MM(\psi) \, T^{n_w(\psi) - n_z(\psi)} \cdot [\x, i-n_w(\psi)].
\end{align}
Let $f_0^t$, $f_1^t$, $f_2^t$ denote the analogous maps on $\CF^t$, which will play a critical role in our argument below. (There are also corresponding chain maps on the $U$-completed complexes $\mathbf{CF}^-$ and $\mathbf{CF}^\infty$, but not on the ordinary $\CFm$ and $\CFi$ because the sums may fail to be finite.)

Following \cite{HeddenMarkFractional}, the quadrilateral-counting maps
\begin{align}
\label{eq: h0}
h_0^+\co & \CFpt(\bm\alpha, \bm\beta, w; \GR) \to \CFp(\bm\alpha, \bm\delta, w) \\
\label{eq: h1}
h_1^+ \co & \CFp(\bm\alpha, \bm\gamma, w) \to \CFpt(\bm\alpha, \bm\beta, w; \GR) \\
\label{eq: h2}
h_2^+\co & \CFp(\bm\alpha, \bm\delta, w) \to \CFp(\bm\alpha, \bm\gamma, w)
\end{align}
are defined by the following formulas:
\begin{align}
\label{eq: h0-def}
h_0^+(T^s \cdot [\x,i]) &= \sum_{\a \in \T_\alpha \cap \T_\delta}  \sum_{\substack{\rho \in \pi_2(\x, \Theta_{\beta\gamma}, \Theta_{\gamma\delta}, \a) \\ \mu(\rho)=-1 \\ \mathclap{s + n_w(\rho) - n_z(\rho) \equiv 0 \pmod m}}}   \#\MM(\rho) \, [\a, i-n_w(\rho)] \\
\label{eq: h1-def}
h_1^+([\q,i]) &= \sum_{\x \in \T_\alpha \cap \T_{\beta}} \sum_{\substack{\rho \in \pi_2(\q, \Theta_{\gamma\delta}, \Theta_{\delta\beta'}, \x) \\ \mu(\rho)=-1}} \#\MM(\rho) \, T^{n_w(\rho) - n_z(\rho)} \cdot [\x, i-n_w(\rho)] \\
\label{eq: h2-def}
h_2^+([\a,i]) &= \sum_{\q \in \T_\alpha \cap \T_\gamma} \sum_{\substack{\rho \in \pi_2(\a, \Theta_{\delta\beta}, \Theta_{\beta\gamma}, \q) \\ \mu(\rho)=-1 \\ n_w(\rho) \equiv n_z(\rho) \pmod m}} \#\MM(\rho) \, [\q, i-n_w(\rho)]
\end{align}

A standard argument shows that for each $j \in \Z/3$, the following holds:
\begin{itemize}
\item $h_j^+$ is a null-homotopy of $f^+_{j+1} \circ f^+_j$;
\item $h_{j+1}^+ \circ f_j^+ + f_{j+2}^+ \circ h_j^+$ is a quasi-isomorphism.
\end{itemize}
(The second statement is proven by introducing pentagon-counting maps, which we discuss in Section \ref{ssec: pent-filt}.) Therefore, the exact triangle detection lemma \cite[Lemma 4.2]{OSzDouble} implies an exact sequence on homology. Again, using the same formulas, one can likewise define such maps on $\CF^t$, $\CFmc$, and $\CFic$.

Each of the complexes $\CFpt(\bm\alpha, \bm\beta, w; \GR)$, $ \CFp(\bm\alpha, \bm\gamma, w)$, and $\CFp(\bm\alpha, \bm\delta, w)$ has a decomposition according to the evaluations of spin$^c$ structures on elements of $H_2$ of the corresponding $3$-manifolds. Remark \ref{rmk: H2-evals} implies that the maps $f_j^+$ and $h_j^+$ all respect that decomposition. In particular, the maps respect the subgroup of each complex consisting only of the groups in torsion spin$^c$ structures. Henceforth, by abuse of notation, we will disregard all non-torsion spin$^c$ structures; that is, whenever we refer to the Heegaard Floer complexes, we actually mean the subgroups consisting of only the torsion spin$^c$ structures.

Each of the three complexes discussed above is naturally filtered by the $i$ coordinate, with respect to which the maps $f_j$ and $h_j$ are obviously filtered. We define a second filtration on each complex as follows:
\begin{definition} \label{def: J-filtrations}
\begin{itemize}
\item
The filtration $\JJ_{\alpha\gamma}$ on $\CFp(\bm\alpha, \bm\gamma, w)$ is simply the Alexander filtration:
\begin{equation} \label{eq: ag-filt}
\JJ_{\alpha\gamma}([\q,i]) = \AlNorm_{w,z'}(\q) + i.
\end{equation}

\item
The filtration $\JJ_{\alpha\delta}$ on $\CFp(\bm\alpha, \bm\delta, w)$ is the Alexander filtration shifted by a constant on each spin$^c$ summand. To be precise, for each spin$^c$ structure $\spincu$, and each generator $\a$ with $\spincs_w(\a) = \spincu$, we define
\begin{equation} \label{eq: ad-filt}
\JJ_{\alpha\delta}([\a,i]) = \AlNorm_{w,z'}(\a) + i + \frac{  d^2 m (2 s_\spincu - 1) }{2k(k+dm)},
\end{equation}
where $s_\spincu$ is the number from Definition \ref{def: xu}.

\item
The filtration $\JJ_{\alpha\beta}$ on $\CFpt(\bm\alpha, \bm\beta, w; \GR)$ is defined via the identification $\theta$ and the decomposition \eqref{eq: ab-twisted-decomp}. For any $\x \in \T_\alpha \cap \T_\beta$ with $\spincs_w(\x) = \spincs$, and any $r$ satisfying \eqref{eq: r-bounds},
\begin{equation} \label{eq: ab-twisted-filt}
\JJ_{\alpha\beta}([\x,i] \otimes T^r) = i - \frac{2dr +k+d}{2k}.
\end{equation}
That is, $\JJ_{\alpha\beta}$ is the trivial filtration shifted by a constant that depends linearly on the exponent of $T$, and it does not depend on $\x$ except via its the associated spin$^c$ structure. We transport this back to $\CFpt(\bm\alpha, \bm\beta, w; \GR)$ via $\theta$.
\end{itemize}
\end{definition}

It would be tempting to try to prove that the maps $f_j^+$ and $h_j^+$ defined above are all filtered with respect to the filtrations $\JJ_{\alpha\beta}$, $\JJ_{\alpha\gamma}$, and $\JJ_{\alpha\delta}$, but this turns out not to be the case. To understand the reason for this failure, we must look at spin$^c$ structures. Each of the maps $f_j^+$, $h_j^+$ decomposes as a sum of terms corresponding to spin$^c$ structures on the relevant cobordisms: for instance, we may write
\[
f_0^+ = \sum_{\spincv \in \Spin^c_0(X_{\alpha\beta\gamma})} f^+_{0,\spincv},
\]
where $f^+_{0,\spincv}$ counts only the terms in \eqref{eq: f0-def} for which $\spincs_w(\psi) = \spincv$, and likewise for the other maps. (Recall that $\Spin^c_0(X_{\alpha\beta\gamma})$ denotes the set of spin$^c$ structures which restrict to the canonical torsion spin$^c$ structure on $Y_{\beta\gamma}$, which is represented by the generator $\Theta_{\beta\gamma}$.) As we will see, for each triangle $\psi$ contributing to $f^+_{j,\spincv}$, the filtration shift of $\psi$ is given by $n_{z'}(\psi)$ plus a term that is given by a linear step function of the evaluation of $c_1(\spincv)$ on the relevant triply periodic domain ($P_\gamma$, $R$, or $P_\delta$). As a result, $f^+_{j,\spincv}$ is filtered only when $\spincv$ lies within a certain range.

However, the maps on the truncated complexes $\CF^t$ (for $t \in \N$) are better behaved. Since the Maslov grading shift of $f^t_{j, \spincv}$ is given by a quadratic function of $c_1(\spincv)$, only finitely many terms of the terms $f^t_{j,\spincv}$ can be nonzero for any fixed $t$. (If $b_1(Y)>0$, this is why we only consider the torsion spin$^c$ structures.) By looking closely at how the Maslov gradings interact with the filtration shifts described above, we will prove:
\begin{proposition} \label{prop: tri-filt}
Fix $t \in \N$. For all $m$ sufficiently large, the maps $f_0^t$, $f_1^t$, and $f_2^t$ are all filtered with respect to the filtrations $\JJ_{\alpha\beta}$, $\JJ_{\alpha\gamma}$, and $\JJ_{\alpha\delta}$. Moreover, for any triangle $\psi$ contributing to any of these maps, the filtration shift of the corresponding term equals $n_{z'}(\psi)$.
\end{proposition}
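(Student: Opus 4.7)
The plan is to decompose each map according to spin$^c$ structures, $f^t_j = \sum_\spincv f^t_{j,\spincv}$ (with $\spincv$ ranging over $\Spin^c_0$ of the appropriate cobordism), and to verify triangle-by-triangle that for every $\psi$ contributing to $f^t_{j,\spincv}$ the $\JJ$-filtration shift $\JJ(\text{input}) - \JJ(\text{output})$ equals exactly $n_{z'}(\psi)$, provided $\spincv$ lies in a certain ``canonical'' range. The first step is a direct computation using Proposition~\ref{prop: alex-triangle} together with Definition~\ref{def: J-filtrations}; the second step is to use Proposition~\ref{prop: maslov-bound} to show that once $m$ is large enough (depending on $t$), these are the only spin$^c$ structures that can contribute nontrivially to the truncated map.

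Consider $f_1^t$ first, as it involves no twisted coefficients. Given $\psi \in \pi_2(\q, \Theta_{\gamma\delta}, \a)$ with $\spincs_w(\a) = \spincu$, combining \eqref{eq: ad-filt}, \eqref{eq: ag-filt}, and \eqref{eq: agd-alex} expresses $\JJ_{\alpha\gamma}([\q,i]) - \JJ_{\alpha\delta}([\a, i-n_w(\psi)])$ as a linear combination of $n_w(\psi), n_{z'}(\psi), n_u(\psi)$, and $s_\spincu$. Substituting \eqref{eq: su-A(a)-cong} together with \eqref{eq: agd-c1-q} to replace $2d s_\spincu - k - dm -d$ by $\frac{\nu}{m} \gen{c_1(\spincv),[R]}$ plus an integer multiple of $2(k+dm)$ gives, after simplification, $n_{z'}(\psi) + \epsilon_\spincv$, where $\epsilon_\spincv$ is a step function of $\gen{c_1(\spincv),[R]}$ that vanishes precisely when $\spincv$ is the canonical extension. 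For $f_0^t$ and $f_2^t$ we first unpack $\theta$ to lift the $T$-exponent to a representative $r$ in the range \eqref{eq: r-bounds}, then run the parallel computation using \eqref{eq: abg-alex}, \eqref{eq: abg-c1-x} (resp.\ \eqref{eq: adb-alex}, \eqref{eq: adb-c1-a}); the mod-$m$ condition in \eqref{eq: f0-def} and \eqref{eq: f2-def} precisely selects the canonical $\spincv$.

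Having identified the canonical range, the next step is to control which $\spincv$ can appear after truncation. The Maslov-grading shift contributed by $\spincv$ is $\tfrac14(c_1(\spincv)^2 - 2\chi - 3\sigma)$, a downward-opening quadratic in $\gen{c_1(\spincv),[\hat F]}$, whereas the absolute gradings on $\CF^t(\Sigma, \bm\alpha, \bm\delta, w)$ are bounded by $[-C, m/4 + C]$ independently of $m$ by Proposition~\ref{prop: maslov-bound}. Comparing a quadratic to a linear function, we find that for $m \gg 0$ only $\spincv$ in a window of width $O(\sqrt{m})$ about the canonical one can contribute, and this window shrinks (relative to the spacing of non-canonical $\spincv$ as measured by their effect on $\JJ$) to pick out exactly the good spin$^c$ structures.

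The hardest part will be the linear-algebra book-keeping that identifies which representative $r$ in \eqref{eq: r-bounds} (resp.\ which $s_\spincu$) produces the cancellation $\epsilon_\spincv = 0$. This is especially subtle for $f_0^t$, because the $T$-exponent is intrinsically only well-defined mod $m$, and the lift prescribed by \eqref{eq: r-bounds} must be verified to coincide with the canonical extension of the spin$^c$ data across the cobordism (compare \eqref{eq: c1(xu)}--\eqref{eq: c1(yu)}). Getting all the signs right in both $k>0$ and $k<0$ regimes, and arranging the winding parameter $b$ from Lemma~\ref{lemma: supported} so that every spin$^c$ structure within the truncation range is strongly supported simultaneously, is what ultimately forces the ``$m$ sufficiently large'' hypothesis.
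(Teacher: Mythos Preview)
Your approach is essentially the same as the paper's: decompose each $f_j^t$ by spin$^c$ structure, use the Maslov-grading bounds from Proposition~\ref{prop: maslov-bound} (together with the bounded gradings on the $\bm\alpha\bm\beta$- and $\bm\alpha\bm\gamma$-complexes) to restrict which $\spincv$ can contribute, and then compute the $\JJ$-shift directly using Proposition~\ref{prop: alex-triangle}.

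Two corrections are worth noting. First, for $f_2^t$ the truncation does \emph{not} single out a unique canonical $\spincv$: after Lemma~\ref{lemma: f2-trunc} both $\spincx_\spincu$ and $\spincy_\spincu$ may contribute, and one must verify the identity $\JJ_{\alpha\delta} - \JJ_{\alpha\beta} = n_{z'}(\psi)$ separately in the two cases $\gen{c_1(\spincv),[P_\delta]} \le 0$ and $\gen{c_1(\spincv),[P_\delta]} > 0$ (these lead to different lifts $r$ of the $T$-exponent, and the case analysis is what makes the $s_\spincu$-terms cancel). Second, your final paragraph invoking Lemma~\ref{lemma: supported} to ``strongly support every spin$^c$ structure simultaneously'' is a red herring: that lemma explicitly requires $b$ to vary with $\spincu$, so no single diagram achieves this, and in any case the proof of Proposition~\ref{prop: tri-filt} does not need any spin$^c$ structure to be supported in the winding region. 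What is actually needed is only the $e$-centered hypothesis $\tfrac{m-e}{2}\le b \le \tfrac{m+e}{2}$, which is what feeds into Proposition~\ref{prop: maslov-bound}.
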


The situation with the rectangle-counting maps is even more complicated. Unlike with the triangle maps, the Maslov grading alone does not guarantee that the only nonzero terms $h^t_{j,\spincv}$ are filtered. However, it turns out that we can simply throw away the bad terms. To be precise, we will define ``truncated'' versions $\tilde h^t_j$, each of which is a sum of terms $h^t_{j,\spincv}$ satisfying certain constraints. We will prove:
\begin{proposition} \label{prop: rect-filt}
Fix $t \in \N$. For all $m$ sufficiently large, the maps $\tilde h^t_0$, $\tilde h^t_1$, and $\tilde h^t_2$ have the following properties:
\begin{itemize}
\item $\tilde h^t_j$ is a filtered null-homotopy of $f^t_{j+1} \circ f^t_j$.
\item $\tilde h^t_{j+1} \circ f^t_j +  f^t_{j+2} \circ \tilde h^t_j$ is a filtered quasi-isomorphism.
\end{itemize}
\end{proposition}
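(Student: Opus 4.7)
The strategy mirrors that of Proposition \ref{prop: tri-filt} but is complicated by rectangles having Maslov index $-1$, which requires that we restrict the pentagon-counting identity to a carefully chosen sub-sum. First, I would define $\tilde h^t_j$ by decomposing $h^t_j = \sum_{\spincv} h^t_{j,\spincv}$ over $\spincv \in \Spin^c_0$ of the corresponding $4$-manifold, and using the appropriate part of Proposition \ref{prop: alex-rectangle} together with Definition \ref{def: J-filtrations} and \eqref{eq: su-A(a)-cong} to compute the $\JJ$-shift of each contribution. Reading off this computation, one finds that the shift is nonnegative precisely when $\gen{c_1(\spincv), [Q]}$ lies in a canonical interval of length $2m$, directly analogous to the interval in \eqref{eq: c1(xu)}--\eqref{eq: c1(yu)} that cuts out $\spincx_\spincu$ and $\spincy_\spincu$. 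Let $\tilde h^t_j$ be the sum of $h^t_{j,\spincv}$ over $\spincv$ in this admissible range; filteredness then holds term by term by construction.

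Second, I would promote the pentagon-counting identity from \cite{OSzDouble, HeddenMarkFractional} to the truncated setting. The standard identity gives
\[
\partial \circ h^+_j + h^+_j \circ \partial + f^+_{j+1} \circ f^+_j = 0,
\]
and the goal is to replace $h^+_j$ by $\tilde h^t_j$ and all other maps by their $\CF^t$ versions. The key observation, already flagged in the paragraph following Proposition \ref{prop: tri-filt}, is that the Maslov grading shift of $h^+_{j,\spincv}$ is a quadratic function of $c_1(\spincv)$ whose leading behavior along $\PD[Q]$ is governed by $[Q]^2 = -m$. For $m$ sufficiently large relative to $t$, this downward quadratic forces every $h^t_{j,\spincv}$ with $\spincv$ outside the admissible range to vanish identically on $\CF^t$; hence $\tilde h^t_j = h^t_j$ as maps on $\CF^t$, and the pentagon identity descends verbatim. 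The same finiteness ensures the sum defining $\tilde h^t_j$ is finite on each element.

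Third, for the filtered quasi-isomorphism statement, I would invoke Ozsv\'ath--Szab\'o's exact triangle argument (via a second pentagon-counting identity) to get that $h^+_{j+1} \circ f^+_j + f^+_{j+2} \circ h^+_j$ is a quasi-isomorphism on $\CFp$. By the same vanishing principle as in step two, on $\CF^t$ the truncated composition $\tilde h^t_{j+1} \circ f^t_j + f^t_{j+2} \circ \tilde h^t_j$ coincides with the untruncated one for $m$ large, so the quasi-isomorphism passes to the truncated complex. Filteredness of each summand then follows from Proposition \ref{prop: tri-filt} together with step one, and Lemma \ref{lemma: filt-QI} upgrades filtered quasi-isomorphism on all $\CF^t$ to a filtered quasi-isomorphism of the relevant $\F[U]$-modules.

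The hard part will be the vanishing statement in step two. The admissible range for $\spincv$ is specified in terms of $\gen{c_1(\spincv), [Q]}$, while the Maslov shift is a quadratic form in $c_1(\spincv)$ evaluated against a different basis for $H_2(\bar X_{\alpha\beta\gamma\delta})$, for example $[R]$ or $[P_\delta]$ depending on the flavor of the rectangle. Using the relations $[R] = \tfrac{m}{\nu}[P_\gamma] + \tfrac{k}{\nu}[Q]$ and $[P_\delta] = [P_\gamma] - d[Q]$ from Section \ref{ssec: cobordisms}, one must reduce the Maslov-shift quadratic to a single-variable expression in $\gen{c_1(\spincv),[Q]}$ with $\gen{c_1(\spincv), [P_\gamma]}$ pinned by the restriction to $X_{\alpha\beta\gamma}$, and then verify that the $-m/4$ leading coefficient eventually dominates any bounded drift coming from the absolute gradings of boundary generators. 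The required uniform estimates are in the spirit of Lemmas \ref{lemma: gr(xl)}--\ref{lemma: gr(qmb)} and Proposition \ref{prop: maslov-bound}, but must hold over the full complement of the admissible set rather than merely for surviving generators, which is the main technical complication.
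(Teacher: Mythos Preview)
Your core claim in step two --- that for $m$ large every $h^t_{j,\spincv}$ with $\spincv$ outside the admissible range vanishes on $\CF^t$, so that $\tilde h^t_j = h^t_j$ --- is false in general, and this is exactly the obstacle the paper is built around. The cobordism $X_{\alpha\beta\gamma\delta}$ is always indefinite (and $X_{\alpha\gamma\delta\beta}$ is indefinite when $k<0$, $X_{\alpha\delta\beta\gamma}$ when $k>0$), so the Maslov-shift quadratic in $c_1(\spincv)$ has both signs. Concretely, writing the shift as in the paper's \eqref{eq: h0-gr-Pg-R},
\[
\absgr(h^\circ_{0,\spincv}) = \frac{\gen{c_1(\spincv),[P_\gamma]}^2}{4dk} - \frac{\nu^2\gen{c_1(\spincv),[R]}^2}{4mk(k+dm)} + \frac{m-1}{4},
\]
there are spin$^c$ structures with both $\gen{c_1(\spincv),[P_\gamma]}$ and $\gen{c_1(\spincv),[R]}$ large for which the two terms nearly cancel and $\absgr(h^t_{0,\spincv})$ stays bounded; these terms need not vanish on $\CF^t$, and the paper explicitly notes they can fail to be filtered (see the remark following Proposition~\ref{prop: h1-filt}). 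Your reduction ``pin $\gen{c_1(\spincv),[P_\gamma]}$ by the restriction to $X_{\alpha\beta\gamma}$'' does not work either: that restriction only constrains $\spincv|_{Y_{\alpha\gamma}}$, not the $c_1$ evaluation on $[P_\gamma]$, which still ranges over a full coset.

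What the paper does instead is genuinely different from your plan. It does \emph{not} argue that bad terms vanish; it discards them by fiat and then proves the discarded part $h^t_j - \tilde h^t_j$ is itself a chain map. This works because any index-$0$ rectangle $\rho$ in a bad spin$^c$ structure cannot degenerate into two triangles: if it did, the triangle factors would count for $f^t_j$ and $f^t_{j+1}$, and Lemmas \ref{lemma: f0-trunc}, \ref{lemma: f1-trunc}, \ref{lemma: f2-trunc} would then force $\spincs_w(\rho)$ back into the good range. Nor can it degenerate via a $(\beta,\gamma,\delta)$-type triangle $\tau_l^\pm$ without forcing $\gen{c_1(\spincv),[Q]}=\pm m$. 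So the only ends are rectangle-plus-bigon, which gives $(h^t_j-\tilde h^t_j)\circ\partial + \partial\circ(h^t_j-\tilde h^t_j)=0$, and hence $\tilde h^t_j$ is still a null-homotopy of $f^t_{j+1}\circ f^t_j$. The admissible set itself is more delicate than a single interval in $\gen{c_1(\spincv),[Q]}$: for $\tilde h^t_0$ it is a union of two conditions (Definition \ref{def: h0-trunc}), chosen precisely so that both types of triangle--triangle degeneration land inside it. The filtered quasi-isomorphism then requires a parallel truncation of the pentagon maps (Definition \ref{def: g0-trunc}) and a case-by-case check that the five types of pentagon degenerations respect goodness; simply inheriting the untruncated pentagon identity, as you propose, is not available once $\tilde h^t_j \ne h^t_j$.
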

The challenging part is to choose the spin$^c$ constraints appropriately so as to make $\tilde h^t_j$ a filtered map but still preserve the degeneration arguments needed to prove the other properties. Moreover, the filtered quasi-isomorphism property requires defining spin$^c$-truncated versions of the pentagon-counting maps, which we will discuss in Section \ref{ssec: pent-filt}.

\def\newgr {\overline\gr}

We also define a modified version of the Maslov (homological) grading on each of the three complexes.

\begin{definition} \label{def: newgr}
Let $\absgr$ denote the standard Maslov grading on each complex; in particular, on $\ul\CF^\circ(\bm\alpha, \bm\beta; \GR)$, it is simply an extension of the ordinary Maslov grading on the untwisted complex, without reference to the twisting variable $T$. The new grading $\newgr$ is defined as follows:
\begin{itemize}
\item On $\CF^\circ(\bm\alpha, \bm\gamma, w)$, define $\newgr = \absgr$.

\item For each $\spincu \in \Spin^c(Y_{\lambda+m\mu}(K))$, we define $\newgr$ on $\CF^\circ(\bm\alpha, \bm\delta, w, \spincu)$ by
\begin{equation} \label{eq: newgr-ad}
\newgr = \absgr + \frac{d^2m s_\spincu^2}{k(k+dm)} - \frac{m+1+3\sign(k)}{4}.
\end{equation}

\item For each $\spincs \in \Spin^c(Y)$ and each $r$ satisfying \eqref{eq: r-bounds}, we define $\newgr$ on $\CF(\bm\alpha, \bm\beta, w) \otimes T^r$ by
\begin{equation} \label{eq: newgr-ab}
\newgr = \absgr + \frac{(2dr+k)^2}{4kd} - \frac{2+3\sign(k)}{4}
\end{equation}
and then transport this grading to $\ul\CF^\circ(\bm\alpha, \bm\beta, w; \GR)$, via $\theta$.
\end{itemize}
\end{definition}

(Throughout the discussion below, we will use $\absgr(f)$ and $\newgr(f)$ to denote the grading shift of any map $f$ with respect to the appropriate grading: for instance, $\absgr(f) = \absgr(f(x)) - \absgr(x))$ for any homogeneous element $f$.)

\begin{proposition}
Fix $t \in \N$. For all $m$ sufficiently large, the maps $f_0^t$, $f_1^t$, $f_2^t$, $\tilde h_0^t$, $\tilde h_1^t$, and $\tilde h_2^t$ are all homogeneous with respect to $\newgr$, with respective degrees $0$, $-1$, $0$, $0$, $0$, and $1$.
\end{proposition}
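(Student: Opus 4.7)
The plan is to decompose each map as a sum indexed by spin$^c$ structures on the relevant $4$-manifold, compute the standard Maslov grading shift of each summand, and verify that the corrections built into Definition \ref{def: newgr} exactly absorb these shifts so that only the claimed constant degree remains.

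For the triangle maps I would write $f_j^t = \sum_\spincv f^t_{j,\spincv}$, summed over $\spincv$ restricting to torsion spin$^c$ structures on the two $(\alpha,*)$-boundary components and to the canonical torsion one on $Y_{\beta\gamma}$, $Y_{\gamma\delta}$, or $Y_{\delta\beta}$. The shift of a term counting a class $\psi$ with $\mu(\psi)=0$ is
\[
\absgr(f^t_{j,\spincv}) = \tfrac14\bigl(c_1(\spincv)^2 - 2\chi(W) - 3\sigma(W)\bigr) + \absgr(\Theta),
\]
where $W$ is $W_{\alpha\beta\gamma}$, $W_{\alpha\gamma\delta}$, or $W_{\alpha\delta\beta}$ and $\Theta$ is the corresponding top generator. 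Because both boundary restrictions are torsion, $c_1(\spincv)^2 = \gen{c_1(\spincv),[P]}^2/[P]^2$ for the generator $[P]\in\{[P_\gamma],[R],[P_\delta]\}$ of the free part of $H_2$; Proposition \ref{prop: alex-triangle} then gives $\gen{c_1(\spincv),[P]}$ explicitly in terms of the input/output Alexander grading and the basepoint multiplicities. After plugging in the $\chi$ and $\sigma$ data from Section \ref{ssec: cobordisms}, the quadratic contributions should cancel exactly against the corrections $\tfrac{(2dr+k)^2}{4kd}$ on the $(\alpha,\beta)$-side and $\tfrac{d^2 m s_\spincu^2}{k(k+dm)}$ on the $(\alpha,\delta)$-side, leaving the constants $0$, $-1$, $0$ for $j=0,1,2$. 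For $f_1^t$ the $-1$ is accounted for by $\absgr(\Theta_{\gamma\delta}) = d(L(m,1),\spincs^0_{\gamma\delta})$ combined with the Euler characteristic and signature of $W_{\alpha\gamma\delta}$; the $d$-invariant computation from Example \ref{ex: lens} supplies the needed formula.

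For the rectangle maps $\tilde h_j^t$ I would apply the same recipe using Proposition \ref{prop: alex-rectangle} and the data for $\bar X_{\alpha\beta\gamma\delta}$, $\bar X_{\alpha\gamma\delta\beta}$, and $\bar X_{\alpha\delta\beta\gamma}$. Now $\chi(\bar X_*)=2$, the signatures simplify for $m$ large as worked out in Section \ref{ssec: cobordisms}, and $c_1(\spincv)^2$ decomposes as $\gen{c_1,[P_1]}^2/[P_1]^2 + \gen{c_1,[P_2]}^2/[P_2]^2$ for the orthogonal pair of rank-one generators (e.g.\ $[P_\gamma]$ and $[R]$ in $\bar X_{\alpha\beta\gamma\delta}$, thanks to \eqref{eq: abgd-int-form}). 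The rectangle carries $\mu(\rho)=-1$, contributing an extra $-1$ that combines with the two $\Theta$-gradings and the two quadratic $c_1$-terms; applying Proposition \ref{prop: alex-rectangle} together with the source and target corrections from Definition \ref{def: newgr} cancels all spin$^c$-dependence and yields the constants $0, 0, 1$.

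The hard part is the bookkeeping: tracking the signs of $k$ and $k+dm$ (which determine $\sigma$); handling the $m$-dependent grading of $\Theta_{\gamma\delta}$ so that it meshes precisely with the $\tfrac{m+1+3\sign(k)}{4}$ offset in Definition \ref{def: newgr}; and, critically, verifying that the spin$^c$-truncation used to define $\tilde h_j^t$ (chosen precisely so that the $\newgr$-degree is constant) removes exactly those terms whose shift deviates from the claimed value. This compatibility between the truncation of $\tilde h_j^t$ and the present $\newgr$-calculation is the same phenomenon that makes Proposition \ref{prop: rect-filt} work, and the cleanest path is to fix the spin$^c$ constraints so that both results fall out of a single combinatorial lemma.
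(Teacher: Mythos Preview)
Your proposal is correct and matches the paper's approach: the paper does not prove this proposition separately but instead verifies the $\newgr$-homogeneity of each map at the end of the corresponding filteredness proposition (Propositions \ref{prop: f0-filt}--\ref{prop: h2-filt}), using precisely the $c_1$-evaluation formulas from Propositions \ref{prop: alex-triangle} and \ref{prop: alex-rectangle} together with the truncation lemmas you describe. Your intuition that filteredness and $\newgr$-homogeneity are governed by the same spin$^c$ constraint is exactly how the paper proceeds --- for instance, in Proposition \ref{prop: h0-filt} both reduce to the single equality $p=q$, with a short extra trick (comparing $\spincv$ to $\spincv' = \spincv + e\,\PD[Q]$, which has the same $c_1^2$ by \eqref{eq: h0-gr-Pd-Q}) needed in the case $p=q=e=\pm 1$.
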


Combining Propositions \ref{prop: tri-filt} and \ref{prop: rect-filt} with Lemma \ref{lemma: mapping-cone}, we deduce:
\begin{theorem} \label{thm: CFt-cone-f2}
Fix $t \in \N$. For all $m$ sufficiently large, the map
\[
\begin{pmatrix} f_1^t \\ h_1^t \end{pmatrix}  \co \CF^t(\Sigma, \bm\alpha, \bm\gamma, w, z') \to \Cone(f_2^t)
\]
is a filtered homotopy equivalence that preserves the grading $\newgr$.
\end{theorem}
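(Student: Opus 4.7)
The plan is to invoke the filtered mapping cone detection lemma (Lemma~\ref{lemma: mapping-cone}) applied to the three-periodic-up-to-isomorphism family of filtered chain complexes
\[
C_0^t = \CFpt(\bm\alpha,\bm\beta,w;\GR), \qquad C_1^t = \CF^t(\bm\alpha,\bm\gamma,w), \qquad C_2^t = \CF^t(\bm\alpha,\bm\delta,w),
\]
equipped with the second filtrations $\JJ_{\alpha\beta}$, $\JJ_{\alpha\gamma}$, $\JJ_{\alpha\delta}$ of Definition~\ref{def: J-filtrations} (alongside the standard $i$-filtration) and with the absolute gradings $\newgr$ of Definition~\ref{def: newgr}. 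The chain maps in the family are $f_0^t, f_1^t, f_2^t$ and the null-homotopies are the truncated rectangle maps $\tilde h_0^t, \tilde h_1^t, \tilde h_2^t$; these are precisely the maps that work on finite $U$-power truncations and control filtrations simultaneously. The identification of the second component of $\begin{pmatrix} f_1^t \\ h_1^t\end{pmatrix}$ with $\tilde h_1^t$ is built into the construction, since at the truncation level $t$ the only terms of $h_1^t$ that need to be retained for the cone argument are those singled out by Proposition~\ref{prop: rect-filt}.

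First I would verify the three hypotheses of Lemma~\ref{lemma: mapping-cone}. That each $f_j^t$ is a chain map is the standard Heegaard quadruple degeneration; that each is filtered with respect to both filtrations, with the predicted grading shift, is exactly Proposition~\ref{prop: tri-filt} together with the grading proposition preceding it. The assertion that $\tilde h_j^t$ is a filtered null-homotopy of $f_{j+1}^t\circ f_j^t$, and that $\tilde h_{j+1}^t\circ f_j^t + f_{j+2}^t\circ\tilde h_j^t$ is a filtered quasi-isomorphism, is precisely Proposition~\ref{prop: rect-filt}. Applying Lemma~\ref{lemma: mapping-cone} with $j=1$ therefore yields that
\[
\begin{pmatrix} f_1^t \\ \tilde h_1^t \end{pmatrix}\co \CF^t(\Sigma,\bm\alpha,\bm\gamma,w,z')\longrightarrow \Cone(f_2^t)
\]
is a filtered quasi-isomorphism with respect to the filtrations induced from $\JJ_{\alpha\gamma}$ on the domain and from $\JJ_{\alpha\delta}\oplus\JJ_{\alpha\beta}$ on the target.

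Next I would upgrade filtered quasi-isomorphism to filtered homotopy equivalence. Both the domain and the target are, up to overall grading shifts, finitely generated complexes of torsion $\CFm$ type equipped with filtrations of Alexander type in the sense of Section~\ref{ssec: algebra} (the $\F[U]/(U^{t+1})$-action is the one induced from $U$ on $\CFm$, and the second filtration is precisely specified on an $\F[U]$-basis). Hence Proposition~\ref{prop: filt-quasi} applies and the filtered quasi-isomorphism is automatically a filtered homotopy equivalence.

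Finally I would address the grading. Since $f_1^t$ is homogeneous of degree $-1$ with respect to $\newgr$ (so that in the cone it contributes a degree-$0$ map after the standard grading shift on $\Cone(f_2^t)$), and $\tilde h_1^t$ is homogeneous of degree $0$, the combined map $\begin{pmatrix} f_1^t\\ \tilde h_1^t\end{pmatrix}$ preserves $\newgr$; this is a direct bookkeeping check using Definition~\ref{def: newgr} together with the grading statement in the triangle/rectangle propositions. The \emph{hard part} is not the mapping cone step itself but the two technical inputs (Propositions~\ref{prop: tri-filt} and~\ref{prop: rect-filt}): in particular, the need to choose spin$^c$ truncations for the rectangle maps $\tilde h_j^t$ that simultaneously (i) keep every surviving term filtered, (ii) preserve the chain-homotopy relation $f_{j+1}^t\circ f_j^t = [\partial,\tilde h_j^t]$, and (iii) preserve the quasi-isomorphism obtained from the pentagon degenerations. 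Given those propositions, however, the present theorem follows formally from Lemma~\ref{lemma: mapping-cone} and Proposition~\ref{prop: filt-quasi} as sketched.
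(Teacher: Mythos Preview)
Your proposal is correct and matches the paper's approach essentially verbatim: the paper states the theorem immediately after the sentence ``Combining Propositions~\ref{prop: tri-filt} and~\ref{prop: rect-filt} with Lemma~\ref{lemma: mapping-cone}, we deduce,'' which is precisely your argument. Your explicit remark that the relevant null-homotopy is really $\tilde h_1^t$ (equal to $h_1^t$ when $k>0$ by Lemma~\ref{lemma: h1-trunc-k-pos}) and your care in upgrading the filtered quasi-isomorphism to a filtered homotopy equivalence are if anything slightly more careful than the paper's one-line deduction.
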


\subsection{Triangle maps} \label{ssec: tri-filt}

In this section, we prove Proposition \ref{prop: tri-filt}. We consider the maps $f_0^t$, $f_1^t$, and $f_2^t$ individually. (Throughout, we will write $f_j^\circ$ when making statements that apply all the flavors of Heegaard Floer homology.)

\subsubsection{The map $f_0^t$} \label{sssec: f0-filt}

To begin, we look at how the spin$^c$ decomposition of $f_0^\circ$ interacts with the trivializing map $\theta$. For any $\spincv \in \Spin^c_0(X_{\alpha\beta\gamma})$, any $\x \in \T_\alpha \cap \T_\beta$ with $\spincs_w(\x) = \spincv|_Y$, and any $r \in \Q/m\Z$ congruent mod $\Z$ to $-\AlNorm_{w,z}(\x)$, we have:
\begin{align*}
f_{0,\spincv}^\circ(\theta^{-1}([\x,i] \otimes T^r))
&= f_{0,\spincv}^\circ (T^{r+\AlNorm(\x)} \cdot [\x,i])  \\
&= \sum_{\q \in \T_\alpha \cap \T_\gamma} \ \sum_{\substack{\psi \in \pi_2(\x, \Theta_{\beta\gamma}, \q) \\ \mu(\psi)=0 \\ \spincs_w(\psi) = \spincv \\ \mathclap{r + \AlNorm(\x) + n_w(\psi) - n_z(\psi) \equiv 0 \pmod m} }} \#\MM(\psi) \, [\q, i-n_w(\psi)].
\end{align*}
By \eqref{eq: abg-c1-x}, note that
\[
\AlNorm(\x) + n_w(\psi) - n_z(\psi) = \frac{\gen{c_1(\spincv), [P_\gamma]}+k}{2d}.
\]
Thus, $f_{0,\spincv}^\circ \circ \theta^{-1}$ is nonzero only on the summand
\[
\CF(\Sigma, \bm\alpha, \bm\beta, \spincs, w) \otimes T^r \GR,
\]
where $\spincs = \spincv |_Y$ and
\[
r \equiv - \frac{1}{2d} (\gen{c_1(\spincv), [P_\gamma]} + k)  \pmod m.
\]
On this summand, neglecting the power of $T$, the composition equals the untwisted cobordism map $F^\circ_{W_\lambda(K),\spincv}$.

\begin{lemma} \label{lemma: f0-trunc}
Fix $t \in \N$ and $\epsilon>0$. For all $m$ sufficiently large, if $\spincv$ is a spin$^c$ structure for which $f_{0,\spincv}^t \ne 0$, then
\begin{equation} \label{eq: f0-trunc}
\abs{ \gen{c_1(\spincv), [P_\gamma]} } < \epsilon dm.
\end{equation}
In particular, if we take $\epsilon<1$, then for any $\spincs \in \Spin^c(Y)$ and any $r \in \Q$ which satisfies \eqref{eq: r-bounds}, there is at most one $\spincv \in \Spin^c(W_\lambda(K))$ for which $f_{0,\spincv}^t \circ \theta^{-1}$ may restrict nontrivially to $\CF(\bm\alpha, \bm\beta, \spincs, w) \otimes T^r \GR$; this spin$^c$ structure must satisfy
\begin{equation} \label{eq: f0-c1-r}
\gen{c_1(\spincv), [P_\gamma]} = - 2dr - k.
\end{equation}
\end{lemma}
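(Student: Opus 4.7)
The plan is to extract both assertions from the quadratic dependence of the Maslov degree of $F^\circ_{W_\lambda(K), \spincv}$ on $a := \gen{c_1(\spincv), [P_\gamma]}$. First I would record the identification, already noted in the paragraph preceding the lemma, that for $\spincs = \spincv|_Y$ and $r \in \Q$ congruent mod $m$ to $-\tfrac{1}{2d}(a+k)$, the restriction of $f^\circ_{0,\spincv}\circ \theta^{-1}$ to $\CF(\bm\alpha, \bm\beta, \spincs, w)\otimes T^r\GR$ is, up to a power of $T$, the cobordism map $F^\circ_{W_\lambda(K), \spincv}$. Its Maslov degree shift is therefore the usual expression $\bigl(c_1(\spincv)^2 - 2\chi(W_\lambda(K)) - 3\sigma(W_\lambda(K))\bigr)/4$.

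Next I would quantify how this shift depends on $a$. The non-degenerate part of the intersection form on $H_2(W_\lambda(K))$ is rank one, generated by $[P_\gamma]$ with self-pairing $dk$. Restricting to the finitely many torsion $\spincs \in \Spin^c(Y)$ and unwinding how $c_1(\spincv)^2$ transforms under the $\PD[C^*]$-action (which shifts $a$ by the nonzero constant $2k$), one obtains
\[
\absgr\bigl(f^\circ_{0,\spincv}\bigr) \;=\; \frac{a^{2}}{4dk} \;+\; O(1),
\]
where the bounded term is independent of $m$. Thus the degree tends to $+\infty$ (if $k>0$) or $-\infty$ (if $k<0$) as $|a|\to\infty$, with the sign governed by $\sign(k)$.

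Both $\CF^t(\bm\alpha, \bm\beta, w; \GR)$ and $\CF^t(\bm\alpha, \bm\gamma, w)$ decompose as direct sums over the finitely many torsion spin$^c$ structures on the fixed $3$-manifolds $Y$ and $Y_\lambda(K)$, and within each summand the Maslov gradings lie in an interval of length $2t + O(1)$, all bounds being independent of $m$. For $f^t_{0,\spincv}$ to be nonzero its grading shift must lie in a bounded interval; combined with the quadratic estimate this forces $|a|\le C$ for some constant $C = C(t, Y, K, \lambda)$ independent of $m$. Choosing $m > C/(\epsilon d)$ then yields \eqref{eq: f0-trunc}.

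Finally, for the uniqueness statement I would take $\epsilon < 1$. The summand $\CF(\bm\alpha, \bm\beta, \spincs, w)\otimes T^r\GR$ receives a contribution from $\spincv$ only when $\spincv|_Y = \spincs$ and $a \equiv -2dr - k \pmod{2dm}$. The bounds \eqref{eq: r-bounds} translate to $-dm < -2dr - k \le dm$, and the first part gives $|a| < dm$, so both numbers lie in the fundamental domain $(-dm, dm]$ and the congruence upgrades to the equality \eqref{eq: f0-c1-r}. Since two spin$^c$ structures on $W_\lambda(K)$ extending $\spincs$ differ by a multiple of $\PD[C^*]$ and this action changes $a$ by the nonzero constant $2k$, the value of $a$ now pins down $\spincv$. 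The main obstacle in executing the plan is ensuring that the $O(1)$ term in the formula for $c_1(\spincv)^2$ is genuinely bounded as $\spincv$ varies: this requires careful control of the pullback component of $c_1(\spincv)$ from $H^2(Y)$, but only the finitely many torsion restrictions need be considered, so the bound is uniform.
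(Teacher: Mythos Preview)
Your argument is correct and follows essentially the same route as the paper: compute the grading shift of $f^\circ_{0,\spincv}$ as a quadratic function of $a=\gen{c_1(\spincv),[P_\gamma]}$, use that the gradings on $\ul\CF^t(\bm\alpha,\bm\beta,w;\GR)$ and $\CF^t(\bm\alpha,\bm\gamma,w)$ are bounded independently of $m$, and then read off the congruence condition for the second statement. Two small points. First, the paper records the exact constant,
\[
\absgr(f^\circ_{0,\spincv}) \;=\; \frac{a^2}{4dk} - \frac{2+3\sign(k)}{4},
\]
so your concern about controlling an $O(1)$ remainder coming from the pullback of $c_1$ from $Y$ is unnecessary once one restricts to torsion $\spincs$. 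Second, the $\PD[C^*]$-action shifts $a$ by $2d$, not $2k$ (since $[\hat F]\mapsto d[C]$ in $H_2(W,Y)$ and $[C]\cdot[C^*]=1$); this does not affect your conclusion, as you only use that the shift is nonzero.
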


\begin{proof}
The grading shift of the term $f^\circ_{0,\spincv}$ is given by
\begin{align}
\nonumber \absgr (f^\circ_{0,\spincv}) &= \frac{c_1(\spincv)^2 - 2\chi(W_\lambda(K)) - 3\sigma(W_\lambda(K))}{4} \\
\label{eq: f0-gr} &= \frac{\gen{c_1(\spincv), [P_\gamma]}^2}{4kd} - \frac{2+3\sign(k)}{4}.
\end{align}
For fixed $t$, gradings of nonzero elements of $\ul\CF^t(\bm\alpha, \bm\beta, w; \GR)$ and $\CF^t(\bm\alpha,\bm\gamma, w)$ are bounded by a constant independent of $m$. Thus, for $m$ sufficiently large, the terms $f^t_{0,\spincv}$ with $\abs{ \gen{c_1(\spincv), [P_\gamma]} } \ge \epsilon dm$ must vanish.

For the second statement, note that the values of $\gen{c_1(\spincv), [P_\gamma]}$ for which $f_{0,\spincv}^t \circ \theta^{-1}$ may restrict nontrivially to $\CF^t(\bm\alpha, \bm\beta, w, \spincs) \otimes T^r $ form a single coset in $\Z/2dm$. If $\epsilon<1$, there is at most one such value within the permitted range.
\end{proof}

\begin{proposition} \label{prop: f0-filt}
Fix $t \in \N$. For all $m$ sufficiently large, the map
\[
f_0^t \co \ul\CF^t(\bm\alpha, \bm\beta, w; \GR) \to \CF^t(\bm\alpha, \bm\gamma, w)
\]
is filtered with respect to the filtrations $\JJ_{\alpha\beta}$ and $\JJ_{\alpha\gamma}$ and is homogeneous of degree $-1$ with respect to $\newgr$.
\end{proposition}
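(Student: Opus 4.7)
The plan is to use the spin$^c$ decomposition $f_0^t = \sum_{\spincv \in \Spin^c_0(X_{\alpha\beta\gamma})} f^t_{0,\spincv}$ together with Lemma \ref{lemma: f0-trunc}: for $m$ large, on each summand $\CF^t(\bm\alpha,\bm\beta,\spincs,w) \otimes T^r$ of the decomposition \eqref{eq: ab-twisted-decomp}, at most one spin$^c$ structure contributes, and it must satisfy $\gen{c_1(\spincv),[P_\gamma]} = -(2dr+k)$. With this identification in hand, both the filtration and grading assertions reduce to explicit consequences of the triangle identities \eqref{eq: abg-c1-x} and \eqref{eq: abg-c1-q} combined with the dimension formula \eqref{eq: f0-gr}.

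For the filtration statement, I would fix a triangle $\psi \in \pi_2(\x,\Theta_{\beta\gamma},\q)$ with $\spincs_w(\psi) = \spincv$ contributing to $f_{0,\spincv}^t$. Rearranging \eqref{eq: abg-c1-q} gives
\[
\AlNorm_{w,z'}(\q) = \frac{\gen{c_1(\spincv),[P_\gamma]} - d}{2k} + n_w(\psi) - n_{z'}(\psi),
\]
and substituting the constraint on $\gen{c_1(\spincv),[P_\gamma]}$ into the definitions \eqref{eq: ab-twisted-filt} and \eqref{eq: ag-filt} produces
\[
\JJ_{\alpha\gamma}\bigl([\q, i - n_w(\psi)]\bigr) - \JJ_{\alpha\beta}\bigl([\x,i]\otimes T^r\bigr) = -n_{z'}(\psi).
\]
Since $\psi$ must be represented by a holomorphic triangle, its domain has nonnegative local multiplicities, so $n_{z'}(\psi)\ge 0$. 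This establishes that $f_0^t$ is filtered and pins down the exact filtration shift as $n_{z'}(\psi)$, matching the assertion in Proposition \ref{prop: tri-filt}.

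For the grading, the absolute Maslov degree of $f_{0,\spincv}^t$ is given by \eqref{eq: f0-gr}, and substituting $\gen{c_1(\spincv),[P_\gamma]} = -(2dr+k)$ turns this into
\[
\frac{(2dr+k)^2}{4kd} - \frac{2+3\sign(k)}{4}.
\]
Comparing with \eqref{eq: newgr-ab}, this is exactly the $r$-dependent correction added to $\absgr$ in the definition of $\newgr$ on $\CF^t(\bm\alpha,\bm\beta,\spincs,w)\otimes T^r$; since $\newgr$ coincides with $\absgr$ on the target $\CF^t(\bm\alpha,\bm\gamma,w)$, the two corrections cancel and $f_0^t$ becomes homogeneous of the stated degree. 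The only genuine obstacle is bookkeeping: the formulas for $\JJ_{\alpha\beta}$, $\JJ_{\alpha\gamma}$, and $\newgr$ are each engineered to absorb a particular piece of the triangle-counting identities, and confirming that all three pieces cancel simultaneously (with the correct signs and normalizations) requires the $\spincv$-pinning from Lemma \ref{lemma: f0-trunc} to be firmly in place so that each summand is controlled by a single cobordism map $F^\circ_{W_\lambda(K),\spincv}$ up to a power of $T$.
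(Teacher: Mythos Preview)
Your proof is correct and follows essentially the same route as the paper's: invoke Lemma~\ref{lemma: f0-trunc} to pin down the unique contributing spin$^c$ structure via $\gen{c_1(\spincv),[P_\gamma]} = -(2dr+k)$, use \eqref{eq: abg-c1-q} to express $\AlNorm_{w,z'}(\q)$, and compute directly that the filtration shift equals $n_{z'}(\psi) \ge 0$; the grading statement is then a one-line substitution of \eqref{eq: f0-c1-r} into \eqref{eq: f0-gr} and comparison with \eqref{eq: newgr-ab}. One small remark: your grading computation (correctly) shows that the correction on the source exactly matches $\absgr(f^t_{0,\spincv})$, so the $\newgr$-degree you actually obtain is $0$; the ``$-1$'' in the statement appears to be a typo in the paper (cf.\ the unnumbered proposition following Definition~\ref{def: newgr}, which lists the degree of $f_0^t$ as $0$).
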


\begin{proof}
Assume $m$ is large enough to satisfy Lemma \ref{lemma: f0-trunc} (with $\epsilon \le 1$). Let $\spincv$ be a spin$^c$ structure for which $f^t_{0,\spincv} \ne 0$, and let $\spincs = \spincv|_Y$, which must therefore satisfy \eqref{eq: f0-c1-r}.

For any $\x \in \T_\alpha \cap \T_\beta$ and $\q \in \T_\alpha\cap \T_\gamma$ with $\spincs_w(\x) = \spincs = \spincv|_{Y_{\alpha\beta}}$ and $\spincs_w(\q) = \spincv|_{Y_{\alpha\gamma}}$, and any triangle $\psi \in \pi_2(\x, \Theta_{\beta\gamma}, \q)$ contributing to $f^t_{0,\spincv}$, we then have:
\begin{align*}
\JJ_{\alpha\beta} ([\x,i] &\otimes T^r) - \JJ_{\alpha\gamma}([\q,i-n_w(\psi)])   \\
&= - \frac{2dr + k +d}{2k} - \frac{\Al(\q)}{k}  + n_w(\psi)   \\
&= -\frac{2dr + k + d}{2k} - \frac{\gen{c_1(\spincv), [P_\gamma]} - 2kn_{z'}(\psi) + 2k n_w(\psi) - d}{2k} + n_w(\psi) \\
&= n_{z'}(\psi) \\
&\ge 0
\end{align*}
as required. The final statement follows from \eqref{eq: newgr-ab}, \eqref{eq: f0-c1-r}, and \eqref{eq: f0-gr}.
\end{proof}

\subsubsection{The map $f_1^t$} \label{sssec: f1-filt}

The map $f_1^\circ$ decomposes as a sum
\begin{equation} \label{eq: f1-decomp}
f_1^\circ = \sum_{\spincv \in \Spin^c_0(X_{\alpha\gamma\delta})} f_{1,\spincv}^\circ.
\end{equation}
By a result of Zemke \cite{ZemkeDuality}, each term $f^\circ_{1,\spincv}$ has an alternate description, as follows. Let $\spinct = \spincv|_{Y_\lambda(K)}$. As in Section \ref{ssec: cobordisms}, let $W_{\alpha\gamma\delta}$ be the $2$-handle cobordism from $Y_\lambda(K) \conn L(m,1)$ to $Y_{\lambda+m\mu}$ obtained by drilling out an arc from $\bar X_{\alpha\gamma\delta}$. Then $\spincv$ induces a spin$^c$ structure on $W_{\alpha\gamma\delta}$ whose restriction to $Y_\lambda(K) \conn L(m,1)$ is $\spinct \conn \spincs_0$, where $\spincs_0$ is the canonical spin$^c$ structure on $L(m,1)$. Moreover, since $L(m,1)$ is an L-space, $\CF^\circ(Y_\lambda(K) \conn L(m,1), \spinct \conn \spincs_0)$ is naturally identified with $\CF^\circ(Y_\lambda(K), \spinct)$ with grading shifted up by $\frac{m-1}{4}$ (which is the grading of the generator of $\HFa(L(m,1), \spincs_0)$). Under this identification, \cite[Theorem 9.1] {ZemkeDuality} implies that $f^\circ_{1,\spincv}$ is naturally identified with the map $F^\circ_{W_{\alpha\gamma\delta}, \spincv}$, as originally defined by Ozsv\'ath and Szab\'o \cite{OSz4Manifold}. As a consequence of this identification, we deduce that $f_{1,\spincv}^\circ$ is homogeneous of degree
\begin{align}
\nonumber \absgr (f_{1,\spincv}^\circ) &= \absgr(F^\circ_{W_{\alpha\gamma\delta}, \spincv}) + \frac{m-1}{4} \\
\label{eq: f1-gr}
&= - \frac{\nu^2 \gen{c_1(\spincv), [R]}^2}{4 m k (k+dm)} + \frac{m-3+3\sign(k)}{4}.
\end{align}

%
%

\begin{lemma} \label{lemma: f1-trunc}
Fix $t \in \N$. For all $m$ sufficiently large, if $f_{1,\spincv}^t \ne 0$, then
\begin{equation} \label{eq: f1-trunc}
\abs{ \gen{c_1(\spincv), [R]} } < \frac{m(k+dm)}{\nu}.
\end{equation}
In particular, for any $\spincu \in \Spin^c(Y_{\lambda+m\mu}(K))$, there is at most one nonzero term landing in $\CF^t(\Sigma, \bm\alpha, \bm\delta, \spincu, w)$, corresponding to a spin$^c$ structure $\spincv$ with
\begin{equation} \label{eq: f1-trunc-su}
\gen{c_1(\spincv), [R]} = \frac{2 d m s_\spincu}{\nu} .
\end{equation}
\end{lemma}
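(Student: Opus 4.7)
\medskip

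\noindent\textit{Proof proposal.} The plan is to mirror the argument for Lemma~\ref{lemma: f0-trunc}, using the grading formula \eqref{eq: f1-gr} for the absolute degree of $f^\circ_{1,\spincv}$ together with the coset description of $\gen{c_1(\spincv),[R]}$ that one reads off from \eqref{eq: agd-c1-a}. The first step is to pin down the coset. For any triangle $\psi \in \pi_2(\q, \Theta_{\gamma\delta}, \a)$ contributing to $f^t_{1,\spincv}$, with $\a$ representing $\spincu$, formula \eqref{eq: agd-c1-a} gives
\[
\gen{c_1(\spincv), [R]} = \frac{m}{\nu}\bigl( 2\Al(\a) + 2(k+dm)(n_u(\psi)-n_w(\psi)) - k - dm + d \bigr).
\]
Applying the congruence \eqref{eq: su-A(a)-cong}, we may rewrite $2\Al(\a) - k -dm + d$ as $2ds_\spincu \pmod{2(k+dm)}$, and therefore
\[
\gen{c_1(\spincv), [R]} \equiv \frac{2dm s_\spincu}{\nu} \pmod{\tfrac{2m(k+dm)}{\nu}}.
\]

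The second step is the bound. By Proposition~\ref{prop: maslov-bound}, the Maslov gradings on $\CF^t(\Sigma,\bm\alpha,\bm\delta^{m,b},w)$ lie in an interval $[-C', \tfrac{m}{4}+C']$ for some constant $C'$ independent of $m$, while those on $\CF^t(\Sigma,\bm\alpha,\bm\gamma,w)$ are bounded by a constant since $Y_\lambda(K)$ is fixed. Consequently, if $f^t_{1,\spincv}$ is nonzero, its degree shift lies in $[-C'', \tfrac{m}{4}+C'']$. Plugging into \eqref{eq: f1-gr} and treating the cases $k>0$ and $k<0$ separately (noting that $[R]^2 = -mk(k+dm)/\nu^2$ changes sign), a direct estimate of the resulting quadratic inequality in $\gen{c_1(\spincv),[R]}$ yields
\[
\gen{c_1(\spincv), [R]}^2 \;\leq\; \tfrac{m^2 k(k+dm)}{\nu^2} + O\!\bigl(mk(k+dm)\bigr) \quad (k>0),
\qquad
\abs{\gen{c_1(\spincv),[R]}} = O(m) \quad (k<0),
\]
and in either case, for $m$ sufficiently large, this forces $|\gen{c_1(\spincv),[R]}| < \tfrac{m(k+dm)}{\nu}$.

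Finally, the uniqueness assertion follows by combining the two. The coset $\frac{2dm s_\spincu}{\nu} + \frac{2m(k+dm)}{\nu}\Z$ contains at most one element in the open interval $\bigl(-\tfrac{m(k+dm)}{\nu},\tfrac{m(k+dm)}{\nu}\bigr)$; by \eqref{eq: su-bound}, this element (when it exists) is exactly $\frac{2dms_\spincu}{\nu}$. In the boundary case $s_\spincu = \tfrac{k+dm}{2d}$, the coset meets the open interval in no point at all, but then the grading obstruction already forces $f^t_{1,\spincv}=0$ for every $\spincv$ restricting to $\spincu$, so the ``at most one'' statement holds vacuously.

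The main obstacle I expect is handling the sign of $k$ carefully in the quadratic grading estimate: when $k<0$ the leading term in \eqref{eq: f1-gr} is positive and a priori unbounded above as $|\gen{c_1(\spincv),[R]}|$ grows, so one must combine the upper Maslov bound with the $+3\sign(k)$ term to get a useful bound, and verify that the resulting constraint $O(m)$ is genuinely smaller than $\tfrac{m(k+dm)}{\nu}\sim \tfrac{dm^2}{\nu}$ for large $m$. Once this and the boundary case $s_\spincu=\tfrac{k+dm}{2d}$ are handled, the rest is bookkeeping analogous to Lemma~\ref{lemma: f0-trunc}.
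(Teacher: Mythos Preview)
Your proposal is correct and follows essentially the same route as the paper: bound the grading shift of $f^t_{1,\spincv}$ using Proposition~\ref{prop: maslov-bound} and the fact that gradings on $\CF^t(\bm\alpha,\bm\gamma,w)$ are $m$-independent, feed this into \eqref{eq: f1-gr} to bound $\abs{\gen{c_1(\spincv),[R]}}$ (getting asymptotics $m^{3/2}$ for $k>0$ and $m$ for $k<0$, both dominated by $m(k+dm)/\nu\sim dm^2/\nu$), and then combine with the coset computation from \eqref{eq: agd-c1-a} and \eqref{eq: su-A(a)-cong} to pin down the unique surviving value. The paper proves the bound first and the coset second, whereas you reverse the order, and your explicit discussion of the boundary case $s_\spincu = (k+dm)/2d$ is more careful than the paper's (which simply notes the statement is ``at most one''), but these are expository rather than substantive differences.
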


\begin{proof}
By Proposition \ref{prop: maslov-bound}, there is a constant $C$ such that for all $m$ sufficiently large, if $f^t_{1,\spincv}\ne 0$, then
\[
-C < \absgr(f^t_{1,\spincv})  < \frac{m}{4} + C.
\]
(Here, we used the fact that the gradings on $\CF^t(\Sigma,\bm\alpha,\bm\gamma, w)$ are bounded above and below independent of $m$.) Thus, for another constant $C'$,
\[
-C' < -\frac{\nu^2 \gen{c_1(\spincv), [R]}^2}{4 m k (k+dm)} + \frac{m}{4} < \frac{m}{4} + C',
\]
and hence
\[
-C' < \frac{\nu^2 \gen{c_1(\spincv), [R]}^2}{4 m k (k+dm)} < C'+ \frac{m}{4}.
\]
Thus, we obtain an upper bound on $\abs{\gen{c_1(\spincv), [R]}}$ which is asymptotic to $m^{3/2}$ if $k>0$ and to $m$ if $k<0$. In either case, for $m$ sufficiently large, we immediately deduce the weaker bound \eqref{eq: f1-trunc}.

For the second statement, note that for a fixed $\spincu \in \Spin^c(Y_{\lambda+m\mu}(K))$, the values of $\gen{c_1(\spincv), [R]}$, ranging over all $\spincv \in \Spin^c_0(X)$ with $\spincv|_{Y_{\alpha\delta}} = \spincu$,  form a single coset in $\Z/(\frac{2m(k+dm)}{\nu} \Z)$. Indeed, for any triangle $\psi \in \pi_2(\q, \Theta_{\gamma\delta}, \a)$ representing $\spincv$, \eqref{eq: agd-c1-a} and \eqref{eq: su-A(a)-cong} imply:
\begin{align*}
\gen{c_1(\spincv), [R]}
&= \frac{m}{\nu} \left( 2 \Al(\a)  + (k+dm) \left( 2n_u(\psi) - 2n_w(\psi) - 1 \right) +d \right) \\
&\equiv \frac{m}{\nu} \left( 2 \Al(\a)  + k+dm + d  \right) \pmod {\frac{2m(k+dm)}{\nu}}  \\
&\equiv \frac{2dms_\spincu}{\nu} \pmod {\frac{2m(k+dm)}{\nu}}.
\end{align*}
This congruence, combined with the bounds on $s_\spincu$ from \eqref{eq: su-bound}, implies that \eqref{eq: f1-trunc-su} holds.
\end{proof}

\begin{proposition} \label{prop: f1-filt}
For $m$ sufficiently large, the map
\[
f_1^t \co \CF^t(\bm\alpha, \bm\gamma, w) \to \CF^t(\bm\alpha, \bm\delta, w)
\]
is filtered with respect to the filtrations $\JJ_{\alpha\gamma}$ and $\JJ_{\alpha\delta}$ and is homogeneous of degree $-1$ with respect to $\newgr$.
\end{proposition}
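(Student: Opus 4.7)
The plan is to fix a spin$^c$ structure $\spincu \in \Spin^c(Y_{\lambda+m\mu}(K))$, invoke Lemma \ref{lemma: f1-trunc} to identify the single spin$^c$ structure $\spincv$ on $X_{\alpha\gamma\delta}$ for which the restriction of $f_{1,\spincv}^t$ to $\CF^t(\bm\alpha,\bm\delta,\spincu,w)$ can be nontrivial, and then use Proposition \ref{prop: alex-triangle}(ii) to compute the filtration and grading shifts contributed by any triangle $\psi \in \pi_2(\q,\Theta_{\gamma\delta},\a)$ that represents $\spincv$.

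First I will take $m$ large enough to satisfy Lemma \ref{lemma: f1-trunc}, so that the only term mapping into the $\spincu$ summand of $\CF^t(\bm\alpha,\bm\delta,w)$ is $f_{1,\spincv}^t$ with $\gen{c_1(\spincv),[R]} = 2dms_\spincu/\nu$. Combining this identity with \eqref{eq: agd-c1-q} and \eqref{eq: agd-c1-a} yields clean expressions for $\Al(\q)$ and $\Al(\a)$ entirely in terms of $s_\spincu$ and the local multiplicities $n_w(\psi), n_u(\psi), n_{z'}(\psi)$ of $\psi$, namely
\begin{align*}
\Al(\q) &= ds_\spincu + k(n_{z'}(\psi) - n_u(\psi)) + \tfrac{k-d}{2}, \\
\Al(\a) &= ds_\spincu + (k+dm)(n_w(\psi) - n_u(\psi)) + \tfrac{k+dm-d}{2}.
\end{align*}
Dividing these by $k$ and $k+dm$ respectively, subtracting, and simplifying the resulting rational shift (the nontrivial arithmetic step produces exactly $\frac{d^2m(2s_\spincu-1)}{2k(k+dm)} + n_{z'}(\psi) - n_w(\psi)$), I expect to find that
\[
\JJ_{\alpha\gamma}([\q,i]) - \JJ_{\alpha\delta}([\a,i-n_w(\psi)]) = n_{z'}(\psi),
\]
where the shift $\frac{d^2m(2s_\spincu-1)}{2k(k+dm)}$ built into Definition \ref{def: J-filtrations} is precisely what cancels the awkward rational term. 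Positivity of $n_{z'}(\psi)$ for any $\psi$ admitting holomorphic representatives then gives filteredness, and as a byproduct identifies the filtration shift of each nontrivial term with $n_{z'}(\psi)$, matching the pattern of Proposition \ref{prop: tri-filt} observed for $f_0^t$.

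For the grading statement I will plug $\gen{c_1(\spincv),[R]} = 2dms_\spincu/\nu$ into the formula \eqref{eq: f1-gr} for $\absgr(f_{1,\spincv}^t)$ and then compare with the definition \eqref{eq: newgr-ad} of $\newgr$ on the $\spincu$--summand of $\CF^t(\bm\alpha,\bm\delta,w)$. The $-\frac{d^2 m s_\spincu^2}{k(k+dm)}$ coming from $\absgr(f_{1,\spincv}^t)$ will cancel the $+\frac{d^2ms_\spincu^2}{k(k+dm)}$ absorbed into $\newgr$, leaving the constant shift $\frac{(m-3+3\sign k) - (m+1+3\sign k)}{4} = -1$.

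The main obstacle is purely algebraic: ensuring the correction terms in Definition \ref{def: J-filtrations} and Definition \ref{def: newgr} have been chosen with the correct normalizations so that the $s_\spincu$--dependent pieces cancel exactly, leaving only the topologically meaningful $n_{z'}(\psi)$ and $-1$. Conceptually nothing more is needed beyond Lemma \ref{lemma: f1-trunc} (to pin down the unique contributing $\spincv$) and the $(\alpha,\gamma,\delta)$--triangle formulas of Proposition \ref{prop: alex-triangle}; the rest is bookkeeping.
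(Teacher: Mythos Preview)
Your proposal is correct and follows essentially the same approach as the paper: invoke Lemma \ref{lemma: f1-trunc} to pin down $\gen{c_1(\spincv),[R]} = 2dms_\spincu/\nu$, then use the $(\alpha,\gamma,\delta)$--triangle formulas of Proposition \ref{prop: alex-triangle} to reduce the filtration shift to $n_{z'}(\psi)$ and the grading shift to $-1$. The only cosmetic difference is that you solve for $\Al(\q)$ and $\Al(\a)$ separately from \eqref{eq: agd-c1-q} and \eqref{eq: agd-c1-a} and then subtract, whereas the paper computes $\AlNorm(\q)-\AlNorm(\a)$ directly by combining \eqref{eq: agd-alex} with \eqref{eq: agd-c1-q}; the algebra and the outcome are the same.
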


\begin{proof}
By Proposition \ref{lemma: f1-trunc}, it suffices to consider only terms $f^t_{1,\spincv}$ with
\[
\gen{c_1(\spincv), [R]} = \frac{2 d m s_\spincu}{\nu},
\]
where $\spincu = \spincv|_{Y_{\alpha\delta}}$. For any $[\q,i] \in \CF^\circ(\Sigma, \bm\alpha, \bm\gamma, w)$ and any term $[\a, i-n_w(\psi)]$ occurring in $f^t_{1,\spincv}([\q,i])$, we first observe:
\begin{align*}
\AlNorm(\q) &- \AlNorm(\a) \\
&= \frac{\Al(\q)}{k} - \frac{\Al(\a)}{k+dm}   \\
&= \frac{k(\Al(\q) - \Al(\a)) + dm \Al(\q)}{k(k+dm)} \\
&= \frac{1}{k+dm} \left( k n_{z'}(\psi)  + dm n_u(\psi) - (k+dm) n_w(\psi) - \frac{dm}{2} \right) \\
 & \qquad + \frac{dm}{k(k+dm)} \left( \frac{\nu}{2m} \gen{c_1(\spincs_w(\psi)), [R]} - k n_u(\psi) + k n_{z'}(\psi) + \frac{k}{2} -  \frac{d}{2} \right) \\
&= n_{z'}(\psi) - n_w(\psi)  + \frac{d^2m}{2k(k+dm)} \left( \frac{\nu}{dm} \gen{c_1(\spincs_w(\psi)), [R]}   -  1 \right) \\
&= n_{z'}(\psi) - n_w(\psi) + \frac{d^2m ( 2s_\spincu -  1 )}{2k(k+dm)}.
\end{align*}
Therefore,
\[
\JJ_{\alpha\gamma}([\q,i]) - \JJ_{\alpha\delta}([\a, i-n_w(\psi)]) = n_{z'}(\psi) \ge 0
\]
as required. The final statement follows from equations \eqref{eq: newgr-ad}, \eqref{eq: f1-gr}, and \eqref{eq: f1-trunc-su}.
\end{proof}

\subsubsection{The map $f_2^t$} \label{sssec: f2-filt}

We start by examining how the spin$^c$ decomposition of $f_2^\circ$ interacts with the trivializing map $\theta$. For any $\a \in \T_\alpha \cap \T_\delta$, using \eqref{eq: adb-c1-x}, we have:
\begin{align*}
\theta \circ f_2^\circ([\a,i])
&= \sum_{\x \in \T_\alpha \cap \T_\beta} \sum_{\substack{\psi \in \pi_2(\a, \Theta_{\delta\beta}, \x) \\ \mu(\psi)=0}} \#\MM(\psi) \, [\x, i-n_w(\psi)] \otimes T^{n_w(\psi) - n_z(\psi) - \AlNorm_{w,z}(\x)} \\
&= \sum_{\x \in \T_\alpha \cap \T_\beta} \sum_{\substack{\psi \in \pi_2(\a, \Theta_{\delta\beta}, \x) \\ \mu(\psi)=0}} \#\MM(\psi) \, [\x, i-n_w(\psi)] \otimes T^{-\frac{1}{2d} (\gen{c_1(\spincs_w(\psi)), [P_\delta]} +k+dm) }.
\end{align*}
In other words, the term $\theta \circ f_{2,\spincv}^\circ$ lands in the summand
\begin{equation} \label{eq: f2-t-target}
\CF^\circ(\bm\alpha, \bm\beta, \spincv|_Y, w) \otimes T^{-\frac{1}{2d} (\gen{c_1(\spincv), [P_\delta]}+k+dm) }
\end{equation}
and agrees with the untwisted map $F^\circ_{W'_m, \spincv}$ in the first factor. The Maslov grading shift of the term $f^\circ_{2,\spincv}$ is given by
\begin{equation} \label{eq: f2-gr}
\absgr (f^\circ_{2,\spincv}) = \frac{c_1(\spincv)^2 + 1}{4} = -\frac{\gen{c_1(\spincv), [P_\delta]}^2}{4d(k+dm)} + \frac14.
\end{equation}

\begin{lemma} \label{lemma: f2-trunc}
Fix $t \in \N$ and $\epsilon>0$. For all $m$ sufficiently large, if $f_{2,\spincv}^t \ne 0$, then
\begin{equation} \label{eq: f2-trunc}
\abs{ \gen{c_1(\spincv), [P_\delta]} } < (1+\epsilon)(k+dm).
\end{equation}
In particular, if we assume that $(1+\epsilon)(k+dm) < 2dm$, the only spin$^c$ structures that may contribute to $f^t_2$ are those denoted by $\spincx_\spincu$ and $\spincy_\spincu$ in Definition \ref{def: xu}.
\end{lemma}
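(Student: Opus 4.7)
The plan is to run the same type of Maslov-grading argument that proves Lemma~\ref{lemma: f1-trunc}, now using the grading-shift formula \eqref{eq: f2-gr}. First I would combine \eqref{eq: f2-gr} with two-sided bounds on the Maslov gradings of the generators of the source and target of $f_2^t$. The target $\ul\CF^t(\bm\alpha,\bm\beta,w;\GR)$ has all of its (torsion) generators concentrated in an interval of gradings bounded by a constant $C_0$ independent of $m$, since the underlying untwisted complex does not depend on $m$ and the twisting variable $T$ does not shift the absolute grading. The source $\CF^t(\bm\alpha,\bm\delta^{m,b},w)$, by Proposition~\ref{prop: maslov-bound}, satisfies $-C\le\absgr(\a)\le\tfrac{m}{4}+C$ for every generator $\a$, provided $m$ is sufficiently large and $b$ is $e$-centered. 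Therefore any nontrivial term $f^t_{2,\spincv}$ has grading shift at least $-\tfrac{m}{4}-C-C_0$.

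Plugging this lower bound into \eqref{eq: f2-gr} and rearranging yields
\[
\gen{c_1(\spincv), [P_\delta]}^2 \;\le\; d(k+dm)(m+M)
\]
for a constant $M$ independent of $m$. Expanding the square root gives
\[
\sqrt{d(k+dm)(m+M)} \;=\; dm\,\sqrt{\,1+\tfrac{k+dM}{dm}+O(m^{-2})\,} \;=\; (k+dm) + O(1).
\]
Since $\epsilon(k+dm)\to\infty$ with $m$, this upper bound on $|\gen{c_1(\spincv), [P_\delta]}|$ is strictly less than $(1+\epsilon)(k+dm)$ for all $m$ sufficiently large, which is \eqref{eq: f2-trunc}.

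For the second statement, recall that the set of spin$^c$ structures on $W'_m$ extending a given $\spincu\in\Spin^c(Y_{\lambda+m\mu})$ is an orbit of the $\gen{\PD[C]}$-action, and $\gen{c_1(\cdot),[P_\delta]}$ is affine on this orbit with step $2(k+dm)$: in $H_2(W'_m,Y_{\lambda+m\mu})$ one has $[P_\delta]=[\hat F]=(k+dm)[C^*]$, hence $[C]\cdot[P_\delta]=k+dm$. By Definition~\ref{def: xu}, $\spincx_\spincu$ and $\spincy_\spincu$ are precisely the two orbit representatives whose evaluations on $[P_\delta]$ lie in $(-2(k+dm),2(k+dm)]$, so every other representative $\spincv$ satisfies $|\gen{c_1(\spincv),[P_\delta]}|\ge 2(k+dm)$. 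The hypothesis $(1+\epsilon)(k+dm)<2dm$ is used to upgrade \eqref{eq: f2-trunc} to the exclusion of all such other representatives: one only needs to verify that the hypothesis forces $(1+\epsilon)(k+dm)<2(k+dm)$, i.e.\ $\epsilon<1$. The main delicate point is precisely this last step when $k<0$, in which case the stated bound allows $\epsilon$ a priori slightly larger than $1$; this is harmless, since the first part of the lemma holds for every $\epsilon>0$, so one simply shrinks $\epsilon$ at the outset to guarantee $\epsilon<1$ before applying \eqref{eq: f2-trunc}.
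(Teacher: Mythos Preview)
Your proof is correct and follows essentially the same approach as the paper: both combine the grading-shift formula \eqref{eq: f2-gr} with the Maslov bounds from Proposition~\ref{prop: maslov-bound} (source gradings $\le \tfrac{m}{4}+C$) and the $m$-independent bounds on the target $\ul\CF^t(\bm\alpha,\bm\beta,w;\GR)$. The only cosmetic difference is that the paper runs this as a contradiction argument (assume $|\gen{c_1(\spincv),[P_\delta]}|\ge(1+\epsilon)(k+dm)$ and force $\absgr(f^t_{2,\spincv}(a))$ below any fixed lower bound), whereas you bound $|\gen{c_1(\spincv),[P_\delta]}|$ directly and then expand the square root; the content is identical.

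One small remark on your last paragraph: your worry about $k<0$ allowing $\epsilon>1$ is unnecessary. For $m$ sufficiently large the hypothesis $(1+\epsilon)(k+dm)<2dm$ is equivalent to $(1+\epsilon)k<(1-\epsilon)dm$, which forces $\epsilon\le 1$ regardless of the sign of $k$ (if $\epsilon>1$ the inequality fails for all large $m$). Hence $(1+\epsilon)(k+dm)\le 2(k+dm)$ automatically, and the orbit argument you gave excludes all extensions of $\spincu$ other than $\spincx_\spincu$ and $\spincy_\spincu$ without any need to shrink $\epsilon$.
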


\begin{proof}
Assume that $m$ is large enough to satisfy Proposition \ref{prop: maslov-bound}. Suppose, toward a contradiction, that $f_{2,\spincv}^t \ne 0$ and $\abs{ \gen{c_1(\spincv), [P_\delta]} } \ge (1+\epsilon)(k+dm)$, Then for some constants $C, C'$ independent of $m$, for any homogeneous element $a \in \CF^t(\bm\alpha, \bm\delta, w)$ with $f_{2,\spincv}^t(a) \ne 0$,  we have:
\begin{align*}
\absgr(f_{2,\spincv}^t(a)) &= \absgr(a)  - \frac{\gen{c_1(\spincv), [P_\delta]}^2}{4d(k+dm)} + \frac14 \\
&\le \frac{m}{4}  - \frac{(1+\epsilon)^2 (k+dm)^2}{4d(k+dm)} + C + \frac14 \\
&= \frac{dm - (1+\epsilon)^2 (k+dm)}{4d} +C + \frac14 \\
&= \frac{1- (1+\epsilon)^2 }{4} \cdot m + C'
\end{align*}
Since the grading on $\ul\CF^t(\bm\alpha, \bm\beta, w; \GR)$ is bounded below independent of $m$, while $\absgr(f_{2,\spincv}^t(a))$ is bounded above by a negative multiple of $m$, we obtain a contradiction if $m$ is large enough.
\end{proof}

\begin{proposition} \label{prop: f2-filt}
Fix $t \in \N$. For all $m$ sufficiently large, the map
\[
f_2^t \co \CF^t(\bm\alpha, \bm\delta, w) \to \ul\CF^t(\bm\alpha, \bm\beta, w; \GR)
\]
is filtered with respect to the filtrations $\JJ_{\alpha\delta}$ and $\JJ_{\alpha\beta}$ and is homogeneous of degree $0$ with respect to $\newgr$.
\end{proposition}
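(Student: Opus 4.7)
The plan is to follow the same two-step template used in the proofs of Propositions \ref{prop: f0-filt} and \ref{prop: f1-filt}, although the spin$^c$ bookkeeping for $f_2^t$ is noticeably more delicate because over each $\spincu$ there can be two spin$^c$ structures on $W'_m$ contributing.

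First, I would use Lemma \ref{lemma: f2-trunc} to pin down which spin$^c$ structures can actually contribute. For any fixed $\epsilon>0$, and for all $m$ large, only $\spincv\in\{\spincx_\spincu,\spincy_\spincu\}$ appear, corresponding to $\kappa\in\{0,1\}$ in the identity
\[
\gen{c_1(\spincv),[P_\delta]}=2ds_\spincu-k-dm+2(k+dm)\kappa.
\]
By \eqref{eq: adb-c1-x}, applying $\theta$ to a triangle contribution of such a $\psi\in\pi_2(\a,\Theta_{\delta\beta},\x)$ lands it in $\CF(\bm\alpha,\bm\beta,\spincs_\spincu,w)\otimes T^{r_0}$ with $r_0=-s_\spincu-(k+dm)\kappa/d$. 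To evaluate $\JJ_{\alpha\beta}$, I need to rewrite $T^{r_0}$ as $T^r$ for the unique $r$ in the standard range \eqref{eq: r-bounds}. I will verify that, with $\epsilon$ chosen close to (but less than) $1$, the truncation forces $r$ to be the natural lift $r=-s_\spincu-k\kappa/d$, i.e., $r=r_0$ when $\kappa=0$ and $r=r_0+m$ when $\kappa=1$, with no additional period shift.

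Second, I would carry out the direct filtration computation. Using \eqref{eq: adb-c1-a} to solve for
\[
\Al(\a) = ds_\spincu - \tfrac{k+dm+d}{2} + (k+dm)\bigl(n_{z'}(\psi)-n_w(\psi)+\kappa\bigr),
\]
substituting into \eqref{eq: ad-filt} and \eqref{eq: ab-twisted-filt}, and simplifying, all the terms involving $s_\spincu$, $\kappa$, and the combinations $d/(2k)-d/(2(k+dm))$ and $d^2m/(2k(k+dm))$ telescope, leaving the clean identity
\[
\JJ_{\alpha\delta}([\a,i]) - \JJ_{\alpha\beta}\bigl([\x,i-n_w(\psi)]\otimes T^{r}\bigr) = n_{z'}(\psi) \ge 0,
\]
parallel to what appears in the proofs of Propositions \ref{prop: f0-filt} and \ref{prop: f1-filt}. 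The $\newgr$-degree statement then follows by combining \eqref{eq: f2-gr} with the shifts \eqref{eq: newgr-ad} and \eqref{eq: newgr-ab}: after substituting the value of $\gen{c_1(\spincv),[P_\delta]}$ on the source side and the matching value of $r$ on the target side, the $s_\spincu^2$ and $\kappa$ contributions cancel, producing degree $0$ independent of $\kappa$.

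The main obstacle will be step one, namely checking that the truncation really does force the natural lift. The point is that a ``wrong'' lift $r=r_0+(\kappa\pm 1)m$ would inject a spurious term $\pm dm/k$ into the filtration difference, destroying non-negativity for large $m$. When $k>0$, the dangerous regions are $s_\spincu\in(-\tfrac{k+dm}{2d},\tfrac{k-dm}{2d}]$ (for $\kappa=0$) and $s_\spincu\in(\tfrac{-k+dm}{2d},\tfrac{k+dm}{2d}]$ (for $\kappa=1$); each has length $k/d$. I would verify that Lemma \ref{lemma: f2-trunc} with $\epsilon<(dm-k)/(k+dm)$ (which is compatible with any fixed $\epsilon<1$ once $m$ is large enough) forces $s_\spincu$ to avoid both regions. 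When $k<0$, the analogous check is automatic from \eqref{eq: su-bound}. Once this bookkeeping is in place, the rest of the argument is essentially formal, mirroring Propositions \ref{prop: f0-filt} and \ref{prop: f1-filt} line by line.
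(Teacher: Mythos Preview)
Your proposal is correct and follows essentially the same approach as the paper. The paper organizes the case split by the sign of $\gen{c_1(\spincv),[P_\delta]}$ (equivalently, by whether $\spincv=\spincx_\spincu$ or $\spincy_\spincu$), derives the equalities $\gen{c_1(\spincv),[P_\delta]}=-2dr-k\mp dm=2ds_\spincu\mp(k+dm)$ directly from the congruences and ranges for $r$ and $s_\spincu$ together with $(1+\epsilon)(k+dm)<2dm$, and then computes the filtration shift using $\gen{c_1(\spincv),[P_\delta]}$ rather than first solving for $\Al(\a)$; your ``dangerous region'' check is exactly this congruence/range step made explicit, and the resulting computation yielding $n_{z'}(\psi)$ is the same.
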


\begin{proof}
Choose $m$ sufficiently large to satisfy Lemma \ref{lemma: f2-trunc}, where we assume that $(1+\epsilon)(k+dm) < 2dm$. Suppose $\spincv$ is a spin$^c$ structure on $W'_m$ for which $f^t_{2, \spincv} \ne 0$, and let $\spincu = \spincv|_{Y_{\lambda+m\mu}(K)}$ and $\spincs = \spincv|_Y$. We thus have
\[
-2dm < -(1+\epsilon)(k+dm) < \gen{c_1(\spincv), [P_\delta]} < (1+\epsilon)(k+dm) < 2dm.
\]

Let $r$ denote the rational number satisfying
\[
-k-dm \le 2dr < -k+dm \quad \text{and} \quad 2dr \equiv -(\gen{c_1(\spincv), [P_\delta]}+k+dm) \pmod {2dm}.
\]
Note that $r$ is one of the exponents appearing in \eqref{eq: ab-twisted-decomp}. By \eqref{eq: f2-t-target}, $f^t_{2,\spincv}$ lands in $\CF^\circ(\bm\alpha, \bm\beta, \spincv|_Y, w) \otimes T^r$. At the same time, by \eqref{eq: su-bound} and \eqref{eq: su-A(a)-cong}, the number $s_\spincu$ satisfies:
\[
-k-dm < 2ds_\spincu \le k+dm \quad \text{and} \quad 2ds_\spincu \equiv \gen{c_1(\spincv), [P_\delta]} + k+dm \pmod {2(k+dm)}.
\]

There are two possibilities to consider. If $-(1+\epsilon)(k+dm) < \gen{c_1(\spincv), [P_\delta]} \le 0$, then the above inequalities and congruences imply that
\begin{equation} \label{eq: f2-filt-c1-neg}
\gen{c_1(\spincv), [P_\delta]} = -2dr-k-dm = 2ds_\spincu - k-dm.
\end{equation}
On the other hand, if $0 < \gen{c_1(\spincv), [P_\delta]} < (1+\epsilon)(k+dm)$, we obtain
\begin{equation} \label{eq: f2-filt-c1-pos}
\gen{c_1(\spincv), [P_\delta]} = -2dr-k+dm = 2ds_\spincu + k+dm.
\end{equation}
%
%
%

Suppose $\psi \in \pi_2(\a, \Theta_{\delta\beta}, \x)$ is any triangle that counts for $f^t_{2,\spincv}$,
so that $\theta( f^t_{2,\spincv}([\a,i]))$ includes the term $[\x,i-n_w(\psi)] \otimes T^r$. We compute:
\begin{align*}
\JJ_{\alpha\delta}([\a,i]) &- \JJ_{\alpha\beta} ([\x,i-n_w(\psi)]  \otimes T^r)   \\
&= \AlNorm_{w,z'}(\a) + \frac{2dr + k + d}{2k} +  \frac{d^2m (2 s_\spincu-1)}{2k(k+dm)} + n_w(\psi)  \\
&= \frac{\gen{c_1(\spincv), [P_\delta]} - 2(k+dm) n_w(\psi) + 2(k+dm) n_{z'}(\psi) -d}{2(k+dm)}  \\
& \qquad + \frac{2dr + k + d}{2k} +  \frac{d^2m (2 s_\spincu-1)}{2k(k+dm)} + n_w(\psi)  \\
&= \frac{\gen{c_1(\spincv), [P_\delta]} -d }{2(k+dm)}  + \frac{2dr + k + d}{2k} +  \frac{d^2m (2 s_\spincu-1)}{2k(k+dm)} + n_{z'}(\psi).
\end{align*}
Depending on the sign of $\gen{c_1(\spincv), [P_\delta]}$, we may use either \eqref{eq: f2-filt-c1-neg} or \eqref{eq: f2-filt-c1-pos} to put the first two fractions in terms of $s_\spincu$, and deduce:
\begin{align*}
\JJ_{\alpha\delta}([\a,i]) &- \JJ_{\alpha\beta} ([\x,i-n_w(\psi)]  \otimes T^r)   \\
&= \frac{2ds_\spincu -d }{2(k+dm)}  - \frac{2ds_\spincu  - d}{2k} +  \frac{d^2m (2 s_\spincu-1)}{2k(k+dm)} + n_{z'}(\psi) \\
&= n_{z'}(\psi) \ge 0.
\end{align*}
For the final statement, equations \eqref{eq: newgr-ad}, \eqref{eq: newgr-ab}, and \eqref{eq: f2-gr} together with either \eqref{eq: f2-filt-c1-neg} or \eqref{eq: f2-filt-c1-pos} show that
\begin{align*}
\newgr(f^t_{2,\spincv})
&= -\frac{\gen{c_1(\spincv), [P_\delta]}^2}{4d(k+dm)} + \frac{(2dr+k)^2}{4kd} - \frac{d^2m s_\spincu^2}{k(k+dm)} + \frac{m}{4} = 0
\end{align*}
as required.
\end{proof}

\subsection{Rectangle maps} \label{ssec: rect-filt}

Next, we turn to the rectangle-counting maps. We will introduce the truncated maps $\tilde h^t_0$, $\tilde h^t_1$, and $\tilde h^t_2$, and use them to prove the first part of Proposition \ref{prop: rect-filt}.

\subsubsection{The map $h_0^t$} \label{sssec: h0-filt}

Just as with $f_0^\circ$ in Section \ref{sssec: f0-filt}, for each $\spincv \in \Spin^c_0(X_{\alpha\beta\gamma\delta})$, the composition $h_{0,\spincv}^\circ \circ \theta^{-1}$ is nonzero only on the summand $\CF^\circ(\bm\alpha, \bm\beta, \spincs, w) \otimes T^r \GR$, where $\spincs = \spincv |_Y$ and
\[
r \equiv - \frac{1}{2d} (\gen{c_1(\spincv), [P_\gamma]} + k)  \pmod m,
\]
on which $h_{0,\spincv}^\circ \circ \theta^{-1}$ equals an untwisted count of rectangles.

The cobordism $X_{\alpha\beta\gamma\delta}$ is always indefinite. Specifically, if $k>0$, then $X_{\alpha\beta\gamma}$ is positive-definite and $X_{\alpha\gamma\delta}$ is negative-definite while the reverse is true if $k<0$. Define $\spincv_{\alpha\beta\gamma} = \spincv|_{W_{\alpha\beta\gamma}}$, and likewise for other subsets of $\alpha,\beta,\gamma,\delta$.

Each summand $h^\circ_{0,\spincv}$ is homogeneous, with grading shift given by
\begin{align*}
\absgr h^\circ_{0,\spincv} &= \absgr(f^\circ _{0,\spincv_{\alpha\beta\gamma}}) + \absgr(f^\circ _{1,\spincv_{\alpha\gamma\delta}}) + 1 \\
&= \frac{c_1^2(\spincv) + m-1}{4}.
\end{align*}
%
%
%
We can break this down in two ways. The first is:
\begin{align}
\nonumber \absgr h^\circ_{0,\spincv}
&= \frac{c_1(\spincv_{\alpha\beta\gamma})^2 + c_1(\spincv_{\alpha\gamma\delta})^2 + m-1}{4} \\
\label{eq: h0-gr-Pg-R} &= \frac{\gen{c_1(\spincv), [P_\gamma]}^2}{4dk} - \frac{\nu^2 \gen{c_1(\spincv), [R]}^2}{4mk(k+dm)}  + \frac{m-1}{4}.
\end{align}
This expression alone does not allow us to simultaneously control $\gen{c_1(\spincv), [P_\gamma]}$ and $\gen{c_1(\spincv), [R]}$ as in Lemmas \ref{lemma: f0-trunc} and \ref{lemma: f1-trunc}; there could be nonzero summands $h^t_{0,\spincv}$ for which the evaluations of $c_1(\spinct)$ on $[P_\gamma]$ and $[R]$ are both large in magnitude while $\gr(h^t_{0,\spincv})$ is small. Instead, it will be more useful to write:
\begin{align}
\nonumber \absgr h^\circ_{0,\spincv} &= \frac{c_1(\spincv_{\alpha\beta\delta})^2 + c_1(\spincv_{\beta\gamma\delta})^2 + m-1}{4} \\
\label{eq: h0-gr-Pd-Q} &= \frac{ \gen{c_1(\spincv), [P_\delta]}^2 }{4d(k+dm)} - \frac {\gen{c_1(\spincv), [Q]}^2} {4m} + \frac{m-1}{4}.
\end{align}
By \eqref{eq: abgd-c1-Q}, note that $\gen{c_1(\spincv), [Q]} = em$, where $e$ is an odd integer.\footnote{Not to be confused with the $e$ from Proposition \ref{prop: maslov-bound}.} We will mostly be interested in the cases where $e=\pm 1$, but we can state the following lemma in more generality:

\begin{lemma} \label{lemma: h0-trunc}
Fix $t \in \N$ and $\epsilon>0$. For all $m$ sufficiently large, the following holds: If $\spincv$ is a spin$^c$ structure with $\gen{c_1(\spincv), [Q]} = em$, and $h^t_{0,\spincv} \ne 0$, then
\[
\abs{\gen{c_1(\spincv), [P_\delta]}} < (\abs{e}+\epsilon)(k+dm).
\]
\end{lemma}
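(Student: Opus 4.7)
The plan is to mirror the proof of Lemma \ref{lemma: f2-trunc}, combining a Maslov grading shift formula with the universal grading bounds provided by Proposition \ref{prop: maslov-bound}. The key choice is to use the splitting of $X_{\alpha\beta\gamma\delta}$ via $Y_{\beta\delta}$, namely equation \eqref{eq: h0-gr-Pd-Q}, rather than the alternative splitting \eqref{eq: h0-gr-Pg-R}. This is essential because the hypothesis of the lemma concerns $\gen{c_1(\spincv), [Q]}$, so we want $[Q]$ (and $[P_\delta]$) to appear in the grading formula.

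First, I would substitute $\gen{c_1(\spincv), [Q]} = em$ into \eqref{eq: h0-gr-Pd-Q} to obtain
\[
\absgr(h^t_{0,\spincv}) \;=\; \frac{\gen{c_1(\spincv), [P_\delta]}^2}{4d(k+dm)} \,+\, \frac{(1-e^2)m - 1}{4}.
\]
Next, I would record two grading bounds. Nonzero elements of $\ul\CF^t(\bm\alpha,\bm\beta,w;\GR)$ have Maslov gradings bounded below by a constant independent of $m$, since this complex is (via $\theta$) isomorphic to a direct sum of finitely many shifted copies of the $m$-independent complex $\CF^t(\bm\alpha,\bm\beta,w)$. On the other hand, Proposition \ref{prop: maslov-bound} implies that the gradings of nonzero elements of $\CF^t(\bm\alpha,\bm\delta^{m,b},w)$ are bounded above by $m/4 + C$ for a constant $C$ independent of $m$ (using our standing convention that $b$ is $e_0$-centered for some fixed $e_0$). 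Consequently, if $h^t_{0,\spincv}$ is nonzero on some homogeneous element, then $\absgr(h^t_{0,\spincv}) \le m/4 + C'$ for a constant $C'$ independent of $m$.

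Combining the two displays yields
\[
\frac{\gen{c_1(\spincv), [P_\delta]}^2}{4d(k+dm)} \;\le\; \frac{e^2 m}{4} \,+\, C''
\]
for some $C''$ independent of $m$, and thus
\[
\left(\frac{\gen{c_1(\spincv), [P_\delta]}}{k+dm}\right)^{\!2} \;\le\; \frac{e^2 dm}{k+dm} \,+\, \frac{4dC''}{k+dm}.
\]
As $m \to \infty$ the right-hand side tends to $e^2$, so for all $m$ sufficiently large (depending on $t$, $e$, and $\epsilon$) it is strictly less than $(|e|+\epsilon)^2$. Taking square roots gives the required inequality $|\gen{c_1(\spincv), [P_\delta]}| < (|e|+\epsilon)(k+dm)$.

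I do not anticipate any real obstacle: the argument is an exact analog of Lemma \ref{lemma: f2-trunc}, with the role of $W'_m$ (and $[P_\delta]$) played by $X_{\alpha\beta\delta}$ inside the quadrilateral cobordism, and the role of the trivial part played by $X_{\beta\gamma\delta}$ (which contributes the $-e^2 m/4$ term via $[Q]$). The only conceptual point to verify is that among the two available decompositions of $\absgr(h^\circ_{0,\spincv})$, only \eqref{eq: h0-gr-Pd-Q} lets the hypothesis $\gen{c_1(\spincv),[Q]} = em$ directly eliminate one of the two unknowns; everything after that is an elementary asymptotic comparison.
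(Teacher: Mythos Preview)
Your proposal is correct and follows essentially the same approach as the paper: both use the decomposition \eqref{eq: h0-gr-Pd-Q} of $\absgr(h^\circ_{0,\spincv})$ together with the grading bounds from Proposition \ref{prop: maslov-bound} on $\CF^t(\bm\alpha,\bm\delta,w)$ and the $m$-independent bound on $\ul\CF^t(\bm\alpha,\bm\beta,w;\GR)$. The only differences are cosmetic: the paper argues by contradiction (assuming $\abs{\gen{c_1(\spincv),[P_\delta]}} \ge (\abs{e}+\epsilon)(k+dm)$ and showing the grading shift tends to $+\infty$), whereas you argue directly by bounding $\gen{c_1(\spincv),[P_\delta]}^2/(k+dm)^2$ and taking a limit; also, your phrase ``finitely many shifted copies'' is slightly imprecise (there are $m$ copies), but the conclusion that the gradings are bounded independent of $m$ is correct since the Maslov grading on the twisted complex ignores the $T$-variable.
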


\begin{proof}
By Proposition \ref{prop: maslov-bound}, there is a constant $C$ (independent of $m$) such that the gradings of all elements of $\CF^t(\bm\alpha, \bm\delta, w)$ are bounded above by $C+\frac{m}{4}$, while the grading on $\ul\CF^t(\bm\alpha, \bm\beta, w; \GR)$ is bounded below by $-C$. Suppose that $h^t_{0,\spincv} \ne 0$ and $\abs{\gen{c_1(\spincv), [P_\delta]}} \ge (\abs{e}+\epsilon)(k+dm)$. For any homogeneous element $x \in \ul\CF^t(\bm\alpha, \bm\beta, w; \GR)$, if $h^t_{0,\spincv}(x) \ne 0$, we have:
\begin{align*}
2C  &\ge \absgr(h^t_{0,\spincv}(x)) - \absgr(x) - \frac{m}{4} \\
&= \frac{ \gen{c_1(\spincv), [P_\delta]}^2 }{4d(k+dm)} - \frac {\gen{c_1(\spincv), [Q]}^2} {4m} + \frac{m-1}{4} - \frac{m}{4} \\
&\ge \frac{ (\abs{e}+\epsilon)^2 (k+dm) }{4d} - \frac {e^2m^2}{4m} - \frac{1}{4} \\
&= \frac{ \left((\abs{e}+\epsilon)^2- e^2\right) dm + (\abs{e}+\epsilon)^2 k - d }{4d}  \\
\end{align*}
The right hand side tends to infinity as $m \to \infty$, which gives a contradiction.
\end{proof}

We now define the ``truncated'' version of $h_0$. Fix a small real number $\epsilon>0$.

\begin{definition} \label{def: h0-trunc}
For any $\epsilon>0$, let $\tilde h_{0,\epsilon}^t$ be the sum of all terms $h^t_{0,\spincv}$ corresponding to spin$^c$ structures $\spincv$ which either satisfy both
\begin{align}
\label{eq: h0-filt-Pg-bound} \abs{\gen{c_1(\spincv), [P_\gamma]}} &< \epsilon dm \\
\label{eq: h0-filt-R-bound} \abs{ \gen{c_1(\spincv), [R]}} &< \frac{m(k+dm)}{\nu}
\end{align}
or satisfy
\begin{equation}
\label{eq: h0-filt-Q-bound} \abs{ \gen{c_1(\spincv), [Q]}} = \pm m.
\end{equation}
We will often suppress the dependence on $\epsilon$ from the notation and just write $\tilde h^t_0$.
\end{definition}

\begin{lemma} \label{lemma: h0-trunc-nulhtpy}
Fix $t \in \N$ and $\epsilon>0$. For all $m$ sufficiently large, $\tilde h^t_0$ is a null-homotopy of $f^t_1 \circ f^t_0$.
\end{lemma}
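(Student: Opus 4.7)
The strategy is to write $\tilde h^t_0 = h^t_0 - h^t_{0,\mathrm{excl}}$, where $h^t_{0,\mathrm{excl}}$ denotes the sum of the terms $h^t_{0,\spincv}$ for those $\spincv \in \Spin^c(X_{\alpha\beta\gamma\delta})$ which are excluded from $\tilde h^t_0$---that is, those which satisfy neither the pair \eqref{eq: h0-filt-Pg-bound}--\eqref{eq: h0-filt-R-bound} nor \eqref{eq: h0-filt-Q-bound}. The standard Ozsv\'ath--Szab\'o degeneration argument for the surgery exact triangle \cite{OSzSurgery, OSzDouble}, adapted to the $T$-twisted setting described above, shows that for $m$ sufficiently large, the full $h^t_0 = \sum_\spincv h^t_{0,\spincv}$ (which is a finite sum on $\CF^t$ by the Maslov grading constraint) satisfies $\partial h^t_0 + h^t_0 \partial = f^t_1 \circ f^t_0$ on the nose: the ends of the 1-dimensional moduli spaces of Maslov-index-$0$ rectangles give this identity once the $\Theta_{\beta\delta}$ boundary degenerations are accounted for by the mod-$m$ $T$-twist built into the definition of $h_0$ and $f_2$. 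It therefore suffices to prove that $\partial h^t_{0,\mathrm{excl}} + h^t_{0,\mathrm{excl}} \partial = 0$.

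To do this, fix an excluded $\spincv$ and examine the ends of the 1-dimensional moduli space of Maslov-index-$0$ rectangles $\rho \in \pi_2(\x, \Theta_{\beta\gamma}, \Theta_{\gamma\delta}, \a)$ with $\spincs_w(\rho) = \spincv$. Such ends fall into three types:
\begin{itemize}
\item[(a)] Disk breakings at one of the four corners, contributing precisely $\partial h^t_{0,\spincv} + h^t_{0,\spincv} \partial$.
\item[(b)] Triangle--triangle breakings through $\T_\alpha \cap \T_\gamma$, corresponding to a cut $(\spincv_1, \spincv_2) \in \Spin^c(X_{\alpha\beta\gamma}) \times \Spin^c(X_{\alpha\gamma\delta})$ of $\spincv$. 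Such an end contributes to $f^t_{1,\spincv_2} \circ f^t_{0,\spincv_1}$, so by Lemmas \ref{lemma: f0-trunc} and \ref{lemma: f1-trunc} it is nonzero only when $|\gen{c_1(\spincv_1), [P_\gamma]}| < \epsilon dm$ and $|\gen{c_1(\spincv_2), [R]}| < m(k+dm)/\nu$. Because $[P_\gamma] \in H_2(X_{\alpha\beta\gamma})$ and $[R] \in H_2(X_{\alpha\gamma\delta})$, these pairings coincide with $\gen{c_1(\spincv), [P_\gamma]}$ and $\gen{c_1(\spincv), [R]}$, so a nonzero end would force $\spincv$ to satisfy both \eqref{eq: h0-filt-Pg-bound} and \eqref{eq: h0-filt-R-bound}, contradicting the exclusion hypothesis.
\item[(c)] Triangle--triangle breakings through $\T_\beta \cap \T_\delta$, which necessarily factor through a $(\beta,\gamma,\delta)$-triangle with a rigid holomorphic representative. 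By Lemma \ref{lemma: bgd-triangles}, the only such triangles are the $\tau_l^\pm$, all ending at $\Theta_{\beta\delta}$; and for $m$ large enough that $n_w(\tau_l^\pm) = ml(l+1)/2 > t$ whenever $l \ge 1$, only $\tau_0^+$ and $\tau_0^-$ can contribute to $\CF^t$. The dual-spider computation in the proof of Lemma \ref{lemma: Theta-gamma-delta} shows that $\tau_0^\pm$ carry spin$^c$ structures on $X_{\beta\gamma\delta}$ with $\gen{c_1, [Q]} = \pm m$, so any rectangle degenerating through these satisfies $\gen{c_1(\spincv), [Q]} = \pm m$. But \eqref{eq: abgd-c1-Q} and the failure of \eqref{eq: h0-filt-Q-bound} force $|\gen{c_1(\spincv), [Q]}| \ge 3m$ for excluded $\spincv$, so this type of end does not occur.
\end{itemize}

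Since only type (a) ends survive for each excluded $\spincv$, we obtain $\partial h^t_{0,\spincv} + h^t_{0,\spincv} \partial = 0$, and summing over all excluded $\spincv$ yields $\partial h^t_{0,\mathrm{excl}} + h^t_{0,\mathrm{excl}} \partial = 0$. Combining with the foundational identity for $h^t_0$ then gives $\partial \tilde h^t_0 + \tilde h^t_0 \partial = f^t_1 \circ f^t_0$, as required.

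The main obstacle is the foundational identity $\partial h^t_0 + h^t_0 \partial = f^t_1 \circ f^t_0$ for the full (spin$^c$-untruncated) map, which rests on the delicate $T$-twist cancellation for the $\Theta_{\beta\delta}$-degeneration at the heart of the Ozsv\'ath--Szab\'o surgery exact triangle; once this is in hand, the remainder is a careful spin$^c$-bookkeeping exercise, leveraging Lemmas \ref{lemma: f0-trunc}, \ref{lemma: f1-trunc}, and \ref{lemma: bgd-triangles} to show that the excluded spin$^c$ structures see none of the non-disk degenerations.
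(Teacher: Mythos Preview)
Your proof is correct and follows essentially the same approach as the paper: decompose $h^t_0$ into the truncated piece $\tilde h^t_0$ plus the excluded terms, then show that for each excluded spin$^c$ structure the only ends of the index-$0$ rectangle moduli space are bigon breakings, using Lemmas \ref{lemma: f0-trunc} and \ref{lemma: f1-trunc} to rule out the $(\alpha,\beta,\gamma)\ast(\alpha,\gamma,\delta)$ degenerations and Lemma \ref{lemma: bgd-triangles} (together with $m>t$) to rule out the $(\beta,\gamma,\delta)\ast(\alpha,\beta,\delta)$ ones. The paper phrases the conclusion as ``$h^t_0 - \tilde h^t_{0,\epsilon}$ commutes with the differentials,'' which is exactly your $\partial h^t_{0,\mathrm{excl}} + h^t_{0,\mathrm{excl}}\partial = 0$.
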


\begin{proof}
Let $\rho \in \pi_2(\x, \Theta_{\alpha\beta}, \Theta_{\beta\gamma}, \a)$ be any rectangle such that $\mu(\rho)=0$, $0 \le n_w(\rho) \le m$, and $\MM(\rho) \ne \emptyset$. The possible ends of $\MM(\rho)$ correspond to the following possible decompositions:
\begin{enumerate} [label=(R-\arabic*)]
\item \label{it: h0-decomp-rect}
A concatenation of a rectangle $\rho'$ (with $\spincs_w(\rho') = \spincs_w(\rho)$) and either an $(\alpha,\beta)$, $(\beta,\gamma)$, $(\gamma,\delta)$, or $(\alpha,\delta)$ bigon.

\item \label{it: h0-decomp-abg-agd}
A decomposition $\rho = \psi_1 * \psi_2$, where $\psi_1 \in \pi_2(\x, \Theta_{\beta\gamma}, \q)$ and $\psi_2 \in \pi_2(\q, \Theta_{\gamma\delta}, \a)$.

\item \label{it: h0-decomp-bgd-abd}
A decomposition $\rho = \phi_1 * \phi_2$, where $\phi_1 \in \pi_2(\Theta_{\beta\gamma}, \Theta_{\gamma\delta}, \Theta')$ and $\phi_2 \in \pi_2(\x, \Theta', \a)$ (for some $\Theta' \in \T_\beta \cap \T_\delta$.
\end{enumerate}

We claim that if $\spincv = \spincs_w(\rho)$ fails to satisfy either [\eqref{eq: h0-filt-Pg-bound} and \eqref{eq: h0-filt-R-bound}] or \eqref{eq: h0-filt-Q-bound}, then $\MM(\rho)$ has only ends of type \ref{it: h0-decomp-rect}. Indeed, if $\MM(\rho)$ has an end of type \ref{it: h0-decomp-abg-agd}, then $\psi_1$ counts for $f^t_{0,\spincv_{\alpha\beta\gamma}}$ and $\psi_2$ counts for $f^t_{1,\spincv_{\alpha\gamma\delta}}$. Hence, by Lemmas \ref{lemma: f0-trunc} and \ref{lemma: f1-trunc}, we deduce that $\spincv = \spincs_w(\rho)$ satisfies \eqref{eq: h0-filt-Pg-bound} and \eqref{eq: h0-filt-R-bound}. Similarly, if $\MM(\rho)$ has an end of type \ref{it: h0-decomp-bgd-abd}, then $\Theta' = \Theta_{\beta\delta}$ and $\phi_1$ must be one of the triangles $\tau_l^{\pm}$ from Lemma \ref{lemma: bgd-triangles}. Moreover, if we assume that $m > t$, then since $n_w(\phi_1) \le m$, $\phi_1$ must in fact be $\tau_0^{\pm}$. Therefore $\spincv$ satisfies \eqref{eq: h0-filt-Q-bound}. This proves the claim.

It follows that the map $h^t_0 - \tilde h^t_{0,\epsilon}$ (which counts rectangles which satisfy neither [\eqref{eq: h0-filt-Pg-bound} and \eqref{eq: h0-filt-R-bound}] nor \eqref{eq: h0-filt-Q-bound}) commutes with the differentials:
\[
(h^t_0 - \tilde h^t_{0,\epsilon}) \circ \partial_{\alpha\beta} + \partial_{\alpha\delta} \circ (h^t_0 - \tilde h^t_{0,\epsilon}) = 0.
\]
Since $h^t_0$ is a null-homotopy of $f^t_1 \circ f^t_0$, it follows that $\tilde h^t_{0,\epsilon}$ is as well.
\end{proof}

\begin{proposition} \label{prop: h0-filt}
Fix $t \in \N$ and $0 <\epsilon <1$. For all $m$ sufficiently large, the map $\tilde h^t_{0,\epsilon}$ is filtered with respect to $\JJ_{\alpha\beta}$ and $\JJ_{\alpha\delta}$ and is homogeneous of degree $0$ with respect to $\newgr$.
\end{proposition}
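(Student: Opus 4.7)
The plan is to follow the template of Propositions \ref{prop: f0-filt}, \ref{prop: f1-filt}, and \ref{prop: f2-filt}, handling the two alternatives in Definition \ref{def: h0-trunc} separately. In both cases the strategy is to constrain the three independent $c_1$-evaluations $\gen{c_1(\spincv), [P_\gamma]}$, $\gen{c_1(\spincv), [R]}$, and $\gen{c_1(\spincv), [Q]}$ (or equivalently $[P_\delta]$ in place of $[P_\gamma]$) enough to mimic the triangle computations; the main algebraic input is the spin$^c$--Alexander dictionary for rectangles given in Proposition \ref{prop: alex-rectangle}.

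First I would treat Case~A: $\spincv$ satisfies both \eqref{eq: h0-filt-Pg-bound} and \eqref{eq: h0-filt-R-bound}. The uniqueness arguments of Lemmas \ref{lemma: f0-trunc} and \ref{lemma: f1-trunc} carry over verbatim (since they use only the range of $r$, the range of $s_\spincu$, and the $2dm$- and $(2m(k+dm)/\nu)$-periodicity of the two $c_1$-evaluations), forcing $\gen{c_1(\spincv), [P_\gamma]} = -2dr - k$ and $\gen{c_1(\spincv), [R]} = 2dms_\spincu/\nu$ for any rectangle $\rho$ contributing to $h^t_{0,\spincv}$. Combining these with the relation $[R] = (m/\nu)[P_\gamma] + (k/\nu)[Q]$ and \eqref{eq: abgd-c1-Q} yields the key identity
\[
n_u(\rho) - n_{z'}(\rho) = \frac{d(r+s_\spincu)}{k} + 1.
\]
Substituting \eqref{eq: abgd-alex} and this identity into the expansion of $\JJ_{\alpha\beta}([\x,i] \otimes T^r) - \JJ_{\alpha\delta}([\a, i - n_w(\rho)])$ via \eqref{eq: ab-twisted-filt} and \eqref{eq: ad-filt} then collapses the expression to exactly $n_{z'}(\rho) \ge 0$. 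The $\newgr$ computation follows from the decomposition \eqref{eq: h0-gr-Pg-R}: substituting the known values of $\gen{c_1,[P_\gamma]}$ and $\gen{c_1,[R]}$ yields an absolute grading shift whose difference from the shifts in Definition \ref{def: newgr} is identically $0$.

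Next I would treat Case~B: $\gen{c_1(\spincv), [Q]} = \pm m$. Since $\gen{c_1(\spincv), [Q]}^2/(4m) = m/4$, the decomposition \eqref{eq: h0-gr-Pd-Q} together with the grading bounds of Proposition \ref{prop: maslov-bound} gives an argument formally identical to Lemma \ref{lemma: f2-trunc}, forcing $|\gen{c_1(\spincv), [P_\delta]}| < (1+\epsilon')(k+dm)$ for any $\epsilon' > 0$ and $m$ sufficiently large. For $\epsilon'$ chosen with $(1+\epsilon')(k+dm) < 2dm$, this means $\spincv|_{X_{\alpha\beta\delta}}$ is one of $\spincx_\spincu$ or $\spincy_\spincu$ from Definition \ref{def: xu}. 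Meanwhile, \eqref{eq: abgd-c1-Q} directly fixes $n_u(\rho) - n_{z'}(\rho) \in \{0,1\}$ according to the sign of $\gen{c_1,[Q]}$. From here the filtration computation is an exact analogue of Proposition \ref{prop: f2-filt}, with the two sign choices playing the roles of the two subcases \eqref{eq: f2-filt-c1-neg} and \eqref{eq: f2-filt-c1-pos}, and with \eqref{eq: abgd-c1-Pd} replacing \eqref{eq: adb-c1-a}. The shift again collapses to $n_{z'}(\rho) \ge 0$. The $\newgr$ verification also mirrors the final display of the proof of Proposition \ref{prop: f2-filt}, the extra $(m-1)/4$ in \eqref{eq: h0-gr-Pd-Q} relative to \eqref{eq: f2-gr} matching the $m/4$ difference between the $\newgr$ normalizations in \eqref{eq: newgr-ab} and \eqref{eq: newgr-ad}.

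The main obstacle is not conceptual but organizational: the proof requires simultaneously tracking three $c_1$-evaluations (only two of which are independent) together with the four basepoint multiplicities $n_w, n_z, n_{z'}, n_u$, and recognizing that each of the two alternatives in Definition \ref{def: h0-trunc} was designed precisely to reproduce the constraints used in one of the two earlier filtered triangle proofs. Once Case~A is matched with the $(f_0, f_1)$ analysis and Case~B with the $f_2$ analysis, the algebraic collapses are mechanical.
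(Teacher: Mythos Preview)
Your overall strategy—splitting according to the two alternatives in Definition~\ref{def: h0-trunc}—is sound, and Case~A is essentially correct: the constraints \eqref{eq: h0-filt-Pg-bound} and \eqref{eq: h0-filt-R-bound} do pin down $\gen{c_1(\spincv),[P_\gamma]} = -2dr-k$ and $\gen{c_1(\spincv),[R]} = 2dms_\spincu/\nu$, and the filtration shift collapses to $n_{z'}(\rho)$.

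Case~B, however, has a gap. The analogy with Proposition~\ref{prop: f2-filt} is not exact, because the exponent $r$ for $h_0$ is determined by $\gen{c_1(\spincv),[P_\gamma]} \pmod{2dm}$, not by $\gen{c_1(\spincv),[P_\delta]}$. Knowing that $\spincv|_{X_{\alpha\beta\delta}}$ is $\spincx_\spincu$ or $\spincy_\spincu$ relates $P_\delta$ to $s_\spincu$, but to compute $\JJ_{\alpha\beta}$ you still need to relate $r$ to $s_\spincu$, and that goes through $P_\gamma = P_\delta + d\cdot Q$. This introduces an interaction between the sign of $\gen{c_1(\spincv),[P_\delta]}$ and the sign of $\gen{c_1(\spincv),[Q]}$, giving four subcases rather than two. (Also, there is no equation labeled \texttt{abgd-c1-Pd}; you would derive it from \eqref{eq: abgd-c1-Pg} and \eqref{eq: abgd-c1-Q}.)

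The paper sidesteps this case explosion by a unified computation. It introduces integers $p,q$ with
\[
\gen{c_1(\spincv),[P_\gamma]} = -2dr-k+2pdm, \qquad \tfrac{\nu}{m}\gen{c_1(\spincv),[R]} = 2ds_\spincu + 2q(k+dm),
\]
and computes the filtration shift for an \emph{arbitrary} $\spincv$ to be $n_{z'}(\rho) - \tfrac{(p-q)dm}{k}$. The whole problem then reduces to showing $p=q$. In Case~A this is immediate ($p=q=0$); in Case~B one checks $p,q\in\{0,e\}$ and then uses a single numerical inequality (the paper's \eqref{eq: h0-filt-pq-bounds-Q}, coming from $\frac{\nu}{m}[R]=[P_\gamma]+\frac{k}{m}[Q]$) to rule out $p\ne q$. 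The $\newgr$ statement in the $p=q=e\ne 0$ subcase is then handled by comparing $\spincv$ to $\spincv'=\spincv+e\PD[Q]$, which has $p'=q'=0$ and the same $c_1^2$. Your outline does not supply this $p=q$ step, and without it Case~B does not close.
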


\begin{proof}
We will start by trying to understand the filtration shift of an arbitrary summand $h^t_{0,\spincv}$, and then specialize by imposing the conditions required for $h^t_{0,\spincv}$ to be included in $\tilde h^t_{0,\epsilon}$. Write $\gen{c_1(\spincv), [Q]} = em$, where $e$ is an odd integer.

Suppose $\x \in \T_\alpha \cap \T_\beta$ and $\a \in \T_\alpha \cap \T_\delta$ are generators with $\spincs := \spincs_w(\x) = \spincv|_Y$ and $\spincu := \spincs_w(\a) = \spincv|_{Y_{\lambda+m\mu}(K)}$. Suppose that $\rho \in \pi_2(\x,\Theta_{\beta\gamma}, \Theta_{\gamma\delta}, \a)$ is a rectangle which contributes to $h^\circ_{0,\spincv}$. That is,
\[
h^t_{0,\spincv} \circ \theta^{-1} ([\x,i] \otimes T^r) = [\a, i-n_w(\rho)] + \text{other terms}
\]
where $r$ is the unique number satisfying
\[
\frac{-k-dm}{2d} \le r < \frac{-k+dm}{2d} \quad \text{and} \quad r \equiv -\frac{1}{2d}( \gen{c_1(\spincv), [P_\gamma]} +k) \pmod m.
\]
Note that $r$ is one of the exponents appearing in \eqref{eq: ab-twisted-decomp}, and hence it appears in the definition of $\JJ_{\alpha\beta}([\x,i] \otimes T^r)$ by \eqref{eq: ab-twisted-filt}. Our goal is to show that
\[
\JJ_{\alpha\beta}([\x,i] \otimes T^r) - \JJ_{\alpha\delta}([\a,i-n_w(\rho)])  = n_{z'}(\rho) \ge 0.
\]

Let us write
\begin{equation} \label{eq: h0-filt-p-def}
\gen{c_1(\spincv), [P_\gamma]} = -2dr-k + 2pdm,
\end{equation}
where $p \in \Z$. In other words, $p$ is the unique integer for which
\begin{equation} \label{eq: h0-filt-p-bounds}
(2p-1) dm < \gen{c_1(\spincv), [P_\gamma]} \le (2p+1)dm.
\end{equation}
In particular, if $\spincv$ satisfies \eqref{eq: h0-filt-Pg-bound}, then $p=0$.

Associated to the spin$^c$ structure $\spincu$, we have the number $s_\spincu$, which by \eqref{eq: su-bound} and \eqref{eq: su-A(a)-cong} satisfies
\[
-(k+dm) < 2d s_\spincu \le k+dm \quad \text{and} \quad  2d s_\spincu \equiv 2 \Al(\a) + k + dm + d    \pmod {2(k+dm)}.
\]
At the same time, by \eqref{eq: abgd-c1-R}, we have
\[
\frac{\nu}{m} \gen{c_1(\spincv), [R]}   = 2 \Al(\a)  + (k+dm) (2n_u(\rho) - 2n_w(\rho)) - (k+dm) +d,
\]
and hence
\[
\frac{\nu}{m} \gen{c_1(\spincv), [R]}   \equiv 2d s_\spincu \pmod{2(k+dm)}.
\]
Write
\begin{equation} \label{eq: h0-filt-q-def}
\frac{\nu}{m} \gen{c_1(\spincv), [R]} = 2d s_\spincu + 2q(k+dm),
\end{equation}
where $q \in \Z$, so that
\begin{equation} \label{eq: h0-filt-q-bounds}
(2q-1)(k+dm) < \frac{\nu}{m} \gen{c_1(\spincv), [R]} \le (2q+1)(k+dm).
\end{equation}
Again, if $\spincv$ satisfies \eqref{eq: h0-filt-R-bound}, then $q=0$. Since $\frac{\nu}{m}[R] = [P_\gamma] + \frac{k}{m}[Q]$, we also have
\begin{equation}\label{eq: h0-filt-pq-bounds-Q}
2(q-p-1)dm + (2q-1)k < ke < 2(q-p+1)dm + (2q+1)k.
\end{equation}

We compute:
\begin{align*}
\JJ_{\alpha\delta}([\a, &i-n_w(\rho)]) - \JJ_{\alpha\beta}([\x,i] \otimes T^r) \\
&= \frac{\Al(\a)}{k+dm} -n_w(\rho) + \frac{d^2m(2s_\spincu-1)}{2k(k+dm)} + \frac{2dr + k+d}{2k} \\
&= \frac{\frac{\nu}{m} \gen{c_1(\spincv), [R]}  +k+dm -d}{2(k+dm)} - n_u(\rho) + \frac{dm (\frac{\nu}{m} \gen{c_1(\spincv), [R]} - 2q(k+dm)) }{2k(k+dm)} \\
& \qquad  + \frac{-\gen{c_1(\spincv), [P_\gamma]} - k + 2pdm}{2k} - \frac{d^2m}{2k(k+dm)} + \frac{k+d}{2k} \\
&= \frac{\nu \gen{c_1(\spincv), [R]} }{2m(k+dm)} + \frac{dm \nu \gen{c_1(\spincv), [R]} ) }{2km(k+dm)} + \frac{-\gen{c_1(\spincv), [P_\gamma]}  }{2k} + \frac{(p-q)dm}{k}\\
& \qquad   - \frac{d^2m}{2k(k+dm)} + \frac{k+d}{2k} - \frac{d}{2(k+dm)}  - n_u(\rho) \\
&= \frac{1}{2k} \gen{c_1(\spincv), \frac{\nu}{m}[R]- [P_\gamma]} + \frac{(p-q)dm}{k} - \frac{d}{2k} + \frac{k+d}{2k}   - n_u(\rho) \\
&= \frac{1}{2m} \gen{c_1(\spincv),  [Q] } + \frac{(p-q)dm}{k}  + \frac12   - n_u(\rho) \\
&= \frac{1}{2} (2n_u(\rho) - 2n_{z'}(\rho) - 1)  + \frac{(p-q)dm}{k}  + \frac12   - n_u(\rho) \\
&= - n_{z'}(\rho) + \frac{(p-q)dm}{k}
\end{align*}
Thus, to show that $\tilde h^t_{0,\epsilon}$ is filtered, we must simply show that $p-q = 0$ whenever $\spincv$ satisfies the conditions from Definition \ref{def: h0-trunc}.

If $\spincv$ satisfies \eqref{eq: h0-filt-Pg-bound} and \eqref{eq: h0-filt-R-bound}, we immediately deduce that $p=q=0$. Thus, suppose that $\spincv$ satisfies \eqref{eq: h0-filt-Q-bound}, i.e. $e=\pm1$. By Lemma \ref{lemma: h0-trunc}, we may also assume that $\abs{c_1(\spincv), [P_\delta]} < (1+\epsilon)(k+dm)$. Recall that $[P_\gamma] = [P_\delta] + d[Q]$ and $\frac{\nu}{m}[R] = [P_\delta] + \frac{k+dm}{m}[Q]$. Therefore,
\begin{align*}
(e-1-\epsilon) dm -(1+\epsilon)k  < \gen{c_1(\spincv), [P_\gamma]} &<  (e+1+\epsilon)dm + (1+\epsilon)k \\
(e-1-\epsilon)(k+dm) < \frac{\nu}{m} \gen{c_1(\spincv), [R]} &< (e+1+\epsilon)(k+dm)
\end{align*}
Assuming $m$ is sufficiently large, this implies that $p,q \in \{0,e\}$. It then follows from \eqref{eq: h0-filt-pq-bounds-Q} that $p=q$, as required.

We now turn to the statement about $\newgr$, which we check in each of the two cases in the previous paragraph. In the case where $p=q=0$, equations \eqref{eq: newgr-ad}, \eqref{eq: newgr-ab}, \eqref{eq: h0-gr-Pg-R}, \eqref{eq: h0-filt-p-def}, and \eqref{eq: h0-filt-q-def} immediately imply that $\newgr(h^t_{0,\spincv}) = 0$. If $p=q=e = \pm 1$, then let $\spincv' = \spincv + e \PD[Q]$, which has the same restrictions to $Y$ and $Y_{\lambda+m\mu}(K)$ as $\spincv$ and the same corresponding values of $r$ and $s_\spincu$. We may easily check the following: 
\begin{align*}
\gen{c_1(\spincv'), [P_\gamma]} &= \gen{c_1(\spincv), [P_\gamma]} - 2edm = -2dr-k \\
\frac{\nu}{m}\gen{c_1(\spincv'), [R]} &= \frac{\nu}{m} \gen{c_1(\spincv), [R]} - 2e(k+dm) = 2d s_\spincu \\
\gen{c_1(\spincv'), [Q]} &= \gen{c_1(\spincv), [Q]} - 2em =  -em \\
\gen{c_1(\spincv'), [P_\delta]} &= \gen{c_1(\spincv), [P_\delta]}  =  -2dr-k-dem
\end{align*}
Thus, the analogues of $e$, $p$, and $q$ associated to $\spincv'$ are $e'=-e$ and $p'=q'=0$, which implies that $\newgr(h^t_{0,\spincv'}) = 0$ by the previous case. At the same time, \eqref{eq: h0-gr-Pd-Q} implies that $c_1(\spincv)^2 = c_1(\spincv')^2$, and hence $\newgr(h^t_{0,\spincv}) = 0$ as required.
\end{proof}

\subsubsection{The map $h_1^t$} \label{sssec: h1-filt}

As in the previous section, we will need to define a truncated version of $\tilde h_1^t$ that uses only certain spin$^c$ structures. To begin, for any $\spincv \in \Spin^c_0(X_{\alpha\gamma\delta\beta})$ and any $\q \in \T_\alpha \cap \T_\gamma$, we have
\begin{align*}
\theta \circ h^\circ_{1,\spincv}([\q,i])
&= \sum_{\x \in \T_\alpha \cap \T_{\beta}} \sum_{\substack{\rho \in \pi_2(\q, \Theta_{\gamma\delta}, \Theta_{\delta\beta'}, \x) \\ \mu(\rho)=-1 \\ \spincs_w(\rho) = \spincv}} \#\MM(\rho) \, [\x, i-n_w(\rho)] \otimes T^{n_w(\rho) - n_z(\rho) - \AlNorm_{w,z}(\x)}  \\
&= \sum_{\x \in \T_\alpha \cap \T_{\beta}} \sum_{\substack{\rho \in \pi_2(\q, \Theta_{\gamma\delta}, \Theta_{\delta\beta'}, \x) \\ \mu(\rho)=-1 \\ \spincs_w(\rho) = \spincv}} \#\MM(\rho) \, [\x, i-n_w(\rho)] \otimes T^{-\frac{1}{2d} \left( \gen{c_1(\spincv), [P_\delta]} +k+dm\right)}
\end{align*}
Thus, the summand of \eqref{eq: ab-twisted-decomp} in which $\theta \circ h^\circ_{1,\spincv}$ lands is determined by the value of $\gen{c_1(\spincv), [P_\delta]}$ modulo $2dm$.


\begin{definition} \label{def: h1-trunc}
Let $\tilde h^t_1$ denote the sum of all terms $h^t_{1,\spincv}$ for which $\spincv$ satisfies
\begin{equation} \label{eq: h1-trunc-Q-bound}
\gen{c_1(\spincv), [Q]} = \pm m.
\end{equation}
\end{definition}

\begin{remark} \label{rmk: h1-trunc-simpler}
The definition of $\tilde h^t_1$ appears considerably somewhat simpler than that of $\tilde h^t_0$ (Definition \ref{def: h0-trunc}) in the previous section. In fact, however, the two definitions are parallel. Suppose that $0 < \epsilon< 1$ and that $\spincv$ is any spin$^c$ structure which satisfies
\begin{align}
\label{eq: h1-trunc-R-bound}
\abs{\gen{c_1(\spincv), [R]}} &< \frac{m(k+dm)}{\nu} \\
\label{eq: h1-trunc-Pd-bound}
\abs{\gen{c_1(\spincv), [P_\delta]}} &< (1+\epsilon)(k+dm)
\end{align}
(which are the conditions suggested by Lemmas \ref{lemma: f1-trunc} and \ref{lemma: f2-trunc}, analogous to \eqref{eq: h0-filt-Pg-bound} and \eqref{eq: h0-filt-R-bound} in Definition \ref{def: h0-trunc}). We then have
\begin{align*}
\abs{\gen{c_1(\spincv), [Q]}}
&\le \frac{\nu}{k+dm} \abs{\gen{c_1(\spincv), [R]}} + \frac{m}{k+dm} \abs{\gen{c_1(\spincv), [P_\delta]}}   \\
&\le \frac{\nu}{k+dm} \frac{m(k+dm)}{\nu} + \frac{m}{k+dm} (1+\epsilon) (k+dm)   \\
&= m(2+\epsilon)
\end{align*}
which implies \eqref{eq: h1-trunc-Q-bound}. Thus, it is not necessary to include \eqref{eq: h1-trunc-R-bound} and \eqref{eq: h1-trunc-Pd-bound} in the definition of $\tilde h^t_1$.
\end{remark}

\begin{lemma} \label{lemma: h1-trunc-nulhtpy}
Fix $t \in \N$. For all $m$ sufficiently large, $\tilde h_1^t$ is a null-homotopy of $f_2^t \circ f_1^t$.
\end{lemma}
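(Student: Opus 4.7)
The plan is to mimic the proof of Lemma \ref{lemma: h0-trunc-nulhtpy}. Fix a rectangle $\rho \in \pi_2(\q, \Theta_{\gamma\delta}, \Theta_{\delta\beta}, \x)$ with $\mu(\rho)=0$, $\MM(\rho) \ne \emptyset$, and $0 \le n_w(\rho) \le t$, and set $\spincv = \spincs_w(\rho)$. The possible ends of $\MM(\rho)$ fall into three types of decompositions: (R$'$-1) a concatenation of a rectangle with $\spincs_w$ equal to $\spincv$ and a bigon (in any of the four curve-families); (R$'$-2) a splitting $\rho = \psi_1 * \psi_2$ with $\psi_1 \in \pi_2(\q, \Theta_{\gamma\delta}, \a)$ and $\psi_2 \in \pi_2(\a, \Theta_{\delta\beta}, \x)$ for some $\a \in \T_\alpha \cap \T_\delta$; and (R$'$-3) a splitting $\rho = \phi_1 * \phi_2$ with $\phi_1 \in \pi_2(\Theta_{\gamma\delta}, \Theta_{\delta\beta}, \Theta)$ and $\phi_2 \in \pi_2(\q, \Theta, \x)$ for some $\Theta \in \T_\gamma \cap \T_\beta$. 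The aim is to show that if $\spincv$ fails \eqref{eq: h1-trunc-Q-bound}, then for $m$ sufficiently large neither type (R$'$-2) nor type (R$'$-3) ends can occur. It will then follow that $h^t_1 - \tilde h^t_1$ commutes with the differentials, so that $\tilde h^t_1$ inherits the null-homotopy property from $h^t_1$.

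Ruling out type (R$'$-2) ends amounts to reversing the computation of Remark \ref{rmk: h1-trunc-simpler}. Such a splitting would require $\psi_1$ to contribute to $f^t_{1, \spincv|_{X_{\alpha\gamma\delta}}}$ and $\psi_2$ to contribute to $f^t_{2, \spincv|_{X_{\alpha\delta\beta}}}$, so by Lemmas \ref{lemma: f1-trunc} and \ref{lemma: f2-trunc} (taking any $0 < \epsilon < 1$) one would obtain $\abs{\gen{c_1(\spincv), [R]}} < \frac{m(k+dm)}{\nu}$ and $\abs{\gen{c_1(\spincv), [P_\delta]}} < (1+\epsilon)(k+dm)$. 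The identity $[Q] = \frac{\nu}{k+dm}[R] - \frac{m}{k+dm}[P_\delta]$ then yields $\abs{\gen{c_1(\spincv), [Q]}} < (2+\epsilon)m$. Since \eqref{eq: abgd-c1-Q} forces $\gen{c_1(\spincv), [Q]}$ to be an odd multiple of $m$, this bound collapses to $\abs{\gen{c_1(\spincv), [Q]}} = m$, contradicting the hypothesis on $\spincv$.

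Ruling out type (R$'$-3) ends requires the analog of Lemma \ref{lemma: bgd-triangles} applied to $(\gamma, \delta, \beta)$ small triangles: one enumerates the positive classes in $\pi_2(\Theta_{\gamma\delta}, \Theta_{\delta\beta}, \Theta)$ admitting rigid holomorphic representatives, all supported within the winding region of Figure \ref{fig: twist}. These classes should fit into a family whose $n_w$-multiplicities are either $0$ or at least $m$, so the constraint $n_w(\phi_1) \le n_w(\rho) \le t < m$ will isolate only the smallest representatives. A direct application of the first Chern class formula of \cite[Proposition 6.3]{OSz4Manifold}, in the spirit of the proof of Lemma \ref{lemma: Theta-gamma-delta}, should then show that these smallest classes satisfy $\gen{c_1(\spincs_w(\phi_1)), [Q]} = \pm m$, which forces $\gen{c_1(\spincv), [Q]} = \pm m$ and again contradicts the hypothesis.

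The hardest step will be establishing the classification of $(\gamma, \delta, \beta)$ small triangles. Unlike the $(\beta, \gamma, \delta)$ case treated in Lemma \ref{lemma: bgd-triangles}, the curve $\gamma_g$ has been wound around $\beta_g$ in our well-adapted diagram (to achieve admissibility when $k<0$, and in any case producing the intersection pattern needed for the large surgery formula), so $\T_\gamma \cap \T_\beta$ contains additional generators beyond $\Theta_{\gamma\beta}$. One will need to verify that only $\Theta = \Theta_{\gamma\beta}$ contributes---presumably on grading or spin$^c$ grounds, using the fact that the other intersection points pair up to cancel in the Floer homology of $Y_{\gamma\beta} \cong -\mathbin{\#}^{g-1}(S^1 \times S^2)$---and then enumerate the small triangles explicitly, computing their multiplicities at $w$, $z$, $z'$, $u$ together with the evaluations of the corresponding spin$^c$ structures on $[Q]$.
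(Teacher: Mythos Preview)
Your approach is exactly what the paper intends: it proves this lemma by saying ``This follows just like Lemma~\ref{lemma: h0-trunc-nulhtpy}, taking Remark~\ref{rmk: h1-trunc-simpler} into account,'' and your (R$'$-2) argument is precisely that remark. Your outline for (R$'$-3) is also correct in spirit.

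However, your ``hardest step'' worry is misplaced. In the well-adapted diagram (Figure~\ref{fig: twist}), $\gamma_g$ meets $\beta_g$ in a \emph{single} point; the winding you are recalling (from the proof of Lemma~\ref{lemma: c1-alex} and Figure~\ref{fig: winding-gamma}) is of $\gamma_g$ around $\alpha_g$, and the Dehn twists producing $\delta_g$ are around $\beta_g$. Neither introduces extra points of $\gamma_g \cap \beta_g$. Thus $\T_\gamma \cap \T_\beta$ has the same structure as $\T_\beta \cap \T_\gamma$, the only possible $\Theta$ is $\Theta_{\gamma\beta}$, and the classification of positive rigid classes in $\pi_2(\Theta_{\gamma\delta}, \Theta_{\delta\beta}, \Theta_{\gamma\beta})$ is the same local computation as Lemma~\ref{lemma: bgd-triangles} (the three curves $\beta_g,\gamma_g,\delta_g$ bound the same configuration of triangles in the winding region regardless of cyclic order). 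No additional work is needed.

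One small correction: the fact that $\gen{c_1(\spincv),[Q]}$ is an odd multiple of $m$ comes from \eqref{eq: agdb-c1-Q}, not \eqref{eq: abgd-c1-Q}.
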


\begin{proof}
This follows just like Lemma \ref{lemma: h0-trunc-nulhtpy}, taking Remark \ref{rmk: h1-trunc-simpler} into account.
\end{proof}

The cobordism $X_{\alpha\gamma\delta\beta}$ is indefinite if $k<0$ and negative-definite if $k>0$. As in the previous section, the grading shift $h^t_{1,\spincv}$ is given by
\begin{align}
\nonumber \absgr(h^t_{1,\spincv})
&= \frac{c_1(\spincv)^2 + m+2+3\sign(k)}{4} \\
\label{eq: h1-gr-R-Pd}
&= -\frac{\nu^2 \gen{c_1(\spincv), [R]}^2}{4mk(k+dm)} - \frac{\gen{c_1(\spincv), [P_\delta]}^2}{4d(k+dm)} + \frac{m+2+3\sign(k)}{4} \\
\label{eq: h1-gr-Pg-Q}
&= -\frac{\gen{c_1(\spincv), [P_\gamma]}^2}{4dk} - \frac{\gen{c_1(\spincv), [Q]}^2}{4m} + \frac{m+2 + 3 \sign(k)}{4}.
\end{align}
The following lemma is analogous to Lemma \ref{lemma: h0-trunc}:

\begin{lemma} \label{lemma: h1-trunc}
Fix $t \in \N$. For all $m$ sufficiently large, if $\spincv$ is any spin$^c$ structure with $\abs{\gen{c_1(\spincv), [Q]}} = em$ (where $e$ is an odd integer), and $h^t_{1,\spincv} \ne 0$, then $\abs{\gen{c_1(\spincv), [P_\gamma]}} < \abs{e}dm$.
\end{lemma}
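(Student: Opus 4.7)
The plan is to mimic the proof of Lemma \ref{lemma: h0-trunc}, using formula \eqref{eq: h1-gr-Pg-Q} in place of \eqref{eq: h0-gr-Pd-Q}, but with attention to case analysis on the sign of $k$, since $X_{\alpha\gamma\delta\beta}$ is negative-definite when $k>0$ and only indefinite when $k<0$.

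First I would observe that since the Heegaard diagram for $Y_\lambda(K)$ does not vary with $m$, the gradings on $\CF^t(\bm\alpha, \bm\gamma, w)$ are bounded above and below by constants independent of $m$; the same is true of $\ul\CF^t(\bm\alpha, \bm\beta, w; \GR)$, since $\absgr$ ignores the twisting variable $T$ and the underlying complex $\CF^t(\bm\alpha, \bm\beta, w)$ is similarly unchanged as $m$ varies. Thus there is a constant $C\ge 0$, independent of $m$, such that for any $\spincv$ with $h^t_{1,\spincv}\ne 0$, we have $|\absgr(h^t_{1,\spincv})|\le C$. Substituting the hypothesis $\gen{c_1(\spincv),[Q]}=\pm em$ into \eqref{eq: h1-gr-Pg-Q} and solving for the $[P_\gamma]$-evaluation yields
\[
-\frac{\gen{c_1(\spincv),[P_\gamma]}^2}{4dk}
= \absgr(h^t_{1,\spincv}) + \frac{(e^2-1)m - 2 - 3\sign(k)}{4}.
\]

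Next I would split into the two sign cases. When $k>0$, the left-hand side is non-positive, so the right-hand side must also be non-positive. For $|e|\ge 3$, the term $(e^2-1)m$ grows linearly in $m$, forcing the right-hand side to be positive for large $m$ — hence no such $\spincv$ exists and the bound holds vacuously. For $|e|=1$, the $(e^2-1)m$ term vanishes, so $\gen{c_1(\spincv),[P_\gamma]}^2 \le 4dk(C + O(1))$ is bounded independently of $m$, which is certainly less than $(|e|dm)^2 = d^2m^2$ for $m$ large. When $k<0$, the left-hand side is instead non-negative (since $-1/k>0$), and the upper bound $\absgr(h^t_{1,\spincv})\le C$ gives
\[
\gen{c_1(\spincv),[P_\gamma]}^2 \le 4d|k|\Bigl( C + \tfrac{(e^2-1)m + 1}{4}\Bigr) = O(m),
\]
so $|\gen{c_1(\spincv),[P_\gamma])}| = O(\sqrt{m})$, which is again much smaller than $|e|dm$ for $m$ large.

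The only minor obstacle is keeping the signs straight in the indefinite vs.\ negative-definite cases; there is no new geometric input beyond the grading-shift formula \eqref{eq: h1-gr-Pg-Q} and the uniform grading bounds on the source and target of $h^t_1$. Unlike Lemma \ref{lemma: h0-trunc}, we do not even need Proposition \ref{prop: maslov-bound}, because neither $\CF^t(\bm\alpha, \bm\gamma, w)$ nor $\ul\CF^t(\bm\alpha, \bm\beta, w; \GR)$ grows in grading with $m$; the asymmetry with the $h_0$ case comes precisely from this, and it makes the argument here slightly easier.
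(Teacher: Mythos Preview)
Your argument is correct and follows essentially the same route as the paper: both use the grading formula \eqref{eq: h1-gr-Pg-Q} together with the fact that $\absgr$ on $\ul\CF^t(\bm\alpha,\bm\beta,w;\GR)$ and $\CF^t(\bm\alpha,\bm\gamma,w)$ is bounded independently of $m$. The paper handles both signs of $k$ in one stroke by rewriting the inequality as $C'' > \gen{c_1(\spincv),[P_\gamma]}^2 + (e^2-1)dkm$ and then assuming $\abs{\gen{c_1(\spincv),[P_\gamma]}}\ge |e|dm$ to reach $C'' > d^2m^2$; your case split on $\sign(k)$ is a bit longer but equally valid, and in the $k>0$ case it incidentally recovers part of Lemma~\ref{lemma: h1-trunc-k-pos} (no contributions with $|e|\ge 3$).

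One small point of care: in your $k<0$ case the bound $\gen{c_1(\spincv),[P_\gamma]}^2 \le 4d|k|\bigl(C+\tfrac{(e^2-1)m+1}{4}\bigr)$ is $O(e^2 m)$, not $O(m)$, since $e$ is not fixed in advance. This does not affect the conclusion, because after squaring the target inequality $|\gen{c_1(\spincv),[P_\gamma]}|<|e|dm$ becomes $e^2 d^2 m^2$, and $d|k|(e^2-1)m + O(1) < e^2 d^2 m^2$ holds for all odd $e$ once $m$ exceeds a threshold independent of $e$ (divide through by $e^2$ and use $(e^2-1)/e^2<1$). It is worth making this uniformity explicit.
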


\begin{proof}
The gradings on $\ul\CF^t(\bm\alpha, \bm\beta, w; \GR)$ and $\CF^t(\bm\alpha, \bm\gamma, w)$ are bounded above and below by constants independent of $m$. Therefore, for some constant $C$, if $\spincv$ is any spin$^c$ structure for which $h^t_{0,\spincv} \ne 0$, then $-C < \absgr(h^t_{1,\spincv}) < C.$
Using \eqref{eq: h1-gr-Pg-Q}, we have:
\[
-C < -\frac{\gen{c_1(\spincv), [P_\gamma]}^2}{4dk} - \frac{\gen{c_1(\spincv), [Q]}^2}{4m} + \frac{m+2-3\sign(k)}{4} < C.
\]
Thus, for some other positive constant $C'$,
\[
-C' < \frac{\gen{c_1(\spincv), [P_\gamma]}^2}{4dk} + \frac{(e^2-1)m}{4}  < C'.
\]
Setting $C'' = 4d\abs{k}C'$, we have
\[
C'' > \gen{c_1(\spincv), [P_\gamma]}^2 + (e^2-1)dkm.
\]
If $\abs{\gen{c_1(\spincv),[P_\gamma]}} \ge \abs{e}dm$, we obtain
\[
C'' > e^2d^2m^2 + (e^2-1) dkm  = dm ( e^2(k+dm) - k  ) \ge d^2m^2,
\]
which is a contradiction for $m$ sufficiently large. Hence $\abs{\gen{c_1(\spincv),[P_\gamma]}} < \abs{e}dm$ as required.
\end{proof}

When $k>0$, there is an even stronger statement:

\begin{lemma} \label{lemma: h1-trunc-k-pos}
Fix $t \in \N$, and assume $k>0$. For all $m$ sufficiently large, if $\spincv$ is any spin$^c$ structure for which $h^t_{1,\spincv} \ne 0$, then $\spincv$ satisfies \eqref{eq: h1-trunc-R-bound} and \eqref{eq: h1-trunc-Pd-bound} (and hence \eqref{eq: h1-trunc-Q-bound}). In particular, $\tilde h^t_1 = h^t_1$.
\end{lemma}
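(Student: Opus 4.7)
The plan is to exploit the fact that when $k>0$ the cobordism $X_{\alpha\gamma\delta\beta}$ is negative-definite: inspection of \eqref{eq: h1-gr-R-Pd} shows that both quadratic terms in $\gen{c_1(\spincv), [R]}$ and $\gen{c_1(\spincv), [P_\delta]}$ carry negative coefficients, so a single upper bound on $\absgr(h^t_{1,\spincv})$ will control both evaluations simultaneously. This is in sharp contrast to the $k<0$ case treated in Lemma \ref{lemma: h1-trunc}, where the two terms have opposite signs and only $\gen{c_1(\spincv), [P_\gamma]}$ can be bounded directly from the grading shift formula \eqref{eq: h1-gr-Pg-Q}.

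First, I would observe that for fixed $t$, the $\absgr$-gradings on both the source $\CF^t(\bm\alpha, \bm\gamma, w)$ and the target $\ul\CF^t(\bm\alpha, \bm\beta, w; \GR)$ are bounded above and below by constants independent of $m$. For the source this is immediate since $Y_\lambda(K)$ does not depend on $m$; for the twisted target this uses Definition \ref{def: newgr}, according to which $\absgr$ on $\ul\CF^t(\bm\alpha,\bm\beta, w; \GR)$ extends the ordinary Maslov grading on the (fixed) untwisted complex $\CF^t(\bm\alpha, \bm\beta, w)$ without reference to the twisting variable $T$. Consequently, whenever $h^t_{1,\spincv}\ne 0$, we have $\abs{\absgr(h^t_{1,\spincv})} \le C$ for some constant $C$ independent of $m$, and combining this with \eqref{eq: h1-gr-R-Pd} yields
\[
\frac{\nu^2 \gen{c_1(\spincv), [R]}^2}{4mk(k+dm)} + \frac{\gen{c_1(\spincv), [P_\delta]}^2}{4d(k+dm)} \;\le\; \frac{m}{4} + C'
\]
for some (possibly larger) constant $C'$ independent of $m$.

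Since both summands on the left are non-negative, each is bounded individually, giving $\gen{c_1(\spincv), [P_\delta]}^2 \le d(k+dm)(m+4C')$ and $\nu^2 \gen{c_1(\spincv), [R]}^2 \le mk(k+dm)(m+4C')$. For the first, $\sqrt{d(k+dm)(m+4C')}$ is asymptotic to $dm$ and therefore strictly less than $(1+\epsilon)(k+dm)$ once $m$ is large, yielding \eqref{eq: h1-trunc-Pd-bound}. For the second, the inequality $\abs{\gen{c_1(\spincv), [R]}} < m(k+dm)/\nu$ reduces, upon squaring, to $k(m+4C') < m(k+dm)$, which manifestly holds for $m$ large; this yields \eqref{eq: h1-trunc-R-bound}. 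Finally, Remark \ref{rmk: h1-trunc-simpler} shows that \eqref{eq: h1-trunc-R-bound} and \eqref{eq: h1-trunc-Pd-bound} together force \eqref{eq: h1-trunc-Q-bound}, so every $\spincv$ contributing to $h^t_1$ already meets the criterion of Definition \ref{def: h1-trunc}, giving $\tilde h^t_1 = h^t_1$.

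There is no real obstacle here: the entire argument is a two-variable quadratic estimate, and it goes through precisely because definiteness of $X_{\alpha\gamma\delta\beta}$ (when $k>0$) puts both quadratic terms on the same side. The only mild care is in checking that the quantitative bounds one extracts from the grading inequality are compatible with the exact thresholds appearing in \eqref{eq: h1-trunc-R-bound} and \eqref{eq: h1-trunc-Pd-bound}, and that this compatibility is uniform in $\spincv$ for all $m$ sufficiently large.
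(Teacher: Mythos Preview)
Your proof is correct and follows essentially the same approach as the paper: both use the bounded gradings on source and target together with the grading-shift formula \eqref{eq: h1-gr-R-Pd}, observe that for $k>0$ the two quadratic terms are simultaneously nonnegative, and then bound each evaluation separately to recover \eqref{eq: h1-trunc-R-bound} and \eqref{eq: h1-trunc-Pd-bound} (the paper appeals to Lemmas \ref{lemma: f1-trunc} and \ref{lemma: f2-trunc} for the final asymptotic estimates, whereas you carry them out explicitly).
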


\begin{proof}
Let $C$ be as in the proof of Lemma \ref{lemma: h0-trunc}.  By \eqref{eq: h1-gr-R-Pd}, we have
\[
-C < \frac{\nu^2 \gen{c_1(\spincv), [R]}^2}{4mk(k+dm)} + \frac{\gen{c_1(\spincv), [P_\delta]}^2}{4d(k+dm)} - \frac{m+5}{4} < C.
\]
The first two terms are both nonnegative since $k>0$, so each one is less than $\frac {m+5}{4} + C$. Just as in the proofs of Lemmas \ref{lemma: f1-trunc} and \ref{lemma: f2-trunc}, for $m$ sufficiently large, both \eqref{eq: h1-trunc-R-bound} and \eqref{eq: h1-trunc-Pd-bound} must hold.
\end{proof}


\begin{proposition} \label{prop: h1-filt}
Fix $t \in \N$. For all $m$ sufficiently large, the map $\tilde h_1^t$ is filtered with respect to the filtrations $\JJ_{\alpha\gamma}$ and $\JJ_{\alpha\beta}$ and is homogeneous of degree $0$ with respect to $\newgr$. (When $k>0$, the same is true for $h_1^t$.)
\end{proposition}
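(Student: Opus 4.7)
The plan is to mirror the proof of Proposition \ref{prop: h0-filt} with the roles of $X_{\alpha\beta\gamma\delta}$ and $X_{\alpha\gamma\delta\beta}$ swapped. The map $h_1^\circ$ counts rectangles $\rho \in \pi_2(\q, \Theta_{\gamma\delta}, \Theta_{\delta\beta}, \x)$ passing through the intermediate three-manifold $Y_{\alpha\delta}$, so the relevant identities are \eqref{eq: agdb-c1-R}, \eqref{eq: agdb-c1-Pd}, and \eqref{eq: agdb-c1-Q}. First I would identify, for each $\spincv \in \Spin^c_0(X_{\alpha\gamma\delta\beta})$, the summand of \eqref{eq: ab-twisted-decomp} into which $\theta \circ h^\circ_{1,\spincv}$ lands: using \eqref{eq: agdb-c1-Pd}, it is $\CF^\circ(\bm\alpha,\bm\beta,\spincv|_Y,w) \otimes T^r$, where $r$ is the unique representative of $-\tfrac{1}{2d}(\gen{c_1(\spincv),[P_\delta]} + k+dm) \bmod m$ in the range of \eqref{eq: r-bounds}. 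Writing $\gen{c_1(\spincv),[Q]} = em$ (with $e = \pm 1$ by Definition \ref{def: h1-trunc}), I would then introduce the integer $p$ determined by $\gen{c_1(\spincv),[P_\delta]} = -2dr-k+(2p-1)dm$.

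Next, for each rectangle $\rho$ contributing to $h^t_{1,\spincv}$, I would compute the filtration shift. Substituting from \eqref{eq: agdb-c1-R} for $\Al(\q)$, using the identity $\tfrac{\nu}{m}[R] = [P_\delta] + \tfrac{k+dm}{m}[Q]$, and exploiting the relation $n_u(\rho)-n_w(\rho) = \tfrac{e+1}{2}$ read off from \eqref{eq: agdb-c1-Q}, a calculation closely paralleling the one in Proposition \ref{prop: h0-filt} should yield
\[
\JJ_{\alpha\gamma}([\q,i]) - \JJ_{\alpha\beta}\bigl([\x,\, i-n_w(\rho)] \otimes T^r\bigr) = n_{z'}(\rho) + \frac{dm(2p+e-1)}{2k}.
\]
The error term vanishes exactly when $p=\tfrac{1-e}{2}$, so the key step is to show that the bound of Lemma \ref{lemma: h1-trunc}, namely $\abs{\gen{c_1(\spincv),[P_\gamma]}}< \abs{e}dm$, forces this value of $p$. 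Since $[P_\gamma]=[P_\delta]+d[Q]$, the strict bound translates into $\gen{c_1(\spincv),[P_\delta]} \in (-2dm,0)$ when $e=1$ and $(0,2dm)$ when $e=-1$; comparing to the intervals $(-2dm, 0]$ and $(0, 2dm]$ corresponding to $p=0$ and $p=1$ respectively pins down $p=\tfrac{1-e}{2}$, leaving $n_{z'}(\rho)\ge 0$ as the filtration shift.

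For the $\newgr$ statement, I would substitute into \eqref{eq: h1-gr-R-Pd} the expressions $\gen{c_1(\spincv),[P_\delta]}=-(2dr+k)-edm$ and $\tfrac{\nu}{m}\gen{c_1(\spincv),[R]}=-(2dr+k)+ke$, together with the correction term from \eqref{eq: newgr-ab}. Setting $a=2dr+k$, the $a^2$ contributions cancel via the identity $\tfrac{m}{k(k+dm)}+\tfrac{1}{d(k+dm)}=\tfrac{1}{dk}$, the $ae$ cross terms cancel directly, and the $e^2$ contributions collapse to $-e^2m/4$, which combines with the $+m/4$ in \eqref{eq: h1-gr-R-Pd} to give zero since $e^2=1$.

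The main obstacle lies in the $k<0$ case, where $X_{\alpha\gamma\delta\beta}$ is indefinite and the truncation in Definition \ref{def: h1-trunc} is actually needed: one must verify that the \emph{strictness} of the inequality in Lemma \ref{lemma: h1-trunc} is exactly what excludes the wrong value of $p$ (which would produce a nonzero $\tfrac{dm}{k}$ shift). The $k>0$ case is easier because Lemma \ref{lemma: h1-trunc-k-pos} gives $\tilde h_1^t = h_1^t$ outright, so the same calculation applies verbatim to all of $h_1^t$ without any further truncation, yielding the parenthetical statement.
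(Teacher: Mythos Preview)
Your proposal is correct and follows essentially the same approach as the paper: define $p$ via $\gen{c_1(\spincv),[P_\delta]} = -2dr-k+(2p-1)dm$, use Lemma~\ref{lemma: h1-trunc} together with $[P_\gamma]=[P_\delta]+d[Q]$ to force $p=\tfrac{1-e}{2}$, and then verify that the filtration shift equals $n_{z'}(\rho)$. The only cosmetic difference is that the paper computes the shift by substituting \eqref{eq: agdb-alex} and \eqref{eq: agdb-c1-Pd} directly rather than passing through $[R]$, and for the $\newgr$ statement the paper uses \eqref{eq: h1-gr-Pg-Q} (noting simply that $\gen{c_1(\spincv),[P_\gamma]}=-2dr-k$, so the cancellation with \eqref{eq: newgr-ab} is immediate) instead of your somewhat longer route via \eqref{eq: h1-gr-R-Pd}; both give the same result.
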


\begin{proof}
Let $\spincv$ be a spin$^c$ structure for which $h^t_{1,\spincv} \ne 0$. Write $\gen{c_1(\spincv), [Q]} = em$. By \eqref{eq: h1-trunc-Q-bound}, we will eventually assume that $e = \pm 1$, but for now let us treat $e$ as an arbitrary odd integer (which will motivate the definition of $\tilde h_1^t$).

Suppose that $\rho \in \pi_2(\q, \Theta_{\gamma\delta}, \Theta_{\delta\beta}, \x)$ is a rectangle which contributes to $h^t_{1,\spincv}$. Let $r$ be the value with
\[
\frac{-k-dm}{2d} \le r < \frac{-k+dm}{2d} \quad \text{and} \quad r \equiv -\frac{1}{2d}( \gen{c_1(\spincv), [P_\delta]} +k+dm) \pmod m,
\]
which is one of the exponents appearing in \eqref{eq: ab-twisted-decomp}. Let $p$ be the integer for which
\[
\gen{c_1(\spincv), [P_\delta]} = -2dr-k + (2p-1) dm,
\]
which implies that
\begin{equation} \label{eq: h1-filt-p-bounds}
(2p-2)dm < \gen{c_1(\spincv), [P_\delta]} \le (2p)dm.
\end{equation}
It also follows from \eqref{eq: agdb-c1-Pd} that
\begin{gather*}
-2dr + 2pdm  = 2\Al(\x) - 2d n_w(\rho) + 2dn_z(\rho).
\end{gather*}

By Lemma \ref{lemma: h1-trunc}, we may assume that $\abs{\gen{c_1(\spincv), [P_\gamma]}} < dm$. Recall that $[P_\gamma] = [P_\delta] + d[Q]$, and therefore
\[
\gen{c_1(\spincv), [P_\delta]} = \gen{c_1(\spincv), [P_\gamma]} - edm.
\]
If $e \ge 1$, this gives
\[
-2e dm < \gen{c_1(\spincv), [P_\delta]} < 0.
\]
Therefore, $-e < p$ and $(2p-2) < 0$, so $-e < p \le 0$. Similarly, if $e \le -1$, then
\[
0 < \gen{c_1(\spincv), [P_\delta]} < -2edm,
\]
so $0 < p \le -e$. Specializing to the cases where $e=\pm 1$, if $e=1$, then $p=0$, and if $e=-1$, then $p=1$. In either case, note that $e+2p=1$. By \eqref{eq: agdb-c1-Q}, this means that
\[
p = \frac{1-e}{2} = n_w(\rho) - n_u(\rho) + 1.
\]
Now, we compute:
\begin{align*}
\JJ_{\alpha\gamma}(&[\q,i]) - \JJ_{\alpha\beta}([\x,i-n_w(\rho)] \otimes T^r) \\
&= \AlNorm_{w,z'}(\q) +  \frac{2d r +k+d}{2k} +n_w(\rho) \\
&= \frac{1}{2k} \left( 2\Al(\q) + 2dr + k + d \right) +n_w(\rho) \\
&= \frac{1}{2k} \left( 2\Al(\q) -2\Al(\x) + 2d n_w(\rho) - 2dn_z(\rho) + 2pdm + k + d \right) +n_w(\rho) \\
&= \frac{1}{2k} \Big( 2d n_z(\rho) + 2k n_{z'}(\rho) + 2dm n_u(\rho) - 2(k+dm+d)n_w(\rho) - k-d-2dm \\
& \qquad \quad + 2d n_w(\rho) - 2dn_z(\rho) + 2dm n_w(\rho) - 2dm n_u(\rho) + 2dm + k + d \Big) +n_w(\rho) \\
&= n_{z'}(\rho) \ge 0.
\end{align*}

For the statement about $\newgr$, we first note that
\[
\gen{c_1(\spincv), [P_\gamma]} = \gen{c_1(\spincv), [P_\delta]} + dem =  -2dr-k + (2p+e-1) dm = -2dr-k.
\]
Equations \eqref{eq: newgr-ab} and \eqref{eq: h1-gr-Pg-Q} then immediately imply that $\newgr(h^t_{1,\spincv})=0$, as required.
\end{proof}

\begin{remark}
In the proof above, without the simplifying assumption that $e+2p=1$, we would have found that
\[
\JJ_{\alpha\gamma}([\q,i]) - \JJ_{\alpha\beta}([\x,i-n_w(\rho)] \otimes T^r) = n_{z'}(\rho) + \frac{e+2p-1}{2k}.
\]
Thus, the map $h^+_1$ (which incorporates all spin$^c$ structures) is not necessarily filtered.
\end{remark}

%

\subsubsection{The map $h_2^t$} \label{sssec: h2-filt}

Next, we consider the map $h_2^t$. Let $W_{\alpha\delta\beta\gamma}$ be the associated cobordism, which is indefinite if $k>0$ and negative-definite if $k<0$. According to \eqref{eq: h2-def}, the map $h_2^t$ counts only holomorphic rectangles $\rho$ with the property that $n_w(\rho) \equiv n_z(\rho) \pmod m$. By \eqref{eq:  adbg-c1-Q}, this is equivalent to the condition that $\gen{c_1(\spincs_w(\rho)), [Q]} = em$, where $e$ is an odd integer. As usual, consider the decomposition into terms of the form $h^t_{2,\spincv}$.

Just as in the previous section, let $\tilde h^t_2$ denote the sum of all terms $h^t_{2,\spincv}$ for which \begin{equation} \label{eq: h2-trunc-Q-bound}
\gen{c_1(\spincv), [Q]} = \pm m.
\end{equation}
The analogue of Remark \ref{rmk: h1-trunc-simpler} holds here as well: for any $0 < \epsilon < 1$, if $\spincv$ satisfies
\begin{align}
\label{eq: h2-trunc-Pd-bound}
\abs{\gen{c_1(\spincv), [P_\delta]}} &< (1+\epsilon)(k+dm) \\
\label{eq: h2-trunc-Pg-bound}
\abs{\gen{c_1(\spincv), [P_\gamma]}} &< \epsilon dm,
\end{align}
then it satisfies \eqref{eq: h2-trunc-Q-bound} as well.
The following lemmas are left as an exercise:

\begin{lemma} \label{lemma: h2-trunc-nulhtpy}
Fix $t \in \N$. For all $m$ sufficiently large, $\tilde h_2^t$ is a null-homotopy of $f_0^t \circ f_2^t$.  \qed
\end{lemma}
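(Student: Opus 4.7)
The argument parallels that of Lemmas \ref{lemma: h0-trunc-nulhtpy} and \ref{lemma: h1-trunc-nulhtpy}. Begin by fixing a rectangle class $\rho \in \pi_2(\a, \Theta_{\delta\beta}, \Theta_{\beta\gamma}, \q)$ with $\mu(\rho) = 0$, $0 \le n_w(\rho) \le m$, $n_w(\rho) \equiv n_z(\rho) \pmod m$, and $\MM(\rho) \ne \emptyset$. The ends of $\MM(\rho)$ decompose into three families: (R-1) a holomorphic rectangle $\rho'$ with $\spincs_w(\rho') = \spincs_w(\rho)$ concatenated with an $(\alpha,\delta)$, $(\delta,\beta)$, $(\beta,\gamma)$, or $(\alpha,\gamma)$ bigon; (R-2) a decomposition $\rho = \psi_1 * \psi_2$ with $\psi_1 \in \pi_2(\a, \Theta_{\delta\beta}, \x)$ and $\psi_2 \in \pi_2(\x, \Theta_{\beta\gamma}, \q)$; or (R-3) a decomposition $\rho = \phi_1 * \phi_2$ with $\phi_1 \in \pi_2(\Theta_{\delta\beta}, \Theta_{\beta\gamma}, \Theta')$ and $\phi_2 \in \pi_2(\a, \Theta', \q)$ for some $\Theta' \in \T_\delta \cap \T_\gamma$. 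Type (R-1) ends give the expected chain-homotopy relation $h_2^t \circ \partial + \partial \circ h_2^t$, while types (R-2) and (R-3) respectively contribute to $f_0^t \circ f_2^t$ and to other terms which vanish because no nontrivial triangle $\phi_1$ satisfies all constraints.

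The key step is to show that whenever $\spincv = \spincs_w(\rho)$ fails to satisfy \eqref{eq: h2-trunc-Q-bound}, only type (R-1) ends can occur, so that $h_2^t - \tilde h_2^t$ commutes with the differentials. If an end of type (R-2) occurs, then $\psi_1$ contributes to $f_{2,\spincv_{\alpha\delta\beta}}^t$ and $\psi_2$ to $f_{0, \spincv_{\alpha\beta\gamma}}^t$, so Lemmas \ref{lemma: f2-trunc} and \ref{lemma: f0-trunc} force the evaluations \eqref{eq: h2-trunc-Pd-bound} and \eqref{eq: h2-trunc-Pg-bound} to hold (for some small $\epsilon <1$); as noted in the discussion preceding the lemma, these two bounds together imply \eqref{eq: h2-trunc-Q-bound}. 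If an end of type (R-3) occurs, then (assuming $m>t$, so that $n_w(\phi_1) \le m$) the $(\delta, \beta, \gamma)$ triangle $\phi_1$ must be an analogue of the small triangles $\tau_0^\pm$ of Lemma \ref{lemma: bgd-triangles}, forcing $\Theta' = \Theta_{\delta\gamma}$ and $\gen{c_1(\spincs_w(\phi_1)), [Q]} = \pm m$; since $\phi_2$ is an $(\alpha, \delta, \gamma)$ triangle and so contributes $0$ to $\gen{c_1,[Q]}$, this again forces \eqref{eq: h2-trunc-Q-bound}.

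Granted these two claims, the map $h_2^t - \tilde h_2^t$ is a chain map: the rectangles it counts admit only bigon degenerations. Since $h_2^t$ itself is a null-homotopy of $f_0^t \circ f_2^t$ by the standard Floer-theoretic argument (unpacking all three types of ends of $\MM(\rho)$), subtracting the chain map $h_2^t - \tilde h_2^t$ shows that $\tilde h_2^t$ is a null-homotopy of $f_0^t \circ f_2^t$ as well. The only nontrivial calculation is the identification of $(\delta, \beta, \gamma)$ small triangles, which is precisely the mirror of Lemma \ref{lemma: bgd-triangles} under reversing the cyclic order of the curves, together with the corresponding Chern class computation using \eqref{eq: adbg-c1-Q}; this is the main technical obstacle, but it is directly analogous to what was already done for $h_0^t$ and $h_1^t$.
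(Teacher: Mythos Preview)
Your proposal is correct and follows exactly the approach the paper intends: the paper's own proof is simply ``\qed'' with the implicit instruction that the argument is parallel to Lemma~\ref{lemma: h0-trunc-nulhtpy} (and Lemma~\ref{lemma: h1-trunc-nulhtpy}), which is precisely the template you execute. One small imprecision: in your first paragraph you say the type (R-3) ends ``vanish because no nontrivial triangle $\phi_1$ satisfies all constraints,'' which is not quite the mechanism in the standard argument (those ends cancel rather than fail to exist); but you handle this correctly in the truncation step, where what matters is only that any such end forces \eqref{eq: h2-trunc-Q-bound}, and your final paragraph correctly appeals to the already-established fact that the untruncated $h_2^t$ is a null-homotopy.
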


\begin{lemma} \label{lemma: h2-trunc}
Fix $t \in \N$ and $\epsilon > 0$. For all $m$ sufficiently large, if $\spincv$ is any spin$^c$ structure with $\abs{\gen{c_1(\spincv), [Q]}} = em$ (where $e$ is an odd integer), and $h^t_{1,\spincv} \ne 0$, then
\begin{equation} \label{eq: h2-trunc-R-bound}
\abs{\gen{c_1(\spincv), [R]}} < \frac{m(k+dm) \abs{e}}{\nu}. 
\end{equation}
\end{lemma}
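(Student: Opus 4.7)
The strategy is to apply Lemma \ref{lemma: h1-trunc}, which governs the same hypothesis $h^t_{1,\spincv}\ne 0$, and combine it with the linear identity
\[
\gen{c_1(\spincv),[R]} = \frac{m}{\nu}\gen{c_1(\spincv),[P_\gamma]} + \frac{k e m}{\nu},
\]
obtained by pairing $c_1(\spincv)$ with $[R] = \frac{m}{\nu}[P_\gamma] + \frac{k}{\nu}[Q]$ and using $\gen{c_1(\spincv),[Q]} = em$. Thus everything reduces to controlling $\gen{c_1(\spincv),[P_\gamma]}$ and exploiting the sign of $k$.

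When $k>0$, the argument is immediate: Lemma \ref{lemma: h1-trunc} gives $|\gen{c_1(\spincv),[P_\gamma]}| < d|e|m$, and the triangle inequality then produces the desired bound
\[
|\gen{c_1(\spincv),[R]}| < \frac{m\cdot d|e|m}{\nu} + \frac{k|e|m}{\nu} = \frac{m|e|(k+dm)}{\nu},
\]
since $|k|=k$.

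The hard case is $k<0$. Here the triangle inequality gives only $|\gen{c_1(\spincv),[R]}| < m|e|(dm+|k|)/\nu$, and since $dm+|k|>dm+k$, this is strictly weaker than required. The plan is to invoke the sharper quadratic estimate that is actually proved (though then weakened) inside the proof of Lemma \ref{lemma: h1-trunc}: combining the grading formula \eqref{eq: h1-gr-Pg-Q} with the $m$-independent Maslov grading bounds on $\CF^t(\bm\alpha,\bm\gamma,w)$ and $\ul\CF^t(\bm\alpha,\bm\beta,w;\GR)$, one obtains $\gen{c_1(\spincv),[P_\gamma]}^2 < C - (e^2-1)dkm$ for some constant $C$ independent of $m$. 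Since $(e^2-1)dkm\le 0$ when $k<0$ and $|e|\ge 1$, this yields $|\gen{c_1(\spincv),[P_\gamma]}| = O(|e|\sqrt{m})$ with implicit constants depending only on $d$ and $k$. Substituting back into the linear relation gives
\[
|\gen{c_1(\spincv),[R]}| \le \frac{|k||e|m}{\nu} + O\!\left(\frac{|e|m^{3/2}}{\nu}\right),
\]
and comparing with the target $\frac{m|e|(k+dm)}{\nu}$ reduces the claim to $O(\sqrt m) < dm + 2k$, which holds for $m$ sufficiently large.

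The main obstacle is precisely this $k<0$ case: the required bound is genuinely stronger than what the triangle inequality provides, and one must carefully extract the quadratic estimate on $|\gen{c_1(\spincv),[P_\gamma]}|$ from the interior of the proof of Lemma \ref{lemma: h1-trunc} rather than from its stated conclusion. Uniformity in $e$ is not a real difficulty, since the grading constraint \eqref{eq: h1-gr-Pg-Q} together with the $m$-independent Maslov bounds already forces $|e|$ to lie in a range that scales compatibly with the growth of the right-hand side of the target inequality, so one never has to confront a genuinely unbounded family of odd integers $e$.
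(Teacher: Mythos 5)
Your argument correctly establishes the statement as literally written, and under that reading the lemma is essentially a corollary of Lemma \ref{lemma: h1-trunc} together with the identity $[R] = \frac{m}{\nu}[P_\gamma] + \frac{k}{\nu}[Q]$ (re-extracting the quadratic estimate from the interior of that lemma's proof in the $k<0$ case, as you do). The problem is that the hypothesis ``$h^t_{1,\spincv}\ne 0$'' is a typo in the paper: Lemma~\ref{lemma: h2-trunc} sits in the section on $h_2$, its conclusion \eqref{eq: h2-trunc-R-bound} is invoked in the proof of Proposition~\ref{prop: h2-filt} under the explicit hypothesis ``$\gen{c_1(\spincv),[Q]}=em$ and $h^t_{2,\spincv}\ne 0$'', and the adjacent Lemma~\ref{lemma: h2-trunc-k-neg} (carrying the same typo) concludes $\tilde h^t_2 = h^t_2$, which only makes sense for~$h_2$. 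Under the intended hypothesis $h^t_{2,\spincv}\ne 0$, the structure $\spincv$ lies in $\Spin^c_0(X_{\alpha\delta\beta\gamma})$ rather than $\Spin^c_0(X_{\alpha\gamma\delta\beta})$, and $h^t_{2,\spincv}\ne 0$ does not entail $h^t_{1,\spincv}\ne 0$; Lemma~\ref{lemma: h1-trunc} therefore simply does not apply, and your reduction collapses.

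The intended proof is the $X_{\alpha\delta\beta\gamma}$ analogue of Lemma~\ref{lemma: h0-trunc}, argued directly from the $(R,Q)$ block of the intersection form rather than via a bound on $[P_\gamma]$. In $X_{\alpha\delta\beta\gamma}$ one has $[R]^2 = +\frac{mk(k+dm)}{\nu^2}$ (opposite in sign to the $X_{\alpha\beta\gamma\delta}$ case, since $[R]$ now sits in $X_{\alpha\delta\gamma}=-X_{\alpha\gamma\delta}$), $[Q]^2=-m$, and $[R]\cdot[Q]=0$. Combining the resulting identity $\absgr(h^t_{2,\spincv}) = \frac{\nu^2\gen{c_1(\spincv),[R]}^2}{4mk(k+dm)} - \frac{\gen{c_1(\spincv),[Q]}^2}{4m} + \text{const}$ with the Maslov bounds $-C \le \absgr \le \tfrac{m}{4}+C$ on $\CF^t(\bm\alpha,\bm\delta,w)$ from Proposition~\ref{prop: maslov-bound} and the $m$-independent bounds on $\CF^t(\bm\alpha,\bm\gamma,w)$ gives a two-sided constraint on the $[R]$-term. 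When $k>0$ the upper inequality bounds $\gen{c_1(\spincv),[R]}^2$ by roughly $\frac{e^2 m^2 k(k+dm)}{\nu^2}$, strictly below the target $\frac{e^2 m^2(k+dm)^2}{\nu^2}$ for $m$ large, uniformly in $e$; when $k<0$ the lower inequality (after multiplying by the negative quantity $4mk(k+dm)$) rules out $\abs{e}\ge 3$ for $m$ large and furnishes an $O(m^{3/2})$ bound when $\abs{e}=1$, well inside the $O(m^2)$ target. This is a genuinely different argument from yours, and does not pass through Lemma~\ref{lemma: h1-trunc} at all.
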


\begin{lemma} \label{lemma: h2-trunc-k-neg}
Fix $t \in \N$ and $\epsilon > 0$, and assume $k<0$. For all $m$ sufficiently large, if $\spincv$ is any spin$^c$ structure for which $h^t_{1,\spincv} \ne 0$, then $\spincv$ satisfies \eqref{eq: h2-trunc-Pd-bound}, \eqref{eq: h2-trunc-Pg-bound}, and  hence \eqref{eq: h2-trunc-Q-bound}. Therefore, $\tilde h^t_2 = h^t_2$. \qed
\end{lemma}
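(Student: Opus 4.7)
The plan is to argue in close analogy with Lemma \ref{lemma: h1-trunc-k-pos}, with the roles of $k>0$ and $k<0$ exchanged and the decomposition $X_{\alpha\gamma\delta\beta} = X_{\alpha\gamma\delta} \cup X_{\alpha\delta\beta}$ replaced by $X_{\alpha\delta\beta\gamma} = X_{\alpha\delta\beta} \cup_{Y_{\alpha\beta}} X_{\alpha\beta\gamma}$. First I derive the grading-shift formula for $h^t_{2,\spincv}$ by summing $\absgr(f^\circ_{2,\spincv_{\alpha\delta\beta}})$ from \eqref{eq: f2-gr} and $\absgr(f^\circ_{0,\spincv_{\alpha\beta\gamma}})$ from \eqref{eq: f0-gr}, and adding $1$ for the extra Maslov index of a rectangle; this yields
\[
\absgr(h^t_{2,\spincv}) = \frac{\gen{c_1(\spincv), [P_\gamma]}^2}{4kd} - \frac{\gen{c_1(\spincv), [P_\delta]}^2}{4d(k+dm)} + \frac{3 - 3\sign(k)}{4}.
\]
When $k<0$ and $m$ is large enough that $k+dm>0$, both $c_1$-dependent summands are non-positive and the constant term equals $\tfrac{3}{2}$.

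Next I bound $\absgr(h^t_{2,\spincv})$ from below. By Proposition \ref{prop: maslov-bound}, the gradings on $\CF^t(\bm\alpha, \bm\delta, w)$ are at most $\tfrac{m}{4}+C$ for a constant $C$ independent of $m$, while the gradings on $\CF^t(\bm\alpha, \bm\gamma, w)$ are bounded above and below by constants independent of $m$. Hence every $\spincv$ with $h^t_{2,\spincv} \ne 0$ satisfies $\absgr(h^t_{2,\spincv}) \ge -\tfrac{m}{4} - C'$ for some constant $C'$ independent of $m$. Plugging into the grading formula and rearranging gives
\[
\frac{\gen{c_1(\spincv), [P_\gamma]}^2}{4d|k|} + \frac{\gen{c_1(\spincv), [P_\delta]}^2}{4d(k+dm)} \le \frac{m}{4} + C''
\]
for a constant $C''$ independent of $m$; since each summand on the left is non-negative, each is bounded individually by $\tfrac{m}{4} + C''$.

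The first bound immediately gives $|\gen{c_1(\spincv), [P_\gamma]}| = O(\sqrt{m})$, which is less than $\epsilon dm$ for $m$ sufficiently large, yielding \eqref{eq: h2-trunc-Pg-bound}. The second gives $|\gen{c_1(\spincv), [P_\delta]}| \le \sqrt{dm(k+dm) + O(k+dm)}$; dividing by $k+dm$, this upper bound is asymptotic to $\sqrt{dm/(k+dm)}$, which equals $\sqrt{1 + |k|/(k+dm)}$ (using $k<0$, so $dm = (k+dm) + |k|$) and tends to $1$ as $m \to \infty$. Hence $|\gen{c_1(\spincv), [P_\delta]}| < (1+\epsilon)(k+dm)$ for all sufficiently large $m$, yielding \eqref{eq: h2-trunc-Pd-bound}. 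As noted after the definition of $\tilde h^t_2$, the bounds \eqref{eq: h2-trunc-Pg-bound} and \eqref{eq: h2-trunc-Pd-bound} together imply \eqref{eq: h2-trunc-Q-bound}, whence $\tilde h^t_2 = h^t_2$.

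The main conceptual point, as in Lemma \ref{lemma: h1-trunc-k-pos}, is that the sign assumption $\sign(k) = -1$ makes both squared-evaluation terms in the grading formula enter with the same (non-positive) sign, so a single Maslov-grading lower bound controls $|\gen{c_1(\spincv), [P_\gamma]}|$ and $|\gen{c_1(\spincv), [P_\delta]}|$ simultaneously. No analogous argument is available when $k>0$, which is why Definition \ref{def: h1-trunc} of $\tilde h_1^t$ is necessary there; here it is redundant.
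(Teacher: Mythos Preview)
Your proof is correct and follows precisely the approach the paper intends; the lemma is left as an exercise analogous to Lemma \ref{lemma: h1-trunc-k-pos}, and your argument mirrors that proof with the roles of $k>0$ and $k<0$ exchanged and the decomposition $X_{\alpha\delta\beta\gamma} = X_{\alpha\delta\beta} \cup_{Y_{\alpha\beta}} X_{\alpha\beta\gamma}$ in place of $X_{\alpha\gamma\delta\beta} = X_{\alpha\gamma\delta} \cup_{Y_{\alpha\delta}} X_{\alpha\delta\beta}$. Your grading-shift formula and the ensuing bounds are correct; the only minor blemish is the closing sentence, where the cross-reference to Definition \ref{def: h1-trunc} (which defines $\tilde h_1^t$) is slightly misplaced, since what is actually needed for $h_2$ when $k>0$ is the truncation of $h_2$ defined in the text just before \eqref{eq: h2-trunc-Q-bound}.
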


\begin{proposition} \label{prop: h2-filt}
Fix $t \in \N$. For all $m$ sufficiently large, the map $\tilde h_2^t$ is filtered with respect to the filtrations $\JJ_{\alpha\delta}$ and $\JJ_{\alpha\gamma}$ and is homogeneous of degree $1$ with respect to $\newgr$. When $k<0$, the same is true for $h_2^t$.
\end{proposition}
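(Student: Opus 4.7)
The plan is to mimic the proofs of Propositions \ref{prop: h0-filt} and \ref{prop: h1-filt}, using the rectangle formulas of Proposition \ref{prop: alex-rectangle} for $(\alpha,\delta,\beta,\gamma)$ rectangles in place of the $(\alpha,\beta,\gamma,\delta)$ and $(\alpha,\gamma,\delta,\beta)$ versions. Fix $\spincv \in \Spin^c_0(X_{\alpha\delta\beta\gamma})$ with $h^t_{2,\spincv} \neq 0$, and let $\spincu = \spincv|_{Y_{\alpha\delta}}$, so $s_\spincu$ is defined. For any rectangle $\rho \in \pi_2(\a,\Theta_{\delta\beta},\Theta_{\beta\gamma},\q)$ contributing to $h^t_{2,\spincv}$, the goal is to show
\[
\JJ_{\alpha\delta}([\a,i]) - \JJ_{\alpha\gamma}([\q,i-n_w(\rho)]) = n_{z'}(\rho) \ge 0.
\]
Unlike the previous two cases, there is no twisting variable $T$ to worry about, so the situation is slightly simpler.

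Using \eqref{eq: ag-filt} and \eqref{eq: ad-filt}, the left-hand side equals $\AlNorm(\a) - \AlNorm(\q) + n_w(\rho) + \frac{d^2m(2s_\spincu - 1)}{2k(k+dm)}$. Next, I would solve \eqref{eq: adbg-c1-Pd} and \eqref{eq: adbg-c1-Pg} for $\Al(\a) = (k+dm)\AlNorm(\a)$ and $\Al(\q) = k\AlNorm(\q)$ in terms of $c_1$ evaluations and local multiplicities of $\rho$, producing the identity
\[
\AlNorm(\a) - \AlNorm(\q) = \frac{\gen{c_1(\spincv),[P_\delta]}}{2(k+dm)} - \frac{\gen{c_1(\spincv),[P_\gamma]}}{2k} - n_w(\rho) + n_{z'}(\rho) + \frac{d}{2k} - \frac{d}{2(k+dm)}.
\]
Substituting this into the filtration difference and combining the last three terms with the $s_\spincu$-correction yields an expression of the form $n_{z'}(\rho) + E(\spincv)$, where $E(\spincv)$ is a rational function of $\gen{c_1(\spincv),[P_\delta]}$, $\gen{c_1(\spincv),[P_\gamma]}$, and $s_\spincu$.

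The heart of the argument is showing $E(\spincv)$ vanishes. Following Proposition \ref{prop: h0-filt}, I would introduce integer offsets by writing
\[
\gen{c_1(\spincv),[P_\delta]} = 2ds_\spincu - k-dm + 2p(k+dm), \quad \gen{c_1(\spincv),[P_\gamma]} = \gen{c_1(\spincv),[P_\delta]} + d\cdot em,
\]
where $e$ is the odd integer with $\gen{c_1(\spincv),[Q]} = em$ (using $[P_\gamma] = [P_\delta] + d[Q]$) and $p \in \Z$ records the deviation of $\spincv|_{W'_m}$ from $\spincx_\spincu$. A direct substitution should reduce $E(\spincv)$ to an affine combination of $e+2p-1$ and similar integer quantities, which vanishes precisely when $\gen{c_1(\spincv),[Q]} = \pm m$ and $p$ takes the corresponding value forced by Lemma \ref{lemma: h2-trunc}'s bound on $\gen{c_1(\spincv),[R]}$. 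This is the step I expect to be the main obstacle: verifying that the specific integer relation forced by the truncation condition \eqref{eq: h2-trunc-Q-bound} (together with the automatic bound on $\gen{c_1(\spincv),[R]}$ from Lemma \ref{lemma: h2-trunc}) is exactly what is needed to kill $E(\spincv)$, without any room to spare. One has to handle the cases $e=1$ and $e=-1$ separately, paralleling the two sub-cases in the proof of Proposition \ref{prop: h0-filt}.

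For the grading statement, the Maslov shift of $h^t_{2,\spincv}$ is $\frac{c_1(\spincv)^2 + m + 2 + 3\sign(k)}{4}$, which by the indefinite decomposition of $X_{\alpha\delta\beta\gamma}$ equals $-\frac{\gen{c_1(\spincv),[P_\gamma]}^2}{4dk} - \frac{\gen{c_1(\spincv),[Q]}^2}{4m} + \frac{m+2+3\sign(k)}{4}$. Under the truncation conditions one has $\gen{c_1(\spincv),[P_\gamma]} = 2ds_\spincu - k - dm + (2p + e -1)dm$, which in the relevant cases simplifies so that combining with \eqref{eq: newgr-ad} and \eqref{eq: newgr-ab} gives $\newgr(h^t_{2,\spincv}) = 1$. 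Finally, for $k < 0$, Lemma \ref{lemma: h2-trunc-k-neg} asserts that every nonzero term $h^t_{2,\spincv}$ already satisfies the truncation conditions, so $\tilde h^t_2 = h^t_2$ and the statement extends to the untruncated map.
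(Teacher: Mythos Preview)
Your plan is essentially the same as the paper's argument: both introduce an integer offset (your $p$, the paper's $q$) via $\gen{c_1(\spincv),[P_\delta]} = 2ds_\spincu + (2q-1)(k+dm)$, use Lemma \ref{lemma: h2-trunc} to bound $\gen{c_1(\spincv),[P_\delta]}$ via $[R]$ and deduce $q=0$ when $e=1$ and $q=1$ when $e=-1$ (so $e+2q=1$), and then compute the filtration difference directly to find it equals $n_{z'}(\rho) - \frac{dm(e+2q-1)}{2k} = n_{z'}(\rho)$.

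Two small corrections to your sketch: for the grading statement, the target is $\CF^t(\bm\alpha,\bm\gamma,w)$, on which $\newgr = \absgr$, so you should not invoke \eqref{eq: newgr-ab} but only \eqref{eq: newgr-ad}; and the decomposition of $c_1(\spincv)^2$ for $X_{\alpha\delta\beta\gamma}$ most naturally uses the orthogonal pair $[P_\delta],[P_\gamma]$ (or $[R],[Q]$), not $[P_\gamma],[Q]$, so your displayed formula for the Maslov shift needs adjustment. The paper simply notes that the $\newgr$ statement follows exactly as in Proposition \ref{prop: h1-filt}, which is a safe shortcut once the integer relation $e+2q=1$ is in hand.
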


\begin{proof}
Let $\a \in \T_\alpha \in \T_\delta$ and $\q \in \T_\alpha \cap \T_\gamma$, and suppose $\rho \in \pi_2(\a, \Theta_{\delta\beta}, \Theta_{\beta\gamma}, \q)$ contributes to $h^t_{2,\spincv}([\a,i])$. Let $\spincu = \spincs_w(\a)$. The definition of $\JJ_{\alpha\delta}$ involves the number $s_\spincu$, which by \eqref{eq: su-bound} and \eqref{eq: su-A(a)-cong} satisfies
\[
-(k+dm) < 2d s_\spincu \le k+dm \quad \text{and} \quad  2d s_\spincu \equiv 2 \Al(\a) + k + dm + d    \pmod {2(k+dm)}.
\]
Combining these facts with \eqref{eq: adbg-c1-Pd}, we have:
\begin{align*}
\gen{c_1(\spincs_w(\rho)), [P_\delta]}
&= 2\Al(\a)  +2(k+dm) n_w(\rho) - 2(k+dm) n_{z'}(\rho) + d \\
&\equiv 2\Al(\a)  + d \pmod{2(k+dm)} \\
&\equiv 2d s_\spincu -k-dm \pmod{2(k+dm)}
\end{align*}
Let $q$ be the integer for which
\[
\gen{c_1(\spincs_w(\rho)), [P_\delta]} = 2d s_\spincu + (2q-1)(k+dm),
\]
so that
\[
(2q-2)(k+dm) < \gen{c_1(\spincs_w(\rho)), [P_\delta]} \le 2q(k+dm).
\]

Suppose that $\gen{c_1(\spincv), [Q]} = em$ and $h^t_{2,\spincv} \ne 0$, so that $\spincv$ satisfies \eqref{eq: h2-trunc-R-bound}. If $e \ge 1$, we have:
\begin{gather*}
\gen{c_1(\spincv), [P_\delta]} = \frac{\nu}{m} \gen{c_1(\spincv), [R]} - \frac{k+dm}{m} \gen{c_1(\spincv), [Q]} \\
- 2e(k+dm) < \gen{c_1(\spincv), [P_\delta]} < 0,
\end{gather*}
so $-2e < 2q$ and $2q-2 < 0$, so $-e < q \le 0$; and if $e=1$, then $q=0$. Likewise, if $e \le -1$, then $0 < q \le -e$; and if $e=-1$, then $q=1$. In either case where $e=\pm 1$, we see that $e+2q = 1$.

Assuming $e=\pm 1$, we now compute:
\begin{align*}
\JJ_{\alpha\delta}&([\a,i]) - \JJ_{\alpha\gamma}([\q,i-n_w(\rho)] ) \\
&= \frac{\Al(\a)}{k+dm} + i + \frac{d^2m(2s_\spincu-1)}{2k(k+dm)} - \frac{\Al(\q)}{k} - i + n_w(\rho) \\
&= \frac{2k \Al(\a) - 2(k+dm)\Al(\q) + d^2m(2s_\spincu-1)}{2k(k+dm)}  + n_w(\rho) \\
&= \frac{2(k+dm)( \Al(\a) - \Al(\q)) - 2dm \Al(\a) + d^2m(2s_\spincu-1)}{2k(k+dm)}  + n_w(\rho) \\
&= \frac{1}{k} \left( d n_z(\rho) + k n_{z'}(\rho) + dm n_u(\rho) - (k+dm+d)n_w(\rho) - \frac{dm}{2} \right) \\
& \qquad - \frac{ dm\left( \gen{c_1(\spincs_w(\rho)), [P_\delta]} -2(k+dm) n_w(\rho) + 2(k+dm)n_u(\rho) - d \right) } {2k(k+dm)} \\
& \qquad + \frac{d^2m(2s_\spincu-1)}{2k(k+dm)}  + n_w(\rho) \\
&= \frac{1}{k} \left( d n_z(\rho) + k n_{z'}(\rho) - d n_w(\rho) - \frac{dm}{2} \right)  - \frac{ dm\left( \gen{c_1(\spincs_w(\rho)), [P_\delta]}   - 2ds_\spincu \right) } {2k(k+dm)} \\
&= n_{z'}(\rho) + \frac{d}{2k} \left(  2n_z(\rho)  -  2n_w(\rho) - m  - m(2q-1) \right)   \\
&= n_{z'}(\rho) + \frac{d}{2k} \left( -\gen{c_1(\spincv), [Q]} - m(2q-1) \right)  \\
&= n_{z'}(\rho) - \frac{dm(e+2q-1)}{2k} \\
&= n_{z'}(\rho) \\
&\ge 0.
\end{align*}
The statement about $\newgr$ follows just as in the proof of Proposition \ref{prop: h1-filt}.
\end{proof}

\subsection{Pentagon maps} \label{ssec: pent-filt}

We now turn to the proof of the second part of Proposition \ref{prop: rect-filt}: showing that each map
$\tilde h^t_{j+1} \circ f^t_j +  f^t_{j+2} \circ \tilde h^t_j$ (where $j \in \Z/3$) is a filtered quasi-isomorphism. This relies on the standard strategy of counting holomorphic pentagons, used by Ozsv\'ath and Szab\'o in \cite{OSzDouble} and then adapted by Mark and the first author in \cite{HeddenMarkFractional}. We will only discuss the case of $j=0$, which is the most technically difficult because of the twisted coefficients. The arguments for $j=1$ and $j=2$ are similar and are left to the reader as an exercise.

Let $\tilde{\bm\beta} = (\tilde \beta_1, \dots, \tilde\beta_g)$ be obtained from $\bm\beta$ by a small Hamiltonian isotopy, such that each $\tilde \beta_i$ meets $\beta_i$ in a pair of points, and assume that $\tilde \beta_g$ is as shown in Figure \ref{fig: beta-tilde}. Let $v$ be a point that is in the same region of $\Sigma \minus (\bm\alpha \cup \bm\beta)$ as $w$ and in the same region of $\Sigma \minus (\bm\alpha \cup \tilde{\bm\beta})$ as $z$. Finally, let $\Theta_{\beta\tilde\beta} \in \T_\beta \cap \T_{\tilde\beta}$ denote the canonical top-dimensional generator. (The twisted chain complex associated to $(\bm\beta, \tilde{\bm\beta})$ is somewhat subtle; see \cite[p.~36]{HeddenMarkFractional}.)

\begin{figure}
\labellist
 \pinlabel $z$ at 93 43
 \pinlabel $w$ at 153 43
 \pinlabel $z'$ at 144 116
 \pinlabel $u$ at 146 74
 \pinlabel $v$ at 110 28
 \pinlabel {{\color{red} $\alpha_g$}} [l] at 223 21
 \pinlabel {{\color{blue} $\beta_g$}} [Bl] at 116 126
 \pinlabel {{\color{blue} $\tilde \beta_g$}} [Br] at 101 126
 \pinlabel {{\color{darkgreen} $\gamma_g$}} [l] at 223 82
 \pinlabel {{\color{purple} $\delta_g$}} [l] at 223 50
 \tiny
 \pinlabel $\bullet$ at 116 82
 \pinlabel $\Theta_{\beta\gamma}$ [br] at 116 82
 \pinlabel $\bullet$ at 162 82
 \pinlabel $\Theta_{\gamma\delta}$ [br] at 165 82
 \pinlabel $\bullet$ at 133 66
 \pinlabel $\Theta_{\delta\tilde\beta}$ [tl] at 133 66
\endlabellist
\includegraphics{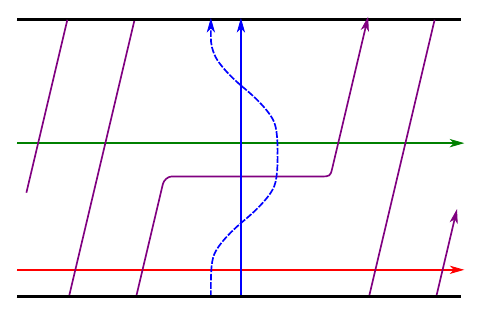}
\caption{Close-up of the winding region with the added curve $\tilde\beta_g$.}
\label{fig: beta-tilde}
\end{figure}

For each $\x \in \T_\alpha \cap \T_\beta$, let $\tilde\x \in \T_\alpha \cap \T_{\tilde\beta}$ be the nearest point. Indeed, every generator in $\T_\alpha \cap \T_{\tilde\beta}$ is of this form, and clearly $\AlNorm(\tilde\x) = \AlNorm(\x)$ and $\absgr(\tilde\x) = \absgr(\x)$. The differentials on the complexes $\ul\CF^t(\bm\alpha, \bm\beta, w; \GR)$ and $\ul\CF^t(\bm\alpha, \tilde{\bm\beta}, w; \GR)$ are given by:
\begin{align*}
\partial_{\alpha\beta}([\x,i]) &= \sum_{\y \in \T_\alpha \cap \T_\beta} \sum_{\substack{ \phi \in \pi_2(\x,\y) \\ \mu(\phi)=1}} \# \widehat\MM(\phi) \, T^{n_w(\phi) - n_v(\phi)} [\y, i-n_w(\phi)] \\
\partial_{\alpha\tilde\beta}([\tilde \x,i]) &= \sum_{\tilde\y \in \T_\alpha \cap \T_{\tilde\beta}} \sum_{\substack{ \phi \in \pi_2(\tilde\x,\y) \\ \mu(\phi)=1}} \# \widehat\MM(\phi) \, T^{n_v(\phi) - n_z(\phi)} [\tilde\y, i-n_w(\phi)]
\end{align*}
Note that in the exponents of $T$, we now use $v$ in place of whichever basepoint ($w$ or $z$) is contained within the same region. Let $\theta_{\alpha\beta}$ and $\theta_{\alpha\tilde\beta}$ be the trivializations defined by \eqref{eq: theta-def} and its $\tilde\beta$ analogue, and let $\JJ_{\alpha\beta}$ and $\JJ_{\alpha\tilde\beta}$ be the corresponding filtrations defined by \eqref{eq: ab-twisted-filt}.

Henceforth, we will treat $f_2^t$ and $h_1^t$ as mapping into $\ul\CF^t(\bm\alpha, \tilde{\bm\beta}, w; \GR)$, with \eqref{eq: f2-def} and \eqref{eq: h1-def} modified accordingly. Of course, all of the results of Sections \ref{sssec: f2-filt} and \ref{sssec: h1-filt} continue to hold. Thus, we may define maps
\[
\Psi_0, \tilde \Psi_0 \co \ul\CF^t(\Sigma, \bm\alpha, \bm\beta, w; \GR) \to \ul\CF^t(\Sigma, \bm\alpha, \tilde{\bm\beta}, w; \GR)
\]
by
\[
\Psi_0 = h_1 \circ f_0 + f_2 \circ h_0 \quad \text{and} \quad \tilde \Psi_0 = \tilde h_1 \circ f_0 + f_2 \circ \tilde h_0.
\]
In \cite{HeddenMarkFractional}, it is verified that $\Psi_0$ is a quasi-isomorphism. We must prove the analogous filtered statement:
\begin{proposition} \label{prop: Psi0-filt-QI}
Fix $t \in \N$ and $\epsilon>0$. For all $m$ sufficiently large, $\tilde \Psi^t_0$ is a filtered quasi-isomorphism with respect to the filtrations $\JJ_{\alpha\beta}$ and $\JJ_{\alpha\tilde\beta}$.
\end{proposition}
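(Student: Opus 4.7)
The plan is to adapt the standard pentagon-counting argument of Ozsv\'ath--Szab\'o \cite{OSzDouble} and its extension to twisted coefficients in Hedden--Mark \cite{HeddenMarkFractional}, carefully tracking spin$^c$ structures so that the chain homotopy can be truncated in a way that matches the truncations already imposed on $\tilde h^t_0$ and $\tilde h^t_1$. Concretely, I introduce pentagon-counting maps
\[
P^t_0 \co \ul\CF^t(\bm\alpha, \bm\beta, w; \GR) \to \ul\CF^t(\bm\alpha, \tilde{\bm\beta}, w; \GR)
\]
counting index $-2$ pentagons in the quintuple diagram $(\Sigma, \bm\alpha, \bm\beta, \bm\gamma, \bm\delta, \tilde{\bm\beta})$ connecting $\x$ to $\tilde\y$ through the top generators $\Theta_{\beta\gamma}$, $\Theta_{\gamma\delta}$, $\Theta_{\delta\tilde\beta}$, weighted by appropriate powers of $T$. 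As in \cite{HeddenMarkFractional}, the boundary of the 1-dimensional moduli space of such pentagons gives the chain-homotopy identity
\[
\partial_{\alpha\tilde\beta} \circ P^t_0 + P^t_0 \circ \partial_{\alpha\beta} = \Psi^t_0 + \Phi^t_0,
\]
where $\Phi^t_0$ is the near-diagonal continuation map induced by the isotopy $\bm\beta \rightsquigarrow \tilde{\bm\beta}$.

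The map $\Phi^t_0$ is a filtered quasi-isomorphism: on generators, $\Phi^t_0([\x,i]) = [\tilde\x, i] + \text{h.o.t.}$, and since $\AlNorm_{w,z}(\tilde \x) = \AlNorm_{w,z}(\x)$ and $\spincs_w(\tilde\x) = \spincs_w(\x)$, the leading term visibly preserves the filtrations $\JJ_{\alpha\beta}$ and $\JJ_{\alpha\tilde\beta}$ and induces an isomorphism on associated graded pieces. Next I define a truncated pentagon map $\tilde P^t_0$ by summing only over pentagons whose spin$^c$ class $\spincv \in \Spin^c_0(X_{\alpha\beta\gamma\delta\tilde\beta})$ satisfies the analogues of \eqref{eq: h0-filt-Pg-bound}--\eqref{eq: h0-filt-Q-bound} and \eqref{eq: h1-trunc-Q-bound}, phrased via evaluations on the triply periodic domains $P_\gamma$, $Q$, and $R$ (viewed inside the quintuple diagram). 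The key identity to verify, following the template of Lemmas \ref{lemma: h0-trunc-nulhtpy} and \ref{lemma: h1-trunc-nulhtpy}, is
\[
\partial_{\alpha\tilde\beta} \circ \tilde P^t_0 + \tilde P^t_0 \circ \partial_{\alpha\beta} = \tilde\Psi^t_0 + \tilde\Phi^t_0,
\]
where $\tilde\Phi^t_0$ agrees with $\Phi^t_0$ up to a term that is itself a chain map on the truncated complex and is null on the associated graded, hence still a filtered quasi-isomorphism. This works because the only codimension-one boundary strata of the moduli of pentagons that contribute to $\tilde\Psi^t_0$ are rectangle-triangle degenerations on the $(\alpha\beta\gamma\delta)$ and $(\alpha\gamma\delta\tilde\beta)$ sides; the spin$^c$ constraints cut out by the truncation of $P^t_0$ are designed precisely so that the rectangle degenerations match the spin$^c$ constraints on $\tilde h^t_0$ and $\tilde h^t_1$, while the pentagons failing the truncation pair up among themselves exactly as in the proofs of Lemmas \ref{lemma: h0-trunc-nulhtpy} and \ref{lemma: h1-trunc-nulhtpy}.

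Finally, I need to show that $\tilde P^t_0$ is itself filtered with respect to $\JJ_{\alpha\beta}$ and $\JJ_{\alpha\tilde\beta}$. For any pentagon $\rho \in \pi_2(\x, \Theta_{\beta\gamma}, \Theta_{\gamma\delta}, \Theta_{\delta\tilde\beta}, \tilde\y)$ in the truncation, one computes
\[
\JJ_{\alpha\beta}([\x,i]\otimes T^r) - \JJ_{\alpha\tilde\beta}([\tilde\y, i-n_w(\rho)] \otimes T^{r'}) = n_{z'}(\rho) + (\text{error}),
\]
where the error assembles, as in the proofs of Propositions \ref{prop: h0-filt} and \ref{prop: h1-filt}, into a sum of terms of the form $\tfrac{dm}{2k}(e+2p-1)$ and $\tfrac{dm}{k}(p-q)$ for the integer parameters $e,p,q$ measuring how far $\gen{c_1(\spincv), [Q]}$, $\gen{c_1(\spincv), [P_\gamma]}$, and $\tfrac{\nu}{m}\gen{c_1(\spincv), [R]}$ lie from their ``central'' values. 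The truncation conditions force $e = \pm 1$, $p-q = 0$, and $e+2p-1 = 0$, so the error vanishes and the difference reduces to $n_{z'}(\rho) \ge 0$. Combining the three ingredients, $\tilde\Psi^t_0$ is filtered chain-homotopic to the filtered quasi-isomorphism $\tilde\Phi^t_0$, hence is itself a filtered quasi-isomorphism.

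The hardest step will be (i) above: organizing the truncation of $P^t_0$ so that the rectangle-triangle boundary degenerations recover precisely $\tilde h^t_0, \tilde h^t_1$ and not their unrestricted cousins, while simultaneously ensuring the ``extra'' pentagons (those violating the spin$^c$ inequalities) still cancel in 1-dimensional moduli. This is a bookkeeping problem about which spin$^c$ structures can restrict compatibly to the various sub-cobordisms of $X_{\alpha\beta\gamma\delta\tilde\beta}$, and it requires the same kind of large-$m$ grading bound used in Proposition \ref{prop: maslov-bound} to ensure that only finitely many ``bad'' spin$^c$ classes are in play.
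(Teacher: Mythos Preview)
Your overall strategy matches the paper's: introduce a pentagon-counting homotopy, truncate it by spin$^c$ conditions, show the truncated map is filtered, and verify it still gives a homotopy between $\tilde\Psi^t_0$ and a filtered quasi-isomorphism $\Phi^t_0$. However, there is a genuine gap in your truncation scheme.

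You propose truncating by conditions on the evaluations of $c_1(\spincv)$ on $[P_\gamma]$, $[Q]$, and $[R]$. But in the quintuple diagram $(\Sigma,\bm\alpha,\bm\beta,\bm\gamma,\bm\delta,\tilde{\bm\beta})$ there is a new periodic domain $V$ with $\partial V=\beta_g-\tilde\beta_g$, and the relevant domains for the output side are $\tilde P_\gamma$, $\tilde P_\delta$, $\tilde Q$ (obtained from $P_\gamma,P_\delta,Q$ by adding multiples of $V$). The correct filtration shift of a pentagon $\sigma$ comes out to $n_{z'}(\sigma)+\tfrac{(e+2q-2p-1)dm}{2k}$, where $e$ is the $[\tilde Q]$-evaluation (not $[Q]$) and $p,q$ come from $[P_\gamma]$ and $[\tilde P_\delta]$. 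To force this error to vanish, the paper's truncation (Definition~\ref{def: g0-trunc}) needs \emph{three} alternative conditions: $\bigl(|\langle c_1(\spincv),[P_\gamma]\rangle|<\epsilon dm$ and $\langle c_1(\spincv),[\tilde Q]\rangle=\pm m\bigr)$, or $\bigl(|\langle c_1(\spincv),[\tilde P_\delta]\rangle|<(1+\epsilon)(k+dm)$ and $\langle c_1(\spincv),[Q]\rangle=\pm m\bigr)$, or $\langle c_1(\spincv),[V]\rangle=0$. The last condition is not an afterthought: the degenerations that produce $\Phi^t_0$ (your ``near-diagonal continuation map'') are exactly those pentagons factoring through a $(\beta,\gamma,\delta,\tilde\beta)$ rectangle and an $(\alpha,\beta,\tilde\beta)$ triangle, and these have $n_w=n_{z'}$, hence $\langle c_1(\spincv),[V]\rangle=0$. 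Without including this as a separate clause in the definition of ``good,'' those degenerations need not satisfy your other conditions, and your truncated homotopy would not recover $\Phi^t_0$ on the nose.

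Your statement that ``$\tilde\Phi^t_0$ agrees with $\Phi^t_0$ up to a term that is null on the associated graded'' papers over this: once the $[V]$-condition is added, the paper shows (Lemma~\ref{lemma: g0-trunc-htpy}) that $\tilde g^t_0$ is a homotopy between $\tilde\Psi^t_0$ and \emph{exactly} $\Phi^t_0$. The verification also requires checking that the canceling pairs of ends of types (P-3) and (P-4) (the $(\beta,\gamma,\delta)$-triangle degenerations) are either both good or both bad, which is a nontrivial case analysis using $[V]\cdot[Q]=0$ and the grading bounds of Lemma~\ref{lemma: g0-trunc}. So the missing ingredient is precisely the role of $V$ and the tilde-domains; once you incorporate those, your outline becomes the paper's proof.
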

It is immediate from our previous results that $\tilde \Psi^t_0$ is a filtered map (since it is a sum of compositions of filtered maps), but more work will be required to see that is a filtered quasi-isomorphism.

The key to understanding $\tilde \Psi^t_0$ is to relate it to the chain isomorphism 
\begin{equation} \label{eq: Phi0}
\Phi_0^\circ \co \ul\CF^\circ(\Sigma, \bm\alpha, \bm\beta, w; \GR) \to \ul\CF^\circ(\Sigma, \bm\alpha, \tilde{\bm\beta}, w; \GR)
\end{equation}
given by
\begin{equation} \label{eq: Phi0-def}
\Phi_0^\circ(T^s \cdot [\x, i]) =
\sum_{\tilde\y \in \T_\alpha \cap \T_{\tilde\beta}} \sum_{\substack{\psi \in \pi_2(\x, \Theta_{\beta\tilde\beta}, \tilde\y) \\ \mu(\psi)=0 }} \#\MM(\psi) \, T^{s + n_w(\psi)-n_z(\psi)} \cdot [\tilde\y, i-n_w(\psi) ].
\end{equation}
The verification that $\Phi_0^\circ$ is an isomorphism uses a standard energy filtration argument, as in \cite{HeddenMarkFractional}: we have $\Phi_0^\circ(T^s \cdot [\x,i]) = T^s \cdot [\tilde x, i] + {}$ lower order terms. Moreover, for any $\psi \in \pi_2(\x, \Theta_{\beta\tilde\beta}, \tilde\y)$, we have $\AlNorm(\x) - \AlNorm(\tilde\y) = n_z(\psi) - n_w(\psi)$. It follows easily that $\Phi_0^\circ$ is a filtered isomorphism with respect to $\JJ_{\alpha\beta}$ and $\JJ_{\alpha\tilde\beta}$.

%
%
%

Consider the map
\begin{equation} \label{eq: g0-Phi0}
g_0^\circ \co \ul\CF^\circ(\Sigma, \bm\alpha, \bm\beta, w; \GR) \to \ul\CF^\circ(\Sigma, \bm\alpha, \tilde{\bm\beta}, w; \GR)
\end{equation}
defined by: 
\begin{equation}
\label{eq: g0-def}
g_0^\circ(T^s \cdot [\x,i]) = \sum_{\y \in \T_\alpha \cap \T_\beta'}  \sum_{\substack{\sigma \in \pi_2(\x, \Theta_{\beta\gamma}, \Theta_{\gamma\delta}, \Theta_{\delta\beta'}, \y) \\ \mu(\sigma)=-2 \\ \mathclap{s + n_w(\sigma) - n_v(\sigma) \equiv 0 \pmod m}}}   \#\MM(\sigma) \, T^{n_v(\sigma) - n_z(\sigma)} [\y, i-n_w(\sigma)].
\end{equation}
The following lemma is a slight refinement of the statement from \cite{HeddenMarkFractional} and immediately implies that $\Psi_0^\circ$ is a quasi-isomorphism:
\begin{lemma} \label{lemma: g0-htpy}
The map $g_0^\circ$ is a chain homotopy between $\Psi_0^\circ$ and $\Phi_0^\circ + U^m \Phi'$, where $\Phi'$ is some other chain map. In particular, when $m > t$, $g_0^t$ is in fact a chain homotopy between $\Psi_0^t$ and $\Phi_0^t$.
\end{lemma}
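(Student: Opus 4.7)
The strategy is the standard one: examine the ends of the $1$-dimensional moduli spaces of Maslov index $-1$ pentagons $\sigma \in \pi_2(\x, \Theta_{\beta\gamma}, \Theta_{\gamma\delta}, \Theta_{\delta\tilde\beta}, \tilde\y)$ satisfying the twisted congruence $s + n_w(\sigma) - n_v(\sigma) \equiv 0 \pmod m$. Counting these ends modulo $2$, with the appropriate powers of $T$ and $U$, yields a chain-homotopy identity
\[
\partial_{\alpha\tilde\beta} \circ g_0^\circ + g_0^\circ \circ \partial_{\alpha\beta} = \Psi_0^\circ + \Phi_0^\circ + U^m \Phi',
\]
which is precisely the content of the lemma. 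That $\Phi'$ is itself a chain map follows formally: $\Psi_0^\circ$ and $\Phi_0^\circ$ are chain maps and $U$ acts injectively on $\ul\CF^\infty(\bm\alpha, \bm\beta, w; \GR)$, so commutativity of $U^m \Phi'$ with the differentials forces commutativity of $\Phi'$.

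By Gromov compactness, the ends decompose into five types. Disk bubbling at $\x$ or $\tilde\y$ gives $\partial_{\alpha\tilde\beta} g_0^\circ + g_0^\circ \partial_{\alpha\beta}$. Splittings through an intermediate generator of $\T_\alpha \cap \T_\gamma$ break $\sigma$ into an $(\alpha,\beta,\gamma)$-triangle and an $(\alpha,\gamma,\delta,\tilde\beta)$-rectangle, summing (after the usual twisted-coefficient bookkeeping) to $h_1^\circ \circ f_0^\circ$; splittings through $\T_\alpha \cap \T_\delta$ analogously contribute $f_2^\circ \circ h_0^\circ$. Together these give $\Psi_0^\circ$. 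The remaining three types of splittings pass through intermediate generators in $\T_\beta \cap \T_\delta$, $\T_\beta \cap \T_{\tilde\beta}$, or $\T_\gamma \cap \T_{\tilde\beta}$; in each case $\sigma$ separates into a triangle or rectangle living in the $(\bm\beta,\bm\gamma,\bm\delta,\tilde{\bm\beta})$-diagram (with the three $\Theta$-inputs pinned) paired with a complementary $\alpha$-polygon involving $\x$ and $\tilde\y$. The plan is to show that the net contribution of all such ends is $\Phi_0^\circ + U^m \Phi'$.

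The key input is the classification of rigid polygons among the $(\bm\beta,\bm\gamma,\bm\delta,\tilde{\bm\beta})$-curves whose $\Theta$-vertices are $\Theta_{\beta\gamma}$, $\Theta_{\gamma\delta}$, $\Theta_{\delta\tilde\beta}$. Inspection of Figure \ref{fig: beta-tilde} exhibits a unique smallest such object: the small rectangle $\tau_0 \in \pi_2(\Theta_{\beta\gamma}, \Theta_{\gamma\delta}, \Theta_{\delta\tilde\beta}, \Theta_{\beta\tilde\beta})$ supported entirely in the $\Theta$-region, with $n_w(\tau_0) = n_v(\tau_0) = n_z(\tau_0) = 0$ and $\#\MM(\tau_0) = 1$. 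Pairing $\tau_0$ with the $(\alpha,\beta,\tilde\beta)$-triangles $\eta \in \pi_2(\x, \Theta_{\beta\tilde\beta}, \tilde\y)$ recovers $\Phi_0^\circ$ exactly (the $T$-exponents match since $n_v(\tau_0) - n_z(\tau_0) = 0$). Every other rigid polygon that appears (either larger rectangles ending at alternate $\Theta' \in \T_\beta \cap \T_{\tilde\beta}$, or triangles from the $\tau_l^\pm$ families of Lemma \ref{lemma: bgd-triangles}) differs from $\tau_0$ by a nonzero integer combination of triply periodic domains in $\Pi_{\beta\gamma\delta\tilde\beta}$, and a direct inspection of the winding region shows that each such combination contributes at least $m$ to $n_w$. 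Grouping these higher-order contributions yields $U^m \Phi'$. The main obstacle is the careful enumeration of all rigid $(\bm\beta,\bm\gamma,\bm\delta,\tilde{\bm\beta})$-polygons and verification that each non-$\tau_0$ contribution carries a factor of at least $U^m$; this is technical bookkeeping essentially parallel to the rectangle count in \cite{HeddenMarkFractional}. The second statement of the lemma is then immediate: when $m > t$, $U^m$ acts as zero on $\CF^t$, so $U^m \Phi' \equiv 0$ there and $g_0^t$ is a chain homotopy from $\Psi_0^t$ to $\Phi_0^t$.
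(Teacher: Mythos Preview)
Your overall strategy is right, but the model computation at the heart of the argument is incorrect. You assert that there is a \emph{unique} smallest rectangle $\tau_0 \in \pi_2(\Theta_{\beta\gamma}, \Theta_{\gamma\delta}, \Theta_{\delta\tilde\beta}, \Theta_{\beta\tilde\beta})$ with $n_w = 0$, and that this single rectangle is what produces $\Phi_0^\circ$. In fact there are exactly $m$ such rigid holomorphic rectangles with $n_w = 0$ (see \cite[Figure~2]{HeddenMarkFractional} and \cite[p.~122]{OSzSurgery}); they arise because $\delta_g$ winds $m$ times relative to $\beta_g$ and $\tilde\beta_g$ in the winding region. These $m$ rectangles are distinguished by their multiplicities at the auxiliary basepoints, and it is precisely the interaction of all $m$ of them with the twisted-coefficient structure (the mod-$m$ congruence in the definition of $g_0^\circ$ and the $T$-powers) that reproduces $\Phi_0^\circ$. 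With only one rectangle your end-count would not match $\Phi_0^\circ$ on the nose. The paper packages this as the computation $F^\circ_{\beta\gamma\delta\tilde\beta}(\Theta_{\beta\gamma} \otimes \Theta_{\gamma\delta} \otimes \Theta_{\delta\tilde\beta}) = \Theta_{\beta\tilde\beta} + U^m \Theta'$ in the appropriate twisted complex; all other holomorphic rectangles have $n_w$ divisible by $m$ and hence contribute the $U^m \Theta'$ term.

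A secondary issue: you lump the degenerations through $\T_\beta \cap \T_\delta$, $\T_\gamma \cap \T_{\tilde\beta}$, and $\T_\beta \cap \T_{\tilde\beta}$ together and claim their net contribution is $\Phi_0^\circ + U^m\Phi'$. In the actual argument the first two types cancel in pairs (analogous to the $\tau_l^+/\tau_l^-$ cancellation), and only the $(\beta,\gamma,\delta,\tilde\beta)$-rectangle ends---via the $m$-rectangle count above---yield $\Phi_0^\circ + U^m\Phi'$. Your sketch does not separate these mechanisms, and the appeal to Lemma~\ref{lemma: bgd-triangles} for the $U^m$ terms is misplaced: those triangles feed the cancelling degenerations, not the $\Phi_0$ term.
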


\begin{proof}
Just as in \cite[p.~122]{OSzSurgery} and \cite[p.~36]{HeddenMarkFractional}, this comes down to a model computation of $F^\circ_{\beta\gamma\delta\tilde\beta}(\Theta_{\beta\gamma} \otimes \Theta_{\gamma\delta} \otimes \Theta_{\delta\tilde\beta})$, which is made only slightly more complicated by the presence of twisted coefficients. The key observation is that there are $m$ distinguished holomorphic rectangles in $\pi_2(\Theta_{\beta\gamma}, \Theta_{\gamma\delta}, \Theta_{\delta\tilde\beta}, \Theta_{\beta\tilde\beta})$, which are the only classes with $n_w=0$. By looking more closely at the computation there (specifically \cite[Figure 2]{HeddenMarkFractional}), one can verify that all other holomorphic rectangles have $n_w$ divisible by $m$. (Compare Lemma \ref{lemma: bgd-triangles} above.) Therefore, one of the terms that arises in the count of degenerations of holomorphic pentagons is of the form $F^\circ_{\alpha\beta\tilde\beta}(- \otimes (\Theta_{\beta\tilde\beta} + U^m \Theta'))$, where $\Theta'$ is some element of $\ul{\mathbf{CF}}^{\le0} (\Sigma, \bm\beta, \tilde{\bm\beta}, w)$, so it has the form described in the lemma.
\end{proof}

\begin{remark} \label{rmk: pentagon-signs}
To generalize Lemma \ref{lemma: g0-htpy} to $\Z$ coefficients, one would need to show that the $m$ distinguished holomorphic rectangles mentioned above all count with the same sign. This statement is implicitly asserted, but without justification, in \cite{OSzSurgery}. Intuitively, Ozsv\'ath--Stipsicz--Szab\'o's approach to sign assignments from \cite{OSSzSign} could be useful here; because the boundaries of domains of these rectangles all interact with the orientations of the $\beta$, $\gamma$, $\delta$, and $\tilde \beta$ curves in the same way, the rectangles should all count with the same sign. However, that argument is far from rigorous; to our knowledge, it is not known whether the combinatorial sign assignments from \cite{OSSzSign} actually agree with the orientations of moduli spaces, even for bigons.
\end{remark}

Before we discuss the filtration shifts, we need to state analogues of the results of Section \ref{ssec: grading-shift} for pentagons. To begin, let $V$ be the $(\beta,\tilde\beta)$ periodic domain with $\partial V = \beta_g - \tilde\beta_g$, $n_v(V) = 1$, and $n_w(V) = n_z(V) = n_{z'}(V) = n_u(V) = 0$. Let $\tilde P_\gamma$, $\tilde P_\delta$, and $\tilde Q$ be the analogues of $P_\gamma$, $P_\delta$, and $Q$ with $\beta$ circles replaced by $\beta'$ circles: to be precise, up to thin domains, we have
\[
[\tilde P_\gamma] = [P_\gamma] + k[V], \quad [\tilde P_\delta] = [P_\delta]+(k+dm)[V], \quad \text{and} \quad
[\tilde Q] = [Q]-m[V].
\]

The Heegaard diagram determines a $4$-manifold $X_{\alpha\beta\gamma\delta\tilde\beta}$, which admits various decompositions into the pieces described in Section \ref{ssec: cobordisms}; for instance, we have
\[
X_{\alpha\beta\gamma\delta\tilde\beta} = X_{\alpha\beta\gamma\delta} \cup_{Y_{\alpha\delta}} X_{\alpha\delta\tilde\beta} = X_{\alpha\beta\gamma} \cup_{Y_{\alpha\gamma}} X_{\alpha\gamma\delta\tilde\beta} .
\]
In the intersection pairing form on $H_2(X_{\alpha\beta\gamma\delta\tilde\beta})$, we have
\[
[V] \cdot [V] = [V] \cdot [Q] = [V] \cdot [\tilde Q] = 0,
\]
and all other intersection numbers can be deduced accordingly.


\begin{lemma}
For any $\x \in \T_\alpha \cap \T_\beta$, $\tilde\y \in \T_\alpha \cap \T_{\tilde\beta}$, and $\sigma \in \pi_2(\x, \Theta_{\beta\gamma}, \Theta_{\gamma\delta}, \Theta_{\delta\tilde\beta}, \tilde \y)$, we have:
\begin{align}
\label{eq: abgdb-alex}
\Al(\x) - \Al(\tilde \y) &= d n_z(\sigma) + k n_{z'}(\sigma) + dm n_u(\sigma) - (k+dm+d) n_w(\sigma) -dm \\
\label{eq: abgdb-c1-Pg}
\gen{c_1(\spincs_w(\sigma)), [P_\gamma]} &= 2 \Al(\x) + 2dn_w(\sigma) - 2d n_v(\sigma) - k \\
\label{eq: abgdb-c1-Pdt}
\gen{c_1(\spincs_w(\sigma)), [\tilde P_\delta]} &= 2 \Al(\tilde\y) + 2dn_z(\sigma) - 2d n_v(\sigma) - (k+dm) \\
\label{eq: abgdb-c1-Q}
\gen{c_1(\spincs_w(\sigma)), [Q]} &= m(2n_u(\sigma) - 2n_{z'}(\sigma) - 1) \\
\label{eq: abgdb-c1-Qt}
\gen{c_1(\spincs_w(\sigma)), [\tilde Q]} &= m(2n_u(\sigma) - 2n_w(\sigma)-1) \\
\label{eq: abgdb-c1-V}
\gen{c_1(\spincs_w(\sigma)), [V]} &= 2n_w(\sigma) - 2n_{z'}(\sigma)
\end{align}
\end{lemma}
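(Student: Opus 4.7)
The plan is to mimic the derivation of Proposition \ref{prop: alex-rectangle}, this time decomposing a pentagon into a rectangle and a triangle. Given $\sigma \in \pi_2(\x, \Theta_{\beta\gamma}, \Theta_{\gamma\delta}, \Theta_{\delta\tilde\beta}, \tilde\y)$, I choose $\a \in \T_\alpha \cap \T_\delta$ together with a rectangle $\rho \in \pi_2(\x, \Theta_{\beta\gamma}, \Theta_{\gamma\delta}, \a)$ satisfying $\spincs_w(\rho) = \spincs_w(\sigma)|_{X_{\alpha\beta\gamma\delta}}$ and a triangle $\psi \in \pi_2(\a, \Theta_{\delta\tilde\beta}, \tilde\y)$ satisfying $\spincs_w(\psi) = \spincs_w(\sigma)|_{X_{\alpha\delta\tilde\beta}}$, arranged (by adding copies of $\Sigma$ to one factor if necessary) so that $n_w(\sigma) = n_w(\rho) + n_w(\psi)$. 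The difference $S := \DD(\sigma) - \DD(\rho * \psi)$ is then a quintuply periodic domain on $(\Sigma, \bm\alpha, \bm\beta, \bm\gamma, \bm\delta, \tilde{\bm\beta})$ with $n_w(S)=0$, and an extension of the lemma preceding Section \ref{ssec: grading-shift} shows that, up to thin domains, $S$ is a $\Q$-linear combination of $P_\gamma$, $Q$, $V$, and elements of $\Pi_{\alpha\beta}$, with coefficients recoverable from the basepoint multiplicities of $S$.

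For the rectangle contribution I invoke Proposition \ref{prop: alex-rectangle} directly. For the triangle contribution I need an $(\alpha,\delta,\tilde\beta)$-analogue of Lemma \ref{lemma: c1-alex-rev}: the triple $(\Sigma, \bm\alpha, \bm\delta, \tilde{\bm\beta})$ is adapted to the cobordism $X_{\alpha\delta\tilde\beta} \cong W'_{\lambda+m\mu}(K)$ from $Y_{\lambda+m\mu}(K)$ to $Y$, with $\tilde P_\delta$ playing the role of the triply periodic domain, with $k+dm$ in place of $k$, and with the pair $(v,z)$ playing the role of $(w,z)$ for the $(\bm\alpha,\tilde{\bm\beta})$--presentation of $Y$ (since $\tilde\beta_g$ separates $v$ from $z$ and the twisted differential already exhibits $v$ as the $\mu$-side basepoint). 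Summing the resulting rectangle and triangle formulas and correcting by $\langle c_1(\spincs_w(\sigma)), \cdot \rangle - \langle c_1(\spincs_w(\rho*\psi)), \cdot\rangle = 2\langle \PD[S], \cdot\rangle$, evaluated via bilinearity from the intersection numbers listed just before the statement, produces \eqref{eq: abgdb-alex} together with \eqref{eq: abgdb-c1-Pg}, \eqref{eq: abgdb-c1-Pdt}, \eqref{eq: abgdb-c1-Q}, and \eqref{eq: abgdb-c1-Qt}. The remaining identity \eqref{eq: abgdb-c1-V} is cleanest to verify directly: $V$ is supported in a collar of $\tilde\beta_g$ with $\partial V = \beta_g - \tilde\beta_g$ and $n_v(V)=1$, so the model computation underlying Example \ref{ex: lens}, restricted to this collar, gives $\langle c_1(\spincs_w(\sigma)), [V]\rangle = 2(n_w(\sigma)-n_{z'}(\sigma))$.

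The principal obstacle will be combinatorial bookkeeping. Five basepoints $w,z,z',u,v$ must be tracked throughout, and one must keep careful account of orientation conventions on the various sub-cobordisms; in particular, $X_{\alpha\delta\tilde\beta}$ carries the orientation opposite to $W_{\lambda+m\mu}(K)$, so the self-intersection $[\tilde P_\delta]^2$ on this piece has sign opposite to its value on $X_{\alpha\beta\delta}$, and similar sign reversals occur in the pairings $[\tilde P_\delta]\cdot[Q]$, $[\tilde P_\delta]\cdot[V]$, etc. A secondary (but routine) subtlety is confirming that the quintuply periodic domain group is generated, modulo thin domains, by $\{P_\gamma,Q,V\}\cup \Pi_{\alpha\beta}$, which follows by the same argument as for $\bar\Pi_{\alpha\beta\gamma\delta}$ after noting that $V$ spans the extra $(\bm\beta,\tilde{\bm\beta})$ freedom. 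Once these ingredients are in place, the computation proceeds exactly in the style of the proof of Proposition \ref{prop: alex-rectangle}, with each of the six stated formulas arising from the corresponding linear combination of basepoint multiplicities of $\sigma$.
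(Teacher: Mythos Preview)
Your approach is essentially the same as the paper's: the paper's proof consists of the single sentence ``This follows from Propositions \ref{prop: alex-triangle} and \ref{prop: alex-rectangle} in much the same way as Proposition \ref{prop: alex-rectangle} follows from Proposition \ref{prop: alex-triangle},'' and your decomposition of the pentagon into an $(\alpha,\beta,\gamma,\delta)$ rectangle and an $(\alpha,\delta,\tilde\beta)$ triangle, followed by correcting by the quintuply periodic domain $S$, is exactly that pattern spelled out. One minor simplification: you need not verify \eqref{eq: abgdb-c1-V} separately, since $[\tilde Q] = [Q] - m[V]$ gives it immediately from \eqref{eq: abgdb-c1-Q} and \eqref{eq: abgdb-c1-Qt}.
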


\begin{proof}
This follows from Propositions \ref{prop: alex-triangle} and \ref{prop: alex-rectangle} in much the same way as Proposition \ref{prop: alex-rectangle} follows from Proposition \ref{prop: alex-triangle}.
\end{proof}

Just as with the other maps, $g_0^\circ$ decomposes as a sum.
\[
g_0^\circ = \sum_{\spincv \in \Spin^c_0(X_{\alpha\beta\gamma\delta\tilde\beta})} g^\circ_{0,\spincv},
\]
where $g^\circ_{0,\spincv}$ counts pentagons $\sigma$ with $\spincs_w(\sigma) = \spincv$. To be precise, for any $\x \in \T_\alpha \cap \T_\beta$ and any $r \equiv -\AlNorm_{w,z}(\x) \pmod \Z$, we have: 
\begin{align*}
(\theta_{\alpha\tilde\beta} \circ g^\circ_0 & \circ \theta_{\alpha\beta}^{-1}) ([\x,i] \otimes T^r) \\
&=
\sum_{\tilde\y \in \T_\alpha \cap \T_{\tilde\beta}}  \sum_{\substack{\sigma \in \pi_2(\x, \Theta_{\beta\gamma}, \Theta_{\gamma\delta}, \Theta_{\delta\tilde\beta}, \tilde\y) \\ \mu(\sigma)=-2 \\ \mathclap{r+\AlNorm_{w,z}(\x) + n_w(\rho) - n_v(\rho) \equiv 0 \pmod m}}}   \#\MM(\sigma) \, [\tilde\y, i-n_w(\sigma)] \otimes T^{-\AlNorm_{w,z} (\tilde\y) + n_v(\sigma) - n_z(\sigma)} \\
&=
\sum_{\tilde\y \in \T_\alpha \cap \T_{\tilde\beta}}  \sum_{\substack{\sigma \in \pi_2(\x, \Theta_{\beta\gamma}, \Theta_{\gamma\delta}, \Theta_{\delta\tilde\beta}, \tilde\y) \\ \mu(\sigma)=-2 \\ \mathclap{r+ \frac{1}{2d} \left( \gen{c_1(\spincs_w(\sigma)), [P_\gamma]} \right) \equiv 0 \pmod m}}}   \#\MM(\sigma) \, [\tilde\y, i-n_w(\sigma)] \otimes T^{-\frac{1}{2d} \left(\gen{c_1(\spincs_w(\sigma)), [\tilde P_\delta]} + k+dm \right)}
\end{align*}
In particular, given a spin$^c$ structure $\spincv$, let $r$ and $s$ be the numbers satisfying
\begin{equation} \label{eq: g0-r-s-def}
\begin{aligned}
\frac{-k-dm}{2d} &\le r < \frac{-k+dm}{2d} & r &\equiv -\frac{1}{2d}( \gen{c_1(\spincv), [P_\gamma]} +k) \pmod m \\
\frac{-k-dm}{2d} &\le s < \frac{-k+dm}{2d} & s &\equiv -\frac{1}{2d}( \gen{c_1(\spincv), [\tilde P_\delta]} +k+dm) \pmod m
\end{aligned}
\end{equation}
which are both among the exponents appearing in the decompositions of $\ul\CF^\circ(\bm\alpha, \bm\beta, w; \GR)$ and $\ul\CF^\circ(\bm\alpha, \tilde{\bm\beta}, w; \GR)$ given by \eqref{eq: ab-twisted-decomp}. Then the composition $\theta \circ g_{0,\spincv}^\circ \circ \theta^{-1}$ takes
\[
\CF^\circ(\bm\alpha, \bm\beta, \spincs, w) \otimes T^r \GR \to \CF^\circ(\bm\alpha, \tilde{\bm\beta}, \tilde\spincs, w) \otimes T^s \GR,
\]
where $\spincs = \spincv |_{Y_{\alpha\beta}}$ and $\tilde\spincs = \spincv |_{Y_{\alpha\tilde\beta}}$. (Analogous statements hold for $\Phi_0$.)

The grading shift of $g^\circ_{0,\spincv}$ is
\[
\absgr(g^\circ_{0,\spincv})
= \frac{c_1(\spincv)^2  +m + 4}{4}.
\]
This can be expressed in terms of $c_1$ evaluations in various ways. For instance:
\begin{align}
\label{eq: g0-gr-Pg-R-Pdt}
\absgr(g^\circ_{0,\spincv})
&= \frac{\gen{c_1(\spincv), [P_\gamma]} ^2}{4dk} - \frac{\nu^2 \gen{c_1(\spincv), [R]}^2}{4km(k+dm)} - \frac{\gen{c_1(\spincv), [\tilde P_\delta] }^2}{4d(k+dm)} + \frac{m+4}{4} \\
\label{eq: g0-gr-Pg-Pgt-Qt}
&= \frac{\gen{c_1(\spincv), [P_\gamma]} ^2 - \gen{c_1(\spincv), [\tilde P_\gamma]}^2}{4dk} - \frac{\gen{c_1(\spincv), [\tilde Q] }^2}{4m} + \frac{m+4}{4} \\
\label{eq: g0-gr-Pd-Pdt-Q}
&= \frac{\gen{c_1(\spincv), [P_\delta]} ^2 - \gen{c_1(\spincv), [\tilde P_\delta]}^2}{4d(k+dm)} - \frac{\gen{c_1(\spincv), [Q] }^2}{4m} + \frac{m+4}{4}
\end{align}

\begin{lemma} \label{lemma: g0-trunc}
Fix $t \in \N$ and $0<\epsilon < \epsilon' < 1$. For all $m$ sufficiently large, if $\spincv$ is any spin$^c$ structure for which $g^t_{0,\spincv} \ne 0$, then the following implications hold:
\begin{enumerate}
\item \label{it: g0-trunc-Pg-Pgt}
If $\abs{\gen{c_1(\spincv), [P_\gamma]}} < \epsilon dm$ and $\gen{c_1(\spincv), [\tilde Q] } = \pm m$, then
\begin{equation} \label{eq: g0-trunc-Pgt}
\abs{\gen{c_1(\spincv), [\tilde P_\gamma]}} < \epsilon' dm.
\end{equation}
\item \label{it: g0-trunc-Pdt-Pd}
If $\gen{c_1(\spincv), [Q]} = \pm m$ and $\abs{\gen{c_1(\spincv), [\tilde P_\delta]}} < (1+\epsilon)(k+dm)$, then \begin{equation} \label{eq: g0-trunc-Pd}
\abs{\gen{c_1(\spincv), [P_\delta]}} < (1+\epsilon')(k+dm).
\end{equation}
\item \label{it: g0-trunc-V-Q}
If $\gen{c_1(\spincv), [V]}=0$, then
\[
\gen{c_1(\spincv), [Q]} = \gen{c_1(\spincv), [\tilde Q]}  = \pm m.
\]
\end{enumerate}
\end{lemma}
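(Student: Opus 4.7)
The plan is to exploit the fact that $g^t_{0,\spincv} \ne 0$ forces $\absgr(g^t_{0,\spincv})$ to lie in an interval whose width is bounded independently of $m$. As in the proofs of Lemmas \ref{lemma: f0-trunc} and \ref{lemma: h0-trunc}, the gradings of nonzero elements of both $\ul\CF^t(\bm\alpha, \bm\beta, w; \GR)$ and $\ul\CF^t(\bm\alpha, \tilde{\bm\beta}, w; \GR)$ are bounded in absolute value by a constant depending only on $t$. Given this uniform grading bound, each of the three implications will follow by substituting the respective hypotheses into one of the three expressions \eqref{eq: g0-gr-Pg-R-Pdt}--\eqref{eq: g0-gr-Pd-Pdt-Q} for $\absgr(g^t_{0,\spincv})$ and reading off the resulting constraint.

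For part \eqref{it: g0-trunc-Pg-Pgt}, I would start from \eqref{eq: g0-gr-Pg-Pgt-Qt}. The hypothesis $\gen{c_1(\spincv), [\tilde Q]} = \pm m$ makes the term $-\gen{c_1(\spincv), [\tilde Q]}^2/(4m)$ equal to $-m/4$, which cancels the explicit $+m/4$ in the formula to leave $\frac{\gen{c_1(\spincv), [P_\gamma]}^2 - \gen{c_1(\spincv), [\tilde P_\gamma]}^2}{4dk} + 1$. Combined with the uniform grading bound and the hypothesis $\abs{\gen{c_1(\spincv), [P_\gamma]}} < \epsilon dm$, this yields $\gen{c_1(\spincv), [\tilde P_\gamma]}^2 \le \epsilon^2 d^2 m^2 + O(1)$, where one uses either the upper or lower bound on the grading shift depending on the sign of $k$. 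Choosing $m$ large enough then gives $\abs{\gen{c_1(\spincv), [\tilde P_\gamma]}} < \epsilon' dm$. Part \eqref{it: g0-trunc-Pdt-Pd} is completely analogous, starting instead from \eqref{eq: g0-gr-Pd-Pdt-Q}: the hypothesis $\gen{c_1(\spincv), [Q]} = \pm m$ kills the $m/4$ term, and the hypothesis on $\abs{\gen{c_1(\spincv), [\tilde P_\delta]}}$ then controls $\abs{\gen{c_1(\spincv), [P_\delta]}}$ by the same mechanism.

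Part \eqref{it: g0-trunc-V-Q} is where the hypothesis on $[V]$ plays a decisive role, through the identities $[\tilde P_\delta] = [P_\delta] + (k+dm)[V]$ and $[\tilde Q] = [Q] - m[V]$ recorded just after the definition of $V$. These give $\gen{c_1(\spincv), [\tilde P_\delta]} = \gen{c_1(\spincv), [P_\delta]}$ and $\gen{c_1(\spincv), [\tilde Q]} = \gen{c_1(\spincv), [Q]}$ as soon as $\gen{c_1(\spincv), [V]} = 0$. The first identity makes the first term of \eqref{eq: g0-gr-Pd-Pdt-Q} vanish outright, so writing $\gen{c_1(\spincv), [Q]} = em$ with $e$ an odd integer (as per \eqref{eq: abgdb-c1-Q}), the grading shift collapses to $((1-e^2)m + 4)/4$. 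Since this must remain bounded as $m \to \infty$, the coefficient $(1-e^2)/4$ must vanish, forcing $e = \pm 1$ and hence $\gen{c_1(\spincv), [Q]} = \gen{c_1(\spincv), [\tilde Q]} = \pm m$.

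The main obstacle will be keeping the constants straight through the estimates in parts \eqref{it: g0-trunc-Pg-Pgt} and \eqref{it: g0-trunc-Pdt-Pd}, especially ensuring the argument works uniformly for both signs of $k$. Conceptually no new ideas should be needed beyond those appearing in Lemmas \ref{lemma: f0-trunc}, \ref{lemma: h0-trunc}, and \ref{lemma: h1-trunc}; what makes part \eqref{it: g0-trunc-V-Q} possible is the algebraic simplification coming from the $[V]$ relations, which is really the only new ingredient specific to the pentagon setting.
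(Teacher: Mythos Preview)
Your proposal is correct and follows essentially the same approach as the paper: bound the grading shift uniformly in $m$, then plug the hypotheses into the appropriate expression \eqref{eq: g0-gr-Pg-Pgt-Qt} or \eqref{eq: g0-gr-Pd-Pdt-Q} to extract the desired inequality. The only cosmetic difference is that for part \eqref{it: g0-trunc-V-Q} the paper uses the $P_\gamma$/$\tilde P_\gamma$ formula \eqref{eq: g0-gr-Pg-Pgt-Qt} (via $[\tilde P_\gamma] = [P_\gamma] + k[V]$) rather than the $P_\delta$/$\tilde P_\delta$ formula you chose, but the argument is identical either way.
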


\begin{proof}
The gradings on $\ul\CF^t(\bm\alpha, \bm\beta, w; \GR)$ and $\ul\CF^t(\bm\alpha, \tilde{\bm\beta}, w; \GR)$ are bounded (above and below) independently of $m$. Thus, for some constant $C>0$, if $g^t_{0,\spincv} \ne 0$, then
\begin{equation} \label{eq: g0-trunc-gr}
-C \le \absgr(g^t_{0,\spincv}) \le C.
\end{equation}

To prove \eqref{it: g0-trunc-Pg-Pgt}, let us assume $k>0$; the case where $k<0$ proceeds almost identically. By \eqref{eq: g0-gr-Pg-Pgt-Qt}, we have:
\begin{align*}
-C &\le \frac{\gen{c_1(\spincv), [P_\gamma]} ^2 - \gen{c_1(\spincv), [\tilde P_\gamma]}^2}{4dk} - \frac{\gen{c_1(\spincv), [\tilde Q] }^2}{4m} + \frac{m+4}{4} \\
&< \frac{\epsilon^2 d^2 m^2  - \gen{c_1(\spincv), [\tilde P_\gamma]}^2}{4dk} - \frac{m^2}{4m} + \frac{m+4}{4},
\end{align*}
so
\[
\gen{c_1(\spincv), [\tilde P_\gamma]}^2 < \epsilon^2 d^2 m^2 + C',
\]
where $C' = 4dk(C+1)$. Therefore,
if $m > \sqrt{\frac{C'}{d^2({\epsilon'}^2 - \epsilon^2)}}$, we obtain
\[
\abs{\gen{c_1(\spincv), [\tilde P_\gamma]}} < \epsilon' dm,
\]
as required.

The proof of \eqref{it: g0-trunc-Pdt-Pd} proceeds similarly using \eqref{eq: g0-gr-Pd-Pdt-Q}.

For \eqref{it: g0-trunc-V-Q}, assuming that $\gen{c_1(\spincv), [V]}=0$, we have $\gen{c_1(\spincv), [P_\gamma]} = \gen{c_1(\spincv), [\tilde P_\gamma]}$ and $\gen{c_1(\spincv), [Q]} = \gen{c_1(\spincv), [\tilde Q]}  = em$ for some odd integer $e$. By \eqref{eq: g0-gr-Pg-Pgt-Qt}, we have
\[
(e^2-1)m   \le 4(C+1).
\]
Thus, for $m$ sufficiently large, we deduce that $e=\pm 1$, as required.
\end{proof}

\begin{definition} \label{def: g0-trunc}
Fix $0 < \epsilon < \frac13$. Call a spin$^c$ structure $\spincv$ \emph{good} if it satisfies both
\begin{align}
\label{eq: g0-trunc-Pg} \abs{\gen{c_1(\spincv), [P_\gamma]}} &< \epsilon dm  \\
\label{eq: g0-trunc-Qt} \gen{c_1(\spincv), [\tilde Q]} &= \pm m, \\
\intertext{or it satisfies both}
\label{eq: g0-trunc-Pdt} \abs{\gen{c_1(\spincv), [\tilde P_\delta]}} &< (1+\epsilon)(k+dm)  \\
\label{eq: g0-trunc-Q} \gen{c_1(\spincv), [Q]} &= \pm m,  \\
\intertext{or it satisfies}
\label{eq: g0-trunc-V} \gen{c_1(\spincv), [V]} &= 0.
\end{align}
Let $\tilde g^t_0$ denote the sum of all terms $g^t_{0,\spincv}$ for which $\spincv$ is good.
\end{definition}

\begin{lemma} \label{lemma: g0-filt}
Fix $t \in \N$ and $\epsilon>0$. For all $m$ sufficiently large, the map $\tilde g^t_0$ is filtered with respect to the filtrations $\JJ_{\alpha\beta}$ and $\JJ_{\alpha\tilde\beta}$.
\end{lemma}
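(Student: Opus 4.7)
The plan is to mirror the proofs of Propositions \ref{prop: h0-filt}--\ref{prop: h2-filt}, extended from the rectangle maps $\tilde h^t_j$ to the pentagon map $\tilde g^t_0$. For a pentagon $\sigma \in \pi_2(\x, \Theta_{\beta\gamma}, \Theta_{\gamma\delta}, \Theta_{\delta\tilde\beta}, \tilde\y)$ contributing to $g^t_{0,\spincv}$, a direct unwinding of \eqref{eq: ab-twisted-filt} gives
\[
\JJ_{\alpha\beta}\bigl([\x,i] \otimes T^r\bigr) - \JJ_{\alpha\tilde\beta}\bigl([\tilde\y, i - n_w(\sigma)] \otimes T^s\bigr) = n_w(\sigma) + \frac{d(s-r)}{k},
\]
where $r, s$ are the coset representatives in $\bigl[\tfrac{-k-dm}{2d}, \tfrac{-k+dm}{2d}\bigr)$ prescribed by \eqref{eq: g0-r-s-def}. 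I would introduce integers $p, q$ via
\[
\gen{c_1(\spincv), [P_\gamma]} = -2dr - k + 2p\,dm, \qquad \gen{c_1(\spincv), [\tilde P_\delta]} = -2ds - k - dm + 2q\,dm,
\]
together with the odd integer $e$ satisfying $\gen{c_1(\spincv), [\tilde Q]} = em$, and invoke the relation $[P_\gamma] - [\tilde P_\delta] = d[Q] - (k+dm)[V]$ together with \eqref{eq: abgdb-c1-Q}--\eqref{eq: abgdb-c1-V} to reduce the difference to
\[
n_{z'}(\sigma) + \frac{dm\bigl(e + 2(q-p) - 1\bigr)}{2k}.
\]
Since $n_{z'}(\sigma) \ge 0$, filteredness of $g^t_{0,\spincv}$ thus amounts to the identity $e = 2(p-q) + 1$, which must be verified for every good $\spincv$.

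I would then treat the three clauses of Definition \ref{def: g0-trunc} in turn. In the first clause, $\abs{\gen{c_1(\spincv), [P_\gamma]}} < \epsilon dm$ with $\epsilon < \tfrac13$ forces $p = 0$, the hypothesis $\gen{c_1(\spincv), [\tilde Q]} = \pm m$ gives $e = \pm 1$, and Lemma \ref{lemma: g0-trunc}\eqref{it: g0-trunc-Pg-Pgt} combined with $[\tilde P_\delta] = [\tilde P_\gamma] - d[\tilde Q]$ localizes $\gen{c_1(\spincv), [\tilde P_\delta]}$ within a subinterval of $(-2dm, 0)$ or $(0, 2dm)$, pinning $q$ to $0$ or $1$ respectively. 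In the third clause, $\gen{c_1(\spincv), [V]} = 0$ combined with Lemma \ref{lemma: g0-trunc}\eqref{it: g0-trunc-V-Q} yields $e = \pm 1$, and the relation $\gen{c_1(\spincv), [\tilde P_\delta]} = \gen{c_1(\spincv), [P_\gamma]} - dem$ (valid since the $[V]$-contribution vanishes) pins $q - p = (e-1)/2$ by inspecting the defining ranges for $r$ and $s$.

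The main obstacle, as in Proposition \ref{prop: h0-filt}, is the second clause. Its hypotheses localize $q$ only to $\{0, 1\}$ via the $[\tilde P_\delta]$ bound and fix $e' := \gen{c_1(\spincv), [Q]}/m$, but leave $e = e' - \gen{c_1(\spincv), [V]}$ unconstrained among the odd integers, since $\gen{c_1(\spincv), [V]}$ is a priori free. I expect to resolve this by the same shift trick used in the proof of Proposition \ref{prop: h0-filt}: using the self-intersection and orthogonality identities $[V]^2 = [V]\cdot[Q] = [V]\cdot[\tilde Q] = 0$ and $[Q]^2 = [\tilde Q]^2 = -m$ recorded in Section \ref{ssec: cobordisms}, the spin$^c$ structure $\spincv' = \spincv + c\,\PD[\tilde Q]$ for a suitable $c \in \Z$ has the same restrictions to the four boundary three-manifolds as $\spincv$ and satisfies $c_1(\spincv')^2 = c_1(\spincv)^2$, while shifting the triple $(p, q, e)$ in a coordinated way. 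The borderline configurations in the second clause then reduce to the canonical case $e = \pm 1$ already treated in clauses one and three, completing the verification and yielding filteredness of $\tilde g^t_0$.
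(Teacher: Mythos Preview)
Your setup is correct and matches the paper: the filtration shift reduces to $n_{z'}(\sigma) + \frac{dm(e+2(q-p)-1)}{2k}$, so the entire content is the identity $e = 2(p-q)+1$ for good $\spincv$. Your treatments of the first and third clauses are valid and close in spirit to the paper's (the paper instead uniformly reduces all three clauses to bounding $\lvert k\gen{c_1(\spincv),[V]}\rvert$ and then reading off $e=2p-2q+1$ from the sandwich inequality \eqref{eq: g0-filt-V-bounds}).

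The genuine gap is your handling of the second clause. The shift $\spincv' = \spincv + c\,\PD[\tilde Q]$ does \emph{not} reduce to a previously treated case. Working in the orthogonal basis $\{[P_\gamma],[R],[\tilde P_\delta]\}$ for $H_2(\bar X_{\alpha\beta\gamma\delta\tilde\beta})$, one computes $[\tilde Q]\cdot[P_\gamma]=0$, $[\tilde Q]\cdot[\tilde P_\delta]=dm$, and $[\tilde Q]\cdot[V]=0$. Hence under your shift $r$ and $s$ are preserved while $(p,q,e)\mapsto(p,\,q+c,\,e-2c)$, so the combination $e+2(q-p)-1$ is \emph{invariant}. (Equivalently, your own displayed identity shows this combination is determined by $r$, $s$, and $\gen{c_1(\spincv),[V]}$, all of which the shift fixes.) So no choice of $c$ moves you into a configuration where the identity is already known; the shift trick in Proposition~\ref{prop: h0-filt} was used only for the $\newgr$ statement, not for filteredness, and the analogy breaks down here.

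The paper's actual argument for the second clause is different and direct: from $\lvert\gen{c_1(\spincv),[\tilde P_\delta]}\rvert < (1+\epsilon)(k+dm)$ and $\gen{c_1(\spincv),[Q]}=\pm m$, Lemma~\ref{lemma: g0-trunc}\eqref{it: g0-trunc-Pdt-Pd} gives $\lvert\gen{c_1(\spincv),[P_\delta]}\rvert < (1+\epsilon')(k+dm)$, and then the relation $[\tilde P_\delta]-[P_\delta]=(k+dm)[V]$ forces $\gen{c_1(\spincv),[V]}\in\{-2,0,2\}$. For $m$ large this makes $\lvert k\gen{c_1(\spincv),[V]}\rvert$ small compared to $dm$, and one concludes via \eqref{eq: g0-filt-V-bounds} as in the other two clauses. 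You should replace your shift argument with this $[V]$-bound.
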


\begin{proof}
It suffices to show that each nonzero term $g^t_{0,\spincv}$ in the definition of $\tilde g^t_0$ is filtered. We will start by looking at an arbitrary term $g^t_{0,\spincv}$, and then specialize to the case where $\spincv$ is good (which will justify our definition).

Let $r$ and $s$ be as in \eqref{eq: g0-r-s-def}. Write
\begin{align*}
\gen{c_1(\spincv), [P_\gamma]} &= -2dr -k + 2pdm \\
\gen{c_1(\spincv), [\tilde P_\delta]} &= -2ds -k + (2q-1) dm
\end{align*}
so that we have
\begin{align}
\label{eq: g0-filt-Pg-bounds} (2p-1)dm &< \gen{c_1(\spincv), [P_\gamma]} \le (2p+1)dm \\
\label{eq: g0-filt-Pdt-bounds} (2q-2)dm &< \gen{c_1(\spincv), [\tilde P_\delta]} \le 2q dm.
\end{align}
Let us assume that $\gen{c_1(\spincv), [\tilde Q]} = em$, where $e$ is an odd integer.

For any $\sigma \in \pi_2(\x, \Theta_{\beta\gamma}, \Theta_{\gamma\delta}, \Theta_{\delta\tilde\beta}, \tilde \y)$ contributing to $g^t_{0,\spincv}$, we compute:
\begin{align*}
\JJ_{\alpha\beta}([\x,i] \otimes T^r) &- \JJ_{\alpha\tilde\beta}([\tilde\y,i-n_w(\sigma)] \otimes T^s) \\
&= i - \frac{2d r +k+d}{2k} - (i-n_w(\sigma)) + \frac{2d s +k+d}{2k} \\
&= \frac{d(s-r)}{k} + n_w(\sigma) \\
&= \frac{\gen{c_1(\spincv), [P_\gamma]-[\tilde P_\delta]} + (2q-2p-1)dm }{2k} + n_w(\sigma) \\
&= \frac{\gen{c_1(\spincv), [P_\delta] + d[Q] -[\tilde P_\delta]} + (2q-2p-1)dm }{2k} + n_w(\sigma) \\
&= \frac{\gen{c_1(\spincv), -(k+dm)[V] + d[Q]} + (2q-2p-1)dm }{2k} + n_w(\sigma) \\
&= \frac{2(k+dm)(n_{z'}(\sigma)-n_w(\sigma)) + dm(2n_u(\sigma) - 2n_{z'}(\sigma) -1) }{2k} \\
& \qquad + \frac{(2q-2p-1)dm }{2k} + n_w(\sigma) \\
&= \frac{2k n_{z'}(\sigma)- 2 dm n_w(\sigma) + 2dm n_u(\sigma)  -dm  }{2k} + \frac{(2q-2p-1)dm }{2k}  \\
&= n_{z'}(\sigma) + \frac{d \gen{c_1(\spincv), [\tilde Q]} + (2q-2p-1)dm }{2k}  \\
&= n_{z'}(\sigma) + \frac{(e+2q-2p-1)dm }{2k}
\end{align*}
Thus, it suffices to show that $e+2q-2p-1 = 0$.

Note that
\begin{align*}
\gen{c_1(\spincv), [P_\gamma]} &= \gen{c_1(\spincv), [\tilde P_\gamma]} - k\gen{c_1(\spincv), [V]} \\
&= \gen{c_1(\spincv), [\tilde P_\delta]} + d\gen{c_1(\spincv), [\tilde Q]} - k\gen{c_1(\spincv), [V]} \\
\intertext{so}
\gen{c_1(\spincv), [P_\gamma] - [\tilde P_\delta]} &= edm  - k\gen{c_1(\spincv), [V]}.
\end{align*}
Combining this with the bounds \eqref{eq: g0-filt-Pg-bounds} and \eqref{eq: g0-filt-Pdt-bounds}, we obtain:
\begin{equation} \label{eq: g0-filt-V-bounds}
2p-2q-1 < e   - \frac{k\gen{c_1(\spincv), [V]}}{dm} < 2p-2q+3
\end{equation}
In particular, if $\gen{c_1(\spincv), [V]}$ is small compared to $m$ (i.e., if $\abs{k \gen{c_1(\spincv), [V]}} < 2dm$), we immediately deduce that $e = 2p-2q+1$, which is precisely what we need. To ensure this, we now apply the hypotheses for $\tilde g^t_0$:
\begin{itemize}
\item
Suppose that $\spincv$ satisfies \eqref{eq: g0-trunc-Pg} and \eqref{eq: g0-trunc-Qt}, and hence also \eqref{eq: g0-trunc-Pgt} by Lemma \ref{lemma: g0-trunc}\eqref{it: g0-trunc-Pg-Pgt}, where we take $\epsilon' = 2\epsilon$. Then
\begin{align*}
\abs{k \gen{c_1(\spincv), [V]}} &= \abs{ \gen{c_1(\spincv), [P_\gamma] - [\tilde P_\gamma]}} \\
&\le \abs{\gen{c_1(\spincv), [P_\gamma]}} + \abs{\gen{c_1(\spincv), [\tilde P_\gamma]}}  \\
&< 3\epsilon dm \\
&< 2dm
\end{align*}
as required.

\item
Suppose that $ \spincv$ satisfies \eqref{eq: g0-trunc-Pdt} and \eqref{eq: g0-trunc-Q}. By Lemma \ref{lemma: g0-trunc}\eqref{it: g0-trunc-Pdt-Pd}, again taking $\epsilon'=2\epsilon$, we also have
\[
\abs{\gen{c_1(\spincv), [P_\delta]}} < (1+2\epsilon) dm.
\]
Therefore,
\begin{align*}
(k+dm) \abs{\gen{c_1(\spincv), [V]}} = \abs{\gen{c_1(\spincv), [\tilde P_\delta] - [P_\delta]}} &< (2+3\epsilon)(k+dm).
\end{align*}
Since $\gen{c_1(\spincv), [V]}$ is an even integer, it must equal either $-2$, $0$, or $2$. For $m$ sufficiently large, we again obtain $\abs{k \gen{c_1(\spincv), [V]}} <2dm$, as required.

\item
Finally, if $\spincv$ satisfies \eqref{eq: g0-trunc-V}, then the conclusion is obvious. \qedhere
\end{itemize}
\end{proof}

\begin{lemma} \label{lemma: g0-trunc-htpy}
Fix $t \in \N$. For all $m$ sufficiently large, the map $\tilde g^t_0$ is a (filtered) chain homotopy between $\tilde \Psi^t_0$ and $\Phi^t_0$.
\end{lemma}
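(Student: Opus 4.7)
The plan is to mimic the template of Lemmas \ref{lemma: h0-trunc-nulhtpy}, \ref{lemma: h1-trunc-nulhtpy}, and \ref{lemma: h2-trunc-nulhtpy}. By Lemma \ref{lemma: g0-htpy}, for $m > t$ we already know that
\begin{equation*}
\partial_{\alpha\tilde\beta} \circ g_0^t + g_0^t \circ \partial_{\alpha\beta} = \Psi_0^t + \Phi_0^t.
\end{equation*}
Subtracting the identity we want to prove, it then suffices to show that the ``bad'' remainder satisfies
\begin{equation*}
\partial_{\alpha\tilde\beta} \circ (g_0^t - \tilde g_0^t) + (g_0^t - \tilde g_0^t) \circ \partial_{\alpha\beta} = (h_1^t - \tilde h_1^t) \circ f_0^t + f_2^t \circ (h_0^t - \tilde h_0^t).
\end{equation*}
Filteredness of $\tilde g_0^t$ was already established in Lemma \ref{lemma: g0-filt}, so this chain-homotopy identity will complete the proof.

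The next step is a standard enumeration of the ends of $\overline{\mathcal{M}}(\sigma)$ for Maslov-index $-1$ pentagon classes $\sigma\in\pi_2(\x,\Theta_{\beta\gamma},\Theta_{\gamma\delta},\Theta_{\delta\tilde\beta},\tilde\y)$. Beyond the $(\alpha,\beta)$- and $(\alpha,\tilde\beta)$-bigon breakings (contributing to $\partial g_0 + g_0 \partial$) and the $(\beta,\gamma)$-, $(\gamma,\delta)$-, $(\delta,\tilde\beta)$-bigon breakings (which cancel because the $\Theta_{**}$'s are cycles), the remaining ends are four families of triangle-plus-rectangle splittings together with two ``internal'' quadrilateral degenerations that involve $\Theta_{\beta\delta}$ or $\Theta_{\gamma\tilde\beta}$. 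The triangle-plus-rectangle ends produce precisely the $h_1\circ f_0$ and $f_2\circ h_0$ type terms, while the internal ones pass through the small triangles $\tau_0^{\pm}$ of Lemma \ref{lemma: bgd-triangles} and, by the same model computation as in the proof of Lemma \ref{lemma: g0-htpy}, contribute the $\Phi_0$ summand (the error term $U^m\Phi'$ vanishes once $m>t$). The key step is then a spin$^c$-matching lemma: for each degeneration $\sigma \rightsquigarrow \psi*\rho$, the evaluations of $c_1(\spincs_w(\sigma))$ on the periodic classes $[P_\gamma], [\tilde P_\delta], [Q], [\tilde Q], [V]$ are additive with respect to the decomposition of $X_{\alpha\beta\gamma\delta\tilde\beta}$ along the relevant $3$-manifold, so that knowing any two of $\spincs_w(\sigma), \spincs_w(\psi), \spincs_w(\rho)$ determines the third. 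Using the bounds from Lemmas \ref{lemma: f0-trunc}, \ref{lemma: h0-trunc}, \ref{lemma: h1-trunc}, \ref{lemma: h2-trunc}, and \ref{lemma: g0-trunc}, one shows that the three cases of Definition \ref{def: g0-trunc} are calibrated so that every degeneration of a bad pentagon has the form (good $f_0$-triangle)\,+\,(bad $h_1$-rectangle), or (bad $h_0$-rectangle)\,+\,(good $f_2$-triangle), or is trivial; while every degeneration of a good pentagon is good-plus-good (and cancels) or is an internal $\Phi_0$-type end (which corresponds to $\gen{c_1,[V]}=0$, case \eqref{eq: g0-trunc-V} of Definition \ref{def: g0-trunc}, and therefore survives only on the $g_0^t$ side rather than the $\tilde g_0^t$ side).

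The main obstacle will be this last bookkeeping. The three cases of Definition \ref{def: g0-trunc} are asymmetric---two impose a small-evaluation bound on $[P_\gamma]$ or $[\tilde P_\delta]$ together with a $|\,\cdot\,[Q]\,|=m$ or $|\,\cdot\,[\tilde Q]\,|=m$ condition on opposite sides, while the third is the diagonal $\gen{c_1,[V]}=0$ case---so the matching must be checked separately in each case, using the pentagon grading-shift formulas \eqref{eq: abgdb-alex}--\eqref{eq: abgdb-c1-V} to translate the ``badness'' conditions on $\psi$ and $\rho$ into the corresponding conditions on $\sigma$. The computation is parallel to the $c_1$-evaluation computations already carried out in Sections \ref{sssec: h0-filt}--\ref{sssec: h2-filt}, and the extra parameters that appear are forced to take only the relevant values ($\pm 1, 0$) once $m$ is chosen large compared to $t$ and to the $c_1$-ranges appearing in Lemmas \ref{lemma: h0-trunc}, \ref{lemma: h1-trunc}, and \ref{lemma: g0-trunc}. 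Once this matching is verified, the degeneration count furnishes exactly the desired identity.
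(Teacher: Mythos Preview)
Your overall strategy—subtracting the untruncated identity from the truncated one and showing the ``bad'' remainder satisfies the corresponding chain-homotopy identity—is valid and equivalent to the paper's direct approach. However, your enumeration of the pentagon degenerations is garbled, and this obscures the one genuinely nontrivial step.

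Specifically, there are \emph{five} non-bigon degeneration types, all of the form (triangle)$*$(rectangle): two give $f_2\circ h_0$ and $h_1\circ f_0$; two involve an auxiliary generator $\Theta_{\beta\delta}$ or $\Theta_{\gamma\tilde\beta}$ (and the triangles $\tau_0^\pm$); and one involves $\Theta_{\beta\tilde\beta}$ and gives $\Phi_0$. You have conflated the last three. The $\Theta_{\beta\delta}$/$\Theta_{\gamma\tilde\beta}$ ends do \emph{not} contribute to $\Phi_0$; in the unrestricted proof they cancel in pairs. The crucial point you have missed is that the two canceling ends in such a pair represent \emph{different} spin$^c$ structures, namely $\spincv$ and $\spincv' = \spincv + \PD[Q]$. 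For the truncated argument to work, you must show that $\spincv$ is good if and only if $\spincv'$ is good—otherwise the cancellation is broken by the truncation. This is a genuine computation (carried out in the paper's proof using the explicit $c_1$-evaluations and the bounds from Lemma~\ref{lemma: g0-trunc}), and it is not covered by your phrase ``or is trivial.'' Your final parenthetical about $\Phi_0$-type ends is also backwards: $\gen{c_1(\spincv),[V]}=0$ makes $\spincv$ \emph{good}, so these ends lie on the $\tilde g_0^t$ side, which is exactly what you need (no $\Phi_0$ contribution from bad pentagons).
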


\begin{proof}
We begin by reviewing the proof that $g_0$ gives a homotopy between $\Psi_0$ and $\Phi_0$, and then see how to modify it to incorporate the notion of good spin$^c$ structures. (See \cite{HeddenMarkFractional} for the most complete treatment.)

For any class $\sigma \in \pi_2(\x, \Theta_{\beta\gamma}, \Theta_{\gamma\delta}, \Theta_{\delta\tilde\beta}, \tilde\y)$ with $\mu(\sigma) = -1$ (and hence $\dim \MM(\sigma)=1$), the possible ends of $\MM(\sigma)$ correspond to the following six types of degenerations:
\begin{enumerate} [label=(P-\arabic*)] \setcounter{enumi}{-1}
\item \label{it: g0-ends-pent}
A concatenation of a pentagon $\sigma'$ (with $\spincs_w(\sigma') = \spincs_w(\sigma)$) with either an $(\alpha,\beta)$, $(\beta,\gamma)$, $(\gamma,\delta)$, $(\delta, \tilde\beta)$, or $(\alpha,\tilde\beta)$ bigon.
\item \label{it: g0-ends-abgd-adb}
A decomposition $\sigma = \rho_1 * \psi_1$, where $\rho_1 \in \pi_2(\x, \Theta_{\beta\gamma}, \Theta_{\gamma\delta}, \a)$ and $\psi_1 \in \pi_2(\a, \Theta_{\delta\tilde\beta}, \tilde\y)$ for some $\a \in \T_\alpha \cap \T_\delta$.
\item \label{it: g0-ends-abg-agdb}
A decomposition $\sigma = \psi_2 * \rho_2$, where $\psi_2 \in \pi_2(\x, \Theta_{\beta\gamma}, \q)$ and $\rho_2 \in \pi_2(\q, \Theta_{\gamma\delta}, \Theta_{\delta\tilde\beta}, \tilde\y)$ for some $\q \in \T_\alpha \cap \T_\gamma$.
\item \label{it: g0-ends-bgd-abdb}
A decomposition $\sigma = \psi_3 * \rho_3$, where $\psi_3 \in \pi_2(\Theta_{\beta\gamma}, \Theta_{\gamma\delta}, \Theta_{\beta\delta})$ and $\rho_3 \in \pi_2(\x, \Theta_{\beta\delta}, \Theta_{\delta\tilde\beta}, \tilde\y)$.
\item \label{it: g0-ends-gdb-abgb}
A decomposition $\sigma = \psi_4 * \rho_4$, where $\psi_4 \in \pi_2(\Theta_{\gamma\delta}, \Theta_{\delta\tilde\beta}, \Theta_{\gamma\tilde\beta})$ and $\rho_4 \in \pi_2(\x, \Theta_{\beta\gamma}, \Theta_{\gamma\tilde\beta}, \tilde\y)$.
\item \label{it: g0-ends-bgdb-abb}
A decomposition $\sigma = \rho_5 * \psi_5$, where $\rho_5 \in \pi_2(\Theta_{\beta\gamma}, \Theta_{\gamma\delta}, \Theta_{\delta\tilde\beta}, \tilde \Theta)$ and $\psi_5 \in \pi_2(\x, \Theta, \tilde\y)$, where $\tilde \Theta \in \T_\beta \cap \T_{\tilde\beta}$.
\end{enumerate}

If we look at all such classes $\sigma$ with no restrictions on the spin$^c$ structures, ends of type \ref{it: g0-ends-pent} correspond to $\partial_{\alpha\tilde\beta} \circ g_0 + g_0 \circ \partial_{\alpha\beta}$ (since $\Theta_{\beta\gamma}$, $\Theta_{\gamma\delta}$, and $\Theta_{\delta\tilde\beta}$ are all cycles. Ends of types \ref{it: g0-ends-abgd-adb} and \ref{it: g0-ends-abg-agdb} correspond to $f_2 \circ h_0$ and $h_1 \circ f_0$, respectively. Ends of types \ref{it: g0-ends-bgd-abdb} and \ref{it: g0-ends-gdb-abgb} cancel in pairs, as seen in the proof of Lemma \ref{lemma: bgd-triangles}. Finally, ends of type \ref{it: g0-ends-bgdb-abb} correspond to $\Phi_0$. The fact that each $\MM(\sigma)$ has algebraically zero ends implies that
\[
f_2 \circ h_0 + h_1 \circ f_0 + \Phi_0 =  \partial_{\alpha\tilde\beta} \circ g_0 + g_0 \circ \partial_{\alpha\beta}
\]
as required.

To prove that $\tilde g^t_0$ gives a homotopy between $\tilde \Psi^t_0$ and $\Phi^t_0$, we must see what happens when we restrict our attention to good spin$^c$ structures.

\begin{itemize}
\item
If $\MM(\sigma)$ has an end of type \ref{it: g0-ends-abgd-adb}, then the evaluations of $c_1(\spincv)$ on $[P_\gamma]$, $[P_\delta]$, $[Q]$, and $[R]$ are determined solely by $\rho_1$, while $\gen{c_1(\spincv), [\tilde P_\delta]}$ is determined solely by $\psi_1$. In particular, since $\psi_1$ counts for $f_2^t$, Lemma \ref{lemma: f2-trunc} implies that \eqref{eq: g0-trunc-Pdt} holds (i.e. $\abs{\gen{c_1(\spincv), \tilde P_\delta}} < (1+\epsilon) (k+dm)$). We claim that $\rho_1$ counts for $\tilde h_0^t$ (see Definition \ref{def: h0-trunc}) iff $\spincv$ is good (see Definition \ref{def: g0-trunc}).

Because \eqref{eq: g0-trunc-Pdt} holds, one of the criteria for $\rho_1$ counting for $\tilde h_0^t$ (namely \eqref{eq: h0-filt-Q-bound}) coincides precisely with one of the criteria for $\spincv$ being good (namely \eqref{eq: g0-trunc-Q}). We thus must simply show that the remaining criteria in each definition are equivalent.
\begin{itemize}
\item
If $\abs{\gen{c_1(\spincv), [P_\gamma]}} < \epsilon dm$ and $\abs{\gen{c_1(\spincv), [R]}} < \frac{m(k+dm)}{\nu}$ as in Definition \ref{def: h0-trunc}, then $\gen{c_1(\spincv), [\tilde Q]} = \pm m$ by Remark \ref{rmk: h1-trunc-simpler}, and hence $\spincv$ is good.

\item
If $\abs{\gen{c_1(\spincv), [P_\gamma]}} < \epsilon dm$ and $\gen{c_1(\spincv), [\tilde Q]} = \pm m$ as in Definition \ref{def: g0-trunc}, then Maslov grading considerations show that $\abs{\gen{c_1(\spincv), [\tilde P_\gamma]}} < 2 \epsilon dm$ (just as in Lemma \ref{lemma: g0-trunc}\eqref{it: g0-trunc-Pg-Pgt}). Therefore,
\begin{align*}
\abs{\gen{c_1(\spincv), [R]}} &= \abs{\gen{c_1(\spincv), \frac{m}{\nu}[\tilde P_\gamma] + \frac{k}{\nu}[Q]}} \\
&\le \frac{m}{\nu} \abs {\gen{c_1(\spincv), [\tilde P_\gamma]}} + \frac{\abs{k}}{\nu} \abs{\gen{c_1(\spincv), [Q]}} \\
&< \frac{m}{\nu} 2\epsilon dm + \frac{\abs{k}}{\nu} m \\
&< \frac{m(\abs{k}+2\epsilon dm)}{\nu} \\
&< \frac{m(k+dm)}{\nu}.
    \end{align*}
(If $k>0$, the last inequality is automatic; if $k<0$, it holds provided that $m > \frac{-2k}{d(1-2\epsilon)}$.) Therefore, $\rho_1$ counts for $\tilde h^t_0$ via \eqref{eq: h0-filt-R-bound}.

\item
If $\gen{c_1(\spincv), [V]}=0$, then Maslov grading considerations as in Lemma \ref{lemma: g0-trunc}\eqref{it: g0-trunc-V-Q} show that $\gen{c_1(\spincv), [Q]} = \pm m$, and hence $\rho_1$ counts for $\tilde h^t_0$ via \eqref{eq: h0-filt-Q-bound}.
\end{itemize}

The claim is thus proved. The case where $\MM(\sigma)$ has an end of type \ref{it: g0-ends-abg-agdb} is handled similarly, with fewer cases to check.

\item
Next, suppose $\MM(\sigma)$ has an end of type \ref{it: g0-ends-bgd-abdb}. If we assume that $m \ge t$, we see that $\psi_3$ equals one of the classes $\tau_0^{\pm}$ from the proof of Lemma \ref{lemma: bgd-triangles}; without loss of generality, assume that $\psi_3 = \tau_0^+$, so that $\gen{c_1(\spincv), [Q]} = m$. Let $\sigma' = \tau_0^- * \rho_3$, which is the class that provides the canceling end, and let $\spincv' = \spincs_w(\sigma')$. Then $\DD(\sigma') = \DD(\sigma) + Q$, so $\spincv' = \spincv + \PD[Q]$, so $\gen{c_1(\spincv'), [Q]} = -m$. We claim that $\spincv$ is good iff $\spincv'$ is good. It will follow that when considering contributions coming from only good spin$^c$ structures, ends of type \ref{it: g0-ends-bgd-abdb} cancel in pairs.

Since
\[
\gen{c_1(\spincv), [\tilde P_\delta]} = \gen{c_1(\spincv'), [\tilde P_\delta]} = \gen{c_1(\spincs_w(\rho_3)), [\tilde P_\delta]},
\]
$\spincv$ satisfies \eqref{eq: g0-trunc-Pdt} iff $\spincv'$ does. For the other two criteria from Definition \ref{def: g0-trunc}, suppose that $\spincv$ is good; the converse follows similarly. There are two cases to consider:

\begin{itemize}
\item If $\gen{c_1(\spincv), [V]}=0$ , then (equivalently) $\abs{\gen{c_1(\spincv), [\tilde Q]}} = m$. We have:
\begin{align*}
\gen{c_1(\spincv'), [V]} &= \gen{c_1(\spincv) + 2\PD[Q], [V]} \\
 &= \gen{c_1(\spincv), [V]} + 2[Q]\cdot [V] \\
 &=0
\end{align*}
Thus, $\spincv'$ is good.

\item
If $\gen{c_1(\spincv), [V]} \ne 0$, then we must be in the case where $\gen{c_1(\spincv), [\tilde Q]} = -m$ (and hence $\gen{c_1(\spincv), [V]} = 2$) and $\abs{\gen{c_1(\spincv), [P_\gamma]}} < \epsilon dm$.

As in Lemma \ref{lemma: g0-trunc}\eqref{it: g0-trunc-Pdt-Pd}, we have
\[
-C < \frac{\gen{c_1(\spincv),[P_\delta]}^2 - \gen{c_1(\spincv), [\tilde P_\delta]}^2}{4d(k+dm)} < C,
\]
where $C$ is a constant that is independent of $m$. Note that
\begin{align*}
\gen{c_1(\spincv), [P_\delta]} &= \gen{c_1(\spincv), [\tilde P_\delta] - (k+dm)[V]} 
= \gen{c_1(\spincv), [\tilde P_\delta]} - 2(k+dm).
\end{align*}
Therefore, we have:
\begin{gather*}
-C < \frac{\left(  \gen{c_1(\spincv), [\tilde P_\delta]} - 2(k+dm)\right)^2 - \gen{c_1(\spincv), [\tilde P_\delta]}^2}{4d(k+dm)} < C \\
-C < \frac{ -4(k+dm)\gen{c_1(\spincv), [\tilde P_\delta]} + 4(k+dm)^2 }{4d(k+dm)} < C \\
-Cd <  -\gen{c_1(\spincv), [\tilde P_\delta]} + k+dm  < Cd \\
k+dm -Cd <  \gen{c_1(\spincv), [\tilde P_\delta]}   < k+dm + Cd
\end{gather*}
If $m$ is sufficiently large, it follows that $\abs{\gen{c_1(\spincv), [\tilde P_\delta]}} < (1+\epsilon)(k+dm)$. And since $\gen{c_1(\spincv), [\tilde P_\delta]} = \gen{c_1(\spincv'), [\tilde P_\delta]}$, we deduce that $\spincv'$ is good.
\end{itemize}
The claim is thus proved. The case of where $\MM(\sigma)$ has an end of type \ref{it: g0-ends-gdb-abgb} is handled similarly.

\item
Finally, ends of type \ref{it: g0-ends-bgdb-abb} correspond to $\Phi_0^t$, as in Lemma \ref{lemma: g0-htpy}. If $\rho$ has an end of this type, then $n_w(\rho) = n_{z'}(\rho)$, since $w$ and $z'$ are in the same region of both $(\Sigma, \bm\beta, \bm\gamma, \bm\delta, \bm\beta')$ and $(\Sigma, \alpha, \bm\beta, \bm\beta')$ (see Figure \ref{fig: beta-tilde}). By \eqref{eq: abgdb-c1-V}, $\gen{c_1(\spincs_w(\rho)), [V]}=0$, so $\spincv$ must be good.
\end{itemize}
\end{proof}

We have thus concluded the proof of Proposition \ref{prop: Psi0-filt-QI}, and hence of Theorem \ref{thm: CFt-cone-f2}. \qed

\section{Proof of the filtered mapping cone formula} \label{sec: proof}

We now turn to the proof of the filtered mapping cone formula. As noted in Section \ref{sec: mapping-cone}, it suffices to prove the mapping cone formula for $\CF^t$, namely Proposition \ref{prop: mapping-cone-CFt}.


\begin{proof}
Let us assume that $k>0$; the case where $k<0$ is similar and is left to the reader. Fix $t \in \N$. By Theorem \ref{thm: CFt-cone-f2}, for sufficiently large $m$ and a well-adapted diagram $(\Sigma, \bm\alpha, \bm\beta, \bm\delta, w, z, z')$, the map
\[
\begin{pmatrix} f_1^t \\ h_1^t \end{pmatrix}  \co \CF^t(\bm\alpha, \bm\gamma, w) \to \operatorname{Cone}(f_2^t)
\]
is a filtered homotopy equivalence.

Let us start by looking closely at how these maps interact with the spin$^c$ decomposition of $\CF^t(\Sigma, \bm\alpha, \bm\gamma, w)$.


Fix a spin$^c$ structure $\spinct \in \Spin^c(Y_\lambda(K))$. As in the Introduction, let $\{s_l \mid l \in \Z\}$ be the arithmetic sequence (with step $k/d$) characterized by
\begin{equation}\label{eq: sl-def}
d s_l \equiv   \frac{2k \AlNorm_{Y_\lambda, K_\lambda}(\spinct) + d-k}{2} \pmod k \quad \text{and} \quad \frac{(2l-1)k}{2}  < ds_l \le \frac{(2l+1)k}{2}
\end{equation}
These numbers are precisely the values of $\AlNorm_{Y,K}(\xi)$ for all $\xi \in G_{Y_\lambda, K_\lambda}^{-1}(\spinct)$.

First, consider the restriction of $f_1^t$ to $\CF^t(\bm\alpha, \bm\gamma, w, \spinct)$. For all spin$^c$ structures $\spincv \in \Spin^c_0(W_{\alpha\gamma\delta})$ extending $\spinct$, the values of $\gen{c_1(\spincv), [R]}$ are congruent modulo $\frac{2mk}{\nu} = 2kp$. Indeed, let $\spincv_l$ be the spin$^c$ structure extending $\spinct$ with
\[
\frac{(2l-1) mk}{\nu} < \gen{c_1(\spincv_l), [R]} \le \frac{(2l+1)mk}{\nu}.
\]
For any triangle $\psi \in \pi_2(\q, \Theta_{\gamma\delta}, \a)$ representing $\spincv$, \eqref{eq: agd-c1-q} gives
\begin{align*}
\frac{\nu}{2m} \gen{c_1(\spincv_l), [R]} &=   \Al(\q) + k n_u(\psi) - k n_{z'}(\psi) + \frac{d-k}{2}  \\
&\equiv k \AlNorm_{Y_\lambda, K_\lambda}(\spinct) + \frac{d-k}{2} \pmod k \\
&\equiv ds_l \pmod k
\end{align*}
and therefore
\[
\gen{c_1(\spincv_l), [R]} =  \frac{2dm s_l}{\nu}.
\]
Moreover, by Lemma \ref{lemma: f1-trunc}, if $f^t_{1,\spincv_l} \ne 0$, then
\[
-\frac{m(k+dm)}{\nu} < \gen{c_1(\spincv_l), [R]} < \frac{m(k+dm)}{\nu},
\]
so it follows that
\[
\abs{l} < \frac{k+dm}{k} + \frac12.
\]
Let $L$ be the largest integer satisfying this constraint; thus, the only possibly nonzero terms in the restriction of $f_1^t$ to $\CF^t(\bm\alpha, \bm\gamma, w, \spinct)$ are $f^t_{1, \spincv_l}$ for $l = -L, \dots, L$. For each such $l$, let $\spincu_l = \spincv_l | _{Y_{\alpha\delta}}$. By \eqref{eq: f1-trunc-su}, we have
\[
\gen{c_1(\spincv_l), [R]} = \frac{2dm s_{\spincu_l}}{\nu}
\]
and therefore $s_{\spincu_l} = s_l$.

Now, look at the spin$^c$ structures on $W_{\alpha\delta\beta}$ extending $\spincu_l$. For each $l=-L, \dots, L$,
recall that we have spin$^c$ structures $\spincx_{\spincu_l}, \spincy_{\spincu_l}$ on $W_{\alpha\delta\beta}$ which satisfy
\begin{align*}
\gen{c_1(\spincx_{\spincu_l}), [P_\delta]} &= 2ds_l - k-dm \\
\gen{c_1(\spincy_{\spincu_l}), [P_\delta]} &= 2ds_l + k+dm.
\end{align*}
For ease of notation, let us write $\spincx_l = \spincx_{\spincu_l}$ and $\spincy_l = \spincy_{\spincu_l}$. For $l=-L+1, \dots, L$, $\spincy_{l-1}$ and $\spincx_l$ have the same restriction to $Y$; denote this by $\spincs_l$. Note that $\spincs_l = \spincs_{l-1} + \PD[K]$, so the list $(\spincs_{-L+1}, \dots, \spincs_L)$ is cyclic with period $d$.
Moreover, we have
\begin{align*}
\gen{c_1(\spincx_l), [P_\delta]} &= 2ds_l - k-dm \\
&= 2ds_{l-1} + k-dm \\
&= \gen{c_1(\spincy_{l-1}), [P_\delta]} - 2dm.
\end{align*}
In particular, the images of the maps $\theta \circ f^t_{2, \spincx_l}$ and $\theta \circ f^t_{2, \spincy_{l-1}}$ both lie in the summand $\CF^t(\bm\alpha, \bm\beta, \spincs_l, w) \otimes T^{-s_l}$, using the decomposition \eqref{eq: ab-twisted-decomp}. Finally, Lemma \ref{lemma: f2-trunc} implies that the maps $\theta \circ f^t_{2, \spincx_{-L}}$ and $\theta \circ f^t_{2, \spincy_{L}}$ vanish.

Next, suppose $\spincv \in \Spin^c_0(X_{\alpha\gamma\delta\beta})$ is a spin$^c$ structure restricting to $\spinct$ for which $\gen{c_1(\spincv), [Q]} = \pm m$ and $h^t_{1,\spincv} \ne 0$. By Lemma \ref{lemma: h1-trunc-k-pos}, we have $\abs{c_1(\spincv), [R]} < \frac{m(k+dm)}{\nu}$ and $\abs{c_1(\spincv), [P_\delta]} < (1+\epsilon)(k+dm)$. This implies that for some $l \in \{-L, \dots, L\}$, $\spincv$ restricts to $\spincv_l$ on $X_{\alpha\gamma\delta}$ and to either $\spincx_l$ or $\spincy_l$ on $X_{\alpha\delta\beta}$ (and not $\spincx_{-L}$ or $\spincy_L$). Therefore, the image of $h^t_{1,\spincv}$ lies in one of the summands of \eqref{eq: ab-twisted-decomp} mentioned in the previous paragraph.


Thus we see that $\CF^t(\Sigma, \bm\alpha, \bm\gamma, w, \spinct)$, equipped with its two filtrations $\II_{\alpha\gamma}$ and $\JJ_{\alpha\gamma}$, is doubly-filtered quasi-isomorphic to the doubly-filtered complex
\begin{equation} \label{eq: mapping-cone-CFt}
\xymatrix@R=0.6in{
\CF^t(\bm\alpha, \bm\delta, \spincu_{-L})  \ar[dr]|{F^t_{W'_m, \spincy_{-L}}} & \CF^t(\bm\alpha, \bm\delta, \spincu_{-L+1}) \ar[d]|{F^t_{W'_m, \spincx_{-L+1}}} \ar[dr]|{F^t_{W'_m, \spincy_{-L+1}}} & \cdots  \ar[d] \ar[dr]|{F^t_{W'_m, \spincy_{L-1}}} &\CF^t(\bm\alpha, \bm\delta, \spincu_{L}) \ar[d]|{F^t_{W'_m, \spincx_{L}}} \\
& \CF^t(\bm\alpha, \bm\beta, \spincs_{-L+1}) \otimes T^{-s_{-L+1}} & \cdots & \CF^t(\bm\alpha, \bm\beta, \spincs_{L}) \otimes T^{-s_L}
}
\end{equation}
which inherits its filtrations from those on $\CF^t(\bm\alpha, \bm\delta, w)$ and $\ul\CF^t(\bm\alpha, \bm\beta, w; \GR)$.

By Theorem \ref{thm: large-surgery}, there are doubly-filtered quasi-isomorphisms
\[
\Lambda^t_{\spincu_l}\co \CF^t(\Sigma, \bm\alpha, \bm\delta, \spincu_l) \to A^t_{\spincs_l, s_l}
\]
where the Alexander filtration on $\Lambda^t_{\spincu_l}$ is identified with the filtration $\JJ_{\spincu_l}$ from \eqref{eq: Ju-def}. Moreover, each $\CF^t(\bm\alpha, \bm\beta, \spincs_l) \otimes T^{-s_l}$ can be identified with $B^t_{\spincs_l} = C_{\spincs_l}\{0 \le i \le t\}$, so that the complex in \eqref{eq: mapping-cone-CFt} is quasi-isomorphic to
\begin{equation} \label{eq: mapping-cone-AtBt}
\xymatrix@C=0.6in@R=0.6in{
A^t_{\xi_{-L}}  \ar[dr]|{h^t_{\xi_{-L}}} & A^t_{\xi_{-L+1}} \ar[d]|{v^t_{\xi_{-L+1}}} \ar[dr]|{h^t_{\xi_{-L+1}}} & \cdots  \ar[d] \ar[dr]|{h^t_{\xi_{L-1}}} & A^t_{\xi_L} \ar[d]|{v^t_{\xi_L}} \\
& B^t_{\spincs_{-L+1}}  & \cdots & B^t_{\spincs_{L}}
}
\end{equation}
By definition, this is precisely the complex $X^t_{\lambda,\spinct,-L,L}$ from Section \ref{sec: mapping-cone}.

To complete the proof, we must check that the $\JJ$ filtration and the  absolute grading on \eqref{eq: mapping-cone-AtBt} agree with the descriptions given in the Introduction. Let us denote these by $\JJ_{\operatorname{mc}}$ and $\gr_{\operatorname{mc}}$, respectively.
\begin{itemize}
\item
On each summand $\CF^t(\bm\alpha, \bm\delta, \spincu_l)$, $\JJ_{\alpha\delta}$ is defined in \eqref{eq: ad-filt} as the Alexander filtration plus $\frac{d^2m(2s_{\spincu_l}-1)}{2k(k+dm)}$, so $\JJ_{\operatorname{mc}}$ on $A^t_{\xi_l}$ is obtained by shifting $\JJ_{\spincu_l}$ by the same amount:
\begin{align*}
\JJ_{\operatorname{mc}}([\x, i, j]) &= \JJ_{\spincu_l}([\x,i,j]) + \frac{d^2m(2s_{\spincu_l}-1)}{2k(k+dm)} \\
&= \max\{i-1, j-s_l\} + \frac{d(2s_l - 1)}{2(k+dm)}  + \frac12 + \frac{d^2m(2s_l-1)}{2k(k+dm)} \\
&= \max\{i-1, j-s_l\} + \frac{2ds_l +k-d}{2k}
\end{align*}
This agrees with \eqref{eq: Jt-def-A}.

The absolute grading on $\CF^t(\bm\alpha, \bm\delta, \spincu_l)$ in \eqref{eq: mapping-cone-CFt} is the original Maslov grading, plus the shift from \eqref{eq: newgr-ad}, plus $1$ (by the definition of a mapping cone). Thus, the induced grading on $A^t_{\xi_l}$ in \eqref{eq: mapping-cone-AtBt} (taking into account the grading shift from Theorem \ref{thm: large-surgery}) is given by
\begin{align*}
\gr_{\operatorname{mc}}([\x,i,j]) &= \absgr(\x)-2i + \frac{d^2 m s_l^2}{k(k+dm)} - \frac{m+1+3\sign(k)}{4} + 1 - \Delta_{\spincu_l} \\
&= \absgr(\x)-2i + \frac{d^2 m s_l^2}{k(k+dm)} - \frac{m+1+3\sign(k)}{4} + 1 + \frac{(2ds_l - k - dm)^2}{4d(k+dm)} - \frac14 \\
&= \absgr(\x)-2i + \frac{4d^3 m s_l^2 + k(4d^2s_l^2 - 4ds_l(k+dm) + (k+dm)^2)}{4dk(k+dm)} + \frac{2-m-3\sign(k)}{4} \\
&= \absgr(\x)-2i + \frac{4(k+dm) d^2 s_l^2  - 4kds_l(k+dm) + k(k+dm)^2}{4dk(k+dm)} + \frac{2-m-3\sign(k)}{4}    \\
&= \absgr(\x)-2i + \frac{4 d^2 s_l^2  - 4kds_l + k(k+dm)}{4dk} + \frac{2-m-3\sign(k)}{4}    \\
&= \absgr(\x)-2i + \frac{4 d^2 s_l^2  - 4kds_l + k^2 }{4dk} + \frac{2-3\sign(k)}{4}   \\
&= \absgr(\x)-2i + \frac{(2ds_l - k)^2 }{4dk} + \frac{2-3\sign(k)}{4}
\end{align*}
which agrees with \eqref{eq: grt-def-A}.

\item
On each summand $\CF^t(\bm\alpha, \bm\beta, \spincs_l) \otimes T^{-s_l}$, \eqref{eq: ab-twisted-filt} gives
\begin{align*}
\JJ_{\operatorname{mc}}([\x, i, j]) &= \JJ_{\alpha\beta}([\x, i] \otimes T^{-s_l}) \\
&= i - \frac{2d(-s_l) + k + d}{2k} \\
&= i-1 + \frac{2ds_l + k - d}{2k}
\end{align*}
which agrees with \eqref{eq: Jt-def-B}. Moreover, by \eqref{eq: newgr-ab}, the absolute grading on $\CF^t(\bm\alpha, \bm\beta, \spincs_l) \otimes T^{-s_l}$ is
\begin{align*}
\gr_{\operatorname{mc}}([\x,i,j]) &= \absgr(\x)-2i + \frac{(2ds_l - k)^2}{4kd} - \frac{2+3\sign(k)}{4},
\end{align*}
which agrees with \eqref{eq: grt-def-B}.
\end{itemize}

Thus, we have shown that $\CF^t(\bm\alpha,\bm\gamma, w, \spincu)$ is filtered homotopy equivalent to $X^t_{\lambda, \spinct, -L,L}$.

A priori, the value of $L$ may increase with $t$, since we needed larger values of $m$ to prove the results of Section \ref{sec: exact-sequence}. However, by Lemma \ref{lemma: Xab-indep-ab}, $X^t_{\lambda, \spinct, -L,L}$ is filtered homotopy equivalent to $X^t_{\lambda, \spinct,a,b}$ for any $a \le -L$ and $b \ge L$, independent of $t$, as required.
\end{proof}

\section{Rational surgeries} \label{sec: rational-surgery}

In \cite[Section 7]{OSzRational}, Ozsv\'ath and Szab\'o derived a mapping cone formula for rational surgeries as well as integral ones. Here, we show how this computation interacts with the second filtration discussed in this paper. For simplicity, we show the details only in the case of $1/n$ surgery on a null-homologous knot $K \subset Y$ for $n>0$, but it is not hard to generalize to the case of arbitrary rational surgeries on rationally null-homologous knots.

%

Let $K_{1/n}$ denote the knot in $Y_{1/n}(K)$ obtained from a left-handed meridian of $K$. (Unlike in the case of integral surgery, we emphasize that $K_{1/n}$ is not isotopic to the core circle of the surgery solid torus.) Observe that $Y_{1/n}(K)$ is obtained by a certain surgery on $K' = K \conn O_n$ in $Y' = Y \conn {-L(n,1)}$, where $O_n \subset -L(n,1)$ is the Floer simple knot from Example \ref{ex: lens}, and the induced knot (coming from the meridian of $K'$) is precisely $K_{1/n}$.

There is a natural one-to-one correspondence between $\Spin^c(Y)$ and $\Spin^c(Y_{1/n}(K))$. To be completely precise, let $W$ be the $2$-handle cobordism from $Y'$ to $Y_{1/n}(K)$. For each $\spincs \in \Spin^c(Y)$ and each $q \in \{0, \dots, n-1\}$, let $\spincs_{q} = \spincs \conn \spincu_q \in \Spin^c(Y \conn -L(n,1))$, where $\spincu_q$ is as described in Example \ref{ex: lens}. Then then all the $\spincs_q$ are cobordant to the same spin$^c$ structure $\spinct \in \Spin^c(Y_{1/n}(K))$ through $W$.

The computation in Example \ref{ex: lens} shows that $\HFKa(-L(n,1), O_n, \spincu_q)$ is supported in Alexander grading $-\frac{n-2q-1}{2n}$. (The $-$ comes from the orientation reversal on $L(n,1)$.) The K\"unneth principle for connected sums (\cite[Theorem 7.1]{OSzKnot}, \cite[Theorem 5.1]{OSzRational}) then implies that $\CFKi(Y',K', \spincs_q)$ is isomorphic to $\CFKi(Y,K)$, with the Alexander grading (and hence all values of $j$) shifted by $-\frac{n-2q-1}{2n}$.

We now apply the mapping cone formula to $(Y',K')$ to compute $\CFKi(Y_{1/n}(K), K_{1/n}, \spinct)$. Using the terminology from Section \ref{ssec: rel-spinc}, we take $d=n$, and the framing on $K'$ corresponds to $k=1$. Label the elements of $G^{-1}_{Y_{1/n}, K_{1/n}}(\spinct)$ by $(\xi_l)$, where
\[
\frac{2l-1}{2n} < \AlNorm_{Y',K'}(\xi_l) \le \frac{2l+1}{2n},
\]
and set $s_l = \AlNorm_{Y',K'}(\xi_l)$.

For each $l \in \Z$, it is clear that $G_{Y',K'}(\xi_l) = \spincs_{[q]}$ for some $q \in \{0, \dots, n-1\}$. To determine $q$, the Alexander grading satisfies
\[
\AlNorm_{Y',K'}(\xi_l) \equiv \frac{-n+2q+1}{2n} \pmod \Z,
\]
so write
\[
\AlNorm_{Y',K'}(\xi_l) = \frac{-n+2q+1}{2n} + r
\]
for then $r \in \Z$. Then $2l-1 < (2r-1)n + 2q+1 \le 2l+1$, so $(2r-1)n \le 2(l-q) < (2r-1)n + 2$. Therefore, if $n$ is even, we deduce that $2(l-q) = (2r-1)n$ and $s_l = \frac{2l+1}{2n}$, while if $n$ is odd, then $2(l-q) = (2r-1)n+1$ and $s_l= \frac {l}{n}$.

By definition, the complexes $A^\infty_{\xi_l}$ and $B^\infty_{\xi_l}$ are each copies of $\CFKi(Y',K', \spincs_q)$. As noted above, this is isomorphic to $\CFKi(Y,K,\spincs)$, with the $j$ coordinate shifted by $-\frac{n-2q-1}{2n}$. Under this identification, the filtrations on $A^\infty_{\xi_l}$ and $B^\infty_{\xi_l}$ given by formulas \eqref{eq: It-def-A}, \eqref{eq: Jt-def-A}, \eqref{eq: It-def-B}, and \eqref{eq: Jt-def-B} can each be expressed in terms of $\CFKi(Y,K,\spincs)$, as follows. The quantity $j-s_l$ in \eqref{eq: It-def-A} and \eqref{eq: Jt-def-A} is replaced with $j + \frac{-n+2q+1}{2n} - s_l$, which by the above discussion is equal to $j-r$ (in both the $n$ even and $n$ odd cases), and the quantity $\frac{2ds_l+k-d}{2k}$ simplifies to $l - \floor{\frac{n-1}{2}}$. Thus, we have:
\begin{align}
\intertext{On $A^\infty_{\xi_l}$,}
\label{eq: It-ratl-A} \II_\spinct([\x,i,j]) &= \max\{i,j-r\} \\
\label{eq: Jt-ratl-A} \JJ_\spinct([\x,i,j]) &= \max\{i-1,j-r\} + l - \floor{\frac{n-1}{2}} \\
\intertext{On $B^\infty_{\xi_l}$,}
\label{eq: It-ratl-B} \II_\spinct([\x,i,j]) &= i \\
\label{eq: Jt-ratl-B} \JJ_\spinct([\x,i,j]) &= i-1 + l - \floor{\frac{n-1}{2}} 
\end{align}
It is easy to check that the $\II_t$ filtration agrees with Ozsv\'ath and Szab\'o's description of the $A$ and $B$ complexes: namely, the mapping cone contains $n$ copies of each of the $A_s$ and $B_s$ complexes for $K$. The $\JJ_t$ filtration takes the same form on each copy, with some shifts as necessary.

\bibliographystyle{amsplain}
\bibliography{bibliography}

\end{document}